\definecolor{darkgreen}{rgb}{0,0.45,0}
\crefname{equation}{}{}
\crefname{thm}{Theorem}{Theorems}
\crefname{defi}{Definition}{Definitions}
\crefname{rmk}{Remark}{Remarks}
\crefname{prop}{Proposition}{Propositions}
\crefname{ex}{Example}{Examples}
\theoremstyle{plain}
\newtheorem{thm}{Theorem}[subsection]
\newtheorem{cor}[thm]{Corollary}
\newtheorem{lem}[thm]{Lemma}
\newtheorem{prop}[thm]{Proposition}
\theoremstyle{remark}
\newtheorem{rmk}[thm]{Remark}
\newtheorem{ex}[thm]{Example}
\theoremstyle{definition}
\newtheorem{defi}[thm]{Definition}
\definecolor{mypurple}{rgb}{0.5, 0.0, 0.5}
\tikzstyle{start}=[to path={(\tikztostart.#1) -- (\tikztotarget)}]
\tikzstyle{end}=[to path={(\tikztostart) -- (\tikztotarget.#1)}]
\newcommand{\ca}{\mathcal}
\newcommand{\dc}{\mathbb}
\newcommand{\nc}{\mathsf}
\newcommand{\Gr}{\mathfrak}
\newcommand{\B}{\mathbf}
\newcommand{\wc}{\widecheck}
\newcommand{\wh}{\widehat}
\newcommand{\mi}{\textrm{-}}
\newcommand{\ot}{\otimes}
\newcommand{\ob}{\ensuremath{\mathrm{ob}}}
\newcommand{\Set}{\nc{Set}}
\newcommand{\Cat}{\nc{Cat}}
\newcommand{\VCat}{\ca{V}\textrm{-}\nc{Cat}}
\newcommand{\VCocat}{\ca{V}\textrm{-}\nc{Cocat}}
\newcommand{\VGrph}{\ca{V}\textrm{-}\nc{Grph}}
\newcommand{\Coalg}{\nc{Coalg}}
\newcommand{\Alg}{\nc{Alg}}
\newcommand{\Mon}{\nc{Mon}}
\newcommand{\Comon}{\nc{Comon}}
\newcommand{\Mnd}{\nc{Mnd}}
\newcommand{\Cmd}{\nc{Cmd}}
\newcommand{\Mod}{\nc{Mod}}
\newcommand{\Comod}{\nc{Comod}}
\newcommand{\VMod}{\ca{V}\mi\nc{Mod}}
\newcommand{\VComod}{\ca{V}\mi\nc{Comod}}
\newcommand{\op}{\mathrm{op}}
\newcommand{\ev}{\mathrm{ev}}
\newcommand{\id}{\mathrm{id}}
\newcommand{\Hom}{\mathrm{Hom}}
\newcommand{\HOM}{\textsc{Hom}}
\newcommand{\simrightarrow}{\xrightarrow{\raisebox{-3pt}[0pt][0pt]{\ensuremath{\sim}}}}
\tikzset{tick/.style={postaction={decorate,decoration={markings,mark=at position 0.5 with {\draw[-] (0,.5ex) -- (0,-.5ex);}}}}}
\tikzset{bul/.style={postaction={decoration={markings,mark=at position 0.5 with {\node{$\sbul$};}},decorate}}}
\tikzset{tick/.style={postaction={decorate,decoration={markings,mark=at position 0.5 with {\draw[-] (0,.4ex) -- (0,-.4ex);}}}}}
\newcommand{\tickar}{\begin{tikzcd}[baseline=-0.5ex,cramped,sep=small,ampersand replacement=\&]{}\ar[r,tick]\&{}\end{tikzcd}}
\newcommand{\bular}{\begin{tikzcd}[baseline=-0.5ex,cramped,sep=small,ampersand replacement=\&]{}\ar[r,bul]\&{}\end{tikzcd}}
\newcommand{\VMMat}{\ca{V}\mi\dc{M}\nc{at}}
\newcommand{\sbul}{\scriptstyle\bullet}
\newcommand{\tick}{\object@{|}}
\newcommand{\VMat}{\ca{V}\mi\nc{Mat}}
\newcommand{\PProf}{\dc{P}\nc{rof}}
\newcommand{\VPProf}{\ca{V}\textrm{-}\PProf}
\newcommand{\matr}[3]{\SelectTips{eu}{10}\xymatrix@C=.2in{#1\colon #2\ar[r]|-{\object@{|}} & #3}}
\newcommand{\Two}{\scriptstyle\Downarrow}
\newcommand{\Cart}{\mathrm{Cart}}
\newcommand{\Lim}{\mathrm{Lim}}
\newcommand{\Cocart}{\mathrm{Cocart}}
\newcommand{\K}{K}
\newcommand{\LL}{L}
\newcommand{\ullimit}{\ar[dr,phantom,very near start,"\lrcorner" description]}
\newcommand{\tboxed}[2][inner sep=1pt]{\ifmmode
\tikz[baseline=(X.base),outer
sep=0pt]{\node[draw,#1](X){\ensuremath{#2}};}
\else
\tikz[baseline=(X.base),outer
sep=0pt]{\node[draw,#1](X){#2};}
\fi
}
\newcommand{\circled}[2][inner sep=1pt]{\ifmmode
\tikz[baseline=(X.base),outer sep=0pt]{\node[circle,draw,#1](X){\ensuremath{#2}};}
\else
\tikz[baseline=(X.base),outer sep=0pt]{\node[circle,draw,#1](X){#2};}
\fi
}
\newcommand{\bultwocell}[9]{\begin{tikzcd}[ampersand replacement=\&,sep=.3in]
#1\ar[r,bul,"{#2}"]\ar[d,"{#8}"']\ar[dr,phantom,"\Two{#9}"] \& #3\ar[d,"{#4}"] \\
#7\ar[r,bul,"{#6}"'] \& #5
\end{tikzcd}}
\newcommand{\ticktwocell}[9]{\begin{tikzcd}[ampersand replacement=\&,sep=.3in]
#1\ar[r,tick,"{#2}"]\ar[d,"{#8}"']\ar[dr,phantom,"\Two{#9}"] \& #3\ar[d,"{#4}"] \\
#7\ar[r,tick,"{#6}"'] \& #5
\end{tikzcd}}
\newcommand{\globtwocell}[7]{\begin{tikzcd}[ampersand replacement=\&]
#1\ar[r,bul,"{#2}"]\ar[d,equal]\ar[dr,phantom,"\Two{#7}"] \& #3\ar[d,equal] \\
#6\ar[r,bul,"{#5}"'] \& #4
\end{tikzcd}}
\begin{document}
\title{Sweedler theory for double categories}

\author{Vasileios Aravantinos-Sotiropoulos}
\address{School of Applied Mathematical and Physical Sciences, National Technical University of Athens, Greece}
\email{vassilarav@yahoo.gr}

\author{Christina Vasilakopoulou}
\address{School of Applied Mathematical and Physical Sciences, National Technical University of Athens, Greece}
\email{cvasilak@math.ntua.gr}

\begin{abstract}
In this work, we establish certain enrichments of dual algebraic structures in the setting of monoidal double categories. In more detail, we obtain a tensored and cotensored enrichment of monads in comonads, as well as a tensored and cotensored enrichment of modules in comodules, under very general conditions on the surrounding double category. These include ``monoidal closedness'' and ``local presentability'', leading classical notions which are here introduced in the double categorical context. Furthermore, we show that in this setting, the actual fibration of modules over monads is itself enriched in the opfibration of comodules over comonads.  
Applying this abstract double categorical framework to the setting of $\ca{V}$-matrices, one directly obtains a many-object generalization of the known enrichment of modules over monoids in comodules over comonoids in a monoidal category $\ca{V}$, which was originally induced by Sweedler's measuring $k$-coalgebras in vector spaces. In the present setting the result involves $\ca{V}$-enriched modules of $\ca{V}$-categories and $\ca{V}$-enriched comodules of $\ca{V}$-cocategories.
\end{abstract}

\maketitle

\setcounter{tocdepth}{2}
\tableofcontents

\section{Introduction}

Sweedler's measuring coalgebras \cite{Sweedler} were the starting point for establishing a framework in which dual algebraic structures like algebras and coalgebras, modules and comodules 
in monoidal categories are enriched in one another. In more detail, for two $k$-algebras $A$ and $B$, their universal measuring $k$-coalgebra $P(A,B)$ is defined via the property that any $k$-coalgebra map $C\to P(A,B)$ bijectively corresponds, in a natural way, to a $k$-algebra map $A\to\Hom_k(C,B)$ into the convolution $k$-algebra. Already part of folklore for the category of vector spaces, and formally obtained in \cite{mine,Measuringcomonoid} in any symmetric monoidal closed and locally presentable category $\ca{V}$, universal measuring comonoids are the hom-objects of an enrichment of monoids in comonoids which is furthermore tensored and cotensored -- see also \cite{Foxuniversal}. Notably, \cite{AnelJoyal} studies the special case of differential graded vector spaces in an effort to approach Koszul duality for (co)algebras \cite{AlgebraicOperads}, and exhibits how various functors involved in the enrichment (collectively called \emph{Sweedler theory}) relate to the bar-cobar adjunctions.

On the other hand, measuring comodules \cite{Batchelor} are defined in an analogous way to measuring coalgebras: for two $k$-modules $M,N$ and a $k$-comodule $X$, we have a natural bijection between $k$-module morphisms $M\to \Hom_k(X,N)$ and $k$-comodule morphisms $X\to Q(M,N)$ into the universal measuring $k$-comodule of $M$ and $N$. These are also the hom-objects of a tensored and cotensored enrichment of a (global) category of modules over monoids in a category of comodules over comonoids \cite{Measuringcomodule}, under the same assumptions on the ambient monoidal category as before. Moreover, the natural structures of fibration of modules over monoids and opfibration of comodules over comonoids are compatible with said enrichments, providing an illustative example of an enriched fibration structure \cite{EnrichedFibration}. 
Pictorially, 
\begin{equation}\label{pic:1}
	\begin{tikzcd}[column sep=.5in]
		\Mod\ar[r,dashed,"\mathrm{enriched}"] \ar[d,"\mathrm{fibred}"'] & \Comod \ar[d,"\mathrm{opfibred}"] \\
		\Alg\ar[r,dashed,"\mathrm{enriched}"'] & \Coalg
	\end{tikzcd}
\end{equation}

In \cite{VCocats}, a double categorical framework is proposed as the appropriate one for generalizing Sweedler theory for (co)algebras to its many-object setting. 
More specifically, it is shown therein that $\ca{V}$-categories are tensored and cotensored enriched in a category of $\ca{V}$-\emph{cocategories} -- structures equipped with cocomposition
arrows of the form $C(x,z)\to\sum_y C(x,y)\otimes C(y,z)$, which are also of independent interest \cite{Ainfinityalgebras}. These results manifest themselves in a double category of enriched matrices $\dc{D}=\VMMat$ (e.g. \cite{Varthrenr} for the corresponding bicategory), using double categorical monads (\cite{Monadsindoublecats}) which in this case are precisely $\ca{V}$-enriched categories. In the general setting, an enrichment of monads in an appropriate category of \emph{comonads} is therein established when $\dc{D}$ is a monoidal, fibrant (\cite{Framedbicats}) and moreover \emph{locally closed monoidal} double category -- asking that $\dc{D}_0$ and $\dc{D}_1$ are monoidal closed in a coherent way. 
This story ultimately results in an enriched fibration
\begin{displaymath}
	\begin{tikzcd}[column sep=.5in]
		\VCat\ar[r,dashed,"\mathrm{enriched}"] \ar[d,"\mathrm{fibred}"'] & \VCocat \ar[d,"\mathrm{opfibred}"] \\
		\Set\ar[r,dashed,"\mathrm{enriched}"'] & \Set
	\end{tikzcd}
\end{displaymath}
where the bottom arrow is simply due to the cartesian closure of $\Set$.

The present paper began as a natural continuation of the many-object generalization of Sweedler theory, now for modules and comodules. The end result concerns $\ca{V}$-modules for $\ca{V}$-categories in two flavors, both special cases of classical $\ca{V}$-bimodules \cite{Lawvereclosedcats}. One the one hand, one-sided single-indexed $\ca{V}$-modules $M$ of $\ca{V}$-categories $A$, equipped with action arrows of the form $A(x,y)\otimes M(y)\to M(x)$ for all $x,y\in\mathrm{obj}(A)$; these are ($A$,$\mathbf{1}$)-bimodules for $\mathbf{1}$ the terminal category.  On the other hand, one-sided double-indexed $\ca{V}$-modules of $\ca{V}$-categories, equipped with action arrows of the form $A(x,y)\otimes M(y,z)\to M(x,z)$ for all $x,y\in\mathrm{obj}(A)$ and $z\in Z$ for some set $Z$; these are ($A$,$\mathrm{disc}(Z)$)-bimodules for $\mathrm{disc}(Z)$ the discrete $\ca{V}$-category on the set $Z$.
Such structures naturally arise as particular instances, in $\dc{D}=\VMMat$, of the newly introduced \emph{modules} in double categories.
We note that, although \emph{bimodules} in fibrant double categories were considered e.g. in \cite[Sec.~11]{Framedbicats}, we here investigate one-sided modules as the analogue for modules over monoids in a monoidal category.

What started as developing the next many-object step, eventually led to a reexamination and reworking of the framework of the earlier \cite{VCocats}. Indeed, our first main result (\cref{thm:big1}) establishes a tensored and cotensored enrichment of monads in comonads in a double category under broader and more natural assumptions than the earlier version \cite[Thm.~3.24]{VCocats}, which had two technical conditions needed to be checked by hand in each specific double category of interest.  To that end, current \cref{prop:MonHcartesian} confirms that a certain internal hom-induced functor preserves cartesian liftings in any \emph{monoidal closed}\footnote{``Locally'' is henceforth dropped from our terminology, and the definition is also slightly altered --  see \cref{def:locclosed} and surrounding discussion.} double category, resolving the first technical condition. Furthermore, by introducing the notion of a \emph{locally presentable} (fibrant) double category in \cref{sec:lpdoublecats}, various ordinary categories involved in the theory end up being locally presentable which in paricular resolves the second condition. 

This results in a more efficient and theoretically elegant high-level approach, where general conditions on the double category replace more specialized ones which involve performing technical calculations in each example application. Moreover, and perhaps more importantly, the notions of monoidal closed and locally presentable categories both constitute central and well-studied properties in the context of ordinary category theory. It is therefore natural to expect that double-categorical counterparts, such as the ones introduced herein, will play a significant role in the development of double category theory and lead to applications in their own right, independently of our current storyline.

Back to Sweedler theory, the second main result of the paper (\cref{thm:big2}) establishes, in a very analogous spirit, a tensored and cotensored enrichment of modules in comodules in a double category which is fibrant, braided monoidal closed and locally presentable. Moreover, under the same assumptions on $\dc{D}$, the fibration of modules over monads $\Mod(\dc{D})\to\Mnd(\dc{D})$ is enriched in the monoidal opfibration of comodules over comonads $\Comod(\dc{D})\to\Cmd(\dc{D})$, settling the analogue of \cref{pic:1} from monoidal to double categories. 
As a special case of this theorem, for $\dc{D}=\VMMat$ we obtain an enrichment of the fibration of $\ca{V}$-modules over $\ca{V}$-categories in the opfibration of $\ca{V}$-comodules over $\ca{V}$-cocategories. We note that the desired enrichments in the general double categorical setting are induced via \emph{actions} of monoidal categories, using standard results originating from an equivalence between ``closed representations'' and tensored enriched categories \cite{EnrThrVar}. 
Naturally, before reaching the final enrichment result, the general theory of the newly introduced modules and comodules in double categories is developed: particular emphasis is given to (co)monadicity results (\cref{prop:Mod(D)monadicComod(D)comonadic}) and (op)fibration structures (\cref{prop:ModfibredoverMnd,prop:sourcetargetbifib}), as well as monoidality (\cref{prop:ModComodmonoidal,prop:IModmonoidal}) and local presentability of certain fibers (\cref{_AMod_CComodLocallyPresentable}). We mention in passing that local presentability of categories of (co)monads and (co)modules in double categories are part of ongoing work in progress.

Finally, it should be stressed that such a thorough development of the theory regarding enrichment of (co)monads and (co)modules in double categories is of course anticipated to be applied to different settings as well, by changing the surrounding double category. 
One example of interest is the fruitful context of operads and their modules, which is part of ongoing work in progress \cite{MonoidalKleisli}. Considering the double category $\dc{D}=\ca{V}\textrm{-}\mathbb{S}\mathsf{ym}$ of $\ca{V}$-\emph{symmetric sequences} whose monads are colored symmetric $\ca{V}$-operads \cite{GambinoJoyal}, one can obtain similar enrichment results that involve (co)operads and their (co)modules -- broadly in a direction that analogously approaches bar-cobar duality for operads using abstract machinery \cite{Aneltalk}. 
One particular point, however, that shows that this shall not be a mere application of \cref{thm:big1,thm:big2} is that the double category $\ca{V}\textrm{-}\mathbb{S}\mathsf{ym}$ does not satisfy all assumptions: for example, it is an \emph{oplax monoidal} rather than monoidal double category, by \cite[Thm.~10.9]{MonoidalKleisli}. Thus, the general theory needs to be appropriately adjusted to cover richer examples, which is planned as future work.

\subsubsection*{Outline} \cref{sec:preliminaries} covers preliminaries which are largely known results, with certain convenient reformulations. 
\cref{sec:moncomondouble} contains some background on double categories, and revisits the context of monads and comonads but in a broader setting than \cite{VCocats}. New material
includes parallel limits and colimits (\cref{def:parallelcolimits}), monoidal closed double categories (\cref{def:locclosed} and related theory), locally presentable double categories and the enrichment of monads in comonads (\cref{sec:lpdoublecats} and preceding results from \cref{sec:monadscomonadsdouble}). Finally, \cref{sec:modscomods} contains new material and develops the theory of modules and comodules in double categories, reaching the tensored and cotensored enrichment of modules in comodules.

\subsubsection*{Acknowledgements}
The authors would like to thank Ji\v{r}\'{\i} Ad\'{a}mek, Natahanael Arkor, John Bourke, Richard Garner, Claudio Hermida, 
Panagis Karazeris, Steve Lack, Ignacio L{\'o}pez Franco, Susan Niefield, Bob Par\'e, Mike Shulman and Ross Street for helpful discussions. 

\section{Preliminaries}\label{sec:preliminaries}

In this section, we recall some known concepts and constructions that will be needed in what follows, drawn from various relevant topics such as 
monads and modules in bicategories, enrichment, fibrations and the so-called Sweedler theory for monoidal categories. The purpose is to make the material as self-contained 
as possible, providing references for further reading wherever appropriate.

\subsection{Monads and modules}\label{sec:monmodbicats} \label{sec:monoidsmodules}

We assume that the reader is familiar with the basic theory of bicategories, see e.g. \cite{Benabou,Review}, as well as monoidal categories, see 
e.g. \cite{BraidedTensorCats}. We first recall the notions of (co)monads and their (co)modules in a bicategory.

\begin{defi}\label{monadbicat}
	A \emph{monad} in a bicategory $\ca{K}$ consists of an object $X$ together with an endomorphism
	$a:X\to X$ and 2-cells $\eta:1_X\Rightarrow a$, $\mu:a\circ a\Rightarrow a$ called the \emph{unit} and \emph{multiplication},
	such that the following diagrams commute:
	\begin{displaymath}
	\begin{tikzcd}
(a\circ a)\circ a\ar[rr,"\sim"] \ar[d,"\mu\circ1"'] && a\circ(a\circ a)\ar[d,"1\circ \mu"] \\
		a\circ a\ar[dr,"\mu"] && a\circ a\ar[ld,"\mu"] \\
		& a &	 
	\end{tikzcd}\qquad\mathrm{and}\qquad
\begin{tikzcd}[sep=.55in]
1_X\circ a\ar[r,"\eta\circ 1"]\ar[dr,sloped,"\sim"'] & a\circ a \ar[d,"\mu"] & a\circ1_X \ar[l,"1\circ\eta"']\ar[ld,sloped,"\sim"'] \\
& a & 
\end{tikzcd}
\end{displaymath}
	A \emph{(lax) monad functor} between two monads $a:X\to X$ and $b:Y\to Y$ in a bicategory $\ca{K}$
	consists of a 1-cell $f:X\to Y$ between the 0-cells of the monads together with a 2-cell $bf\Rightarrow fa$
	satisfying compatibility conditions with multiplications and units.
\end{defi}

Together with the appropriate transformations between lax monad functors, we have the bicategory $\Mnd(\ca{K})$ of all monads in $\ca{K}$. Any 
lax functor between bicategories $F\colon\ca{K}\to\ca{L}$ induces a lax functor $\Mnd(\ca{K})\to\Mnd(\ca{L})$, or in other words lax functors 
preserve monads. One way of explaining this is to observe that a monad in $\ca{K}$ can be equivalently described as a lax functor 
$\bf{1}\to\ca{K}$, where $\bf{1}$ is the terminal bicategory with a unique object -- and $\Mnd(\ca{K})$ is in fact $[\B{1},\ca{K}]_\mathrm{lax}$.

\begin{defi}\label{def:modulebicat}
	Let $(a\colon X\to X,\eta,\mu)$ be a monad in the bicategory $\ca{K}$. A \emph{(left) module} over $a$ consists of a 1-cell $m\colon U\rightarrow X$ and a 2-cell $\lambda\colon a\circ m\Rightarrow m$ such that the following diagrams commute:
	\begin{displaymath}
	\xymatrix @=3em
	{a\circ a\circ m\ar[r]^{\mu\circ 1}\ar[d]_{1\circ\lambda} & a\circ m\ar[d]^{\lambda}\\
		a\circ m\ar[r]_{\lambda} & m}\qquad\mathrm{and}\qquad
	\xymatrix @=3em
	{m\ar[r]^{\eta\circ 1}\ar[dr]_{1} & a\circ m\ar[d]^{\lambda} \\
	                  & m}
	\end{displaymath}
	
	A \emph{morphism} of left $a$-modules $(m,\lambda)\rightarrow(m',\lambda')$ with domain $U$ is a 2-cell $\alpha\colon m\Rightarrow m'$ such that the 
diagram below commutes:
	\begin{displaymath}
	\xymatrix @=3em
	{a\circ m\ar[r]^{1\circ\alpha}\ar[d]_{\lambda} & a\circ m'\ar[d]^{\lambda'}\\
		m\ar[r]_{\alpha} & m'}
	\end{displaymath}
\end{defi}

We thus have for any given object $U\in\ca{K}$ a \emph{category of (left) $a$-modules} with domain $U$, denoted by $_{a}^{U}\Mod$.
In the classical case $\ca{K}=\nc{Cat}$, modules over a monad 
$T:\ca{C}\rightarrow\ca{C}$ with domain the terminal category $\bf{1}$ are the usual `$T$-algebras' in the literature -- and the category 
$_{T}^{\bf{1}}\Mod$ is just the Eilenberg-Moore category $\ca{C}^{T}$. 

\begin{rmk}\label{rem:modulesaremonadic}
A more compact description of the category of left $a$-modules of given domain $U$, for a monad $a\colon X\rightarrow X$ in the bicategory $\ca{K}$, is as follows. For any object $U\in\ca{K}$ there is an induced monad (in the ordinary sense) on the hom-category $\ca{K}(U,X)$ given by post-composition
\begin{displaymath}
\ca{K}(U,a)\colon\ca{K}(U,X)\rightarrow\ca{K}(U,X)
\end{displaymath}
Then $_{a}^{U}\Mod$ is precisely the Eilenberg-Moore category $\ca{K}(U,X)^{\ca{K}(U,a)}$ of $\ca{K}(U,a)$-algebras. 
\end{rmk}

There are also the dual concepts of \emph{comonads} $(c\colon Z\to Z,\delta,\epsilon)$ and (left) $c$-\emph{comodules} $(k\colon V\to 
Z,\gamma\colon k\Rightarrow c\circ k)$ forming appropriate categories
${}_c^V\Comod$ which is the category of Eilenberg-Moore coalgebras $\ca{K}(V,Z)_{\ca{K}(V,c)}$ for the comonad given by post-composition with $c$.

Next, we recall some basic facts regarding monoids and comonoids in monoidal categories. 
These can be 
viewed as the one-object case of the above notions, since if $\ca{K}$ is a bicategory with a single object $*$, it is equivalent to the data of a 
monoidal category $\ca{V}=\ca{K}(*,*)$ with tensor corresponding to composition $\ca{K}(*,*)\times\ca{K}(*,*)\rightarrow \ca{K}(*,*)$. A monad is 
then just an object $A\in\ca{V}$ together with morphisms $\mu\colon A\otimes A\to A$, $\eta\colon I\to A$ satisfying associativity and unit laws, i.e. 
a \emph{monoid}. Dually, \emph{comonoids} $(C,\delta\colon C\to C\ot C,\epsilon\colon C\to I)$ can be viewed as comonads therein.

Along with the corresponding notions of morphism, 
we have the categories $\Mon(\ca{V})$ and $\Comon(\ca{V})$. Note that if $\ca{V}$ is braided 
monoidal, then both categories are themselves monoidal, although not necessarily braided. If however $\ca{V}$ is symmetric, then symmetry is 
inherited by both $\Mon(\ca{V})$ and $\Comon(\ca{V})$.

Analogously to how lax functors between bicategories preserve monads, any lax monoidal functor $F\colon\ca{V}\to\ca{W}$ with 
structure maps $\phi_{A,B}\colon FA\otimes FB\to F(A\otimes B)$	and $\phi_0\colon I\to F(I)$ induces a functor between their categories of monoids
\begin{equation}\label{eq:MonF}
\Mon F\colon\Mon(\ca{V})\to\Mon(\ca{W})
\end{equation}
explicitly given by $(A,\mu,\eta)\mapsto(FA,F\mu\circ\phi_{A,A},F\eta\circ\phi_0)$. Dually, oplax monoidal functors induce functors between the 
categories of comonoids. We usually denote $\Mon F$ and $\Comon F$ with the same name $F$ in order to not overburden with notation. 

Briefly recall that a functor $F\colon\ca{A}\times\ca{B}\to\ca{C}$ of two variables has a \emph{(right) parameterized} adjoint when for all 
$B\in\ca{B}$, $F(\mi,B)\dashv R(B,\mi)$; in particular this gives rise to a functor $R\colon\ca{B}^\op\times\ca{C}\to\ca{A}$, see 
\cite[\S IV.7,Thm.~3]{MacLane}. 
When $\ca{V}$ is braided monoidal closed, the internal hom functor $[\mi,\mi]\colon\ca{V}^\op\times\ca{V}\to\ca{V}$ obtains a lax monoidal structure, as 
the parameterized adjoint to the strong monoidal tensor functor, see e.g. \cite[Prop.~2.1]{Measuringcomonoid}. 
As a result, there is an induced functor between categories of monoids which is denoted 
\begin{equation}\label{defMon[]}
[\mi,\mi]\colon\Comon(\ca{V})^\op\times\Mon(\ca{V})\to\Mon(\ca{V}).
\end{equation}
This says that for a comonoid $C$ and a monoid $A$ in a braided monoidal closed category $\ca{V}$, the object $[C,A]$ has the standard \emph{convolution} monoid structure.

Specializing the notion of module over a monad in a bicategory, a (left) \emph{module} over a monoid $A$ in $\ca{V}$ consists of a pair $(M,\lambda)$, where $\lambda:A\otimes M\rightarrow M$ is an \emph{action} of $A$ on $M$, i.e. satisfies standard compatibilities with the multiplication and unit. Similarly, a module morphism $(M,\lambda)\rightarrow(M',\lambda')$ is now an arrow $f:M\rightarrow M'\in\ca{V}$ which commutes with the actions; thus we obtain the category $\Mod_{\ca{V}}(A)$ of $A$-modules in $\ca{V}$. 
Dually, we have the category $\Comod_{\ca{V}}(C)$ of (left) \emph{$C$-comodules} $(\K,\gamma\colon \K\to C\otimes \K)$ over a comonoid $C$ in 
$\ca{V}$.
By \cref{rem:modulesaremonadic}, we obtain the well-known fact that the categories $\Mod_{\ca{V}}(A)$ and $\Comod_{\ca{V}}(C)$ are respectively monadic and comonadic over $\ca{V}$, e.g. via the monad $(A\otimes\mi,\mu\otimes\mi,\eta\otimes\mi)$.

Any lax monoidal functor $F\colon\ca{V}\to\ca{W}$ also induces a functor between categories of modules $$\Mod F\colon 
\Mod_{\ca{V}}(A)\to\Mod_{\ca{W}}(FA)$$ for any monoid $A$, mapping $(M,\lambda)$ to $(FM,F\lambda\circ\phi_{A,M})$.
As a particular instance, the lax monoidal internal hom $[\mi,\mi]\colon\ca{V}^\op\times\ca{V}\to\ca{V}$ for a braided monoidal closed category induces, further to \cref{defMon[]}, a functor
\begin{equation}\label{defMod[]}
[\mi,\mi]_{CA}\colon\Comod_{\ca{V}}(C)^\op\times\Mod_{\ca{V}}(A)\to\Mod_{\ca{V}}([C,A])
\end{equation}
for any comonoid $C$ and monoid $A$.
To put the action of this functor in more concrete terms, given any $C$-comodule $(\K,\gamma)$ and any $A$-module $(M,\lambda)$, $[\K,M]$ acquires a 
$[C,A]$-module structure whose action $[C,A]\otimes[\K,M]\to[\K,M]$ is defined as the transpose of the following composite under the 
adjunction $-\otimes \K\dashv[\K,-]:$
\begin{equation}\label{eq:KMaction}
\begin{tikzcd}[row sep=.15in]
{[C,A]\otimes[\K,M]\otimes \K}\ar[r,"1\otimes\gamma"]\ar[ddrr,dashed] & {[C,A]\otimes[\K,M]\otimes C\otimes \K}\ar[r,"1\otimes s\otimes1"]
& {[C,A]\otimes C\otimes[\K,M]\otimes \K}\ar[d,"\mathrm{ev}\otimes\mathrm{ev}"] \\
& & A\otimes M\ar[d,"\lambda"] \\
& & M.
\end{tikzcd}
\end{equation}

Up to this point, we have recalled the definitions of categories of (co)modules over a \emph{fixed} (co)monoid. We now turn to `global' categories of (co)modules, fundamental in this work, by also summarizing the main points of \cite[\S~4.1]{Measuringcomodule}. 

\begin{defi}\label{def:globalcats}
The \emph{global category of modules} $\Mod_\ca{V}$ has objects modules $M$ over any monoid $A$ in the monoidal category $\ca{V}$, denoted by $M_{A}$. A morphism $p_{f}\colon M_{A}\rightarrow N_{B}$ is defined to be a monoid morphism $f\colon A\rightarrow B$ together with an arrow $p\colon M\rightarrow N\in\ca{V}$ making the following diagram commute:
\begin{displaymath}
\begin{tikzcd}[sep=.3in]
A\otimes M\ar[rr,"\lambda"]\ar[d,"1\otimes p"'] & & M\ar[d,"p"] \\
		             A\otimes N\ar[r,"f\otimes 1"'] & B\otimes N\ar[r,"\lambda"'] & N 
\end{tikzcd}
\end{displaymath}
Dually, the \emph{global category of comodules} $\Comod_\ca{V}$ has objects all comodules $\K$ over any comonoid $C$ in $\ca{V}$, and a morphism 
$k_{g}:\K_{C}\rightarrow \LL_{D}$ is a comonoid morphism $g:C\to D$ and an arrow $k\colon \K\to\LL$ making the dual diagram to the above commute.
\end{defi}

There are obvious forgetful functors $\Mod_\ca{V}\to\Mon(\ca{V})$ and $\Comod_\ca{V}\to\Comon(\ca{V})$ which map $M_{A}\in\Mod_\ca{V}$ to its monoid $A$ and 
$X_{C}\in\Comod_\ca{V}$ to its comonoid $C$ respectively. We will describe their (op)fibrational structure in the next section; for now, the following 
states that (co)limits in the respective global categories are created from those in $\ca{V}$ and (co)monoids.

\begin{prop}\label{prop:Modmonadic}
The functor $\Comod_\ca{V}\to\ca{V}\times\Comon(\ca{V})$ which maps $\K_{C}$ to the pair $(\K,C)$ is comonadic, with right adjoint 
$(Y,D)\mapsto(D\otimes Y)_D$.  Dually, the global category of modules $\Mod_\ca{V}$ is monadic over the category $\ca{V}\times\Mon(\ca{V})$.
\end{prop}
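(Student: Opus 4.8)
The plan is to exhibit the functor $U\colon\Comod_\ca{V}\to\ca{V}\times\Comon(\ca{V})$, $\K_C\mapsto(\K,C)$, as comonadic by constructing its right adjoint explicitly, identifying the induced comonad, and then checking that the comparison functor to the category of coalgebras is an \emph{isomorphism} of categories (so in particular an equivalence).

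First I would establish the adjunction $U\dashv R$ with $R(Y,D)=(D\ot Y)_D$, where $D\ot Y$ carries the cofree $D$-comodule structure $\delta_D\ot Y\colon D\ot Y\to D\ot D\ot Y$. The required natural bijection
\[
\Comod_\ca{V}\big(\K_C,(D\ot Y)_D\big)\;\cong\;\big(\ca{V}\times\Comon(\ca{V})\big)\big((\K,C),(Y,D)\big)
\]
sends a comodule morphism $k_g$ (so $g\colon C\to D$ a comonoid map and $k\colon\K\to D\ot Y$ compatible with the coactions) to the pair $\big((\epsilon_D\ot Y)\circ k,\,g\big)$, with inverse sending $(h,g)$ to $k=(g\ot h)\circ\gamma_\K$ together with $g$. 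Verifying that this latter $k$ is genuinely a comodule morphism uses coassociativity of $\gamma_\K$ together with the hypothesis that $g$ commutes with comultiplications, and the two round-trips collapse by the counit laws for the comonoid $D$ and for the comodule $\K$ respectively; naturality is routine.

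Next I would read off the induced comonad $G=UR$ on $\ca{V}\times\Comon(\ca{V})$: on objects $G(Y,D)=(D\ot Y,D)$, with comultiplication induced by $\delta_D$ in the first coordinate and the identity on $D$, and counit induced by $\epsilon_D$. The heart of the argument is then the direct computation that a $G$-coalgebra is exactly an object of $\Comod_\ca{V}$. A coalgebra structure on $(Y,D)$ is a morphism $(Y,D)\to(D\ot Y,D)$ in the product category, i.e.\ a pair $(\gamma\colon Y\to D\ot Y,\ \phi\colon D\to D)$ with $\phi$ a comonoid endomorphism; the counit axiom forces $\phi=\id_D$ and $(\epsilon_D\ot Y)\circ\gamma=\id_Y$, while coassociativity of the coalgebra is precisely coassociativity of $\gamma$. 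Thus the data amount to a $D$-comodule structure on $Y$, and tracing the unit $\eta_{\K_C}=(\gamma_\K)_{\id_C}$ shows that the comparison functor sends $\K_C$ to the coalgebra $\big((\K,C),(\gamma_\K,\id_C)\big)$ --- manifestly an isomorphism of categories with the evident inverse, yielding comonadicity.

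The step I expect to require the most care is the forcing of $\phi=\id_D$: it is exactly the counit law of a $G$-coalgebra, read off in the $\Comon(\ca{V})$-coordinate, that rigidifies the comonoid component and prevents coalgebras from carrying extra comonoid data; this is also where tracking the comonoid (and not merely the underlying object) in the codomain $\ca{V}\times\Comon(\ca{V})$ is essential, so that the global statement genuinely improves on the fixed-comonoid comonadicity of \cref{rem:modulesaremonadic}. Finally, the assertion for $\Mod_\ca{V}$ is formally dual: one dualizes every arrow, replacing the cofree comodule $(D\ot Y)_D$ by the free module $(B\ot Y)_B$ and the counit-forcing argument by the corresponding unit-forcing one, presenting $\Mod_\ca{V}\to\ca{V}\times\Mon(\ca{V})$ as monadic with left adjoint $(Y,B)\mapsto(B\ot Y)_B$.
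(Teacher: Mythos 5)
Your proof is correct. The paper itself states this proposition without proof, recalling it from \cite[\S~4.1]{Measuringcomodule}, so there is no in-paper argument to compare against; your route --- exhibiting the cofree-comodule right adjoint explicitly, reading off the comonad $G(Y,D)=(D\ot Y,D)$, and observing that the counit axiom in the $\Comon(\ca{V})$-coordinate forces the comonoid component of a $G$-coalgebra structure to be the identity, so that $G$-coalgebras are literally objects of $\Comod_\ca{V}$ --- is the standard argument and establishes the comparison as an isomorphism of categories, which is stronger than the bare equivalence required for comonadicity. The dualization to $\Mod_\ca{V}$ is indeed formal.
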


Let us also record here that, as was the case for the categories of monoids and comonoids in $\ca{V}$, the existence of a braiding on $\ca{V}$ implies 
that the global categories $\Mod_\ca{V}$ and $\Comod_\ca{V}$ have a monoidal structure which furthermore is symmetric whenever $\ca{V}$ is such. For 
example, $X_C\ot Y_D$ is a $C\ot D$-comodule with coaction
\begin{displaymath}
X\ot Y\xrightarrow{\delta\ot\delta}X\ot C\ot Y\ot D\xrightarrow{1\ot s\ot1}X\ot Y\ot C\ot D.
\end{displaymath}

Finally, in what follows we are in particular interested in local presentability of categories of (co)monoids and (co)modules. For the 
general theory of locally presentable and accessible categories, we refer the reader to \cite{MakkaiPare,LocallyPresentable}. 
Briefly recall that if $\lambda$ is a regular cardinal, a $\lambda$-\emph{accessible} category is a category with $\lambda$-filtered colimits 
and a small set of $\lambda$-presentable objects (namely $\ca{C}(c,\mi)$ preserves $\lambda$-filtered colimits) such that every other object is a 
$\lambda$-filtered colimit of those; and a category is \emph{locally} $\lambda$-\emph{presentable} if it is furthermore cocomplete. A \emph{locally 
presentable} category is locally $\lambda$-presentable for some $\lambda$.

The following theorem ensures that any cocontinuous functor on a locally presentable 
category has a right adjoint, since presentable objects form a small dense subcategory.

\begin{thm}\cite[Thm.~5.33]{Kelly}\label{thm:Kelly}
 If the cocomplete category $\ca{C}$ has a small dense subcategory, every cocontinuous $S\colon\ca{C}\to\ca{B}$ has a right adjoint.
\end{thm}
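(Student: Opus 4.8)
The plan is to realize $\ca{C}$ as a reflective subcategory of a presheaf category, produce a right adjoint ``upstairs'' essentially for free, and then transport it back down. Write $\iota\colon\ca{A}\hookrightarrow\ca{C}$ for the small dense subcategory, $Y\colon\ca{A}\to\wh{\ca{A}}$ for the Yoneda embedding into $\wh{\ca{A}}:=[\ca{A}^\op,\Set]$, and $N=\ca{C}(\iota\mi,\mi)\colon\ca{C}\to\wh{\ca{A}}$ for the induced nerve. Density is precisely the statement that $N$ is fully faithful, while cocompleteness of $\ca{C}$ guarantees that the left Kan extension $L=\mathrm{Lan}_Y\iota$ exists, satisfies $LY\cong\iota$, and is left adjoint to $N$. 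Full faithfulness of $N$ makes the counit $LN\Rightarrow\id_{\ca{C}}$ invertible, so $(L\dashv N)$ exhibits $\ca{C}$ as a reflective subcategory of $\wh{\ca{A}}$.

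First I would build the right adjoint upstairs. The composite $T:=S\circ L\colon\wh{\ca{A}}\to\ca{B}$ is cocontinuous, being a left adjoint followed by a cocontinuous functor. By the universal property of $\wh{\ca{A}}$ as the free cocompletion of $\ca{A}$, every cocontinuous functor out of $\wh{\ca{A}}$ admits a right adjoint; concretely $T\dashv G$ with $Gb=\ca{B}(S\iota\mi,b)$, using $LY\cong\iota$. To descend this, fix $b\in\ca{B}$ and consider $F_b:=\ca{B}(S\mi,b)\colon\ca{C}^\op\to\Set$; since $S$ is cocontinuous and $\ca{B}(\mi,b)$ sends colimits to limits, $F_b$ is continuous. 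Combining $T\dashv G$ with $S\cong SLN=TN$ one obtains, naturally in $c$, the identification $F_b(c)=\ca{B}(Sc,b)\cong\ca{B}(TNc,b)\cong\wh{\ca{A}}(Nc,Gb)$, that is $F_b\cong\wh{\ca{A}}(N\mi,Gb)$.

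The candidate right adjoint is then $R:=L\circ G\colon\ca{B}\to\ca{C}$, and everything reduces to showing that $Gb$ is a \emph{local} object, i.e. lies in the essential image of $N$, so that $Gb\cong NL(Gb)=N(Rb)$. Granting this, full faithfulness of $N$ yields $F_b(c)\cong\wh{\ca{A}}(Nc,NRb)\cong\ca{C}(c,Rb)$ naturally in $c$, which is exactly the desired adjunction $S\dashv R$. To verify locality I would invoke the characterization of local objects for a density-induced reflection: a presheaf $Q$ lies in the essential image of $N$ precisely when $\wh{\ca{A}}(N\mi,Q)\colon\ca{C}^\op\to\Set$ is continuous. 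Indeed, the maps inverted by $L$ are generated under saturation by the comparison maps $\mathrm{colim}_i Nc_i\to N(\mathrm{colim}_i c_i)$ obtained by applying $N$ to colimit cocones in $\ca{C}$, and orthogonality of $Q$ against exactly these maps says that $\wh{\ca{A}}(N\mi,Q)$ turns $\ca{C}$-colimits into limits. Since $\wh{\ca{A}}(N\mi,Gb)\cong F_b$ is continuous by the previous paragraph, $Gb$ is local, completing the argument.

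The hard part is this final locality step, for it is the only place where density is used essentially; all the rest is formal juggling of the two adjunctions $L\dashv N$ and $T\dashv G$. An equivalent but more hands-on route to the same point is to define $Rb$ directly through its nerve $a\mapsto\ca{B}(S\iota a,b)$ and to establish representability of $F_b$ using that each object is the canonical colimit $c\cong\mathrm{colim}_{\iota a\to c}\iota a$ of objects of $\ca{A}$: cocontinuity of $S$ and continuity of $F_b$ then force $F_b(c)\cong\lim_{\iota a\to c}\ca{B}(S\iota a,b)$, which matches $\ca{C}(c,Rb)=\lim_{\iota a\to c}\ca{C}(\iota a,Rb)$ as soon as the presheaf $a\mapsto\ca{B}(S\iota a,b)$ is of the form $\ca{C}(\iota\mi,Rb)$ for some $Rb\in\ca{C}$ -- which is once more precisely the locality statement above.
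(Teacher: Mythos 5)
The paper offers no proof of this statement at all --- it is imported verbatim from Kelly as a preliminary --- so there is nothing in-house to compare against. Your route (embed $\ca{C}$ reflectively into $\wh{\ca{A}}=[\ca{A}^\op,\Set]$ via the nerve $N$ of the dense inclusion, obtain a right adjoint $G$ for $T=SL$ upstairs essentially for free, and descend by showing each $Gb$ lies in the essential image of $N$) is the standard argument, and the formal steps are all correct: $LY\cong\iota$, the counit of $L\dashv N$ is invertible by density, $S\cong TN$, and $\ca{B}(Sc,b)\cong\wh{\ca{A}}(Nc,Gb)$ naturally in $c$.

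The one point where the sketch is not yet a proof is exactly where you flag it: the locality criterion. Your justification --- that the $L$-inverted maps are the saturation of the comparison maps $\mathrm{colim}_iNc_i\to N(\mathrm{colim}_ic_i)$ --- is, with saturation understood via double orthogonality, equivalent to the criterion you are trying to establish, so as written the argument is circular. The missing observation that closes it is that for any presheaf $Q$ the unit $\eta_Q\colon Q\to NLQ$ is \emph{itself} isomorphic to one of your comparison maps: writing $Q\cong\mathrm{colim}_{\mathrm{el}(Q)}Ya\cong\mathrm{colim}_{\mathrm{el}(Q)}N\iota a$ (using fullness of the dense subcategory) and $LQ\cong\mathrm{colim}_{\mathrm{el}(Q)}\iota a$, the canonical map $\mathrm{colim}\,N\iota a\to N(\mathrm{colim}\,\iota a)$ agrees with $\eta_Q$ under these identifications, since both restrict to the colimit coprojections on each $Ya\to Q$. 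Hence if $\wh{\ca{A}}(N\mi,Q)$ is continuous then $Q$ is orthogonal to $\eta_Q$, which yields a retraction $r$ with $r\eta_Q=1$; and since $NLQ$ is in the image of $N$ it is orthogonal to every $L$-inverted map, in particular to $\eta_Q$ (whose image under $L$ is invertible by the triangle identity), forcing $\eta_Qr=1$ as well. So $\eta_Q$ is invertible and $Q$ is local. Applying this to $Q=Gb$, whose $N$-hom is $\ca{B}(S\mi,b)$ and hence continuous by cocontinuity of $S$, completes your argument; with that paragraph supplied the proof is correct.
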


For example, the above can be used to deduce monoidal closedness of comonoids when $\ca{V}$ is closed (see \cite[\S~3.2]{MonComonBimon}),
because $\Comon(\ca{V})$ -- as well as $\Mon(\ca{V})$ -- is locally presentable under the conditions below.

\begin{prop}\label{prop:MonComonlp}
If $\ca{V}$ is a locally presentable monoidal category where $\ot$ preserves filtered colimits in both variables, the category of monoids 
$\Mon(\ca{V})$ is locally presentable and finitary monadic over $\ca{V}$ and the category of comonoids $\Comon(\ca{V})$ is locally presentable and 
comonadic over $\ca{V}$. Moreover, if $\ca{V}$ is braided monoidal closed, then $\Comon(\ca{V})$ is monoidal closed.
\end{prop}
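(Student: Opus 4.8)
The plan is to treat the three assertions in turn, extracting as much as possible from the (co)monadicity of the forgetful functors and from \cref{thm:Kelly}. For $\Mon(\ca{V})$, I would first argue that the forgetful functor $U\colon\Mon(\ca{V})\to\ca{V}$ is \emph{finitary} monadic. The free-monoid monad $T$ exists under the stated hypotheses by the usual (transfinite) free-monoid construction — in the closed case it is simply $TX=\coprod_{n\geq0}X^{\ot n}$ — and its key feature here is that it preserves filtered colimits: each $n$-fold tensor $X\mapsto X^{\ot n}$ does, since $\ot$ preserves filtered colimits in each variable and filtered colimits commute with themselves, and coproducts preserve them as well. Thus $T$ is finitary, $\Mon(\ca{V})\simeq\ca{V}^{T}$, and local presentability follows from the standard fact that the category of algebras for a finitary (indeed any accessible) monad on a locally presentable category is again locally presentable \cite{LocallyPresentable}; in particular $\ca{V}^{T}$ is cocomplete, which is what upgrades accessibility to local presentability.

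For the local presentability of $\Comon(\ca{V})$ I would proceed in two independent steps. First, cocompleteness: the forgetful functor $G\colon\Comon(\ca{V})\to\ca{V}$ \emph{creates} all colimits, because a comultiplication and counit on a colimit $C=\mathrm{colim}_i C_i$ are uniquely determined by the compatible families $C_i\xrightarrow{\delta_i}C_i\ot C_i\to C\ot C$ and $C_i\xrightarrow{\epsilon_i}I$ — here one only maps \emph{out} of the colimit, so no exactness of $\ot$ is required. Second, accessibility: I would realize $\Comon(\ca{V})$ as an iterated limit (pseudo-pullback, inserters, equifiers) of accessible categories over $\ca{V}$. Concretely, the data $(\delta,\epsilon)$ is captured by a pseudo-pullback of the inserters $\mathrm{Ins}(\mathrm{Id},T_2)$ and $\mathrm{Ins}(\mathrm{Id},\Delta_I)$, where $T_2\colon C\mapsto C\ot C$ and $\Delta_I$ is constant at $I$; coassociativity and the two counit laws are then cut out by equifiers (into $T_3\colon C\mapsto C^{\ot3}$ and into $\mathrm{Id}$, respectively). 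Since $\ot$ preserves filtered colimits in each variable, $T_2$ and $T_3$ are accessible and $\Delta_I$ is finitary, so by the limit theorem for accessible categories \cite{MakkaiPare} every stage remains accessible. Being accessible and cocomplete, $\Comon(\ca{V})$ is locally presentable.

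With local presentability in hand, comonadicity of $G$ is almost formal: $G$ is cocontinuous and its domain, being locally presentable, has a small dense subcategory of presentable objects, so \cref{thm:Kelly} supplies a right adjoint — the cofree comonoid. Since $G$ reflects isomorphisms and creates equalizers of $G$-split pairs (such equalizers are absolute, hence preserved by $\ot$, so the comonoid structure descends uniquely), the dual of Beck's monadicity theorem yields that $G$ is comonadic. Finally, for the monoidal closed statement, when $\ca{V}$ is braided monoidal closed $\Comon(\ca{V})$ carries a monoidal structure (as recalled above) and, for each comonoid $D$, the functor $D\ot-\colon\Comon(\ca{V})\to\Comon(\ca{V})$ is cocontinuous — colimits are created by $G$ and $D\ot-$ preserves colimits on $\ca{V}$ because $\ca{V}$ is closed. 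A second application of \cref{thm:Kelly} produces its right adjoint, the internal hom of comonoids, so $\Comon(\ca{V})$ is monoidal closed (cf.\ \cite[\S3.2]{MonComonBimon}).

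I expect the genuine work to be concentrated in the accessibility half of the comonoid argument: checking that the inserter/equifier presentation reproduces \emph{exactly} the comonoid morphisms (not merely the objects) and that all the structure functors involved are accessible, so that the limit theorem applies verbatim. The monoid case and both invocations of \cref{thm:Kelly} are comparatively routine once the requisite preservation and density properties are recorded.
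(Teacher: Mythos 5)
Your proposal is correct and follows essentially the same route as the paper, which only sketches the argument: accessibility of $\Comon(\ca{V})$ (and $\Mon(\ca{V})$) via an inserter/equifier presentation and the Limit Theorem of Makkai--Par\'e, cocompleteness from creation of colimits by the forgetful functor, and both the cofree comonoid and the internal hom of $\Comon(\ca{V})$ obtained from \cref{thm:Kelly}, with details deferred to the cited references. The only (harmless) divergence is that you handle $\Mon(\ca{V})$ directly through the finitary free-monoid monad rather than through the same 2-categorical limit presentation, which if anything makes the ``finitary monadic'' clause more transparent.
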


Briefly, both categories can be written using 2-categorical limits (e.g. inserters and equifiers) of locally presentable categories and (accessible) functors, 
thus they end up locally presentable essentially due to the ``Limit Theorem'' for accessible categories \cite[Thm.~5.1.6]{MakkaiPare}; 
more explicit descriptions of these constructions for comonoids can be found in \cite{MonComonBimon}. 

Similarly, local presentability is also inherited by categories of modules and comodules as a result of their (co)monadic structure over $\ca{V}$. 
This follows from \cite[\S~2.78]{LocallyPresentable} for Eilenberg-Moore algebras and an analogous argument for coalgebras again as a result of the Limit Theorem for accessible categories, and the fact that a category is locally 
presentable if and only if it is cocomplete and accessible category or equivalent a complete and accessible \cite[Cor.~2.47]{LocallyPresentable}.

\begin{thm}\label{thm:monadalgebras}
For a locally presentable category $\ca{V}$, the category of algebras for a finitary monad as well as the category of coalgebras for a finitary comonad are locally presentable categories.
\end{thm}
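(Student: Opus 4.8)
The plan is to treat the two halves uniformly through the characterization recorded just above the statement: a category is locally presentable precisely when it is accessible together with either completeness or cocompleteness \cite[Cor.~2.47]{LocallyPresentable}. For each half I would establish the appropriate (co)completeness from the (co)monadic forgetful functor, and accessibility from a presentation of the (co)algebra category as a flexible $2$-limit of accessible categories, invoking the Limit Theorem \cite[Thm.~5.1.6]{MakkaiPare}. The finitary hypothesis enters at exactly one point: being finitary (i.e. $\aleph_0$-accessible), the endofunctors in question are accessible, which is what the Limit Theorem requires.

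For the category $\ca{V}^T$ of algebras of a finitary monad $T$, completeness is immediate, since the forgetful functor $\ca{V}^T\to\ca{V}$ is monadic and hence creates all limits existing in the complete category $\ca{V}$. For accessibility I would exhibit $\ca{V}^T$ as an iterated flexible $2$-limit in the $2$-category of accessible categories and accessible functors. Concretely, the inserter of the pair $T,\id_\ca{V}\colon\ca{V}\to\ca{V}$ has as objects the pairs $(X,a\colon TX\to X)$; imposing the unit law $a\circ\eta_X=\id_X$ and the associativity law $a\circ Ta=a\circ\mu_X$ as two equifiers carves out exactly the Eilenberg--Moore category, on objects as well as on morphisms. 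Since $T$ is finitary it is accessible, and so are $\mu$, $\eta$ and the identity, whence every functor appearing in this presentation is accessible; the Limit Theorem then yields that $\ca{V}^T$ is accessible. (Alternatively one may cite \cite[\S~2.78]{LocallyPresentable} directly for this half.) Being accessible and complete, $\ca{V}^T$ is locally presentable.

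For the category $\ca{V}_G$ of coalgebras of a finitary comonad $G$ the roles of limits and colimits are interchanged. Here the forgetful functor $\ca{V}_G\to\ca{V}$ is comonadic, hence a left adjoint creating all colimits existing in $\ca{V}$, so $\ca{V}_G$ is cocomplete. The accessibility argument is entirely parallel: the inserter of $\id_\ca{V},G\colon\ca{V}\to\ca{V}$ has objects $(X,c\colon X\to GX)$, and the counit and coassociativity axioms are again imposed by two equifiers; with $G$ finitary, hence accessible, the Limit Theorem shows $\ca{V}_G$ is accessible. Accessible and cocomplete, it is locally presentable.

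The point I would flag as the genuine content is the coalgebra half. One cannot obtain it by formal duality from the algebra half, because the opposite of a locally presentable category is almost never locally presentable; the accessibility of $\ca{V}_G$ must therefore be argued directly via the $2$-limit presentation, and it must be paired with \emph{cocompleteness} rather than completeness in the final invocation of \cite[Cor.~2.47]{LocallyPresentable}. The only real bookkeeping is to check that the inserter-and-equifiers description reproduces the (co)algebra category on the nose -- with the correct morphisms, not merely the correct objects -- which is routine once the universal properties of inserters and equifiers are spelled out, together with verifying that the structure transformations $\mu,\eta$ (resp. $\delta,\epsilon$) are accessible, which is precisely what finitarity of $T$ (resp. $G$) delivers.
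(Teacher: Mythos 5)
Your proof is correct and follows essentially the same route the paper sketches: local presentability of the algebra category via \cite[\S~2.78]{LocallyPresentable} (equivalently, monadic creation of limits plus accessibility from an inserter-and-equifiers presentation and the Limit Theorem), and for coalgebras the analogous accessibility argument paired with \emph{cocompleteness} from comonadicity, concluding by \cite[Cor.~2.47]{LocallyPresentable}. Your observation that the coalgebra half cannot be obtained by formal duality and is the part carrying the real content matches the paper's intent exactly.
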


For example, under the assumptions below, the respective (co)monads preserve all filtered colimits. 

\begin{prop}\label{prop:ModComodlp}
If $\ca{V}$ is a locally presentable monoidal category where $\ot$ preserves filtered colimits in both variables, the category of modules 
$\Mod_\ca{V}(A)$ for any monoid $A$ and the category of comodules $\Comod_\ca{V}(C)$ for any comonoid $C$ are locally presentable. 
Moreover, the global category of modules 
$\Mod_\ca{V}$ and the global category of comodules $\Comod_\ca{V}$ are locally presentable. 
\end{prop}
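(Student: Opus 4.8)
The plan is to reduce everything to the general principle recorded in \cref{thm:monadalgebras}: algebras for a finitary monad, and coalgebras for a finitary comonad, on a locally presentable category are again locally presentable. The only work is to identify the relevant (co)monads and to verify that each one is finitary.

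For the fixed (co)monoid case, recall from the monoidal incarnation of \cref{rem:modulesaremonadic} that $\Mod_\ca{V}(A)$ is the Eilenberg--Moore category of the monad $A\ot\mi$ on $\ca{V}$, while $\Comod_\ca{V}(C)$ is the category of Eilenberg--Moore coalgebras for the comonad $C\ot\mi$. Since by hypothesis $\ot$ preserves filtered colimits in each variable, the endofunctors $A\ot\mi$ and $C\ot\mi$ preserve filtered colimits; hence the monad $A\ot\mi$ and the comonad $C\ot\mi$ are finitary. As $\ca{V}$ is locally presentable, \cref{thm:monadalgebras} immediately gives that $\Mod_\ca{V}(A)$ and $\Comod_\ca{V}(C)$ are locally presentable.

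For the global categories I would invoke \cref{prop:Modmonadic}: $\Mod_\ca{V}$ is monadic over $\ca{V}\times\Mon(\ca{V})$ and $\Comod_\ca{V}$ is comonadic over $\ca{V}\times\Comon(\ca{V})$. By \cref{prop:MonComonlp} both $\Mon(\ca{V})$ and $\Comon(\ca{V})$ are locally presentable, hence so are the products $\ca{V}\times\Mon(\ca{V})$ and $\ca{V}\times\Comon(\ca{V})$ (a finite product of locally presentable categories is again locally presentable). It remains to check finitariness of the induced (co)monads. Reading off the right adjoint displayed in \cref{prop:Modmonadic}, the comonad on $\ca{V}\times\Comon(\ca{V})$ sends $(Y,D)$ to $(D\ot Y,D)$, and dually the monad on $\ca{V}\times\Mon(\ca{V})$ sends $(X,A)$ to the free module $(A\ot X,A)$. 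To see these preserve filtered colimits I would use two facts: first, that the forgetful functors $\Mon(\ca{V})\to\ca{V}$ and $\Comon(\ca{V})\to\ca{V}$ create filtered colimits, since the (co)multiplications involve only finitely many tensor factors, so filtered colimits in the product categories are computed componentwise in $\ca{V}$; and second, that a filtered colimit of pairs $(Y_i,D_i)$ is sent to $\bigl((\mathrm{colim}_i D_i)\ot(\mathrm{colim}_i Y_i),\,\mathrm{colim}_i D_i\bigr)$, which by preservation of filtered colimits agrees with $\mathrm{colim}_i(D_i\ot Y_i,D_i)$. Granting this, the (co)monads are finitary, and a final application of \cref{thm:monadalgebras} yields local presentability of $\Mod_\ca{V}$ and $\Comod_\ca{V}$.

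The main obstacle is exactly this last finitariness verification, and within it the passage from separate to joint preservation of filtered colimits by $\ot$: the hypothesis only supplies preservation in each variable, whereas both the componentwise computation of filtered colimits in $\Mon(\ca{V})$/$\Comon(\ca{V})$ and the preservation by the (co)monads require matching the diagonal colimit $\mathrm{colim}_i(D_i\ot Y_i)$ with $(\mathrm{colim}_i D_i)\ot(\mathrm{colim}_i Y_i)$. The clean way to justify this is the finality of the diagonal $\Delta\colon\ca{I}\to\ca{I}\times\ca{I}$ for any filtered $\ca{I}$, which rewrites the diagonal colimit as the iterated colimit $\mathrm{colim}_i\,\mathrm{colim}_j(D_i\ot Y_j)$ and then applies separate preservation twice. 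Once this standard lemma is in place the rest is routine, and it is precisely the content foreshadowed by the remark that ``the respective (co)monads preserve all filtered colimits''.
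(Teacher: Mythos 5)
Your proposal is correct and is essentially the argument the paper intends: the paper leaves this proposition unproved beyond the surrounding remarks, which point exactly to the (co)monadicity of $\Mod_\ca{V}(A)$ and $\Comod_\ca{V}(C)$ over $\ca{V}$ and of the global categories over $\ca{V}\times\Mon(\ca{V})$ and $\ca{V}\times\Comon(\ca{V})$ (\cref{prop:Modmonadic}), the observation that ``the respective (co)monads preserve all filtered colimits,'' and an appeal to \cref{thm:monadalgebras}. Your explicit justification of joint preservation of filtered colimits from separate preservation, via finality of the diagonal of a filtered category, correctly fills in the one step the paper leaves implicit.
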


\subsection{Fibrations}\label{sec:fibredadjunctions}

In this section we record some definitions and results involving fibred adjunctions and fibred limits; this is done to fix notation and for 
future reference. For further details and related bibliography, we refer the reader to \cite{FibredAdjunctions,Measuringcomodule}. 

Briefly recall that $P\colon\ca{A}\to\ca{X}$ is a (cloven) \emph{fibration} if for any $f\colon X\to Y$ in $\ca{X}$ (called the \emph{base}) and $B$ above $Y$ in $\ca{A}$ (called the \emph{total} category), there is a cartesian lifting $\Cart(f,B)\colon f^*B\to B$ of $f$ to $B$, namely any other map $u\colon A\to B$ with $Pu=f\circ g$ uniquely factorizes through $\Cart(f,B)$ via some $v\colon A\to f^*B$ with $Pv=g$. 
The \emph{fibre} category $\ca{A}_Y$ consists of objects above $Y$ and morphisms above $1_Y$ (called \emph{vertical}). By the universal 
property of liftings, there are induced \emph{reindexing} functors $f^*\colon\ca{A}_Y\to\ca{A}_X$ for any $f$ in the base, satisfying 
$(g\circ f)^*\cong g^*\circ f^*$ and $1_{\ca{A}_X}\cong(1_X)^*$. If these isomorphisms are equalities, we have a \emph{split} fibration. 
Dually, we have the notion of an \emph{opfibration} with cocartesian liftings deloned $\Cocart(f,A)\colon A\to f_!A$. A \emph{bifibration} is both a fibration and an opfibration, 
equivalently we have adjunctions $f_!\dashv f^*$.

The fundamental \emph{Grothendieck construction} provides a passage between fibrations $\ca{A}\to\ca{X}$ and pseudofunctors 
$\ca{X}^\op\to\Cat$. Concretely, given a pseudofunctor $F\colon\ca{X}^\op\to\Cat$, its \emph{Grothendieck category} $\int F$ has as objects pairs $(X\in\ca{X},A\in FX)$ and as morphisms pairs $(f\colon X\to Y\in\ca{X},\phi\colon A\to f^*B\in FX)$. Essentially, the induced fibration $\int F\to\ca{X}$ that maps $(X,A)$ to $X$ has as fibres the categories $FX$ and and as reindexing functors $Ff\colon FY\to FX$.
This construction extends to a 2-equivalence between the corresponding 2-categories; 
for example, a fibred 1-cell between two fibrations $P,Q$ is a commutative
\begin{displaymath}
\begin{tikzcd}
\ca{A}\ar[r,"K"]\ar[d,"P"'] & \ca{B}\ar[d,"Q"] \\
\ca{X}\ar[r,"F"'] & \ca{Y}
\end{tikzcd}
\end{displaymath}
where $K$ preserves cartesian liftings. In the split case, the correspondence restricts to ordinary functors into $\Cat$. Similarly, there is an equivalence between opfibrations and pseudofunctors $\ca{X}\to\Cat$. 

As an example, consider the global categories of (co)modules of \cref{def:globalcats} in a monoidal category $\ca{V}$. These are split fibred and 
opfibred respectively over the categories of (co)monoids, since they arise as the Grothendieck categories of the functors
\begin{equation}\label{eq:globalfibs}
 \begin{tikzcd}[row sep=.02in,/tikz/column 1/.append style={anchor=base east},/tikz/column 2/.append style={anchor=base west},/tikz/column 5/.append style={anchor=base east},/tikz/column 6/.append style={anchor=base west}]
\Mon(\ca{V})^\op\ar[r] & \Cat &&& \Comon(\ca{V})\ar[r] & \Cat \\
A\ar[r,mapsto]\ar[dd,"f"'] & \Mod_\ca{V}(A) &&& C\ar[r,mapsto]\ar[dd,"f"'] & \Comod_\ca{V}(C)\ar[dd,"f_!"]\\
\hole \\
B\ar[r,mapsto] & \Mod_\ca{V}(B)\ar[uu,"f^*"'] &&& D\ar[r,mapsto] & \Comod_\ca{V}(C)
\end{tikzcd}
\end{equation}
where $f^*$ is the so-called `restriction of scalars', turning any $B$-module $N$ into an $A$-module via an action $A\ot N\xrightarrow{f\ot1}B\ot 
N\xrightarrow{\mu}N$. Dually, $f_!$ is the `corestriction of scalars' for comodules. Moreover, in a braided monoidal closed category, it can be 
verified that the `fibrewise' functors $[\mi,\mi]_{CA}$ 
induced by the internal hom as in \cref{defMod[]} gather together into a functor between the total categories
\begin{equation}\label{eq:HomMod0}
[\mi,\mi]\colon\Comod_\ca{V}^\op\times\Mod_\ca{V}\to\Mod_\ca{V} 
\end{equation}
which is part of a fibred 1-cell as follows (see \cite[Eq.~(20)]{Measuringcomodule} and explanation below)
\begin{equation}\label{eq:HomMod}
\begin{tikzcd}
\Comod_\ca{V}^\op\times\Mod_\ca{V}\ar[r,"{[\mi,\mi]}"]\ar[d] & \Mod_\ca{V}\ar[d] \\
\Comon(\ca{V})^\op\times\Mon(\ca{V})\ar[r,"{[\mi,\mi]}"',"\cref{defMon[]}"] & \Mon(\ca{V}).
\end{tikzcd}
\end{equation}

In what follows, we are interested in lifting adjunctions from bases to total categories of (op)fibrations.  
The following result is proved in detail in \cite[Prop.~3.4\&3.5]{Measuringcomodule}, and for the simpler case of fibrations over a fixed base see \cite[Prop.~8.4.2]{Handbook2}.

\begin{thm}\label{thm:totaladjointthm}
Suppose $(L,F)\colon U\to V$ is an opfibred 1-cell and $F\dashv G$ is an adjunction with counit $\varepsilon$ between the bases of the opfibrations, as in
\begin{displaymath}
\begin{tikzcd}[column sep=.6in, row sep=.4in]
\ca{C}\ar[d,"U"']\ar[r,"L"] & \ca{D}\ar[d,"V"]   \\
\ca{X}\ar[r,shift left=1.5,"F"]\ar[r,phantom,"\scriptstyle\bot"] & \ca{Y}\ar[l,shift left=1.5,"G"]
\end{tikzcd}
\end{displaymath}
If, for each $Y\in\ca{Y}$, the composite functor between the fibres
\begin{displaymath}
\ca{C}_{GY}\xrightarrow{L_{GY}}\ca{D}_{FGY}
\xrightarrow{(\varepsilon_Y)_!}\ca{D}_Y
\end{displaymath}
has a right adjoint, then $L$ has a right adjoint $R$ between the total categories, and $(U,V)$ is a map between the adjunctions $L\dashv 
R$ and $F\dashv G$.
\end{thm}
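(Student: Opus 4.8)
The plan is to construct $R$ pointwise via the standard representability criterion: $L$ admits a right adjoint precisely when, for each $D\in\ca{D}$, the functor $\ca{D}(L-,D)\colon\ca{C}^{\op}\to\Set$ is representable, in which case $RD$ is the representing object and $R$ is automatically functorial. Throughout write $Y:=VD$ and $X:=UC$. Denoting by $\Phi_Y:=(\varepsilon_Y)_!\circ L_{GY}\colon\ca{C}_{GY}\to\ca{D}_Y$ the composite appearing in the hypothesis and by $\Psi_Y$ its right adjoint, my candidate is $RD:=\Psi_Y(D)\in\ca{C}_{GY}$. In particular $UR=GV$ holds on objects by construction, which is one half of the desired map of adjunctions.

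The crux is a natural isomorphism $\ca{C}(C,RD)\cong\ca{D}(LC,D)$. Since both $U$ and $V$ are opfibrations, every morphism out of a lifted object factors uniquely as a cocartesian lift followed by a vertical map, giving
\[
\ca{D}(LC,D)\cong\coprod_{h\colon FX\to Y}\ca{D}_Y\bigl(h_!(LC),D\bigr),\qquad \ca{C}(C,RD)\cong\coprod_{k\colon X\to GY}\ca{C}_{GY}\bigl(k_!(C),RD\bigr).
\]
The adjunction $F\dashv G$ identifies the two indexing sets via the transpose bijection $h\leftrightarrow\hat{h}$, characterised by $h=\varepsilon_Y\circ F\hat{h}$. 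For a fixed $\hat{h}$, the fibrewise adjunction $\Phi_Y\dashv\Psi_Y$ yields $\ca{C}_{GY}(\hat{h}_!(C),\Psi_Y D)\cong\ca{D}_Y(\Phi_Y(\hat{h}_!(C)),D)$, so matching summands reduces to the single identification
\[
\Phi_Y(\hat{h}_!(C))=(\varepsilon_Y)_!\,L_{GY}(\hat{h}_!(C))\;\cong\;h_!(LC)\quad\text{in }\ca{D}_Y.
\]
This is where three structural inputs combine: $L$ preserves cocartesian liftings, so $L(\hat{h}_!C)\cong(F\hat{h})_!(LC)$; the opcleavage is pseudofunctorial, so $(\varepsilon_Y)_!(F\hat{h})_!\cong(\varepsilon_Y\circ F\hat{h})_!$; and the transpose relation gives $\varepsilon_Y\circ F\hat{h}=h$. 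Composing the three yields the claimed identification.

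The main obstacle I anticipate is not any single isomorphism above but rather assembling them into a bijection that is natural in $C$ (so that it is a genuine representation) and in $D$ (so that it is compatible with the adjunction structure): one must check that the cocartesian-lift decompositions are compatible with precomposition by morphisms of $\ca{C}$ and postcomposition by morphisms of $\ca{D}$, and that the coherence isomorphisms of the two opcleavages are respected throughout — this bookkeeping is considerably cleaner if one works with split (op)fibrations, so that the pseudofunctoriality isomorphisms become identities. Granting naturality in $C$, representability gives $L\dashv R$ with $RD=\Psi_{VD}(D)$. For the map-of-adjunctions statement, the construction is fibred over $F\dashv G$ by design: $VL=FU$ and $UR=GV$ hold strictly, and tracing the unit and counit of $L\dashv R$ through the hom-isomorphism shows that $U$ and $V$ carry them to the unit and counit of $F\dashv G$, so that $(U,V)$ is a morphism between the two adjunctions.
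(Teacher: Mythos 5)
The paper does not reproduce a proof of this statement; it defers to the cited reference (\cite[Prop.~3.4\&3.5]{Measuringcomodule}), where the argument is the standard one: define $RD:=\Psi_{VD}(D)$, exhibit the counit $LRD\to D$ as the cocartesian lift over $\varepsilon_{VD}$ followed by the counit of the fibrewise adjunction, and verify its universal property using the unique (cocartesian)$\circ$(vertical) factorization in an opfibration. Your proposal is correct and carries essentially the same content, merely repackaged: instead of verifying the universal property of a counit, you decompose both hom-sets as coproducts over base morphisms, match the indexing sets via the transposition bijection of $F\dashv G$, and match summands via the fibrewise adjunction together with the identification $(\varepsilon_Y)_!\,(F\hat{h})_!\cong(\varepsilon_Y\circ F\hat{h})_!=h_!$ and the preservation of cocartesian lifts by $L$ --- exactly the three structural inputs the reference's proof also uses. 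The naturality bookkeeping you flag is genuine but routine (naturality in $C$ reduces to $\widehat{h\circ Fx}=\hat{h}\circ x$ and the compatibility of cocartesian factorizations with precomposition; naturality in $D$ and the fact that $R$ lies over $G$ on morphisms then come for free from representability), so there is no gap, only deferred verification.
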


Next, a fibration $P\colon\ca{A}\to\ca{X}$ is said to have \emph{fibred} $\ca{I}$-limits for a small category $\ca{I}$, if 
all fibers $\ca{A}_X$ have $\ca{I}$-limits and these are preserved by all reindexing functors $f^*\colon\ca{A}_Y\to\ca{A}_X$, see 
e.g. \cite[\S2.3]{FibredAdjunctions}. 
Moreover, if the base of a fibration has $\ca{I}$-limits, then we have the following useful result.

\begin{prop}\label{prop:fiberwiselimits}
Let $\ca{I}$ be a small category and suppose that $P\colon\ca{A}\to\ca{X}$ is a fibration, where the base category $\ca{X}$ has all 
$\ca{I}$-limits. Then the following are equivalent:
	\begin{enumerate}
		\item $\ca{A}$ has all $\ca{I}$-limits and these are (strictly)\footnote{As is clear from the proof, this essentially means that limits in 
the total category are constructed depending on a choice of limits in the base category.} preserved by the functor $P$;
		\item the fibration $P\colon\ca{A}\to\ca{X}$ has fibred $\ca{I}$-limits.
	\end{enumerate}
\end{prop}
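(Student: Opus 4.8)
The plan is to prove the two implications directly, transporting diagrams between the fibres and the total category by means of cartesian liftings. For $(2)\Rightarrow(1)$, I would start from an arbitrary diagram $D\colon\ca{I}\to\ca{A}$, form the limit $X=\lim PD$ in the base with projection cone $(\pi_i\colon X\to PD(i))_i$, and then reindex each $D(i)$ along $\pi_i$. Using the universal property of the liftings $\Cart(\pi_i,D(i))$ together with the cone equations $PD(\alpha)\circ\pi_i=\pi_j$, the assignment $i\mapsto\pi_i^*D(i)$ organizes (functorially, by uniqueness of cartesian factorizations) into a diagram $\bar D\colon\ca{I}\to\ca{A}_X$ living in the single fibre over $X$. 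Its fibre limit $L=\lim_{\ca{A}_X}\bar D$ exists by hypothesis, and I claim that $L$, equipped with the cone whose legs are $\Cart(\pi_i,D(i))\circ\lambda_i$ (the fibre projections $\lambda_i$ post-composed with the liftings), is a limit of $D$ in $\ca{A}$; since $PL=X=\lim PD$ by construction, $P$ strictly preserves it.

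Verifying the universal property is the step where the fibred condition is genuinely used. Given a competing cone $(\mu_i\colon W\to D(i))$ over $D$, its image induces a unique $h\colon PW\to X$ with $\pi_i\circ h=P\mu_i$, and cartesianness of each $\Cart(\pi_i,D(i))$ lifts the legs to maps $\nu_i\colon W\to\bar D(i)$ lying over $h$. To produce the mediating morphism I would reindex $L$ along $h$: because reindexing preserves fibre limits, $h^*L$ is the limit of $h^*\bar D$ in $\ca{A}_{PW}$, and factoring the $\nu_i$ through the liftings $\Cart(h,\bar D(i))$ yields a vertical cone over $h^*\bar D$, hence a unique vertical map into $h^*L$ whose composite with $\Cart(h,L)$ gives the required factorization; uniqueness is tracked the same way. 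This is exactly the point at which cones with apex over a base object other than $X$ are accommodated, so preservation of fibre limits by reindexing cannot be dispensed with.

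For $(1)\Rightarrow(2)$ I would first build fibre limits. Given $D\colon\ca{I}\to\ca{A}_X$, its image $PD$ is the constant diagram $\Delta X$, and the identity cone induces a canonical diagonal $d\colon X\to\lim\Delta X$ with $\pi_i\circ d=1_X$. Forming the total limit $L'=\lim_{\ca{A}}D$ (so that $PL'=\lim\Delta X$ by preservation) and reindexing along $d$, I claim $d^*L'$ is the $\ca{I}$-limit of $D$ in $\ca{A}_X$: its legs $\Cart(d,L')$ post-composed with the projections of $L'$ are vertical since $\pi_i\circ d=1_X$, and the universal property transfers from that of $L'$ because any vertical competing cone induces a map into $L'$ necessarily lying over $d$, which then factors uniquely through $\Cart(d,L')$. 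For preservation by reindexing along $f\colon X'\to X$, the key is the lemma that a natural transformation of $\ca{I}$-diagrams in $\ca{A}$ which is \emph{pointwise cartesian} induces a cartesian morphism between the total limits whenever $P$ preserves those limits. Applying it to the pointwise-cartesian transformation $f^*\circ D\Rightarrow D$ exhibits $\lim_{\ca{A}}(f^*\circ D)$ as a reindexing of $\lim_{\ca{A}}D$; combined with pseudofunctoriality of reindexing and naturality of the diagonals, this yields the isomorphism $f^*(\lim_{\ca{A}_X}D)\cong\lim_{\ca{A}_{X'}}(f^*\circ D)$ in the fibre.

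The main obstacle, in both directions, is the same: extracting real work from the preservation-by-reindexing clause rather than from the mere existence of fibre limits. In $(2)\Rightarrow(1)$ it appears when testing universality against cones whose apex sits over an arbitrary base object, and in $(1)\Rightarrow(2)$ it is concentrated in the cartesian-limits lemma. Accordingly, I would isolate that lemma as a separate statement and prove it once (a cone of cartesian maps has cartesian limit, by lifting the defining factorizations objectwise and assembling them through the limit cone); everything else is a somewhat bookkeeping-heavy but routine chase of cartesian factorizations that I would carry out, but not belabor, in the full proof.
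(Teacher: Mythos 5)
Your proposal is correct and follows essentially the same route as the paper: for $(2)\Rightarrow(1)$ you reindex the diagram along the limit projections of $PD$ into the fibre over $\lim PD$ and take the fibre limit there, and for $(1)\Rightarrow(2)$ you reindex the total limit along the canonical diagonal $X\to\lim\Delta X$; these are exactly the paper's constructions. The only difference is one of bookkeeping: you make explicit the verifications the paper leaves as ``it can be checked'' (in particular, isolating the lemma that a pointwise cartesian transformation induces a cartesian map on limits preserved by $P$, which cleanly handles preservation of fibre limits by reindexing), and you correctly locate where the fibred-limits hypothesis is actually used.
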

In provided references, this is proved using results for fibred adjunctions, see \cite[Cor.~3.7]{FibredAdjunctions}; we here provide a direct 
calculation which is of use later in \cref{prop:continuous fibred 1-cells}.
\begin{proof}
\underline{(1)$\implies$(2)}: Consider any functor $G\colon\ca{I}\to\ca{A}_X$, for some fixed $X\in\ca{X}$. Let $(\lambda_i\colon \Lim G\to 
Gi)_{i\in\ca{I}}$ be the limit of this diagram in the ambient category $\ca{A}$ as depicted below. Since $P$ is assumed to preserve $\ca{I}$-limits, 
the cone 
$(P\lambda_i\colon P(\Lim G)\to PGi)_{i\in\ca{I}}$ is a limit in $\ca{X}$. Since $G$ takes values in $\ca{A}_X$, we have $PGi=X$ for all 
$i\in\ca{I}$, 
and $PG\alpha=1_X$ for all $\alpha\colon i\to j\in\ca{I}$. Thus, the collection of identity morphisms $(1_X\colon X\to X)_{i\in\ca{I}}$ constitutes a 
cone over $PG$ in $\ca{X}$, and hence there exists a unique $\Delta\colon X\to P(\Lim G)\in\ca{X}$ such that $P\lambda_i\circ\Delta=1_X$ for all 
$i\in\ca{I}$.
	
If we consider the cartesian lift $\Delta^{*}(\Lim G)\to \Lim G\in\ca{A}$ of $\Delta$ at $\Lim G$, it can be verified 
that $(\lambda_{i}\circ\Cart(\Delta,\Lim G)\colon \Delta^{*}(\Lim G)\to Gi)_{i\in\ca{I}}$ is the desired limit cone in $\ca{A}_X$

\begin{equation}\label{eq:fiberlimit}
\begin{tikzcd}[column sep=3.5em]
\Delta^*(\Lim G)\ar[dr,bend left,"{\Cart}"]\ar[d,dashed] && \\
Gi & \Lim G\ar[l,"\lambda_i"] &\ca{A}\ar[dd,dotted,"P"] \\
X\ar[d,equal]\ar[dr,bend left,"\Delta"] & & \\
X & P(\Lim G)\ar[l,"P\lambda_i"] & \ca{X}
\end{tikzcd}
\end{equation}
Moreover, for any $f\colon Y\to X$ in $\ca{X}$, the cone $(f^{*}(\lambda_{i}\circ\Cart)\colon 
f^{*}(\Delta^{*}(\Lim G))\to f^{*}(Gi))_{i\in\ca{I}}$ can be verified to be a limit cone in $\ca{A}_Y$.
	
\underline{(2)$\implies$(1)}: Consider a diagram $G\colon\ca{I}\to\ca{A}$. Let $(\kappa_{i}\colon \Lim PG\to PGi)_{i\in\ca{I}}$ be the corresponding 
limit of the diagram $PG$ in $\ca{X}$ as depicted below. For each $i\in\ca{I}$, we can take a cartesian lift $(\kappa_i)^*Gi\to Gi\in\ca{A}$ 
of $\kappa_{i}$ at $Gi$, and call its domain $(\kappa_i)^*Gi=Di$. Now for every $\alpha\colon i\to j\in\ca{I}$, we have 
$PG\alpha\circ\kappa_i=\kappa_j$ since these form a limit cone. Thus, by the universal property of the cartesian lift of $\kappa_i$ at $G_i$,  there 
exists a unique 
map $Di\to Dj\in\ca{A}$ above $1_{\Lim PG}$, call it $D\alpha$, with the property that $\Cart(\kappa_j,Gj)\circ 
D\alpha=G\alpha\circ\Cart(\kappa_i,Gi)$:
\begin{displaymath}
\begin{tikzcd}[column sep=3.5em]
	Di\ar[r,"{\Cart}"]\ar[d,dashed,"D\alpha"'] & Gi\ar[dr,"G\alpha"] &&  \\
	Dj\ar[rr,"{\Cart}"'] && Gj & \ca{A}\ar[dd,dotted,"P"] \\
	\Lim PG\ar[d,equal]\ar[r,"\kappa_i"] & PGi\ar[dr,"PG\alpha"] &&  \\
	\Lim PG\ar[rr,"\kappa_j"'] && PGj & \ca{X}
\end{tikzcd} 
\end{displaymath}
The uniqueness clause immediately implies that this assignment actually defines a functor $D\colon\ca{I}\to\ca{A}_{\Lim PG}$. Now by assumption, 
this functor has a limit in that fiber, say $(p_i\colon A\to Di)_{i\in\ca{I}}$ as in:
\begin{equation}\label{eq:LimG}
 \begin{tikzcd}[column sep=4em]
\Lim D{=}A\ar[d,"p_i"']\ar[dr,dashed,bend left=20] && \\
Di{=}(\kappa_i)^*Gi\ar[r,"\Cart"'] & Gi & \ca{A}\ar[d,dotted,"P"] \\
\Lim PG\ar[r,"\kappa_i"'] & PGi & \ca{X}
 \end{tikzcd}
\end{equation}
It can be checked that 
$(\Cart(\kappa_i,Gi)\circ p_{i}\colon A\to Gi)_{i\in\ca{I}}$ gives the desired limit of $G$ in $\ca{A}$, and clearly $PA=\Lim PG$.
\end{proof}

As is evident from the above constructions, when both exist, limits inside the fibers do not need to coincide with limits in the total category in 
general; one case this is true is that of connected limits. 

\begin{cor}\label{connected}
Let $\ca{I}$ be a small connected category and $P\colon\ca{A}\to\ca{X}$ a fibration such that $\ca{A}$ and $\ca{X}$ have, $P$ preserves all 
$\ca{I}$-limits. For every $X\in\ca{X}$, the inclusion $\ca{A}_X\hookrightarrow\ca{A}$ preserves $\ca{I}$-limits.
\end{cor}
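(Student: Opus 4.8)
The plan is to read off the fibre limit from the explicit construction in the proof of \cref{prop:fiberwiselimits}, and then to exploit connectedness to show that this fibre limit agrees with the total limit. Fix a diagram $G\colon\ca{I}\to\ca{A}_X$. Since by hypothesis $\ca{A}$ has, and $P$ preserves, all $\ca{I}$-limits, the direction (1)$\implies$(2) of \cref{prop:fiberwiselimits} applies: writing $(\lambda_i\colon\Lim G\to Gi)_{i\in\ca{I}}$ for the limit in the total category $\ca{A}$, the limit of $G$ inside the fibre $\ca{A}_X$ is computed as $\Delta^*(\Lim G)$ with cone $(\lambda_i\circ\Cart(\Delta,\Lim G))_{i\in\ca{I}}$, where $\Delta\colon X\to P(\Lim G)$ is the unique arrow satisfying $P\lambda_i\circ\Delta=1_X$ for all $i$. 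To prove the corollary it therefore suffices to show that the comparison $\Cart(\Delta,\Lim G)\colon\Delta^*(\Lim G)\to\Lim G$ is an isomorphism, for then the fibre limit cone is isomorphic (as a cone in $\ca{A}$) to the total limit cone, which is exactly the assertion that $\ca{A}_X\hookrightarrow\ca{A}$ preserves the $\ca{I}$-limit of $G$.

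The crux is to show that $\Delta$ itself is an isomorphism, and this is precisely where connectedness of $\ca{I}$ enters. Because $P$ preserves the limit, $(P\lambda_i\colon P(\Lim G)\to PGi)_{i\in\ca{I}}$ is a limit cone over $PG$ in $\ca{X}$. But $G$ factors through the fibre $\ca{A}_X$, so $PGi=X$ for every $i$ and $PG\alpha=1_X$ for every $\alpha$; that is, $PG$ is the constant diagram at $X$. The limit of a constant diagram indexed by a \emph{connected} category is its value, with all legs the identity, so $(X,(1_X)_{i\in\ca{I}})$ is also a limit cone over $PG$. Comparing the two limit cones produces a unique isomorphism $\phi\colon P(\Lim G)\to X$ with $P\lambda_i=1_X\circ\phi=\phi$ for all $i$. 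Then $\phi^{-1}$ satisfies $P\lambda_i\circ\phi^{-1}=1_X$, so by the uniqueness clause defining $\Delta$ we conclude $\Delta=\phi^{-1}$, an isomorphism.

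It then remains only to invoke that cartesian liftings of isomorphisms are isomorphisms, whence $\Cart(\Delta,\Lim G)$ is invertible and $\Delta^*(\Lim G)\cong\Lim G$ over $X$. Consequently the fibre limit $\Delta^*(\Lim G)$, regarded in $\ca{A}$, is isomorphic as a cone to the total limit $\Lim G$, giving the claim. I expect the only genuinely substantive step to be the identification $\Delta=\phi^{-1}$ through connectedness; everything else is bookkeeping with the construction already carried out in \cref{prop:fiberwiselimits}. The single point that warrants care is the fact that a constant functor over a connected category has its value as limit -- this fails for disconnected $\ca{I}$, where the limit of the constant diagram would instead be a power of $X$ and $\Delta$ a diagonal-type map, so connectedness is exactly the hypothesis forcing the fibrewise and total limits to coincide.
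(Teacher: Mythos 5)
Your proof is correct and follows essentially the same route as the paper's: both read off the fibre limit as $\Delta^*(\Lim G)$ from the construction in \cref{prop:fiberwiselimits}, use connectedness to identify the limit of the constant diagram $PG$ with $X$ so that $\Delta$ is forced to be an isomorphism, and conclude via invertibility of the cartesian lift. Your explicit identification $\Delta=\phi^{-1}$ just spells out what the paper phrases as ``forced to be $\Delta$ by the uniqueness of the triangle commutativities.''
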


\begin{proof}
Consider a diagram $G\colon\ca{I}\to\ca{A}_X$. Since $\ca{I}$ is connected, the limit of 
$\ca{I}\xrightarrow{G}\ca{A}_X\hookrightarrow\ca{A}\xrightarrow{P}\ca{X}$ is now (isomorphic to) $X$, since $PG$ is the constant diagram on $X$. More 
precisely, in the picture of \cref{eq:fiberlimit} there exists an isomorphism $X\cong P(\Lim G)$ in $\ca{X}$, which is forced to be $\Delta$ by the 
uniqueness of the triangle commutativities. Since $\Delta$ is invertible, its cartesian lift 
$\Cart(\Delta,\Lim G)$ is also invertible in $\ca{A}$, which proves the result.
\end{proof}

Finally, the following result concerns the preservation of limits by fibred 1-cells whose functors between the bases 
preserves them, and appears to be novel.

\begin{prop}\label{prop:continuous fibred 1-cells}
Let $(K,F)$ be a fibred 1-cell between the fibrations $P,Q$ as indicated below:
\begin{displaymath}
\begin{tikzcd}
	\ca{A}\ar[r,"K"]\ar[d,"P"'] & \ca{B}\ar[d,"Q"] \\
	\ca{X}\ar[r,"F"'] & \ca{Y}
\end{tikzcd}
\end{displaymath}
Assume furthermore that $P$ and $Q$ have fibred $\ca{I}$-limits for some small category $\ca{I}$, and that the bases $\ca{X},\ca{Y}$ have 
$\ca{I}$-limits which are preserved by the functor $F$. Then the following are equivalent:
\begin{enumerate}
	\item $K$ preserves $\ca{I}$-limits;
	\item for every $X\in\ca{X}$, the restricted functor $K_X\colon\ca{A}_X\to\ca{B}_{FX}$ preserves $\ca{I}$-limits.
\end{enumerate}
\end{prop}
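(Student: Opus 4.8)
The plan is to run everything through the explicit two-stage construction of total limits established in the proof of \cref{prop:fiberwiselimits}. The standing hypotheses place both $P$ and $Q$ in the situation of that proposition: each is a fibration with fibred $\ca{I}$-limits over a base admitting $\ca{I}$-limits, so $\ca{A}$ and $\ca{B}$ have $\ca{I}$-limits, $P$ and $Q$ preserve them, and they are computed by the recipe given there. Concretely, for $G\colon\ca{I}\to\ca{A}$ one first forms the base limit $(\kappa_i\colon\Lim PG\to PGi)$ in $\ca{X}$, then the cartesian lifts $\Cart(\kappa_i,Gi)\colon Di\to Gi$ which assemble into a diagram $D\colon\ca{I}\to\ca{A}_{\Lim PG}$, and finally the fibrewise limit of $D$; its cone, composed with the cartesian legs, is the limit cone of $G$. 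The identical recipe computes limits in $\ca{B}$.

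The crucial structural observation, which I would verify first, is that $K$ transports the $\ca{A}$-recipe for $G$ onto the $\ca{B}$-recipe for $KG$. Since $F$ preserves $\ca{I}$-limits and $QK=FP$, the cone $(F\kappa_i\colon F(\Lim PG)\to QKGi)$ is a limit cone for $Q(KG)=FPG$, so $F(\Lim PG)=\Lim Q(KG)$ is the correct base object with the correct cone. Because $K$ is a fibred $1$-cell it preserves cartesian liftings, so $K\,\Cart(\kappa_i,Gi)=\Cart(F\kappa_i,KGi)$; hence the diagram $KD\colon\ca{I}\to\ca{B}_{F(\Lim PG)}$ is exactly the diagram built from $KG$ in the $\ca{B}$-recipe. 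It follows that $K(\Lim_\ca{A}G)=\Lim_\ca{B}(KG)$, with matching cones, precisely when $K_{\Lim PG}$ sends the fibrewise limit of $D$ to the fibrewise limit of $KD$.

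Given this, the implication (2)$\implies$(1) is immediate: if every $K_X$ preserves $\ca{I}$-limits, then in particular $K_{\Lim PG}$ preserves the fibre limit of $D$, yielding preservation of the total limit. For (1)$\implies$(2) I would fix $X$ and a diagram $G\colon\ca{I}\to\ca{A}_X$, so that $PG$ is the constant diagram at $X$ and $\Lim PG$ is its (power-type) limit, equipped with the diagonal comparison $\Delta\colon X\to\Lim PG$. The proof of \cref{prop:fiberwiselimits} identifies the fibre limit as $\Lim_X G\cong\Delta^*(\Lim_\ca{A}G)$. Applying $K$ and again using preservation of cartesian liftings gives $K_X(\Delta^*\Lim_\ca{A}G)=(F\Delta)^*(K\Lim_\ca{A}G)$, which by (1) equals $(F\Delta)^*(\Lim_\ca{B}KG)$. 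The final ingredient is the identity $F\Delta=\Delta'$, where $\Delta'\colon FX\to\Lim Q(K_XG)$ is the corresponding diagonal in $\ca{B}$: this holds because $F$ preserves the limit of the constant diagram and carries the defining equations $\kappa_i\circ\Delta=1_X$ to $F\kappa_i\circ F\Delta=1_{FX}$, which characterize the diagonal. Hence $K_X(\Lim_X G)=(\Delta')^*(\Lim_\ca{B}KG)=\Lim_{FX}(K_XG)$ by the fibrewise identification in $\ca{B}$, so $K_X$ preserves $\ca{I}$-limits.

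The bookkeeping I am suppressing is the naturality of all the cones and comparison isomorphisms involved. I expect the main obstacle to be the clean identification of the fibre limit with a reindexed total limit along the diagonal in the (1)$\implies$(2) direction: the subtlety is the non-connected case, where $\Lim PG$ is a genuine power of $X$ rather than $X$ itself (cf.\ \cref{connected}), so one must carry the diagonal $\Delta$ through rather than an identity. This is exactly the point furnished by the explicit calculation already carried out for \cref{prop:fiberwiselimits}, and the remainder is a matter of transporting the two-stage construction across $K$ using only the two standing hypotheses, namely preservation of cartesian liftings by $K$ and of $\ca{I}$-limits by $F$.
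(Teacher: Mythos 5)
Your proposal is correct and follows essentially the same route as the paper's proof: both directions transport the explicit two-stage construction of limits from \cref{prop:fiberwiselimits} across $K$, using preservation of cartesian liftings by $K$ and of $\ca{I}$-limits by $F$, and in particular the identification $F\Delta=\Delta'$ of the diagonal comparisons in the (1)$\implies$(2) direction. The only cosmetic difference is that you phrase some steps via reindexing functors (e.g.\ $K_X(\Delta^*(\Lim G))\cong(F\Delta)^*(K\Lim G)$) where the paper argues directly with the cones, which changes nothing of substance.
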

\begin{proof}
\underline{(1)$\implies$(2)}: Consider a diagram $G\colon\ca{I}\to\ca{A}_X$. As explained above \cref{eq:fiberlimit}, the limit in the fiber 
$\ca{A}_X$ is $(\lambda_{i}\circ\Cart\colon \Delta^{*}(\Lim G)\to Gi)_{i\in\ca{I}}$, 
where $(\lambda_{i}\colon \Lim G\to Gi)_{i\in\ca{I}}$ is the limit of $G$ in $\ca{A}$.
Since $K$ preserves limits, $(K\lambda_{i}\colon K\Lim G\to KGi)_{i\in\ca{I}}$ is a limit of $KG$ in $\ca{B}$, as depicted below.
Moreover, $K\Cart(\Delta,\Lim G)$ is $Q$-cartesian over $F\Delta$ since $K$ preserves cartesian lifts. Furthermore, since $F$ also preserve 
limits, $FP\lambda_i=QK\lambda_i$ is a limiting cone in $\ca{Y}$, thus $F\Delta$ is the unique morphism from $FX$ 
such that $QK\lambda_{i}\circ F\Delta=1_{FX}$ for all $i\in\ca{I}$:
\begin{displaymath}
\begin{tikzcd}[column sep=3.5em]
\Delta^*(\Lim G)\ar[dr,bend left,"{\Cart}"]\ar[d,dashed] && \\
Gi & \Lim G\ar[l,"\lambda_i"] & \mapsto\\
X\ar[d,equal]\ar[dr,bend left,"\Delta"] & & \\
X & P(\Lim G)\ar[l,"P\lambda_i"]
\end{tikzcd}
\begin{tikzcd}[column sep=3.5em]
K\Delta^*(\Lim G)\ar[dr,bend left,"{K\Cart}"]\ar[d,dashed] & \\
KGi & K(\Lim G)\ar[l,"K\lambda_i"] \\
FX\ar[d,equal]\ar[dr,bend left,"F\Delta"] &  \\
FX & FP(\Lim G)=QK(\Lim G)\ar[l,"FP\lambda_i=QK\lambda_i"]
\end{tikzcd}
\end{displaymath}
Thus, by the corresponding construction of limits in the fiber $\ca{B}_{FX}$, $(K(\lambda_{i}\circ\Cart)\colon K\Delta^{*}(\Lim G)\to 
KGi)_{i\in\ca{I}}$ is indeed a limit cone.

\underline{(2)$\implies$(1)}: Consider a diagram $G\colon\ca{I}\to\ca{A}$. As shown in \cref{eq:LimG}, its limit 
is $(\Cart(\kappa_i,Gi)\circ p_{i}\colon A\to Gi)_{i\in\ca{I}}$ where the cartesian lifts of $(\kappa_{i}\colon \Lim PG\to PGi)_{i\in\ca{I}}$ at 
$Gi$'s are used to form a diagram $D$ in the fiber of $\Lim PG$, whose limit is $A$. Since $F$ preserves $\ca{I}$-limits, $F(\Lim PG)$
is the limit of $FPG=QKG$ in $\ca{Y}$ as depicted below. In addition, for every $i\in\ca{I}$ we have that $K\Cart(\kappa_{i},Gi)$ is $Q$-cartesian 
over $F\kappa_{i}$. Finally, since by assumption $K_{\Lim PG}\colon\ca{A}_{\Lim PG}\to\ca{B}_{F(\Lim PG)}$ preserves $\ca{I}$-limits, we know that 
$(Kp_i\colon KA\to KDi)_{i\in\ca{I}}$ is the limit in $\ca{B}_{F(\Lim PG)}$ of $KD\colon\ca{I}\to\ca{B}_{F(\Lim PG)}$
\begin{displaymath}
\begin{tikzcd}[column sep=3.5em]
A\ar[dr,dashed,bend left]\ar[d,"p_i"'] && \\
Di\ar[r,"\Cart"'] & Gi & \mapsto\\
\Lim PG\ar[r,"\kappa_i"'] & PGi &
\end{tikzcd}
\begin{tikzcd}[column sep=3.5em]
KA\ar[dr,bend left,dashed]\ar[d,"Kp_i"'] & \\
KDi\ar[r,"K\Cart"'] & KGi \\
F(\Lim PG)\ar[r,"F\kappa_i"'] & FPGi=QKGi
\end{tikzcd}
\end{displaymath}
Thus, by the corresponding construction of $\ca{I}$-limits in $\ca{B}$, we see that $\left(K(\Cart\circ p_{i})\colon KA\to 
KGi\right)_{i\in\ca{I}}$ is indeed a limit cone therein.
\end{proof}

\subsection{Sweedler theory for monoidal categories}\label{sec:enrichedfibrations}

In this section, we summarize the main points regarding an enrichment of monoids in comonoids using Sweedler's \emph{universal 
measuring coalgebras} \cite{Sweedler}, an enrichment of the global category of modules in comodules using \emph{universal measuring comodules} 
\cite{Batchelor}, as well as an enriched fibration structure they form. The term `Sweedler 
theory' was originally used in \cite{AnelJoyal} for the theory surrounding the enrichment of dg-(co)algebras specifically, but here we use it 
as an umbrella term for general (co)monoids, (co)modules and their (op)fibrations --  
\cref{thm:MonenrichedComon,thm:ModenrichedComod,thm:ModenrichedComodfib}. 
More details on this material can be found in \cite{Measuringcomonoid,Measuringcomodule,EnrichedFibration}.

Recall that an \emph{action} \cite{Benabou} of a monoidal category $(\ca{V},\otimes,I)$ on an ordinary category $\ca{C}$ is a functor 
$*\colon\ca{V}\times\ca{C}\to\ca{C}$ along with isomorphisms 
\begin{equation}\label{eq:actions}
\chi_{X,Y,C}\colon X*(Y*C)\cong (X\ot Y)*C \textrm{ and } \nu_C\colon C\cong I*C 
\end{equation}
satisfying compatibility conditions that express, on higher level, that $\ca{C}$ is a pseudomodule for the pseudomonoid $\ca{V}$ in the 
cartesian monoidal 2-category $\Cat$. If $*$ is an action, then $*^\op$ is also an action. There is a connection between $\ca{V}$-actions and 
$\ca{V}$-enrichment, which e.g. in \cite{AnoteonActions} is stated as follows:
if a monoidal category $\ca{V}$ acts on a category $\ca{C}$ where the action has a parameterized adjoint, then we can enrich $\ca{C}$ in 
$\ca{V}$. The following version is \cite[Prop.~7]{Duoidalenrichment}, and starts from an action of the opposite 
of a braided monoidal category and is more convenient for our purposes.

\begin{thm}\label{thm:cotensorenrich}
 Suppose $\ca{V}$ is a braided monoidal category, and $\pitchfork\colon\ca{V}^\op\times\ca{C}\to\ca{C}$ is an action with an adjoint $\mi\pitchfork^\op D\dashv R(\mi,D)$ via $\ca{C}(C,X\pitchfork D)\cong\ca{V}(X,R(C,D))$. Then there is a $\ca{V}$-enriched category with hom-objects $R(C,D)$ and underlying category $\ca{C}$.
 
Moreover, if $\ca{V}$ is closed then the enriched category admits cotensors $X\pitchfork C$. Finally, if the action has another parameterized adjoint $X*\mi\dashv X\pitchfork\mi$ then it also admits tensors $X*C$.
\end{thm}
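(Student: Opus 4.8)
The plan is to prove the statement by constructing the $\ca{V}$-enriched category directly from the given action $\pitchfork\colon\ca{V}^\op\times\ca{C}\to\ca{C}$ and its parameterized adjoint, and then verifying each additional clause (closedness gives cotensors; a second parameterized adjoint gives tensors). First I would define the hom-objects to be $R(C,D)\in\ca{V}$ coming from the adjunction $\ca{C}(C,X\pitchfork D)\cong\ca{V}(X,R(C,D))$, natural in $X$; this is the standard recipe of \cite{AnoteonActions,Duoidalenrichment}, so the core of the argument is to supply enriched composition and identity morphisms and check the enriched associativity and unit axioms. Composition $R(D,E)\ot R(C,D)\to R(C,E)$ should be obtained by transposing across the adjunction: I would exhibit a natural map $\ca{C}\bigl(C,(R(D,E)\ot R(C,D))\pitchfork E\bigr)$ using the action coherence $\chi$ to rewrite $(R(D,E)\ot R(C,D))\pitchfork E\cong R(D,E)\pitchfork(R(C,D)\pitchfork E)$, then applying the evaluation/counit maps of the two adjunctions in turn. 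The identity $I\to R(C,C)$ transposes the unitor isomorphism $\nu_C\colon C\cong I\pitchfork C$.

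The main obstacle will be verifying the enriched associativity axiom, since it requires carefully chasing the pentagon-type coherence for the action (the condition that $\ca{C}$ is a pseudomodule over $\ca{V}$) through the transposes. Here the braiding on $\ca{V}$ enters: because $\pitchfork$ is contravariant in its $\ca{V}$-argument, the composite is naturally built in the ``wrong'' order, and the braiding is what allows the two ways of bracketing $R(D,E)\ot R(C,D)$ to be reconciled with the order in which the counits are applied. I expect this to be the delicate point, but it is a formal diagram chase using the coherence isomorphisms $\chi,\nu$ together with naturality of the adjunction bijection, so I would present it as a commuting diagram rather than grinding through it elementwise. The key simplification is that $R$ is defined by a universal property, so composition is determined uniquely by its transpose, and the axioms reduce to equalities of transposed maps into $\ca{C}(C,-\pitchfork E)$.

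For the cotensor clause, assuming $\ca{V}$ closed, I would show that the object $X\pitchfork C$ serves as the cotensor of $C\in\ca{C}$ by $X\in\ca{V}$, meaning there is an enriched natural isomorphism $R(C',X\pitchfork C)\cong[X,R(C',C)]$ of hom-objects. This follows by comparing universal properties: using the defining adjunction twice and the action coherence $\chi$, one gets $\ca{V}(Y,R(C',X\pitchfork C))\cong\ca{C}(C',Y\pitchfork(X\pitchfork C))\cong\ca{C}(C',(Y\ot X)\pitchfork C)\cong\ca{V}(Y\ot X,R(C',C))\cong\ca{V}(Y,[X,R(C',C)])$, natural in $Y$, whence the claimed isomorphism by Yoneda; closedness of $\ca{V}$ is exactly what supplies the last step.

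Finally, for the tensor clause, assuming the further parameterized adjoint $X*\mi\dashv X\pitchfork\mi$, I would verify that $X*C$ is the tensor of $C$ by $X$, i.e. that there is a natural isomorphism $R(X*C,D)\cong[X,R(C,D)]$. Chasing the universal properties gives $\ca{V}(Y,R(X*C,D))\cong\ca{C}(X*C,Y\pitchfork D)\cong\ca{C}(C,X\pitchfork(Y\pitchfork D))\cong\ca{C}(C,(X\ot Y)\pitchfork D)\cong\ca{V}(X\ot Y,R(C,D))\cong\ca{V}(Y,[X,R(C,D)])$ by closedness, again concluding by Yoneda. The step $\ca{C}(X*C,Y\pitchfork D)\cong\ca{C}(C,X\pitchfork(Y\pitchfork D))$ uses precisely the new adjunction $X*\mi\dashv X\pitchfork\mi$, and the coherence $\chi$ reassociates the iterated cotensor. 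Throughout, I would lean on the established dictionary between closed actions and tensored--cotensored enriched categories, so that once the hom-objects and the relevant natural isomorphisms are in place, the tensor and cotensor structures follow formally and no separate axiom-checking is needed beyond the coherence already secured for the enrichment.
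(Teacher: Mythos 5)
Your proposal is correct and follows essentially the standard argument: the paper does not prove this statement itself but cites it (as \cite[Prop.~7]{Duoidalenrichment}, building on \cite{AnoteonActions}), and your construction -- hom-objects from the parameterized adjunction, composition and identities by transposing the counits and $\nu$ through $\chi$, with the braiding repairing the order reversal caused by the contravariance of $\pitchfork$ in its $\ca{V}$-variable, and the cotensor/tensor isomorphisms $R(C',X\pitchfork C)\cong[X,R(C',C)]$ and $R(X*C,D)\cong[X,R(C,D)]$ obtained by Yoneda from chains of adjunction bijections -- is exactly how that cited proof goes. No gaps beyond the (acknowledged and genuinely routine) coherence diagram chases.
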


The above theory can be applied to the categories of monoids and comonoids in a symmetric\footnote{Symmetry is needed because braiding alone only makes the category of comonoids monoidal.}
 monoidal closed $\ca{V}$ which is furthermore locally presentable.
More precisely, under these assumptions, $\Comon(\ca{V})$ is a symmetric monoidal closed category (\cref{prop:MonComonlp}) and the restricted 
internal hom $[\mi,\mi]\colon\Comon(\ca{V})^\op\times\Mon(\ca{V})\to\Mon(\ca{V})$ of \cref{defMon[]} is an action with adjoints
\begin{align}
[\mi,B]^\op\dashv P(\mi,B)\colon&\Mon(\ca{V})^\op\to\Comon(\ca{V})\label{eq:meascoalg}\\
C\triangleright\mi\dashv[C,\mi]\colon&\Mon(\ca{V})\to\Mon(\ca{V})\label{eq:tenscoalg}
\end{align}
using local presentability and (co)continuity properties and \cref{thm:Kelly} appropriately, as detailed in \cite[\S~4\&5]{Measuringcomonoid}. Thus the above theorem applies,
where the hom-objects $R(A,B)$ are the universal measuring comonoids first introduced in \cite{Sweedler} in the context of vector 
spaces. 

\begin{thm}[Sweedler theory for (co)monoids]\label{thm:MonenrichedComon}
Suppose $\ca{V}$ is a locally presentable, symmetric monoidal closed category. The category of monoids $\Mon(\ca{V})$ is tensored and cotensored enriched in the symmetric monoidal closed category of comonoids $\Comon(\ca{V})$.
\end{thm}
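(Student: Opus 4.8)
The plan is to apply \cref{thm:cotensorenrich} with the braided monoidal category taken to be $\Comon(\ca{V})$ and the acted-upon category taken to be $\Mon(\ca{V})$, using the internal-hom-induced functor as the action. By \cref{prop:MonComonlp}, the hypotheses on $\ca{V}$ guarantee that $\Comon(\ca{V})$ is itself a symmetric (in particular braided) monoidal closed category, and that both $\Comon(\ca{V})$ and $\Mon(\ca{V})$ are locally presentable, with $\Comon(\ca{V})$ comonadic and $\Mon(\ca{V})$ monadic over $\ca{V}$. The candidate action is the functor $[\mi,\mi]\colon\Comon(\ca{V})^\op\times\Mon(\ca{V})\to\Mon(\ca{V})$ of \cref{defMon[]}, which has exactly the shape $\ca{W}^\op\times\ca{C}\to\ca{C}$ required by \cref{thm:cotensorenrich} with $\ca{W}=\Comon(\ca{V})$ and $\ca{C}=\Mon(\ca{V})$.

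First I would verify that $[\mi,\mi]$ is an action in the required sense. The coherence isomorphisms $C\pitchfork(D\pitchfork A)\cong(C\ot D)\pitchfork A$ and $A\cong I\pitchfork A$ are inherited from the standard internal-hom isomorphisms $[C,[D,A]]\cong[C\ot D,A]$ and $[I,A]\cong A$ in the closed category $\ca{V}$; one checks that these are compatible with the convolution monoid structures on the internal homs and with the monoidal product on $\Comon(\ca{V})$, so that they lift to isomorphisms of monoids natural in all arguments (here symmetry of $\ca{V}$, hence of $\Comon(\ca{V})$, is used). This amounts to saying that the self-action of $\ca{V}$ by its internal hom descends to an action of $\Comon(\ca{V})$ on $\Mon(\ca{V})$.

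The substantive step, and the main obstacle, is producing the two parameterized adjoints. For the enrichment itself I must exhibit, for each monoid $B$, a right adjoint $R(\mi,B)$ to the functor $[\mi,B]^\op\colon\Comon(\ca{V})\to\Mon(\ca{V})^\op$, as in \cref{eq:meascoalg}. The key observation is that $[\mi,B]$ turns colimits into limits in its contravariant variable: since colimits of comonoids and limits of monoids are created by the (co)monadic forgetful functors to $\ca{V}$, and the internal hom $[\mi,B]\colon\ca{V}^\op\to\ca{V}$ sends colimits to limits, the functor $[\mi,B]^\op\colon\Comon(\ca{V})\to\Mon(\ca{V})^\op$ is cocontinuous. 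As $\Comon(\ca{V})$ is locally presentable it is cocomplete with a small dense subcategory of presentable objects, so \cref{thm:Kelly} furnishes the desired right adjoint $R(\mi,B)$, whose values $R(A,B)$ are the universal measuring comonoids. For the tensor I must exhibit, for each comonoid $C$, a left adjoint $C\triangleright\mi$ to $[C,\mi]=C\pitchfork\mi\colon\Mon(\ca{V})\to\Mon(\ca{V})$, as in \cref{eq:tenscoalg}. Since $[C,\mi]$ is continuous (being the lift of a right adjoint in $\ca{V}$, where limits of monoids are created) and accessible (using local presentability of $\Mon(\ca{V})$ together with the hypothesis that $\ot$ preserves filtered colimits in both variables), the adjoint functor theorem for locally presentable categories yields the left adjoint $C\triangleright\mi$.

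With both parameterized adjoints in hand, \cref{thm:cotensorenrich} applies directly: the adjunction $[\mi,B]^\op\dashv R(\mi,B)$ makes $\Mon(\ca{V})$ into a $\Comon(\ca{V})$-enriched category with hom-objects $R(A,B)$; closedness of $\Comon(\ca{V})$ supplies the cotensors $C\pitchfork A=[C,A]$; and the adjunction $C\triangleright\mi\dashv[C,\mi]$ supplies the tensors $C\triangleright A$. This establishes that $\Mon(\ca{V})$ is tensored and cotensored enriched in $\Comon(\ca{V})$. I expect the delicate points throughout to be precisely the (co)continuity and accessibility verifications for the internal-hom functors at the level of monoids and comonoids, all of which ultimately rest on the fact that the relevant (co)limits are computed over $\ca{V}$, where the internal hom is genuinely adjoint on each side.
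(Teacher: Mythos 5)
Your proposal is correct and follows essentially the same route as the paper: apply \cref{thm:cotensorenrich} to the convolution action $[\mi,\mi]\colon\Comon(\ca{V})^\op\times\Mon(\ca{V})\to\Mon(\ca{V})$ of \cref{defMon[]}, obtaining the two parameterized adjoints \cref{eq:meascoalg} and \cref{eq:tenscoalg} from local presentability, the (co)continuity of the internal hom lifted along the (co)monadic forgetful functors, and \cref{thm:Kelly}. The paper defers these verifications to the cited reference, and your reconstruction of them is accurate.
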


The situation for (co)modules is similar. Although it is still possible (yet less informative, see \cite[Rem.~4.8]{Measuringcomodule}) to obtain required adjunctions using results
like \cref{thm:Kelly}, it is more efficient to use the theory of (op)fibred adjunctions.
First of all, under the running assumptions on $\ca{V}$, the 
category of comodules is monoidal closed as shown in \cite[Prop.~4.5]{Measuringcomodule}; below is a sketch of the argument, which is later 
generalized in \cref{prop:Comodclosed}.

\begin{prop}\label{prop:ComodVclosed}
 For a braided monoidal closed and locally presentable category $\ca{V}$, the monoidal category $\Comod_\ca{V}$ is closed.
\end{prop}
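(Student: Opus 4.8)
The plan is to establish that $\Comod_\ca{V}$ is monoidal closed by producing a right adjoint to the tensor functor $-\ot\K_C\colon\Comod_\ca{V}\to\Comod_\ca{V}$ for each fixed comodule $\K_C$, and I would do this by descending to the fibred picture rather than attempting a hands-on construction of the internal hom. The global category $\Comod_\ca{V}$ is opfibred over $\Comon(\ca{V})$ via \cref{eq:globalfibs}, and under the running hypotheses $\Comon(\ca{V})$ is itself monoidal closed by \cref{prop:MonComonlp}. So the base already has the closed structure I need; the task is to lift it to the total category. The monoidal structure on $\Comod_\ca{V}$ (recorded after \cref{prop:Modmonadic}) sits over the monoidal structure on $\Comon(\ca{V})$, giving a square of the shape of a fibred (here opfibred) $1$-cell, and the natural tool is \cref{thm:totaladjointthm}.

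\emph{First}, I would fix a comodule $\K_C$ and set $U=V=$ the forgetful opfibration $\Comod_\ca{V}\to\Comon(\ca{V})$, with $L=-\ot\K_C$ on total categories lying over $F=-\ot C$ on the base. Since $\Comon(\ca{V})$ is monoidal closed, $F=-\ot C$ has a right adjoint $G=[C,-]_{\Comon}$; this supplies the base adjunction $F\dashv G$ with counit $\varepsilon$ demanded by \cref{thm:totaladjointthm}. \emph{Second}, to invoke that theorem I must check that for each comonoid $D$ the composite on fibres
\begin{displaymath}
\Comod_\ca{V}(GD)\xrightarrow{L_{GD}}\Comod_\ca{V}(FGD)\xrightarrow{(\varepsilon_D)_!}\Comod_\ca{V}(D)
\end{displaymath}
admits a right adjoint. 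Each fibre $\Comod_\ca{V}(C')$ is locally presentable by \cref{prop:ModComodlp}, hence in particular complete and cocomplete, and the composite above is a cocontinuous functor between fibres (the opfibration tensor $-\ot\K_C$ on fibres is a left adjoint fibrewise, being built from $\ot$ which preserves colimits, and corestriction $(\varepsilon_D)_!$ is a left adjoint by definition of opfibration). \emph{Third}, because each fibre is locally presentable it has a small dense subcategory, so \cref{thm:Kelly} yields the required fibrewise right adjoint. Then \cref{thm:totaladjointthm} produces a right adjoint $R$ to $-\ot\K_C$ on all of $\Comod_\ca{V}$, making $(U,V)$ a map of adjunctions, which is exactly the internal hom of the closed structure.

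\emph{The main obstacle} I anticipate is verifying the cocontinuity of the fibrewise composite carefully enough to apply \cref{thm:Kelly}: one must confirm that $L_{GD}=-\ot\K_C$ restricted to a fibre genuinely preserves all colimits, which rests on $\ot$ preserving colimits in $\ca{V}$ (guaranteed by the closedness hypothesis, since $-\ot X$ is a left adjoint) and on the interaction of this tensor with the coaction structure in the fibre. The corestriction functor $(\varepsilon_D)_!$ is automatically a left adjoint, so no difficulty there. A secondary bookkeeping point is checking the coherence constraints that make the resulting $R$ an honest internal hom, i.e.\ that the adjunction $-\ot\K_C\dashv R$ assembles into a monoidal closed structure with the expected unit and associativity compatibilities; but this is largely formal once the adjoint exists, and parallels the monoidal closed argument for $\Comon(\ca{V})$. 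Since the statement asks only for closedness of the monoidal category $\Comod_\ca{V}$, producing the parameterized right adjoint to the tensor is the essential content, and the fibred adjoint lifting theorem does the heavy lifting.
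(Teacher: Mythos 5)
Your proposal is correct and follows essentially the same route as the paper: the paper's (sketched) proof also applies \cref{thm:totaladjointthm} to the opfibred 1-cell given by $-\otimes \K_C$ over $-\otimes C$, using the monoidal closedness of $\Comon(\ca{V})$ from \cref{prop:MonComonlp} for the base adjunction and local presentability of the fibres for the fibrewise adjoint. The only slight imprecision is your aside that $(\varepsilon_D)_!$ is a left adjoint ``by definition of opfibration'' --- what one actually needs (and what holds, since colimits of comodules are created on underlying objects) is that corestriction of scalars is cocontinuous.
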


\begin{proof}[Sketch]
 The existence of the internal hom of $\Comod_\ca{V}$ follows from \cref{thm:totaladjointthm} as an adjoint of the opfibred 1-cell
 \begin{displaymath}
 \begin{tikzcd}[column sep=.7in,row sep=.5in]
\Comod_\ca{V}\ar[d]\ar[r,bend left=10,"{\mi\otimes X_C}"]\ar[r,phantom,"\bot"] & \Comod_\ca{V}\ar[d]\ar[l,dashed,bend 
left=10,"{\overline{\HOM}(X_C,\mi)}"] \\
\Comon(\ca{V})\ar[r,bend left=10,"{\mi\otimes C}"]\ar[r,phantom,"\bot"] & \Comon(\ca{V})\ar[l,bend left=10,"{\HOM(C,-)}"]
 \end{tikzcd}
 \end{displaymath}
 where the bottom adjunction is the monoidal closed structure of $\Comon(\ca{V})$ from \cref{prop:MonComonlp}. In particular, even though the 
underlying object of the internal hom of two comonoids $\HOM(C,D)$ is not $[C,D]_\ca{V}$, the underlying comonoid of the internal hom of two comodules 
$\overline{\HOM}(X_C,Y_D)$ is $\HOM(C,D)$.
\end{proof}

Moreover, the restricted internal hom $[\mi,\mi]\colon\Comod_\ca{V}^\op\times\Mod_\ca{V}\to\Mod_\ca{V}$ of \cref{eq:HomMod0} is an action 
\cite[Prop.~4.9]{Measuringcomodule}, and has parameterized adjoints by \cite[Prop.~4.6\&Thm.~4.10]{Measuringcomodule} as sketched below.

\begin{prop}\label{prop:HomModadjoints}
 Suppose $\ca{V}$ is a symmetric monoidal closed and locally presentable category. There are adjunctions
 \begin{align*}
  [\mi,N_B]^\op\dashv Q(\mi,N_B)\colon&\Mod_\ca{V}^\op\to\Comod_\ca{V} \\
  X_C\oslash\mi\dashv [X_C,\mi]\colon&\Mod_\ca{V}\to\Mod_\ca{V} 
 \end{align*}
for any module $N_B$ and any comodule $X_C$.
\end{prop}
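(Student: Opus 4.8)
The plan is to obtain both adjunctions by lifting the corresponding \emph{base} adjunctions from Sweedler theory for monoids (\cref{eq:meascoalg,eq:tenscoalg}) to the total categories of the relevant (op)fibrations, via the total adjoint \cref{thm:totaladjointthm} together with its evident dual. The two fibred $1$-cells in play are restrictions of \cref{eq:HomMod}: fixing the module variable at $N_B$ yields a fibred $1$-cell $[\mi,N_B]\colon\Comod_\ca{V}^\op\to\Mod_\ca{V}$ over $[\mi,B]\colon\Comon(\ca{V})^\op\to\Mon(\ca{V})$, while fixing the comodule variable at $X_C$ yields a fibred $1$-cell $[X_C,\mi]\colon\Mod_\ca{V}\to\Mod_\ca{V}$ over $[C,\mi]\colon\Mon(\ca{V})\to\Mon(\ca{V})$. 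That these preserve (co)cartesian liftings is exactly the content of \cref{eq:HomMod} being a fibred $1$-cell, and their fibrewise components are the functors $[\mi,\mi]_{CA}$ of \cref{defMod[]}.

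For the measuring comodule $S(\mi,N_B)$, I would read $[\mi,N_B]^\op\colon\Comod_\ca{V}\to\Mod_\ca{V}^\op$ as an opfibred $1$-cell over $[\mi,B]^\op\colon\Comon(\ca{V})\to\Mon(\ca{V})^\op$ (the fibration $\Mod_\ca{V}\to\Mon(\ca{V})$ becomes an opfibration after passing to opposites). The base functor $[\mi,B]^\op$ has right adjoint $R(\mi,B)$ by \cref{eq:meascoalg}, so \cref{thm:totaladjointthm} applies once its fibrewise hypothesis is verified: for every monoid $A$, with counit $\varepsilon_A\colon A\to[R(A,B),B]$, the composite
\[
\Comod_\ca{V}(R(A,B))\xrightarrow{[\mi,N]^\op_{R(A,B),B}}\Mod_\ca{V}([R(A,B),B])^\op\xrightarrow{(\varepsilon_A)_!}\Mod_\ca{V}(A)^\op
\]
must have a right adjoint, where $(\varepsilon_A)_!$ is the opcartesian reindexing, i.e.\ restriction of scalars read in the opposite. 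Since the internal hom preserves limits in each variable, $[\mi,N]_{CB}\colon\Comod_\ca{V}(C)^\op\to\Mod_\ca{V}([C,B])$ preserves limits, hence $[\mi,N]^\op_{CB}$ is cocontinuous; restriction of scalars preserves limits (computed in $\ca{V}$), so $(\varepsilon_A)_!$ is cocontinuous, and thus the whole composite is cocontinuous. As the source fibre $\Comod_\ca{V}(R(A,B))$ is locally presentable by \cref{prop:ModComodlp}, it has a small dense subcategory and \cref{thm:Kelly} supplies the required right adjoint. The total adjoint theorem then delivers $S(\mi,N_B)$ compatibly with the opfibrations; in particular $S(M_A,N_B)$ is an $R(A,B)$-comodule, matching the intended picture.

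The tensor adjunction is handled by the dual of \cref{thm:totaladjointthm} for fibrations and left adjoints: given the fibred $1$-cell $[X_C,\mi]$ over $[C,\mi]$ and the adjunction $C\triangleright\mi\dashv[C,\mi]$ of \cref{eq:tenscoalg} with unit $\eta$, it suffices that for each monoid $B$ the composite
\[
\Mod_\ca{V}(C\triangleright B)\xrightarrow{[X_C,\mi]_{C,C\triangleright B}}\Mod_\ca{V}([C,C\triangleright B])\xrightarrow{(\eta_B)^*}\Mod_\ca{V}(B)
\]
admits a left adjoint. Here both fibres are locally presentable (\cref{prop:ModComodlp}), and the composite is continuous (the internal hom preserves limits in the module variable and $(\eta_B)^*$ preserves limits) as well as accessible (being a composite of right adjoints and a restriction functor, all of which preserve sufficiently filtered colimits created in $\ca{V}$); hence the adjoint functor theorem for locally presentable categories (the left-adjoint counterpart of \cref{thm:Kelly}, cf.\ \cite{LocallyPresentable}) produces the left adjoint. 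Lifting it gives the tensor $X_C\oslash\mi$ left adjoint to $[X_C,\mi]$.

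The main obstacle I anticipate is not conceptual but a matter of careful variance bookkeeping. Because the measuring-comodule functor is contravariant in the comodule variable while the tensor functor is covariant in the module variable, one must track several opposite-category flips so that the fibrewise composite is genuinely cocontinuous out of a locally presentable category (for $S$) or continuous-and-accessible between locally presentable categories (for $\oslash$), rather than the reverse; a single sign error here would make the relevant adjoint functor theorem inapplicable. One must also check that the composite through the (co)unit is exactly the one demanded by the (dual) total adjoint theorem, so that the lifted adjoints are the asserted ones and the forgetful functors form maps of adjunctions.
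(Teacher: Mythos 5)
Your proposal follows essentially the same route as the paper: both adjunctions are obtained by applying \cref{thm:totaladjointthm} and its dual to the (op)fibred $1$-cells obtained from \cref{eq:HomMod} by fixing one variable, sitting over the base adjunctions \cref{eq:meascoalg,eq:tenscoalg}. The fibrewise verification you supply (local presentability of the fibres via \cref{prop:ModComodlp}, cocontinuity/continuity-plus-accessibility of the composite through the (co)unit reindexing, and \cref{thm:Kelly} or its locally presentable counterpart) is exactly the content the paper delegates to the cited reference, and your variance bookkeeping is correct.
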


\begin{proof}[Sketch]
The existence of $Q$ and $\oslash$ follow from \cref{thm:totaladjointthm} and its dual, for
\begin{displaymath}
 \begin{tikzcd}[column sep=.7in,row sep=.5in]
\Comod_\ca{V}\ar[d]\ar[r,bend left=10,"{[\mi,N_B]^\op}"]\ar[r,phantom,"\bot"] & \Mod_\ca{V}^\op\ar[d]\ar[l,dashed,bend left=10] \\
\Comon(\ca{V})\ar[r,bend left=10,"{[\mi,B]^\op}"]\ar[r,phantom,"{\phantom{\scriptstyle\cref{eq:meascoalg}}}\bot{\scriptstyle\cref{eq:meascoalg}}"] & 
\Mon(\ca{V})^\op\ar[l,bend left=10,"{P(\mi,B)}"]
 \end{tikzcd}\qquad
  \begin{tikzcd}[column sep=.7in,row sep=.5in]
\Mod_\ca{V}\ar[d]\ar[r,bend right=10,"{[X_C,\mi]}"']\ar[r,phantom,"\bot"] & \Mod_\ca{V}\ar[d]\ar[l,dashed,bend right=10] \\
\Mon(\ca{V})\ar[r,bend right=10,"{[C,\mi]}"']\ar[r,phantom,"{\phantom{\scriptstyle\cref{eq:meascoalg}}}\bot{\scriptstyle\cref{eq:tenscoalg}}"] & 
\Mon(\ca{V})\ar[l,bend right=10,"{C\triangleright\mi}"']
 \end{tikzcd}
\end{displaymath}
As a result, $Q(M_A,N_B)$ is a $P(A,B)$-comodule and $X_C\oslash N_B$ is a $C\triangleright B$-module. 
\end{proof}

All clauses of \cref{thm:cotensorenrich} are now satisfied, see \cite[Thm.~4.10]{Measuringcomodule}. The hom-objects $S(M,N)$ are the universal measuring comodules, first introduced in \cite{Batchelor} for vector spaces. 

\begin{thm}[Sweedler theory for (co)modules]\label{thm:ModenrichedComod}
 Suppose $\ca{V}$ is a locally presentable, symmetric monoidal closed category. The global category of modules $\Mod_\ca{V}$ is tensored and cotensored enriched in the symmetric monoidal closed global category of comodules $\Comod_\ca{V}$.
\end{thm}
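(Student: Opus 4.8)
The plan is to deduce the result as a direct instantiation of the general enrichment criterion \cref{thm:cotensorenrich}, taking the braided monoidal category appearing there to be $\Comod_\ca{V}$ and the underlying ordinary category to be $\Mod_\ca{V}$. Essentially all of the analytic work has been isolated into the preceding propositions, so the proof proper amounts to checking that their conclusions line up precisely with the three clauses of \cref{thm:cotensorenrich} (base enrichment, cotensors, and tensors).

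First I would record that $\Comod_\ca{V}$ is a symmetric monoidal closed category: since $\ca{V}$ is symmetric, $\Comod_\ca{V}$ inherits a symmetric monoidal structure as noted after \cref{prop:Modmonadic}, while its closedness is precisely \cref{prop:ComodVclosed}. Next, the action $\pitchfork$ demanded by \cref{thm:cotensorenrich} is supplied by the restricted internal hom $[\mi,\mi]\colon\Comod_\ca{V}^\op\times\Mod_\ca{V}\to\Mod_\ca{V}$ of \cref{eq:HomMod0}, which is an action of $\Comod_\ca{V}$ on $\Mod_\ca{V}$. It then remains to feed in the parameterized adjoints from \cref{prop:HomModadjoints}. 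The adjunction $[\mi,N_B]^\op\dashv S(\mi,N_B)$ provides, for each module $N_B$, exactly the adjoint $R(\mi,N_B)=S(\mi,N_B)$ required by the hypothesis of \cref{thm:cotensorenrich}; this already produces the $\Comod_\ca{V}$-enrichment of $\Mod_\ca{V}$ with hom-objects the universal measuring comodules $S(M_A,N_B)$. Closedness of $\Comod_\ca{V}$ then yields the cotensors $X_C\pitchfork M_A=[X_C,M_A]$, and the second adjunction $X_C\oslash\mi\dashv[X_C,\mi]$ is precisely the further parameterized adjoint whose presence guarantees tensors $X_C\oslash N_B$. With all three clauses verified, \cref{thm:cotensorenrich} delivers the tensored and cotensored enrichment, completing the argument.

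I expect the main obstacle to lie not in this final assembly but upstream, in the results being invoked: the existence of the two parameterized adjoints of \cref{prop:HomModadjoints}, which rests on the fibred adjoint-lifting result \cref{thm:totaladjointthm} applied to the relevant (op)fibred 1-cells, together with the closedness of $\Comod_\ca{V}$ from \cref{prop:ComodVclosed}. At the level of the present proof, the only genuine care needed is bookkeeping: matching the variances exactly (the enrichment is by $\Comod_\ca{V}$ through its opposite, with $N_B$ fixed as the second argument), identifying the enriched hom with $S(M,N)$ rather than some a priori different object, and—importantly—using \emph{symmetry} rather than mere braiding of $\ca{V}$, since it is only symmetry that makes $\Comod_\ca{V}$ symmetric monoidal and hence a legitimate enriching base.
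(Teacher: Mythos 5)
Your proposal is correct and matches the paper's own treatment: the theorem is obtained exactly by assembling \cref{prop:ComodVclosed} (closedness of $\Comod_\ca{V}$), the action structure of the restricted internal hom \cref{eq:HomMod0}, and the two parameterized adjoints of \cref{prop:HomModadjoints} into the three clauses of \cref{thm:cotensorenrich}. The variance bookkeeping and the identification of the enriched homs with the measuring comodules $S(M,N)$ are also as in the paper.
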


We now turn to an enrichment of the fibration of modules over monoids in the opfibration of comodules over comonoids in a monoidal 
category, which will later be generalized for (co)monads and (co)modules in double categories. For that purpose, we recall some 
relevant concepts and results.

\begin{defi}(\cite{Framedbicats})\label{monoidalfibration}
Let $\ca{V}$, $\ca{W}$ be monoidal categories. A fibration $T\colon\ca{V}\to\ca{W}$ is \emph{monoidal} if
$T$ is a strict monoidal functor and the tensor product $\otimes_\ca{V}$ preserves cartesian liftings. 
It is \emph{braided} or \emph{symmetric} monoidal when it is a braided strict monoidal functor between braided or symmetric monoidal 
categories. Dually, there is the notion of a (braided or symmetric) \emph{monoidal opfibration}.
\end{defi}

\begin{ex}\label{ex:monoidalfibs}
The fibration $\Mod_\ca{V}\to\Mon(\ca{V})$ and opfibration $\Comod_\ca{V}\to\Comon(\ca{V})$ as in \cref{eq:globalfibs} are both monoidal when 
$\ca{V}$ is braided monoidal, since indeed the monoidal structure is strictly preserved, 
and it can be 
verified that the tensor product preserves (co)cartesian liftings (\cite[Prop.~4.5]{Measuringcomodule}). 

Notice that whereas the global categories $\Mod_\ca{V}$ and $\Comod_\ca{V}$ are monoidal, their fibres $\Mod_\ca{V}(A)$ and $\Comod_\ca{V}(C)$ for a 
fixed (co)monoid are not monoidal -- since for two $A$-modules $M$ and $N$, $M\ot N$ is naturally just an $A\ot A$-module. This phenomenon 
is further discussed in \cite[\S~5.7]{MonGroth}.
\end{ex}

We next recall the definition of a fibration enriched in a monoidal fibration, see \cite[Def.~3.8]{EnrichedFibration}. If $\ca{A}$ is a 
$\ca{V}$-enriched category, we denote by $\ca{A}(\mi,\mi)$ the enriching 
hom-object functor, see e.g. \cite[\S~1.6]{Kelly}.

\begin{defi}\label{enrichedfibration}
A fibration $P\colon\ca{A}\to\ca{X}$ is \emph{enriched} in a monoidal fibration $T\colon\ca{V}\to\ca{W}$ when
\begin{itemize}
\item $\ca{A}$ is enriched in $\ca{V}$, $\ca{X}$ is enriched in $\ca{W}$ and the following commutes
\begin{displaymath}
\begin{tikzcd}[column sep=.5in]
\ca{A}^\op\times\ca{A}\ar[r,"{\ca{A}(-,-)}"]\ar[d,"P^\op\times P"'] & \ca{V}\ar[d,"T"] \\
\ca{X}^\op\times\ca{X}\ar[r,"{\ca{X}(-,-)}"']& \ca{W}
\end{tikzcd}
\end{displaymath}
\item composition and identities of the enrichments are compatible, in that
\begin{displaymath}
TM^{\ca{A}}_{A,B,C}=M^{\ca{X}}_{PA,PB,PC}\;\textrm{ and }\; Tj^\ca{A}_A=j^{\ca{X}}_{PA}.
\end{displaymath}
\end{itemize}
\end{defi}

Dually, we have the notion of an enriched opfibration, as well as the following mixed version.
\begin{defi}\label{def:fibenropfib}
If $T\colon\ca{V}\to\ca{W}$ is a monoidal opfibration, a fibration $P\colon\ca{A}\to\ca{X}$ is enriched in the opfibration $T$ if the opfibration 
$P^\op$ is $T$-enriched.
\end{defi} 

\begin{rmk}\label{rmk:braidingnotneeded}
Notice that for the opfibration $P^\op$ to be enriched in $T$, the condition ``braided'' or ``symmetric'' for the monoidal opfibration $T$ (as was 
the case in the original reference \cite{EnrichedFibration}) is not really necessary and thus here dropped. What is gained under those 
extra assumptions is that the $\ca{V}$-enrichment of $\ca{A}^\op$ implies that $(\ca{A}^\op)^\op\cong\ca{A}$ (or equal in the case of symmetry) 
is also $\ca{V}$-enriched, and similarly the $\ca{W}$-enrichment $\ca{X}^\op$ implies that of $\ca{X}$. This is of course what happens in the 
examples of interest, and as a result motivated the original definition.
\end{rmk}

Leading to an action-induced enrichment result similar to \cref{thm:cotensorenrich}, below a pseudomonoid acts on a pseudomodule in the 
cartesian monoidal 2-category of fibrations, \cite[Def.~3.3]{EnrichedFibration}.

\begin{defi}\label{Trepresentation}
A monoidal fibration $T\colon\ca{V}\to\ca{W}$ \emph{acts} on a fibration $P\colon\ca{A}\to\ca{X}$ when there exists a fibred 1-cell
\begin{displaymath}
\begin{tikzcd}
\ca{V}\times\ca{A}\ar[r,"*"]\ar[d,"T\times P"'] & 
\ca{A}\ar[d,"P"] \\
\ca{W}\times\ca{X}\ar[r,"\diamond"'] & \ca{X}
\end{tikzcd}
\end{displaymath}
where $*,\diamond$ are ordinary actions, such that the action constraints are compatible in that
\begin{displaymath}
P\chi^\ca{A}_{XYA}=\chi^\ca{X}_{(TX)(TY)(PA)},\quad  P\nu^\ca{A}_A=\nu^\ca{X}_{PA}
\end{displaymath}
for all $X,Y\in\ca{V}$ and $A\in\ca{A}$. Dually, a monoidal opfibration \emph{acts} on an opfibration.
\end{defi}

\begin{ex}\label{ex:actions}
If $\ca{V}$ is a braided monoidal closed category, the monoidal fibration $\Comod_\ca{V}^\op\to\Comon(\ca{V})^\op$ acts on the fibration $\Mod_\ca{V}\to\Mon(\ca{V})$ via the fibred 1-cell \cref{eq:HomMod}, where both instances of the internal hom are actions and the required compatibilities hold. 
\end{ex}

In \cite[Thm.~3.11]{EnrichedFibration}, it was proved that if $T\colon\ca{V}\to\ca{W}$ is a monoidal fibration which acts on a fibration 
$P\colon\ca{A}\to\ca{X}$ with a parameterized adjoint $(R,S)\colon P^\op\times P\to T$ in $\Cat^\mathbf{2}$, then we can enrich the fibration $P$ in 
the monoidal fibration $T$. The parameterized adjunction condition, see \cite[Thm.~3.4]{EnrichedFibration}, means that there are adjunctions 
$\mi*A\dashv R(A,\mi)$ and $\mi\diamond X\dashv S(X,\mi)$ such that
\begin{equation}\label{eq:parameterizedCat2}
 \begin{tikzcd}[column sep=.6in]
\ca{V}\ar[r,shift left,"{\mi*A}"]\ar[d,"T"'] & \ca{A}\ar[l,shift left,"{R(A,\mi)}"]\ar[d,"P"] \\
\ca{W}\ar[r,shift left,"{\mi\diamond PA}"] & \ca{X}\ar[l,shift left,"{S(PA,\mi)}"]
 \end{tikzcd}
\end{equation}
is a map of adjunctions (what is called `transformation of adjoints' in \cite[\S IV.7]{MacLane}), namely (co)units are above one another. 
We will here use a version of its dual for a monoidal opfibration acting on the opposite of a fibration
due to our subsequent applications. Notice that concerning enriched fibrations, there are no tensored or cotensored versions investigated in the literature yet.

\begin{thm}\label{thm:enrichedfib}
 Suppose that $T\colon\ca{V}\to\ca{W}$ is a 
 monoidal opfibration that acts on the opposite of a fibration $P\colon\ca{A}\to\ca{X}$ via an opfibred 1-cell
\begin{displaymath}
\begin{tikzcd}
\ca{V}\times\ca{A}^\op\ar[r,"\pitchfork^\op"]\ar[d,"T\times P^\op"'] & 
\ca{A}^\op\ar[d,"P^\op"] \\
\ca{W}\times\ca{X}^\op\ar[r,"\square^\op"'] & \ca{X}^\op    
\end{tikzcd}
\end{displaymath}
If $(\pitchfork^\op,\square^\op)$ has a right parameterized adjoint in $\mathsf{Cat}^\mathbf{2}$, then the fibration $P$ is enriched in the monoidal 
opfibration $T$. 
\end{thm}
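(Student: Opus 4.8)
The plan is to dualize the already-established result \cite[Thm.~3.11]{EnrichedFibration} (restated before \cref{thm:enrichedfib} via the map of adjunctions \cref{eq:parameterizedCat2}) and then transport it across the opposite-category construction, invoking \cref{def:fibenropfib} at the very end. First I would unwind the hypotheses: we are given a monoidal opfibration $T\colon\ca{V}\to\ca{W}$, a fibration $P\colon\ca{A}\to\ca{X}$, and an opfibred 1-cell $(\pitchfork^\op,\square^\op)$ witnessing that $T$ acts on the opfibration $P^\op\colon\ca{A}^\op\to\ca{X}^\op$ in the sense of the dual of \cref{Trepresentation}. Here one should check that $\pitchfork^\op$ and $\square^\op$ are genuine actions of $\ca{V}$ and $\ca{W}$ (equivalently $\pitchfork$ and $\square$ are actions of $\ca{V}^\op$, $\ca{W}^\op$, using that the opposite of an action is again an action, as noted after \cref{eq:actions}), and that the action constraints sit above one another as required by \cref{Trepresentation}.

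Next I would feed these data directly into the \emph{dual} of \cite[Thm.~3.11]{EnrichedFibration}: a monoidal opfibration acting on an opfibration, with a right parameterized adjoint $(R,S)$ to the acting opfibred 1-cell in $\Cat^{\mathbf 2}$, yields an enrichment of that opfibration in the monoidal opfibration. Applying this with $P^\op$ in the role of the acting-upon opfibration, the hypothesis that $(\pitchfork^\op,\square^\op)$ has a right parameterized adjoint gives adjunctions $X\pitchfork^\op(\mi)\dashv R(X,\mi)$ on the fibre-level and a compatible pair over the bases, exactly a map of adjunctions in the shape of \cref{eq:parameterizedCat2} but with the opfibration orientation. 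The conclusion of that dual theorem is precisely that the opfibration $P^\op$ is enriched in the monoidal opfibration $T$: the hom-objects $R(\mi,\mi)\colon(\ca{A}^\op)^\op\times\ca{A}^\op\to\ca{V}$ and $S(\mi,\mi)\colon(\ca{X}^\op)^\op\times\ca{X}^\op\to\ca{W}$ assemble into the required commuting square and satisfy the composition and identity compatibilities of \cref{enrichedfibration}.

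Finally, by \cref{def:fibenropfib}, saying that the fibration $P$ is enriched in the monoidal opfibration $T$ \emph{means exactly} that the opfibration $P^\op$ is $T$-enriched, which is what the previous paragraph produced. So the last step is a matter of quoting the definition rather than of further construction. As noted in \cref{rmk:braidingnotneeded}, the fact that $T$ is not assumed braided or symmetric is harmless here: all we need is the $\ca{V}$-enrichment of $\ca{A}^\op$ and the $\ca{W}$-enrichment of $\ca{X}^\op$, and we do not attempt to transport these enrichments back to $\ca{A}$ or $\ca{X}$ themselves.

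The main obstacle I anticipate is bookkeeping rather than mathematical depth: one must be scrupulous about variance when dualizing, since the acting category is $\ca{V}$ (not $\ca{V}^\op$) while the acted-upon object is $\ca{A}^\op$, so the relevant instance of \cite[Thm.~3.11]{EnrichedFibration} is its \emph{opfibration} dual and one must confirm that the direction of the parameterized adjoint in the hypothesis matches the one consumed by that dual theorem. Concretely, the subtle point is verifying that ``$(\pitchfork^\op,\square^\op)$ has a right parameterized adjoint in $\Cat^{\mathbf 2}$'' is the correct hypothesis to trigger the enrichment of the opfibration $P^\op$ (and not some co-enrichment), i.e. that the counits/units of the fibrewise adjunctions and of the base adjunction genuinely lie above one another in the sense of a map of adjunctions. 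Once the variances are pinned down, the remainder is a direct citation of the dual of the cited theorem together with \cref{def:fibenropfib}.
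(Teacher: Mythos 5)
Your proposal is correct and follows essentially the same route as the paper: the paper's own (sketch) proof likewise reduces via \cref{def:fibenropfib} to enriching the opfibration $P^\op$ in $T$, obtains the enrichments of $\ca{A}^\op$ in $\ca{V}$ and $\ca{X}^\op$ in $\ca{W}$ from the parameterized adjoints (as in \cref{thm:cotensorenrich}), and uses the map-of-adjunctions condition in $\Cat^{\mathbf 2}$ to get $T\circ\ca{A}^\op(\mi,\mi)=\ca{X}^\op(P\mi,P\mi)$ and the compatibility of composition and identities. Your attention to the variance bookkeeping and to \cref{rmk:braidingnotneeded} matches the paper's framing of the statement as the dual of \cite[Thm.~3.11]{EnrichedFibration} for a monoidal opfibration acting on the opposite of a fibration.
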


\begin{proof}[Sketch]
By \cref{def:fibenropfib} we need to show that the opfibration $P^\op$ is enriched in the monoidal opfibration $T$. 
The adjunctions $(\mi\pitchfork^\op A)\dashv R(A,\mi)$ and $(\mi\square^\op X)\dashv S(X,\mi)$ give rise to an enrichment of $\ca{A}^\op$ in $\ca{V}$ and 
of $\ca{X}^\op$ in $\ca{W}$ (as well as of $\ca{A}$ and $\ca{X}$ in case $T$ is braided, by \cref{thm:cotensorenrich})
in such a way that $T\circ\ca{A}^\op(\mi,\mi)=\ca{X}^\op(P\mi,P\mi)$ by the parameterized adjunction condition.
Finally, the corresponding composition and identities of enriched categories are compatible, because their adjuncts are. Therefore all clauses of \cref{enrichedfibration} are satisfied.
\end{proof}

As a result, we obtain the following result (see \cite[Prop.~4.1]{EnrichedFibration}\footnote{\label{note1}Symmetry 
assumptions from the references are adjusted according to \cref{rmk:braidingnotneeded}; braiding in $\ca{V}$ is merely required for $\Comon(\ca{V})$ 
and $\Comod_\ca{V}$ to be monoidal categories.}), formally describing \cref{pic:1}.

\begin{thm}\label{thm:ModenrichedComodfib}
For a braided monoidal closed and locally presentable $\ca{V}$, the fibration $\Mod_\ca{V}\to\Mon(\ca{V})$ is enriched in the monoidal 
opfibration $\Comod_\ca{V}\to\Comon(\ca{V})$.
\end{thm}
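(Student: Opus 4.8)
The plan is to read the statement as a direct application of the action-induced enrichment \cref{thm:enrichedfib}, with the monoidal opfibration $T\colon\ca{V}\to\ca{W}$ instantiated as $\Comod_\ca{V}\to\Comon(\ca{V})$ and the fibration $P\colon\ca{A}\to\ca{X}$ instantiated as $\Mod_\ca{V}\to\Mon(\ca{V})$. Essentially all the required ingredients have already been assembled in the preliminaries, so the task is to check that they slot precisely into the hypotheses of that theorem; no genuinely new construction is needed.

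First I would record that $T=\bigl(\Comod_\ca{V}\to\Comon(\ca{V})\bigr)$ really is a monoidal opfibration: this is exactly \cref{ex:monoidalfibs}, which uses only that $\ca{V}$ is braided (so that $\Comod_\ca{V}$ and $\Comon(\ca{V})$ are monoidal and their tensor preserves cocartesian liftings). Next I would exhibit the action of $T$ on $P^\op$. The required opfibred $1$-cell $(\pitchfork^\op,\square^\op)$ is nothing but the opposite of the internal-hom square \cref{eq:HomMod}: on total categories $\pitchfork=[\mi,\mi]\colon\Comod_\ca{V}^\op\times\Mod_\ca{V}\to\Mod_\ca{V}$ of \cref{eq:HomMod0}, and on bases $\square=[\mi,\mi]\colon\Comon(\ca{V})^\op\times\Mon(\ca{V})\to\Mon(\ca{V})$ of \cref{defMon[]}. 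That both are honest actions, and that the action constraints upstairs lie over those downstairs in the sense of \cref{Trepresentation}, is the content of \cref{ex:actions} together with \cite[Prop.~4.9]{Measuringcomodule}.

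The one substantive hypothesis remaining is the existence of a right parameterized adjoint to $(\pitchfork^\op,\square^\op)$ in $\Cat^\mathbf{2}$. Here I would appeal directly to \cref{prop:HomModadjoints}: the total-category adjunction $[\mi,N_B]^\op\dashv S(\mi,N_B)\colon\Mod_\ca{V}^\op\to\Comod_\ca{V}$, paired with the base-category adjunction $[\mi,B]^\op\dashv R(\mi,B)$ of \cref{eq:meascoalg}, is exactly the cotensor/hom adjoint demanded by the theorem. The resulting enriching hom-objects are Batchelor's universal measuring comodules $S(M_A,N_B)$, sitting over the universal measuring comonoids $R(A,B)$. (Only this right parameterized adjoint is needed, since — as noted in the excerpt — enriched fibrations carry no tensored/cotensored refinement; the further adjunction $X_C\oslash\mi\dashv[X_C,\mi]$ is not required for the statement.)

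The point on which the real work sits, and which I would flag as the main obstacle, is verifying that this adjoint data assembles into a morphism in $\Cat^\mathbf{2}$: that the total-level adjunction maps down to the base-level one as a \emph{map of adjunctions}, i.e. the dual of the compatibility \cref{eq:parameterizedCat2}, with units and counits lying above one another. This compatibility is not automatic from the bare existence of the two adjoints; however, it is guaranteed by the manner in which $S$ and $R$ were produced, namely as (op)fibred adjoints through \cref{thm:totaladjointthm} and its dual. By that construction the relevant $(co)$units are built fibrewise over the base $(co)$units, so the $\Cat^\mathbf{2}$-condition holds on the nose. With this in place \cref{thm:enrichedfib} applies verbatim and yields that $\Mod_\ca{V}\to\Mon(\ca{V})$ is enriched in the monoidal opfibration $\Comod_\ca{V}\to\Comon(\ca{V})$. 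Finally I would note, per \cref{rmk:braidingnotneeded}, that although the classical reference and the global statement \cref{thm:ModenrichedComod} assume symmetry, here braiding suffices: symmetry of $\ca{V}$ was only ever used to promote the one-sided enrichment of $\ca{A}^\op$ to a two-sided enrichment of $\ca{A}$, which the fibration-enrichment statement does not require.
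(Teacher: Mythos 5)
Your proposal is correct and follows exactly the route the paper intends: the theorem is presented as an instance of \cref{thm:enrichedfib} with the monoidal opfibration from \cref{ex:monoidalfibs}, the action from \cref{ex:actions}, and the parameterized adjoint from \cref{prop:HomModadjoints}, whose compatibility in $\Cat^\mathbf{2}$ is indeed guaranteed by the construction via \cref{thm:totaladjointthm}. Your closing observation that braiding suffices is also precisely the adjustment the paper flags in its footnote referencing \cref{rmk:braidingnotneeded}.
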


\section{Monads and comonads in double categories, revisited}\label{sec:moncomondouble}

This chapter is mixed, in that it provides some general background on double categories, fibrant and monoidal structures, and recalls 
some results about monads and comonads in double categories appearing in \cite{VCocats}. However, it goes much further: the notion of a 
monoidal closed double category is settled, the notion of a locally presentable double category is introduced, and certain parts of the 
development in loc. cit. are much simplified by abstracting some results from the double category $\VMMat$ to a general double category $\dc{D}$.
Its end result, \cref{thm:big1}, generalizes Sweedler theory for (co)algebras (\cref{thm:MonenrichedComon}) from monoidal categories to double categories.

\subsection{Double categories}\label{sec:doublecats}

We recall some basic material from the theory of double categories, following references like \cite{Limitsindoublecats,Framedbicats}.
Moreover, we introduce \emph{parallel limits} and \emph{colimits} in double categories (\cref{def:parallelcolimits}) and study some equivalent formulations in terms of 
horizontal composition in fibrant double categories.

\begin{defi}\label{def:doublecats}
	A \emph{(pseudo) double category} $\dc{D}$
	consists of a category of objects $\dc{D}_0$ and
	a category of arrows $\dc{D}_1$, with identity, source and target, composition structure
	functors 
	\begin{displaymath}
		\B{1}\colon\dc{D}_0\to\dc{D}_1,\quad 
		\Gr{s},\Gr{t}\colon\dc{D}_1\rightrightarrows\dc{D}_0,\quad
		\odot\colon\dc{D}_1{\times_{\dc{D}_0}}\dc{D}_1\to\dc{D}_1
	\end{displaymath}
	such that
	$\Gr{s}(1_X)$=$\Gr{t}(1_X)$=$X,\;\Gr{s}(M\odot N)$=$\Gr{s}(N),\;
	\Gr{t}(M\odot N)$=$\Gr{t}(M)$
	for all $X\in\ob\dc{D}_0$ and $M,N\in\ob\dc{D}_1$,
	equipped with natural isomorphisms
	$a\colon(M\odot N)\odot P\simrightarrow M\odot(N\odot P)$, $\ell\colon1_{\Gr{s}(M)}\odot M\simrightarrow M$, $r\colon M\odot1_{\Gr{t}(M)}\simrightarrow M$
	in $\dc{D}_1$ such that 
	$\Gr{t}(a),\Gr{s}(a),\Gr{t}(\ell),\Gr{s}(\ell),
	\Gr{t}(r),\Gr{s}(r)$ are all
	identities, and satisfying the usual coherence conditions. 
\end{defi}

To set some terminology and notation, the objects and morphisms of $\dc{D}_0$ are the \emph{0-cells}  and 
\emph{vertical 1-cells} $f\colon X\to Y$. The objects of $\dc{D}_1$ are the \emph{horizontal 1-cells}
$M\colon X\bular Y$, and the morphisms of $\dc{D}_1$ are the 
\emph{2-morphisms}
\begin{equation}\label{2morphism}
	\bultwocell{X}{M}{Y}{g}{W}{N}{Z}{f}{\alpha}
\end{equation}
or $^f\alpha^g:M\Rightarrow N$.
Strict (vertical) identities are $\mathrm{id}_X:X\to X$ and $\id_M:M\Rightarrow M$,
and horizontal units are $1_X\colon X\bular X$ and
$1_f:1_X\Rightarrow 1_Y$.
A 2-morphism with identity source and target vertical 1-cell, like $a,\ell,r$ above, is called  \emph{globular}.

The \emph{opposite} double category $\dc{D}^\op$ is the double category with category of objects $\dc{D}_0^\op$ and category of arrows $\dc{D}^\op_1$.
For every double category $\dc{D}$, there is a corresponding \emph{horizontal bicategory} $\ca{H}(\dc{D})$ which essentially comes from 
discarding its vertical structure: it consists of the 0-cells, horizontal 1-cells and globular 2-morphisms. In fact, many bicategories that 
are of interest can be seen as the horizontal bicategory of suitable double categories.

We denote by $^X\dc{D}_1$ and $\dc{D}_1^Y$ the subcategories of $\dc{D}_1$ of fixed-domain or fixed-codomain horizontal 1-cells, and 2-morphisms with identity domain or codomain vertical arrows respectively, namely
\begin{displaymath}
	\begin{tikzcd}
		X\ar[r,bul,"M"]\ar[d,equal]\ar[dr,phantom,"\Two"] & Y\ar[d,"g"] \\
		X\ar[r,bul,"N"'] & W
	\end{tikzcd}\qquad\mathrm{or}\qquad
	\begin{tikzcd}
		X\ar[r,bul,"M"]\ar[d,"f"']\ar[dr,phantom,"\Two"] & Y\ar[d,equal] \\
		Z\ar[r,bul,"N"'] & Y.
	\end{tikzcd}
\end{displaymath}
Furthermore, the subcategories $^X\dc{D}_1^Y$ with fixed-domain and fixed-codomain horizontal 1-cells together with globular 2-morphisms coincide with the
hom-categories of the horizontal bicategory $\ca{H}(\dc{D})(X,Y)$.

Also, we denote by $\dc{D}_1^\bullet$ the subcategory of $\dc{D}_1$ of horizontal endo-1-cells $M\colon X\bular X$ and 2-morphisms with the same 
source and target $^f\alpha^f$, namely the equalizer of $\Gr{s},\Gr{t}\colon\dc{D}_1\rightrightarrows\dc{D}_0$. As 
it will be useful to what follows, notice that as for all equalizers, $\dc{D}_1^\bullet$ can be equivalently written as the pullback
\begin{equation}\label{eq:D1bulpul}
	\begin{tikzcd}
		\dc{D}_{1}^{\bullet}\ar[r]\ar[d]\ullimit & \dc{D}_{1}\ar[d,"{\langle \Gr{s},\Gr{t}\rangle}"] \\
		\dc{D}_{0}\ar[r,"\Delta"'] & \dc{D}_{0}\times\dc{D}_{0}.
	\end{tikzcd}
\end{equation}

Well-known examples of double categories include $\dc{S}\nc{pan}(\ca{C})$ for a category with pullbacks $\ca{C}$ whose horizontal 1-cells are spans,
$\dc{R}\nc{el}(\ca{C})$ for a regular category $\ca{C}$ whose horizontal 1-cells are relations, $\dc{B}\nc{Mod}$ whose horizontal 1-cells are ring 
bimodules, and $\VPProf$ whose horizontal 1-cells are $\ca{V}$-profunctors between $\ca{V}$-categories. The latter's `discrete' version, the double 
category of enriched matrices, is the central example of interest to us later; we elaborate below.

\begin{ex}\label{ex:VMMat}
	There is a double category $\VMMat$, where $\ca{V}$ is a monoidal category with small coproducts preserved by $\mi\otimes\mi$ in both variables.
	Its horizontal bicategory was studied since \cite[\S~6.2]{Distributeurs} or 
	generalized in \cite{Varthrenr} for the theory of bicategory-enriched categories.
	
	In more detail, the vertical category $\VMMat_0$ is just $\nc{Set}$, whereas 
	the horizontal 1-cells are $\ca{V}$-matrices $S\colon X\tickar Y$, namely functors $S\colon Y\times X\to\ca{V}$ where $Y\times X$ is viewed 
	as a discrete category; equivalently, these are families of objects $\{S(y,x)\}_{(y,x)\in Y\times X}$ in $\ca{V}$,
	sometimes also denoted $\{S_{y,x}\}$. The 2-morphisms $^f\alpha^g\colon S\Rightarrow T$ are natural transformations
	\begin{displaymath}
		\begin{tikzcd}[row sep=.1in,baseline=2ex]
			Y\times X\ar[rr,bend left,"S"]\ar[rr,phantom,"\Two\alpha"]\ar[dr,bend right=5,"g\times f"'] && \ca{V} \\
			& Z\times W\ar[ur,bend right=5,"T"'] &
		\end{tikzcd}
	\end{displaymath}
	given by families of arrows $\alpha_{y,x}\colon S(y,x)\to T(g(y),f(x))$ in $\ca{V}$, for all $x\in X$ and $y\in Y$.
	The identity $\ca{V}$-matrix $1_X\colon X\tickar X$  is given by $1_X(x',x)=I$ if $x=x'$ or $0$ otherwise,
	and the horizontal composition functor maps two composable $\ca{V}$-matrices $T\colon Y\tickar Z$ and $S\colon X\tickar Y$ to
	$T\circ S\colon X\tickar Z$ given by
	\begin{displaymath}
		(T\circ S)(z,x)=\sum_{y\in Y} T(z,y)\otimes S(y,x).
	\end{displaymath}
	More details can be found in the provided references or \cite[\S~4.1]{VCocats}, although we here use the opposite convention when defining 
$\ca{V}$-matrices $X\tickar Y$ as functors from $Y\times X$ rather than $X\times Y$. 
The category $\VMMat_1^\bullet$ is the category $\VGrph$ of $\ca{V}$-graphs.
\end{ex}

\begin{defi}\label{defi:doublefunctor}
	For $\dc{D}$ and $\dc{E}$ double categories, a \emph{(pseudo) double functor} $F\colon\dc{D}\to\dc{E}$
	consists of functors $F_0:\dc{D}_0\to\dc{E}_0$ and $F_1:\dc{D}_1\to\dc{E}_1$ such that $\Gr{s}\circ F_1=F_0\circ\Gr{s}$ 
	and $\Gr{t}\circ F_1=F_0\circ\Gr{t}$, and natural transformations $\phi$, $\phi_0$ 
	with components globular isomorphisms
	$F_1M\odot F_1N\simrightarrow F_1(M\odot N)\textrm{ and }1_{F_0X}\simrightarrow F_1(1_X)$
	which satisfy coherence axioms. If these isomorphisms are non-invertible in 
	that or the reverse direction, we have the notion of a \emph{lax} or \emph{oplax} double functor.
\end{defi}
We usually drop indices $0,1$ and write $FM\colon FX\bular FY$ and $^{Ff}(F\alpha)^{Fg}\colon FM\Rightarrow FN$.

In most interesting examples of double categories, vertical 1-cells can be turned to horizontal 1-cells in a canonical way, leading to the theory of 
fibrant double categories or framed bicategories \cite{Framedbicats}.

\begin{defi}\label{def:compconj}
	Let $\dc{D}$ be a double category and $f:X\to Y$
	a vertical 1-cell. A \emph{companion}, resp. \emph{conjoint} of $f$
	is a horizontal 1-cell $\wh{f}\colon X\bular Y$, resp. $\wc{f}\colon Y\bular X$ together with
	2-morphisms
	\begin{displaymath}
		\begin{tikzcd}[ampersand replacement=\&,sep=.3in]
			X\ar[r,bul,"\wh{f}"]\ar[d,"f"']\ar[dr,phantom,"\Two{p_1}"] \& Y\ar[d,equal] \\
			Y\ar[r,bul,"{1_Y}"'] \& Y
		\end{tikzcd}\qquad
		\begin{tikzcd}[ampersand replacement=\&,sep=.3in]
			X\ar[r,bul,"1_X"]\ar[d,equal]\ar[dr,phantom,"\Two{p_2}"] \& X\ar[d,"f"] \\
			X\ar[r,bul,"{\wh{f}}"'] \& Y
		\end{tikzcd}\;\;\textrm{ resp. }\;\; \begin{tikzcd}[ampersand replacement=\&,sep=.3in]
			Y\ar[r,bul,"\wc{f}"]\ar[d,equal]\ar[dr,phantom,"\Two{q_1}"] \& X\ar[d,"f"] \\
			Y\ar[r,bul,"{1_Y}"'] \& Y
		\end{tikzcd}
		\qquad
		\begin{tikzcd}[ampersand replacement=\&,sep=.3in]
			X\ar[r,bul,"1_X"]\ar[d,"f"']\ar[dr,phantom,"\Two{q_2}"] \& X\ar[d,equal] \\
			Y\ar[r,bul,"\wc{f}"'] \& X
		\end{tikzcd}
	\end{displaymath}
	such that $p_1p_2=1_f$ and $p_1\odot p_2\cong1_{\wh{f}}$, resp. $q_1q_2=1_f$ and $q_2\odot q_1\cong 1_{\wc{f}}$.
\end{defi}
A \emph{fibrant double category} then is a double category
for which every vertical 1-morphism has a companion and a conjoint.
Equivalently, it is a double category $\dc{D}$ such that the functor
\begin{equation}\label{eq:stbifibration}
	\langle\Gr{s},\Gr{t}\rangle\colon\dc{D}_1\longrightarrow\dc{D}_0\times\dc{D}_0
\end{equation}
is a fibration, or equivalently an opfibration. For example, the cartesian liftings for any horizontal map $N\colon Z\bular W$ and vertical maps $f\colon X\to Z,g\colon Y\to W$
are of the form
\begin{equation}\label{eq:D1targetlifts}
\begin{tikzcd}
X\ar[r,bul,"\wh{f}"]\ar[d,"f"']\ar[dr,phantom,"\Two p_1"] & Z\ar[r,bul,"N"]\ar[d,equal] & W\ar[d,equal]\ar[r,bul,"\wc{g}"]\ar[dr,phantom,"\Two q_2"] 
& Y\ar[d,"g"] \\
Z\ar[r,bul,"1_X"'] & Z\ar[r,bul,"N"'] & W\ar[r,bul,"1_W"'] & W.
\end{tikzcd}
\end{equation} 
By appropriately pasting the structure 2-morphisms $p_i,q_j$ 
of \cref{def:compconj} at the sides of any 2-morphism $^f\alpha^g$ \cref{2morphism},
we can verify bijections with globular 2-morphisms
\begin{equation}\label{eq:glob2map}
	\begin{tikzcd}
		X\ar[r,bul,"M"]\ar[d,equal]\ar[drr,phantom,"\Two\wh{\alpha}"] & Y\ar[r,bul,"\wh{g}"] & W\ar[d,equal] \\
		X\ar[r,bul,"\wh{f}"'] & Z\ar[r,bul,"N"'] & W
	\end{tikzcd}\quad
	\begin{tikzcd}
		Z\ar[r,bul,"\wc{f}"]\ar[d,equal]\ar[drr,phantom,"\Two\wc{\alpha}"] & X\ar[r,bul,"M"] & Y\ar[d,equal] \\
		Z\ar[r,bul,"N"'] & W\ar[r,tick,"\wc{g}"'] & Y
	\end{tikzcd}
\end{equation}
\begin{displaymath}
	\begin{tikzcd}
		X\ar[d,equal]\ar[rrr,bul,"M"]\ar[drrr,phantom,"\Two"] &&& Y\ar[d,equal] \\
		X\ar[r,bul,"\wh{f}"'] & Z\ar[r,bul,"N"'] & W\ar[r,bul,"\wc{g}"'] & Y
	\end{tikzcd}
	\quad
	\begin{tikzcd}
		Z\ar[d,equal]\ar[r,bul,"\wc{f}"]\ar[drrr,phantom,"\Two"] & X\ar[r,bul,"M"] & Y\ar[r,bul,"\wh{g}"] & W\ar[d,equal] \\
		Z\ar[rrr,bul,"N"'] &&& W
	\end{tikzcd}
\end{displaymath}
The following lemma recalls some properties of companions and conjoints; proofs can be found in \cite{Adjointfordoublecats,ConstrSymMonBicatsFun}.

\begin{lem}\label{lem:fibrantproperties}
	Suppose $f\colon X\to Y$ is a vertical morphism in a fibrant double category $\dc{D}$.
	 \begin{enumerate}[(i),ref=\thedefi(\roman*)]
		\item Companions and conjoints of $f$ are unique up to (unique) globular isomorphism.
		\item If $f$ is an isomorphism, its structure 2-morphisms $p_1,p_2,q_1,q_2$ are invertible.\label{fibr-ii}
		\item There is an adjunction $\wh{f}\dashv\wc{f}$ in the horizontal bicategory $\ca{H}(\dc{D})$. It becomes an adjoint equivalence when $f$ 
is an isomorphism.
		\item For two composable vertical 1-cells, $\wh{g}\odot\wh{f}$ is a companion and $\wc{f}\odot\wc{g}$ is a conjoint of $gf$.
		\item If $F\colon\dc{D}\to\dc{E}$ is a double functor, $F(\wh{f})$ is a companion and $F(\wc{f})$ is a conjoint of $Ff$.
	\end{enumerate}
\end{lem}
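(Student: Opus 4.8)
The statement being proved is \cref{lem:fibrantproperties}; the proofs are standard and available in the cited references, so I will only indicate the strategy. The plan is to treat the five clauses one at a time, switching freely between the two-dimensional pasting calculus of the defining 2-morphisms $p_1,p_2,q_1,q_2$ of \cref{def:compconj} and the equivalent description of fibrancy via the (op)fibration $\langle\Gr{s},\Gr{t}\rangle\colon\dc{D}_1\to\dc{D}_0\times\dc{D}_0$ from \cref{eq:stbifibration}, whichever is more economical. The only inputs will be the defining equations $p_1p_2=1_f$, $p_1\odot p_2\cong 1_{\wh f}$ and, dually, $q_1q_2=1_f$, $q_2\odot q_1\cong 1_{\wc f}$, together with the unitors and associators of $\dc{D}$.

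For (i), given two companions $(\wh f,p_1,p_2)$ and $(\wh f',p_1',p_2')$ of the same $f$, I would build a globular comparison $\theta\colon\wh f\Rightarrow\wh f'$ as the horizontal paste $p_1\odot p_2'$: the two middle copies of $f$ are composable and the two outer legs are identities, so after composing with the unit isomorphisms $\wh f\odot 1_X\cong\wh f$ and $1_Y\odot\wh f'\cong\wh f'$ one obtains a globular 2-morphism. Building $\theta'$ symmetrically and rewriting the composites $\theta'\theta$, $\theta\theta'$ using $p_1\odot p_2\cong 1_{\wh f}$, $p_1p_2=1_f$ (for each companion) forces them to be identities; the same equations pin down $\theta$ as the unique globular iso compatible with the structure cells. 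The conjoint case is formally dual via $q_1,q_2$.

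Clause (ii) is cleanest through the fibration characterization: as displayed in \cref{eq:D1targetlifts}, each of $p_1,p_2,q_1,q_2$ is (built from) a (co)cartesian lifting of a vertical morphism of the form $(f,\id)$ or $(\id,f)$ in $\dc{D}_0\times\dc{D}_0$; since $f$ is invertible these base maps are isomorphisms, and (co)cartesian liftings of isomorphisms in the base of a(n) (op)fibration are themselves isomorphisms in $\dc{D}_1$. For (iii), I would exhibit $\wh f\dashv\wc f$ in $\ca{H}(\dc{D})$ with the standard unit $1_X\cong 1_X\odot 1_X \Rightarrow \wc f\odot\wh f$ obtained from $q_2\odot p_2$ and counit $\wh f\odot\wc f\Rightarrow 1_Y\odot 1_Y\cong 1_Y$ obtained from $p_1\odot q_1$ (in each case the middle legs $f$ cancel and the outer legs are identities, yielding globular cells). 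The two triangle identities are then verified by expanding these pastes and rewriting with $p_1\odot p_2\cong 1_{\wh f}$, $q_2\odot q_1\cong 1_{\wc f}$ and the interchange law. When $f$ is an isomorphism, combining with (ii) makes the structure cells invertible, hence unit and counit invertible, upgrading the adjunction to an adjoint equivalence. This triangle-identity bookkeeping is where the calculation is heaviest, and is the main obstacle of the lemma.

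For (iv), from companions $(\wh f,p_1,p_2)$ of $f$ and $(\wh g,\bar p_1,\bar p_2)$ of $g$ I would assemble structure cells $P_1\colon\wh g\odot\wh f\Rightarrow 1_Z$ and $P_2\colon 1_X\Rightarrow\wh g\odot\wh f$ for $gf$ by pasting $\bar p_1$ with $p_1$ and $\bar p_2$ with $p_2$ respectively; the equations $P_1P_2=1_{gf}$ and $P_1\odot P_2\cong 1_{\wh g\odot\wh f}$ reduce to those for $f$ and $g$ via functoriality of $\odot$ and interchange, and the conjoint statement for $\wc f\odot\wc g$ is dual. Finally (v) uses that a pseudo double functor $F$ carries $p_1,p_2$ to $Fp_1,Fp_2$ and preserves vertical composition and identities: adjusting by the coherence isomorphisms $\phi\colon FM\odot FN\cong F(M\odot N)$ and $\phi_0\colon 1_{FX}\cong F(1_X)$, one checks $(\phi_0^{-1}\cdot Fp_1)\,(Fp_2\cdot\phi_0)=F(p_1p_2)=1_{Ff}$ and, via $\phi$, $(\phi_0^{-1}\cdot Fp_1)\odot(Fp_2\cdot\phi_0)\cong F(p_1\odot p_2)\cong 1_{F\wh f}$, so $F\wh f$ is a companion of $Ff$; the conjoint case is again dual. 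All five arguments are routine two-dimensional manipulations once the orientations are fixed, with only (iii) requiring genuine care.
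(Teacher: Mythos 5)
Your proposal is correct, but note that the paper does not actually prove this lemma: it is stated as a recollection of known facts, with the proofs deferred to the cited references \cite{Adjointfordoublecats,ConstrSymMonBicatsFun}. Your sketches reproduce exactly the standard arguments from those sources, and all the orientations check out against the paper's conventions (recall $\Gr{s}(M\odot N)=\Gr{s}(N)$, so in $p_1\odot p_2'$ the cell $p_2'$ is applied first): the comparison cell $\wh f\cong\wh f\odot 1_X\xRightarrow{p_1\odot p_2'}1_Y\odot\wh f'\cong\wh f'$ for (i), the unit $q_2\odot p_2$ and counit $p_1\odot q_1$ for (iii), the vertically-then-horizontally pasted structure cells for (iv), and the $\phi_0$-corrected cells for (v) are precisely the ones used there. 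Two small points worth making explicit if you were to write this out. For (ii), the identification of $p_1,q_1$ as cartesian lifts of $(f,\id)$, $(\id,f)$ at the relevant horizontal units (and dually of $p_2,q_2$ as cocartesian lifts) is exactly what the paper records in \cref{eq:D1targetlifts}, so the ``(co)cartesian lifts of isomorphisms are isomorphisms'' argument is legitimate; alternatively one can argue directly that $p_1$ and $q_1$ for $f$ are, up to the canonical isomorphisms of (i), the cells $p_2$ and $q_2$ for $f^{-1}$. For (v), your argument genuinely uses invertibility of $\phi_0$ (and $\phi$), so it applies to pseudo double functors as in \cref{defi:doublefunctor} but not to merely lax or oplax ones; this matches the statement, which only claims the result for double functors.
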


\begin{ex}
	Given any regular category $\ca{C}$, the double category $\dc{R}\nc{el}(\ca{C})$ is fibrant. Indeed, any morphism $f\colon X\to Y\in\ca{C}$ 
defines a relation $f\colon X\bular Y$ via its graph and also a relation $f^{\circ}\colon Y\bular X$ which is the opposite of the latter. In this 
case the 2-cells $p_{1},p_{2}$ are identities, while $q_{1},q_{2}$ are respectively the inclusions $ff^{\circ}\subseteq 1_{Y}$ and $1_{X}\subseteq 
f^{\circ}f$ expressing the familiar adjunction $f\dashv f^{\circ}$.
\end{ex}

\begin{ex}\label{ex:VMMatfibrant}
	The double category $\VMMat$ from \cref{ex:VMMat} is fibrant. Indeed, any function $f:X\to Y$ canonically determines two 
	$\ca{V}$-matrices $f_*\colon X\tickar Y$ and $f^*\colon Y\tickar X$ given by
	\begin{displaymath}
		f_*(y,x)=f^*(x,y)=\begin{cases}
			I,\quad \mathrm{if  }\;f(x)=y\\
			0,\quad \mathrm{ otherwise}
		\end{cases}
	\end{displaymath}
	with appropriate structure 2-cells, see e.g. \cite[Eq.~37]{VCocats}.
\end{ex}

The following observations are useful in what comes next, but also later in \cref{sec:ModsComodsdoublecats}. 

\begin{prop}\label{globalvslocal}
Suppose $\dc{D}$ is a fibrant double category. Then $\Gr{s}$ and $\Gr{t}$ are bifibrations $\dc{D}_1\to\dc{D}_0$, that restrict to bifibrations 
$\Gr{s}\colon\dc{D}^Y_1\to\dc{D}_0$ and $\Gr{t}\colon^X\dc{D}_1\to\dc{D}_0$ for any object $X,Y$. 
\end{prop}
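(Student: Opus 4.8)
The plan is to derive the whole statement from one fact already recorded above---that fibrancy makes $\langle\Gr{s},\Gr{t}\rangle\colon\dc{D}_1\to\dc{D}_0\times\dc{D}_0$ simultaneously a fibration and an opfibration, hence a bifibration---combined with two standard closure properties of (bi)fibrations: they are closed under composition, and they are stable under change of base (pullback). With these in hand almost no lift needs to be built by hand.

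For the unrestricted functors I would first note the factorizations $\Gr{s}=\pi_1\circ\langle\Gr{s},\Gr{t}\rangle$ and $\Gr{t}=\pi_2\circ\langle\Gr{s},\Gr{t}\rangle$, with $\pi_1,\pi_2\colon\dc{D}_0\times\dc{D}_0\to\dc{D}_0$ the projections. Each product projection is a split bifibration: the cartesian lift of $k\colon X'\to X''$ at $(X'',Z)$, and likewise the cocartesian lift at $(X',Z)$, is $(k,1_Z)$, so the fibres are copies of $\dc{D}_0$ and every reindexing functor is an identity. Since a composite of fibrations is a fibration and dually for opfibrations, a composite of bifibrations is a bifibration, whence both $\Gr{s}$ and $\Gr{t}$ are bifibrations. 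Tracing the composite cleavage recovers the expected explicit lifts: the $\Gr{s}$-cartesian lift of $f\colon X\to Z$ at $N\colon Z\bular W$ is the companion-induced $2$-morphism onto $N$ obtained by pasting the $p_1$ of \cref{def:compconj}, i.e. the source-leg instance of \cref{eq:D1targetlifts} with $g=1_W$ (so that $\wc{1_W}\cong 1_W$); dually the $\Gr{s}$-cocartesian lift is built from $q_2$.

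For the restricted functors I would exhibit $\dc{D}_1^Y$ and $^X\dc{D}_1$ as pullbacks of $\langle\Gr{s},\Gr{t}\rangle$ along the two embeddings of $\dc{D}_0$ into $\dc{D}_0\times\dc{D}_0$. Concretely, pulling back along $(\mi,Y)\colon\dc{D}_0\to\dc{D}_0\times\dc{D}_0$, $X'\mapsto(X',Y)$, $k\mapsto(k,1_Y)$, the objects of the pullback are the horizontal $1$-cells $M$ with $\Gr{t}(M)=Y$ and its morphisms the $2$-morphisms $\alpha$ with $\Gr{t}(\alpha)=1_Y$---exactly the fixed-codomain, identity-right-leg data defining $\dc{D}_1^Y$---while the pullback projection to $\dc{D}_0$ sends $M$ to $\Gr{s}(M)$, so it is precisely $\Gr{s}|_{\dc{D}_1^Y}$. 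Symmetrically, pulling back along $(X,\mi)$ yields ${}^X\dc{D}_1$ with projection $\Gr{t}|_{{}^X\dc{D}_1}$. As fibrations and opfibrations are each stable under pullback, both restricted functors inherit the bifibration structure.

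I expect the only delicate point to be the bookkeeping in the last paragraph: checking that the fixed-domain/codomain-with-identity-leg conditions defining $\dc{D}_1^Y$ and ${}^X\dc{D}_1$ correspond exactly to the fibre-type conditions $\Gr{t}(\alpha)=1_Y$, resp. $\Gr{s}(\alpha)=1_X$, cut out by the pullback, and that the surviving projection is the intended source, resp. target, functor rather than its twin. Everything else is a direct invocation of the closure properties quoted above, so no new cartesian-lift verification is required. Should one prefer to avoid those closure lemmas, an equivalent route is to verify cartesianness and cocartesianness of the explicit companion/conjoint lifts by hand, using the identities $p_1p_2=1_f$, $p_1\odot p_2\cong 1_{\wh{f}}$ and the conjoint analogues $q_1q_2=1_f$, $q_2\odot q_1\cong 1_{\wc{f}}$ together with the globular bijections of \cref{eq:glob2map}; but the change-of-base argument is considerably shorter.
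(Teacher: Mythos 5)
Your proof is correct, and the first half is essentially the paper's argument verbatim: both of you factor $\Gr{s}$ and $\Gr{t}$ as $\pi_i\circ\langle\Gr{s},\Gr{t}\rangle$, note that fibrancy makes \cref{eq:stbifibration} a bifibration and that projections are bifibrations, and invoke closure of (op)fibrations under composition. Where you diverge is the restriction to $\dc{D}_1^Y$ and ${}^X\dc{D}_1$: the paper argues directly that these subcategories are closed under the (co)cartesian lifts of the ambient bifibrations --- e.g.\ the $\Gr{t}$-cocartesian lift $\wh{g}\odot M$ has the same source as $M$ and is the identity on the left leg, so it stays in ${}^X\dc{D}_1$ --- whereas you realize the subcategories as strict pullbacks of $\langle\Gr{s},\Gr{t}\rangle$ along the embeddings $(\mi,Y)$ and $(X,\mi)$ and invoke stability of (op)fibrations under change of base. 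Your identification of the pullback with the fixed-codomain (resp.\ fixed-source) subcategory is correct, and the induced cleavage agrees (up to the harmless isomorphism $\wc{1_W}\odot N\cong N$) with the composite cleavage of \cref{eq:D1targetlifts}, so the two routes produce the same lifts. The paper's closure argument is marginally more informative in that it makes explicit that the inclusion ${}^X\dc{D}_1\hookrightarrow\dc{D}_1$ preserves the relevant (co)cartesian morphisms --- a fact used later, e.g.\ in \cref{prop:equivalentdefparcom} --- while your change-of-base argument is more uniform and avoids inspecting the lifts at all; either is acceptable here.
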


\begin{proof}
The source and target functors are separately both bifibrations, formed as
\begin{equation}\label{eq:stbifibrations}
	\begin{tikzcd}
		\dc{D}_1\ar[r,"{\langle\Gr{s},\Gr{t}\rangle}","\cref{eq:stbifibration}"'] & \dc{D}_0\times\dc{D}_0\ar[r,shift left,"\pi_1"]\ar[r,shift 
right,"\pi_2"'] & 
		\dc{D}_0 
	\end{tikzcd}
\end{equation}
since (op)fibrations compose and projections are (op)fibrations in a straightforward way, see e.g. \cite[Prop.~8.1.12\&13]{Handbook2}. 
In more detail, the reindexing functors of $\dc{D}_1\xrightarrow{\Gr{s}}\dc{D}_0$ for some $f\colon X\to Y$ are 
$^X\dc{D}_1\xleftarrow{\mi\odot\wh{f}}{}^Y\dc{D}_1$ for the fibration and $^X\dc{D}_1\xrightarrow{\mi\odot\wc{f}}{}^Y\dc{D}_1$ for the opfibration 
structure, where of course $(\mi\odot\wc{f})\dashv(\mi\odot\wh{f})$. Similarly, the reindexing functors of the bifibration 
$\dc{D}_1\xrightarrow{\Gr{t}}\dc{D}_0$ for $g\colon Z\to W$ are 
\begin{equation}\label{eq:reindexingt}
\begin{tikzcd}
	\dc{D}^Z_1\ar[r,shift left=2,"\wh{g}\odot\mi"]\ar[r,phantom,"\bot"] &
	\dc{D}^W_1.\ar[l,shift left=2,"\wc{g}\odot\mi"]
\end{tikzcd}
\end{equation}
Furthermore, both functors restrict to sub-bifibrations. 
For example, $^X\dc{D}_{1}$ is closed under $\Gr{t}$-cocartesian lifts inside $\dc{D}_1$, 
since $\wh{g}\odot M$ has the same domain as $M$ and the lift is identity on the left. Notice that the fiber above any $Z\in\dc{D}_0$ is 
$^X\dc{D}_1^Z=\ca{H}(\dc{D})(X,Z)$, and similarly for $\Gr{s}$.
\end{proof}

In the present work we will also need to consider (co)limits in various categories arising in the setting of double categories, and so it is 
pertinent to introduce the following definition, of essentially a graph internal in the category of $\ca{I}$-(co)complete categories and 
$\ca{I}$-(co)limit preserving functors. 

\begin{defi}\label{def:parallelcolimits}
Let $\dc{D}$ be a double category and $\ca{I}$ is any small category. We shall say that $\dc{D}$ has \emph{parallel 
$\ca{I}$-(co)limits} if the categories $\dc{D}_0$, $\dc{D}_1$ both have $\ca{I}$-(co)limits and the functors $\Gr{s},\Gr{t}\colon\dc{D}_1\to\dc{D}_0$ 
preserve them. If $\dc{D}$ has parallel $\ca{I}$-(co)limits for any small $\ca{I}$, then we say that it is \emph{parallel 
(co)complete}.
\end{defi}

Regarding terminology, `parallel' indicates that this is not a general notion of double-categorical (co)limit e.g. in the sense of 
\cite{Limitsindoublecats}, but a more specific one concerning the categories $\dc{D}_1$ and $\dc{D}_0$. In particular, the term was adopted
from the special case of parallel (co)products appearing in \cite{Parallelproducts} (mentioned in a talk by Bob Par\'e in 2009).

\begin{prop}\label{ex:parallelcolimitsinVMMat}
The double category $\VMMat$ has parallel coproducts.
If, furthermore, $\ca{V}$ has all colimits and they are preserved by $\otimes$ in each variable, then $\VMMat$ has all parallel colimits. 
\end{prop}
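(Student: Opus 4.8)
The plan is to deduce both statements from the bifibration $\langle\Gr{s},\Gr{t}\rangle\colon\VMMat_1\to\VMMat_0\times\VMMat_0$ furnished by fibrancy, combined with the colimit-dual of \cref{prop:fiberwiselimits}. Since $\VMMat_0=\Set$ is cocomplete, it and the product $\Set\times\Set$ have all small coproducts and indeed all $\ca{I}$-colimits, computed componentwise. Because $\VMMat$ is fibrant (\cref{ex:VMMatfibrant}), the functor of \cref{eq:stbifibration} is a bifibration (\cref{globalvslocal}) whose fiber over $(X,Y)$ is the hom-category $\ca{H}(\VMMat)(X,Y)=\ca{V}^{Y\times X}$ of $\ca{V}$-matrices $X\tickar Y$ together with globular $2$-morphisms.

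First I would note that these fibers inherit colimits from $\ca{V}$: as $Y\times X$ is discrete, $\ca{V}^{Y\times X}$ is a product of copies of $\ca{V}$, hence has pointwise any $\ca{I}$-colimit that $\ca{V}$ has. Under the hypotheses of the first claim this gives coproducts in each fiber, and under those of the second all $\ca{I}$-colimits. (Beyond ensuring, via the standing assumption of \cref{ex:VMMat}, that $\VMMat$ is a well-defined fibrant double category --- for which coproduct-preservation by $\otimes$ already suffices --- the tensor plays no further role in this argument.)

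The crux is that the opreindexing (cobase-change) functors of the bifibration preserve these fibrewise colimits. The decisive point is that in a bifibration the cocartesian liftings assemble into left adjoints $(f,g)_!\dashv(f,g)^*$, so each $(f,g)_!$ automatically preserves whatever colimits exist in the fibers, with no computation required. Concretely $(f,g)_!\cong\wh{g}\odot(\mi)\odot\wc{f}$, whose value at an index $(w,z)$ is the coproduct $\coprod_{(y,x)\in(g\times f)^{-1}(w,z)}S(y,x)$, so the assertion also reduces to colimits commuting with coproducts in $\ca{V}$; but the adjointness makes this transparent. Thus $\langle\Gr{s},\Gr{t}\rangle$ has fibred coproducts, respectively fibred $\ca{I}$-colimits.

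Finally I would invoke the colimit-dual of \cref{prop:fiberwiselimits}: for an opfibration whose base has $\ca{I}$-colimits, possessing fibred $\ca{I}$-colimits is equivalent to the total category having $\ca{I}$-colimits strictly preserved by the projection. Applied to $\langle\Gr{s},\Gr{t}\rangle$ this yields the colimits in $\VMMat_1$ and their preservation by $\langle\Gr{s},\Gr{t}\rangle$; since colimits in $\Set\times\Set$ are componentwise and both projections preserve them, $\Gr{s}$ and $\Gr{t}$ preserve them separately, which is exactly \cref{def:parallelcolimits}. The first statement is the case of discrete $\ca{I}$, using only coproducts in $\ca{V}$; the second is the case of arbitrary small $\ca{I}$, using cocompleteness of $\ca{V}$. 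The only genuinely delicate steps are fixing the direction of the duality and recognising the fibred-colimit condition as left-adjointness of cobase-change; everything else is bookkeeping.
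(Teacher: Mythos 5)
Your proof is correct, but it takes a genuinely different route from the paper. The paper's proof is a bare-hands construction: it exhibits the coproduct of a family of $\ca{V}$-matrices explicitly (coproduct of object-sets, with the ``block'' matrix that is $M_i(x',x)$ on matching indices and $0$ otherwise), and then constructs coequalizers by first coequalizing in $\Set$ and then taking, for each pair of equivalence classes, a coequalizer of sums in $\ca{V}$. You instead run the abstract machinery: fibrancy makes $\langle\Gr{s},\Gr{t}\rangle$ a bifibration with fibers $\ca{V}^{Y\times X}$, which have pointwise colimits; cobase change $(f,g)_!\cong\wh{g}\odot(\mi)\odot\wc{f}$ preserves them because it is a left adjoint; and the colimit-dual of \cref{prop:fiberwiselimits} then assembles the fibrewise colimits into colimits of $\VMMat_1$ preserved by $\langle\Gr{s},\Gr{t}\rangle$, hence by $\Gr{s}$ and $\Gr{t}$ separately. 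All the ingredients you use (\cref{ex:VMMatfibrant}, \cref{globalvslocal}, \cref{prop:fiberwiselimits}) are established independently of the statement, so there is no circularity; in fact the paper itself points out in \cref{rmk:parcocomplete}, via \cref{prop:equivalentdefparcom}, that exactly this kind of argument recovers the proposition. What your route buys is brevity and conceptual clarity --- the only $\ca{V}$-specific input is that $\ca{V}^{Y\times X}$ has pointwise colimits --- at the cost of relying on a dualization the paper only states in the limit form. What the paper's route buys is the explicit formulas for these colimits, which it reuses later (\cref{ex:colimitsinVMMatfibers}, feeding into the accessibility checks of \cref{prop:VMatlp}); your argument would leave those to be extracted afterwards from the general construction in the proof of \cref{prop:fiberwiselimits}.
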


\begin{proof}
The category of objects $\VMMat_0=\Set$ has coproducts, and for a small family of $\ca{V}$-matrices $(M_{i}\colon X_{i}\tickar X'_{i})_{i\in 
I}$ we define their coproduct to be the matrix $\bigsqcup 
M_{i}\colon\bigsqcup X_{i}\tickar\bigsqcup X'_{i}$ given by	
\begin{gather*}
\bigsqcup M_{i}(x',x)\coloneqq
\begin{cases}
	M_{i}(x',x),\quad \mathrm{if }\;x\in X_i, x'\in X'_{i}\\
	0,\quad \mathrm{otherwise }.
\end{cases}
\end{gather*}

There are obvious inclusion 2-morphisms
\begin{displaymath}
\begin{tikzcd}
	X_i\ar[d,"s_i"']\ar[r,bul,"M_i"]\ar[dr,phantom,"\Two\sigma_i"] & X'_{i}\ar[d,"s'_{i}"] \\
	\bigsqcup X_i\ar[r,bul,"\bigsqcup M_{i}"'] & \bigsqcup X'_{i}
\end{tikzcd}
\end{displaymath}
which are given by identity morphisms $M_{i}(x',x)\to M_{i}(x',x)$ in $\ca{V}$, for all $i\in I$ and $x\in X_i$, $x'\in X'_{i}$. It can then be 
verified that $\bigsqcup M_i$ has the required universal property in $\VMMat_1$: for any $\ca{V}$-matrix $N\colon Y\bular Y'$ together 
with 2-morphisms $^{f_{i}}\phi_{i}^{f'_{i}}\colon M_i\Rightarrow N$, we consider the induced functions 
$f\coloneqq(f_i)\colon\bigsqcup X_i\to Y$ and $f'\coloneqq(f'_i)\colon\bigsqcup X'_i\to Y'$ 
and uniquely define a factorization $^{f}\phi^{f'}\colon\bigsqcup M_i\to N$ by setting
\begin{gather*}
	\phi_{x,x'}\coloneqq
	\begin{cases}
		(\phi_i)_{x',x},\quad \mathrm{if }\;x\in X_i, x'\in X'_{i}\\
		0\xrightarrow{!} N(f'_j(x'),f_i(x)),\quad \mathrm{if }\;x\in X_i, x'\in X'_{j}, i\neq j.
	\end{cases}
\end{gather*}

For the second part of the proposition, it suffices to show that under the further assumptions, $\VMMat$ also has parallel coequalizers. Consider a pair 
of morphisms in $\VMMat_1$
\begin{displaymath}
\begin{tikzcd}
	X\ar[r,bul,"A"]\ar[d,"f"']\ar[dr,phantom,"\Two\phi"] & X'\ar[d,"{f'}"] \\
	Y\ar[r,bul,"B"'] & Y'
\end{tikzcd}
\qquad
\begin{tikzcd}
	X\ar[r,bul,"A"]\ar[d,"g"']\ar[dr,phantom,"\Two\psi"] & X'\ar[d,"{g'}"] \\
	Y\ar[r,bul,"B"'] & Y'
\end{tikzcd}
\end{displaymath}
and first form the coequalizers $\begin{tikzcd}
	X\ar[r,shift left=1ex,"f"]\ar[r,shift right,"g"'] & Y\ar[r,two heads,"q"] & Q,
\end{tikzcd}$ $\begin{tikzcd}
	X'\ar[r,shift left=1ex,"{f'}"]\ar[r,shift right,"{g'}"'] & Y'\ar[r,two heads,"{q'}"] & Q'
\end{tikzcd}$ in $\nc{Set}$.
It can be verified that the following 2-morphism is the coequalizer of $\phi,\psi$ in $\VMMat_1$
\begin{displaymath}
\begin{tikzcd}
	Y\ar[r,bul,"B"]\ar[d,"q"']\ar[dr,phantom,"\Two\omega"] & Y'\ar[d,"{q'}"] \\
	Q\ar[r,bul,"C"'] & Q'
\end{tikzcd}
\end{displaymath}
where $C$ and $\omega$ are defined as follows: for each $u\in Q, u'\in Q'$, form the coequalizer in $\ca{V}$
\begin{displaymath}
\begin{tikzcd}
	\sum\limits_{\substack{qf(x)=u \\ q'f'(x')=u'}}A(x',x)\ar[r,shift left=1ex,"\sum\phi_{x,x'}"]\ar[r,shift right=1ex,"\sum\psi_{x,x'}"'] & 
	\sum\limits_{\substack{q(y)=u \\ q'(y')=u'}}B(y',y)\ar[r,two heads,"{\omega(u,u')}"] & C(u',u)
\end{tikzcd}
\end{displaymath}
and for any $y\in Y,y'\in Y'$, $\omega_{y,y'}\colon B(y',y)\to C(q'(y'),q(y))$ is the following composite in $\ca{V}$
\begin{displaymath}
\begin{tikzcd}
	B(y',y)\ar[r] & \sum\limits_{\substack{q(z)=q(y) \\ q'(z')=q'(y')}}B(z',z)\ar[rr,two heads,"{\omega(q(y),q'(y'))}"] && C(q'(y'),q(y)).
\end{tikzcd}
\end{displaymath}
\end{proof}

The level of generality of \cref{def:parallelcolimits} is enough for certain categories of interest to 
us, like (co)monads and (co)modules in double categories, to inherit (co)limits -- see \cref{prop:(co)limits in (co)monads,prop:Mod(D)complete}. 
Those in fact depend on the following result.

\begin{prop}\label{lem:(co)limits in D_1 bullet}
Let $\dc{D}$ be a fibrant double category which has parallel $\ca{I}$-(co)limits, for some small category $\ca{I}$. Then $\dc{D}_{1}^{\bullet}$ has 
$\ca{I}$-(co)limits and the inclusion $\dc{D}_{1}^{\bullet}\to\dc{D}_1$ preserves them.
\end{prop}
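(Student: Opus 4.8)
The plan is to exploit the description of $\dc{D}_1^\bullet$ as the strict pullback \eqref{eq:D1bulpul} of $\langle\Gr{s},\Gr{t}\rangle\colon\dc{D}_1\to\dc{D}_0\times\dc{D}_0$ along the diagonal $\Delta\colon\dc{D}_0\to\dc{D}_0\times\dc{D}_0$, in which the top projection is precisely the inclusion $\iota\colon\dc{D}_1^\bullet\hookrightarrow\dc{D}_1$. I would isolate the general fact that a strict pullback of an \emph{isofibration} along any functor creates $\ca{I}$-(co)limits, provided the three categories involved have $\ca{I}$-(co)limits and the two legs preserve them, and then simply check the hypotheses in our situation. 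I treat limits explicitly; colimits follow by the dual argument, since being an isofibration is a self-dual property.

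First I would assemble the ingredients. By \cref{def:parallelcolimits} the categories $\dc{D}_0$ and $\dc{D}_1$ have $\ca{I}$-limits, so $\dc{D}_0\times\dc{D}_0$ has them computed componentwise; the pairing $\langle\Gr{s},\Gr{t}\rangle$ preserves $\ca{I}$-limits because $\Gr{s}$ and $\Gr{t}$ separately do, and the diagonal $\Delta$ preserves all limits that exist. Crucially, fibrancy of $\dc{D}$ makes $\langle\Gr{s},\Gr{t}\rangle$ a fibration (in fact an opfibration) by \eqref{eq:stbifibration}, hence an isofibration, since cartesian lifts of isomorphisms are themselves invertible.

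Now let $G\colon\ca{I}\to\dc{D}_1^\bullet$ be a diagram and let $(\lambda_i\colon L\to G_i)_{i\in\ca{I}}$ be the limit of $\iota G$ in $\dc{D}_1$, writing $L=\Lim(\iota G)$. Since $G$ lands in $\dc{D}_1^\bullet$, its objects and morphisms have equal source and target, so $\Gr{s}\iota G=\Gr{t}\iota G=:G_0\colon\ca{I}\to\dc{D}_0$. As $\Gr{s}$ and $\Gr{t}$ preserve the limit, both $(\Gr{s}\lambda_i)$ and $(\Gr{t}\lambda_i)$ are limit cones for $G_0$, whence there is a unique isomorphism $\theta\colon\Gr{s}L\simrightarrow\Gr{t}L$ with $\Gr{t}\lambda_i\circ\theta=\Gr{s}\lambda_i$. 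The one delicate point---and the main obstacle---is that $L$ need not lie \emph{strictly} in the equalizer $\dc{D}_1^\bullet$, i.e. $\theta$ need not be an identity; a naive limit in $\dc{D}_1$ gives only $\Gr{s}L\cong\Gr{t}L$.

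This is exactly where the isofibration property intervenes. I would lift the isomorphism $(\theta,\id)\colon(\Gr{s}L,\Gr{t}L)\to(\Gr{t}L,\Gr{t}L)=\Delta(\Gr{t}L)$ to an isomorphism $\tilde\theta\colon L\simrightarrow L'$ in $\dc{D}_1$ with $\langle\Gr{s},\Gr{t}\rangle\tilde\theta=(\theta,\id)$. Then $\Gr{s}L'=\Gr{t}L'=\Gr{t}L$, so $L'\in\dc{D}_1^\bullet$, and transporting the cone along $\tilde\theta$ gives $\lambda_i':=\lambda_i\circ\tilde\theta^{-1}$ with $\Gr{s}\lambda_i'=\Gr{s}\lambda_i\circ\theta^{-1}=\Gr{t}\lambda_i=\Gr{t}\lambda_i'$, so each $\lambda_i'$ is a $2$-morphism of $\dc{D}_1^\bullet$. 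Finally I would verify that $(L',\lambda_i')$ is a limit cone in $\dc{D}_1^\bullet$: any cone therein is in particular a cone in $\dc{D}_1$, hence factors uniquely through $L\cong L'$, and the induced factorization automatically has equal source and target, because its images under $\Gr{s}$ and $\Gr{t}$ are both the unique factorization through the limit of $G_0$. Since $\iota L'\cong L=\Lim(\iota G)$, the inclusion preserves the limit. The colimit statement is obtained by the dual construction, transporting a chosen colimit in $\dc{D}_1$ along the analogous canonical isomorphism and using once more that $\langle\Gr{s},\Gr{t}\rangle$ is an (op)fibration, hence an isofibration.
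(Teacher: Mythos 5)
Your argument is correct and is essentially the paper's own: both take the (co)limit in $\dc{D}_1$, observe that $\Gr{s}$ and $\Gr{t}$ send it to two (co)limits of the same $\dc{D}_0$-diagram, and use fibrancy to transport the (co)limit vertex along the resulting canonical isomorphism into $\dc{D}_{1}^{\bullet}$ --- the paper realizes your isofibration lift concretely as composition with the companion $\wh{h}$ of that isomorphism, whose structure cell $p_2$ is invertible. Your more abstract ``strict pullback of an isofibration'' packaging is moreover recorded explicitly in the remark immediately following the proposition.
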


\begin{proof}
Consider a diagram $D\colon\ca{I}\to\dc{D}_{1}^{\bullet}$ whose colimit cocone in $\dc{D}_{1}$ is 
\begin{displaymath}
	\begin{tikzcd}
		X_{i}\ar[r,bul,"Di"]\ar[d,"f_{i}"']\ar[dr,phantom,"\Two\phi_{i}"] & X_{i}\ar[d,"g_{i}"] \\
		X\ar[r,bul,"M"'] & Y
	\end{tikzcd}
\end{displaymath}
Since $\Gr{s},\Gr{t}$ preserve colimits, both $f_i$ and $g_i$
are colimiting cocones in $\dc{D}_0$, however of the same diagram in $\dc{D}_0$ since the original $D$ lies in $\dc{D}_{1}^{\bullet}$. Hence there is 
an isomorphism $h\colon Y\cong X$ such that $hg_i=f_i$ for all $i\in\ca{I}$. This allows us to post-compose the the initial colimit $\phi_i$ by the 
following isomorphism in $\dc{D}_1$
\begin{equation}\label{eq:normalizing}
	\begin{tikzcd}
		X\ar[d,equal]\ar[rr,bul,"M"]\ar[drr,phantom,"\Two\cong"] && Y\ar[d,equal] \\
		X\ar[d,equal]\ar[r,bul,"M"] & Y\ar[d,equal]\ar[r,bul,"1_Y"]\ar[dr,phantom,"\Two p_2"] & Y\ar[d,"h"] \\
		X\ar[r,bul,"M"'] & Y\ar[r,bul,"\wh{h}"'] & X
	\end{tikzcd}
\end{equation}	
where $p_2$ is invertible since $h$ is, by Lemma \ref{fibr-ii}. Hence the new colimiting cocone lies wholly in 
$\dc{D}_{1}^{\bullet}$, and has the required universal 
property.
\end{proof}

\begin{rmk}\label{rem:D1bul}
We here provide a high-level argument for the above result, that will be used in analogous ways in later results as well.
As seen in \cref{eq:D1bulpul}, $\dc{D}_1^\bullet$ is a pullback along $\langle\Gr{s},\Gr{t}\rangle$ which in the setting of a fibrant 
double category is a fibration \cref{eq:stbifibration}. By a standard result (see e.g. \cite{Joyal1993}), since every Grothendieck fibration is an isofibration,
the (strict) pullback is equivalent to a \emph{pseudopullback}. Since (co)complete categories with 
(co)continuous functors can both be expressed as 2-categories of algebras and pseudo-morphisms $T$-$\mathsf{Alg}$ for a 2-monad on $\mathsf{Cat}$, 
they have all pseudolimits by the well-known \cite[Thm.~2.6]{2-dimmonadtheory}. 
In particular, since $\langle\Gr{s},\Gr{t}\rangle$ is (co)continuous under these assumptions, and so is the diagonal $\Delta$, their (pseudo)pullback  
$\dc{D}_1^\bullet$ is a (co)complete category.
\end{rmk}

The following result connects the existence of parallel (co)limits in a fibrant double category, to the existence of (co)limits to the various 
fixed domain/codomain subcategories of $\dc{D}_1$, as well as the hom-categories of the horizontal bicategory $\ca{H}(\dc{D})$, in a natural way.

\begin{prop}\label{prop:equivalentdefparcom}
Suppose $\dc{D}$ is a fibrant double category such that $\dc{D}_0$ is complete. The following are equivalent:
\begin{enumerate}[(i),ref=\thedefi(\roman*)]
 \item $\dc{D}$ is parallel complete;
 \item The fibrations $\Gr{s},\Gr{t}\colon\dc{D}_1\to\dc{D}_0$ have all fibred limits.
 
 Namely, $^{X}\dc{D}_{1}$ and $\dc{D}_{1}^{Z}$ are complete categories for any $X,Z\in\dc{D}_0$, and $\mi\odot\wh{f}\colon 
^Y\dc{D}_1\to{}^X\dc{D}_1$ and $\wc{g}\odot\mi\colon\dc{D}_1^W\to\dc{D}_1^Z$ are continuous functors for any $f\colon X\to Y$ and $g\colon Z\to W$;
 \item $\ca{H}(\dc{D})(X,Z)$ is a complete category for any $X,Z\in\dc{D}_0$, and 
$\mi\odot\wh{f}\colon\ca{H}(\dc{D})(Y,Z)\to\ca{H}(\dc{D})(X,Z)$ and $\wc{g}\odot\mi\colon\ca{H}(\dc{D})(X,W)\to\ca{H}(\dc{D})(X,Z)$ are continuous 
functors.
\end{enumerate}
Dually, if $\dc{D}_0$ is cocomplete, the following are equivalent:
\begin{enumerate}[(i),ref=\thedefi(\roman*)]
 \item $\dc{D}$ is parallel cocomplete;
 \item The opfibrations $\Gr{s},\Gr{t}\colon\dc{D}_1\to\dc{D}_0$ have all opfibred colimits.
 
 Namely, $^{X}\dc{D}_{1}$ and $\dc{D}_{1}^{Z}$ are cocomplete for any $X,Z\in\dc{D}_0$, and 
$\mi\odot\wc{f}\colon{}^X\dc{D}_1\to{}^Y\dc{D}_1$ and $\wh{g}\odot\mi\colon\dc{D}_1^Z\to\dc{D}_1^W$ are cocontinuous functors for any $f\colon X\to Y$ and 
$g\colon Z\to W$;
\item $\ca{H}(\dc{D})(X,Z)={}^X\dc{D}_1^Z$ is a cocomplete category for any $X,Z\in\dc{D}_0$, and 
$\mi\odot\wc{f}\colon{}^X\dc{D}_1^Z\to{}^Y\dc{D}_1^Z$ and $\wh{g}\odot\mi\colon{}^X\dc{D}_1^Z\to{}^X\dc{D}_1^W$ are cocontinuous 
functors.
\end{enumerate}
\end{prop}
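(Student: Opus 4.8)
The plan is to deduce all three equivalences from \cref{prop:fiberwiselimits}, applied in turn to the three functors $\Gr{s}\colon\dc{D}_1\to\dc{D}_0$, $\Gr{t}\colon\dc{D}_1\to\dc{D}_0$ and $\langle\Gr{s},\Gr{t}\rangle\colon\dc{D}_1\to\dc{D}_0\times\dc{D}_0$, all of which are fibrations by fibrancy of $\dc{D}$ (see \cref{eq:stbifibration} and \cref{globalvslocal}); note $\dc{D}_0\times\dc{D}_0$ is complete since $\dc{D}_0$ is. I fix a small category $\ca{I}$ and argue for $\ca{I}$-limits, the general statements following by ranging over all $\ca{I}$. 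Since $\dc{D}_0$ is already assumed complete, parallel $\ca{I}$-completeness (i) amounts exactly to the assertion that $\dc{D}_1$ has $\ca{I}$-limits which are preserved by \emph{both} $\Gr{s}$ and $\Gr{t}$, and I freely use (i) in this form.

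For (i)$\iff$(ii) I apply \cref{prop:fiberwiselimits} to $\Gr{s}$ and to $\Gr{t}$ separately. For $\Gr{s}$ it reads: $\dc{D}_1$ has $\ca{I}$-limits preserved by $\Gr{s}$ iff $\Gr{s}$ has fibred $\ca{I}$-limits, and the latter is precisely the condition that each fibre $^X\dc{D}_1$ is $\ca{I}$-complete and each reindexing $\mi\odot\wh{f}$ is continuous (by \cref{globalvslocal}); symmetrically for $\Gr{t}$ with fibres $\dc{D}_1^Z$ and reindexing $\wc{g}\odot\mi$ from \cref{eq:reindexingt}. Assuming (i), both left-hand conditions hold and so (ii) follows. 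Conversely, assuming (ii), the same proposition produces $\ca{I}$-limits in $\dc{D}_1$ preserved by $\Gr{s}$ and, independently, $\ca{I}$-limits in $\dc{D}_1$ preserved by $\Gr{t}$; since limits in $\dc{D}_1$ are unique up to canonical isomorphism these are the \emph{same} limits, hence preserved by both $\Gr{s}$ and $\Gr{t}$, which is (i).

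For (i)$\iff$(iii) I instead apply \cref{prop:fiberwiselimits} to $\langle\Gr{s},\Gr{t}\rangle$ over $\dc{D}_0\times\dc{D}_0$. Its fibre over $(X,Z)$ is exactly the hom-category $\ca{H}(\dc{D})(X,Z)={}^X\dc{D}_1^Z$, and since limits in $\dc{D}_0\times\dc{D}_0$ are computed componentwise, $\langle\Gr{s},\Gr{t}\rangle$ preserves $\ca{I}$-limits precisely when both of its composites with the projections, namely $\Gr{s}$ and $\Gr{t}$, do; thus the left-hand condition of the proposition for $\langle\Gr{s},\Gr{t}\rangle$ is exactly (i). On the other side, the reindexing of $\langle\Gr{s},\Gr{t}\rangle$ along a morphism $(f,g)$ of $\dc{D}_0\times\dc{D}_0$ is $N\mapsto\wc{g}\odot N\odot\wh{f}$, read off the cartesian liftings of \cref{eq:D1targetlifts}, and this factors as the composite of the two single-variable reindexings $\mi\odot\wh{f}$ (the case $g=\id$) and $\wc{g}\odot\mi$ (the case $f=\id$). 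Hence all reindexing functors are continuous iff these two families are, so the fibred-$\ca{I}$-limit condition for $\langle\Gr{s},\Gr{t}\rangle$ is precisely completeness of every $\ca{H}(\dc{D})(X,Z)$ together with continuity of $\mi\odot\wh{f}$ and $\wc{g}\odot\mi$, i.e. (iii). This gives (i)$\iff$(iii) and closes the first chain of equivalences.

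The cocomplete statement is entirely dual: $\Gr{s}$, $\Gr{t}$ and $\langle\Gr{s},\Gr{t}\rangle$ are equally opfibrations in a fibrant double category (\cref{globalvslocal}), now with opcartesian reindexings $\mi\odot\wc{f}$ and $\wh{g}\odot\mi$, and one applies the dual of \cref{prop:fiberwiselimits} (for colimits and opfibrations, obtained by passing to opposite categories) verbatim. The only points needing genuine care are the coincidence-of-limits step in (ii)$\Rightarrow$(i) and, for (iii), the bookkeeping identifying the reindexing $\wc{g}\odot\mi\odot\wh{f}$ of $\langle\Gr{s},\Gr{t}\rangle$ with the two single-variable functors $\mi\odot\wh{f}$ and $\wc{g}\odot\mi$; once these are settled the three equivalences are immediate consequences of \cref{prop:fiberwiselimits}.
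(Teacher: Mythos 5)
Your proof is correct, and while the first equivalence matches the paper, your treatment of condition (iii) takes a genuinely different and more economical route. For (i)$\iff$(ii) you do exactly what the paper does: apply \cref{prop:fiberwiselimits} to $\Gr{s}$ and $\Gr{t}$ separately and use uniqueness of limits to see that the limits produced are preserved by both. For (iii), however, the paper proves (ii)$\Rightarrow$(iii) by restricting the source fibration to $\Gr{s}\colon\dc{D}_1^Z\to\dc{D}_0$, verifying by hand that this restricted fibration is continuous (invoking \cref{connected} for connected limits and the explicit construction \cref{eq:fiberlimit} together with the form of the reindexings \cref{eq:reindexingt} for discrete ones), and then applying \cref{prop:fiberwiselimits} once more; the converse (iii)$\Rightarrow$(ii) is obtained via \cref{prop:continuous fibred 1-cells} plus the observation that $\wc{g}\odot\mi$ is a right adjoint in $\Cat/\dc{D}_0$ and hence a cartesian functor. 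You instead apply \cref{prop:fiberwiselimits} a single time to the joint fibration $\langle\Gr{s},\Gr{t}\rangle\colon\dc{D}_1\to\dc{D}_0\times\dc{D}_0$, whose fibres are exactly the hom-categories ${}^X\dc{D}_1^Z$ and whose reindexing along $(f,g)$ is $\wc{g}\odot\mi\odot\wh{f}$, which factors (up to canonical isomorphism, harmless for limit preservation) as a composite of the two one-variable reindexings. This bypasses \cref{connected}, \cref{prop:continuous fibred 1-cells} and the adjoint-in-the-slice argument entirely, at the cost only of two routine checks that you do state: that $\langle\Gr{s},\Gr{t}\rangle$ preserves a limit iff both $\Gr{s}$ and $\Gr{t}$ do (limits in a product being componentwise), and that continuity of all $(f,g)^*$ is equivalent to continuity of the families $(f,\id)^*$ and $(\id,g)^*$. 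Both checks are sound, and the dualization to the cocomplete case goes through verbatim, so the argument is complete; the one thing the paper's longer route yields as a by-product, which yours does not, is the explicit continuity of the restricted fibrations $\Gr{s}\colon\dc{D}_1^Z\to\dc{D}_0$ themselves, a fact reused elsewhere in the text.
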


\begin{proof}
We will verify the version for limits; the version for colimits is dual. 

$(i)\Leftrightarrow (ii)$ This follows essentially from \cref{prop:fiberwiselimits}. In more detail, if $\dc{D}_1$ is complete and $\Gr{s},\Gr{t}$ are continuous functors as per 
\cref{def:parallelcolimits}, the fibration $\Gr{s}$ of \cref{globalvslocal} has all fibred limits, which means that all fibers $^{X}\dc{D}_1$ have limits 
and the reindexing functors $\mi\odot\wh{f}\colon^{Y}\dc{D}_1\to{}^{X}\dc{D}_1$ for any $f\colon X\to Y$ are continuous. Similarly for the fibration $\Gr{t}$. 

Conversely, if all $^X\dc{D}_1$ have limits and all appropriate $\mi\odot\wh{f}$ preserve them, this means that the fibration $\Gr{s}$ is complete. Since its base category $\dc{D}_0$ is also complete, it follows from \cref{prop:fiberwiselimits} that $\dc{D}_1$ has all limits and $\Gr{s}$ preserves them. Similarly, $\Gr{t}$ is also continuous hence $\dc{D}$ is parallel complete. 

$(ii)\Rightarrow (iii)$
It can be verified that the restricted fibration 
$\Gr{s}\colon\dc{D}_1^Z\to\dc{D}_0$ 
from \cref{globalvslocal} is continuous. Indeed, $\dc{D}_1^Z$ is a fiber of the target fibration, which has all fibred limits, hence it is complete. Furthermore, connected limits
are preserved under the inclusion $\dc{D}_1^{Z}\hookrightarrow\dc{D}_1$ by \cref{connected}. Given the equivalence of (1) and (2), we know that $\Gr{s}$ preserves all limits, hence so does the composite $\dc{D}_1^{Z}\hookrightarrow\dc{D}_1\to\dc{D}_0$. For discrete limits, by their general construction 
in \cref{eq:fiberlimit} and the form of the reindexing functors of $\Gr{t}$ \cref{eq:reindexingt} namely $\Delta^*=\wc{\Delta}\odot\mi$, it is the case that the source of the limit horizontal 1-cell
remains unchanged.
 
Thus, by \cref{prop:fiberwiselimits} the fibers $\ca{H}(\dc{D})(X,Z)$ of the continuous $\Gr{s}\colon\dc{D}_1^Z\to\dc{D}_0$  are also complete
and the restricted reindexing functors $\mi\odot\wc{f}$ preserve all limits. In a similar fashion, the restricted fibration $\Gr{t}\colon{}^X\dc{D}_1\to\dc{D}_0$ can be seen to preserve all 
limits, so also the restricted $\wh{g}\odot\mi$ on the horizontal bicategory hom-categories also preserve all limits.

$(iii)\Rightarrow(ii)$ Since every $\mi\odot\wh{f}\colon\ca{H}(\dc{D})(Y,Z)\to\ca{H}(\dc{D})(X,Z)$ is continuous and $\dc{D}_0$ is complete, the 
restricted 
fibration $\Gr{s}\colon\dc{D}_1^Z\to\dc{D}_0$ from \cref{globalvslocal} is complete for all objects $Z$, which in particular means that all 
$\dc{D}_1^Z$ are complete. Now we can form the following commutative triangle
\begin{displaymath}
 \begin{tikzcd}
\dc{D}_1^W\ar[rr,"\wc{g}\odot\mi"]\ar[dr,"\Gr{s}"'] && \dc{D}_1^Z\ar[dl,"\Gr{s}"] \\
& \dc{D}_0.
 \end{tikzcd}
\end{displaymath}
Since $(\wh{g}\odot\mi)\dashv(\wc{g}\odot\mi)$ is such that the counit is $\Gr{s}$-vertical, they form an adjunction in $\mathsf{Cat}/\dc{D}_0$ and so
by a standard argument where right adjoints in the slice category are cartesian (see e.g. \cite[Lem.~4.5]{Winskel}), the top arrow preserves cartesian liftings. As a result, by \cref{prop:continuous fibred 1-cells} 
in the special case that the functor between the bases is the identity, the fact that the fiberwise 
$\wc{g}\odot\mi\colon\ca{H}(\dc{D})(X,W)\to\ca{H}(\dc{D})(X,Z)$ are continuous by assumption implies that the ``global'' functor 
$\wc{g}\odot\mi\colon\dc{D}_1^W\to\dc{D}_1^Z$ is continuous.

In an analogous way, now starting from the fact that $\wc{g}\odot\mi\colon\ca{H}(\dc{D})(X,W)\to\ca{H}(\dc{D})(X,Z)$ is continuous and $\dc{D}_0$ is 
complete, we can deduce that the restricted fibration $\Gr{t}\colon {}^X\dc{D}_1\to\dc{D}_0$ from \cref{globalvslocal} is complete thus ensuring that 
each ${}^X\dc{D}_1$ is complete, which can then be used to form a fibred functor
\begin{displaymath}
 \begin{tikzcd}
^Y\dc{D}_1\ar[rr,"\mi\odot\wh{f}"]\ar[dr,"\Gr{t}"'] && ^X\dc{D}_1\ar[dl,"\Gr{t}"] \\
& \dc{D}_0
 \end{tikzcd}
\end{displaymath}
since $\mi\odot\wh{f}$ is again a right adjoint over $\dc{D}_0$. The fiberwise continuous restrictions of $\mi\odot\wh{f}$ from the assumptions then 
make the functor between the total categories continuous. 
\end{proof}

\begin{rmk}\label{rmk:parcocomplete}
One extra advantage of the above equivalent formulations of \cref{prop:equivalentdefparcom} in the fibrant setting is that they connect the notion 
of a parallel cocomplete double category to the more standard one of a ``locally cocomplete bicategory'', see e.g. \cite{Varthrenr}. In more 
detail, if a fibrant 
double category $\dc{D}$ is such that $\ca{H}(\dc{D})$ is locally cocomplete, namely all hom-categories are cocomplete and all functors of the form 
$M\odot\mi\odot N$ preserve colimits, condition (3) above is clearly verified when $\dc{D}$ is also ``vertically'' cocomplete. This is 
important because checking that a horizontal bicategory of a double category is locally cocomplete in the usual sense could be much easier. For 
example, (co)limits in the hom-categories $\VMat(X,Y)=[Y\times X,\ca{V}]$ are computed pointwise, and horizontal composition preserves colimits by 
usual assumptions on the monoidal structure of $\ca{V}$, hence we recover that $\VMat$ is parallel cocomplete -- \cref{ex:parallelcolimitsinVMMat}. 
\end{rmk}

\begin{ex}\label{ex:colimitsinVMMatfibers}
Since they will be used later, let us explicitly describe the colimits in the fixed-domain subcategories of $\VMMat_1$. Consider a diagram 
$D\colon\ca{I}\to\VMMat_{1}^{Y}$ depicted as
\begin{displaymath}
\begin{tikzcd}
	X_i\ar[r,bul,"Di"]\ar[d,"f_{\alpha}"']\ar[dr,phantom,"\Two\delta_{\alpha}"] & Y\ar[d,equal] \\
	X_j\ar[r,bul,"Dj"'] & Y
\end{tikzcd}
\end{displaymath}
for $\alpha\colon i\to j\in\ca{I}$. First, we form the colimit for $\ca{I}\xrightarrow{D}\VMMat_1^Y\xrightarrow{\mathfrak{s}}\Set$ and call it 
$(q_{i}\colon X_{i}\to X)_{i\in\ca{I}}$. We then define a $\ca{V}$-matrix $C\colon X\tickar Y$ by
$$C(y,x)\coloneqq\mathrm{Colim}_i\left(\sum\limits_{q_{i}(x_i)=x}
Di(y , x_i)\right)\in\ca{V}$$
which is in fact the desired colimit of $D$, with colimit cocone
\begin{displaymath}
\begin{tikzcd}
	X_i\ar[r,bul,"Di"]\ar[d,"q_i"']\ar[dr,phantom,"\Two\gamma^{i}"] & Y \\
	X\ar[r,bul,"C"'] & Y\ar[u,equal]
\end{tikzcd}\textrm{ where }\gamma^{i}_{x_{i},y}\colon D_{i}(y,x_{i})\to\sum\limits_{\substack{q_{i}(x_{i}')\\=q_{i}(x_i)}}Di(y,
x_i)\hookrightarrow C(y,q_{i}(x_i)).
\end{displaymath}
\end{ex}


\subsection{Monoidal closed double categories}\label{sec:monoidalclosed}

We now move to the context of monoidal double categories, see e.g. \cite[Def.~2.10]{ConstrSymMonBicatsFun} for a detailed description. We then describe in detail a monoidal closed double structure pertinent to our purposes.

\begin{defi}\label{defi:monoidaldoublecategory}
	A \emph{monoidal double category} is a double category $\dc{D}$ such that $(\dc{D}_0,\otimes_0,I)$ and 
	$(\dc{D}_1,\otimes_1,1_I)$ are monoidal categories, $\Gr{s},\Gr{t}$ are strict monoidal functors, and there exist globular isomorphisms
	\begin{equation}\label{eq:monoidaldoubleiso}
		(M\otimes_1 N)\odot(M'\otimes_1 N')\cong
		(M\odot M')\otimes_1(N\odot N'), \quad
		1_{(X\otimes_0 Y)}\cong
		1_X\otimes_1 1_Y
	\end{equation}
	subject to coherence conditions.
\end{defi}
The natural isomorphisms come from the fact that there are pseudo double functors $\otimes\colon\dc{D}\times\dc{D}\to\dc{D}$ and 
$I\colon\B{1}\to\dc{D}$ expressing that a monoidal double category 
is a pseudomonoid in a suitable cartesian 2-category of double categories.
A \emph{braided} or \emph{symmetric} monoidal double category $\dc{D}$ is one for which $\dc{D}_0$, $\dc{D}_1$ are
braided or symmetric, and $\Gr{s},\Gr{t}$ are strict braided monoidal, subject to two more axioms expressing compatibility of the braiding with the 
structure isomorphisms.

\begin{ex}\label{ex:VMMatmonoidal}
	The double category $\VMMat$ from \cref{ex:VMMat} is monoidal when $\ca{V}$ is moreover braided. The vertical category is the cartesian 
$(\B{Set},\times,\{*\})$ and $\VMMat_1$ is monoidal via
	\begin{displaymath}
		\otimes_1\colon\VMMat_1\times\VMMat_1\xrightarrow{\phantom{AAAAAAA}}\VMMat_1\phantom{AAAAA}
	\end{displaymath}
	\begin{displaymath}
		\ticktwocell{X}{S}{Y}{g}{Y',}{S'}{X'}{f}{\alpha}
		\ticktwocell{Z}{T}{W}{k}{W'}{T'}{Z'}{h}{\beta}\mapsto
		\ticktwocell{X{\times}Z}{S{\ot}T}{Y{\times}W}{g{\times}k}{Y'{\times}W'}{S'{\ot}T'}{X'{\times}Z'}{f{\times}h}{\alpha{\ot}\beta}
	\end{displaymath}
	where $(S\otimes T)\left((y,w),(x,z)\right):=S(y,x)\otimes T(w,z)$,
	and with unit $\ca{I}\colon\{*\}\bular\{*\}$ given by	$\ca{I}(*,*)=I_\ca{V}$.
	The isomorphisms \cref{eq:monoidaldoubleiso} come down to the tensor product in $\ca{V}$ being braided and commuting with coproducts, see 
\cite[\S~4.1]{VCocats}. 

Notice that $\VMMat$ is not a braided monoidal double category under these assumptions, because the braiding fails to become a transformation of double categories. However,
if the braiding of $\ca{V}$ is a symmetry, that extra axiom is satisfied and $\VMMat$ becomes a symmetric monoidal double category.
\end{ex}

\begin{ex}
 The double category $\dc{R}\nc{el}(\ca{C})$ for any regular $\ca{C}$ is (cartesian) monoidal. The product of two relations 
$R\colon X\bular Y$ and $R'\colon X'\bular Y'$ represented by the jointly monic $X\xleftarrow{r_0}R\xrightarrow{r_1}Y$
and $X'\xleftarrow{r'_0}R'\xrightarrow{r'_1}Y'$ respectively is the 
relation $R\times R'\colon X\times X'\bular 
Y\times Y'$ represented by $X\times X'\xleftarrow{r_0\times r_{0}^{'}}R\times R'\xrightarrow{r_1\times r_{1}^{'}}Y\times Y'$. 
The globular isomorphisms required are the equalities $(S\times S')(R\times R')=(SR)\times (S'R')$ and $\Delta_{X\times 
Y}=\Delta_{X}\times\Delta_{Y}$, which boil down to the stability of regular epimorphisms under finite products in the regular category $\ca{C}$.
Notice that the bicategory $\nc{Rel}(\ca{C})$ does not have products without additional assumptions on $\ca{C}$.
\end{ex}

In \cite[\S~3.3]{VCocats}, a notion of a `locally closed' monoidal double category was introduced, that renders both ordinary categories $\dc{D}_0$ 
and $\dc{D}_1$ monoidal closed. We here revisit this notion, moreover dropping the term `locally', by first taking a closer look to general adjoints for double categories, see 
\cite[\S3.2]{Adjointfordoublecats}.

\begin{defi}\label{def:oplaxlaxajd}
An \emph{oplax/lax} adjunction $F\dashv R$ between double categories consists of
\begin{itemize}
 \item an oplax double functor $(F,\phi,\phi_0)\colon\dc{D}\to\dc{E}$
 \item a lax double functor $(R,\rho,\rho_0)\colon\dc{E}\to\dc{D}$
 \item two ordinary adjunctions $F_0\dashv R_0$ and $F_1\dashv R_1$ that respect the source and target -- equivalently $(\Gr{s},\Gr{s})$ and 
$(\Gr{t},\Gr{t})$ are maps of adjunctions
\item the counits are compatible with the structure maps\footnote{ As discussed in \cite{Adjointfordoublecats}, it cannot be asked that 
the unit and counit have the structure of a transformation, because the composites $FR$ and $RF$ are 
neither lax nor oplax double functors.}, namely
\begin{equation}\label{eq:rhobyphi}
\begin{tikzcd}[column sep=.5in]
FRY\ar[d,equal]\ar[r,bul,"F(RN'\odot RN)"]\ar[dr,phantom,"\Two F\rho_{N,N'}"] & FRY''\ar[d,equal] \\
FRY\ar[r,bul,"FR(N'\odot N)"]\ar[d,"\varepsilon_Y"']\ar[dr,phantom,"\Two\varepsilon_{N'\odot N}"] & FRY''\ar[d,"\varepsilon_{Y''}"] \\
Y\ar[r,bul,"N'\odot N"'] & Y''
\end{tikzcd}{=}
\begin{tikzcd}
FRY\ar[d,equal]\ar[rr,bul,"F(RN'\odot RN)"]\ar[drr,phantom,"\Two\phi_{RN,RN'}"] && FRY''\ar[d,equal] \\
FRY\ar[d,"\varepsilon_Y"']\ar[dr,phantom,"\Two\varepsilon_N"]\ar[r,bul,"FRN"] & 
FRY'\ar[d,"\varepsilon_{Y'}"']\ar[dr,phantom,"\Two\varepsilon_{N'}"]\ar[r,bul,"FRN'"] & FRY''\ar[d,"\varepsilon_{Y''}"] \\
Y\ar[r,bul,"N"'] & Y'\ar[r,bul,"N'"'] & Y''
\end{tikzcd}\;\;
\begin{tikzcd}
FRY\ar[d,equal]\ar[r,bul,"F(1_{RY})"]\ar[dr,phantom,"\Two F\rho_0"] & FRY\ar[d,equal] \\
FRY\ar[r,bul,"FR(1_Y)"]\ar[d,"\varepsilon_Y"']\ar[dr,phantom,"\Two\varepsilon_{1_Y}"] & FRY\ar[d,"\varepsilon_Y"] \\
Y\ar[r,bul,"1_Y"'] & Y
\end{tikzcd}{=}
\begin{tikzcd}
FRY\ar[r,bul,"F(1_{RY})"]\ar[d,equal]\ar[dr,phantom,"\Two\phi_0"] & FRY\ar[d,equal] \\
FRY\ar[r,bul,"1_{FRY}"]\ar[d,"\varepsilon_Y"']\ar[dr,phantom,"\Two1_{\varepsilon_Y}"] & FRY\ar[d,"\varepsilon_Y"] \\
Y\ar[r,bul,"1_Y"'] & Y
\end{tikzcd}
\end{equation}
\end{itemize}
\end{defi}

\begin{rmk}\label{rem:redundant}
The original definition included the extra axiom
\begin{equation}\label{eq:phibyrho}
\begin{tikzcd}[column sep=.5in]
X\ar[r,bul,"M'\odot M"]\ar[d,"\eta_X"']\ar[dr,phantom,"\Two\eta_{M'\odot M}"] & X''\ar[d,"\eta_{X"}"] \\
RFX\ar[d,equal]\ar[r,bul,"RF(M'\odot M)"]\ar[dr,phantom,"\Two R\phi_{M,M'}"] & RFX''\ar[d,equal] \\
RFX\ar[r,bul,"R(FM'\odot FM)"'] & RFX''
\end{tikzcd}{=}
\begin{tikzcd}
X\ar[r,bul,"M"]\ar[d,"\eta_X"']\ar[dr,phantom,"\Two\eta_M"] & X'\ar[r,bul,"M'"]\ar[d,"\eta_{X'}"']\ar[dr,phantom,"\Two\eta_{M'}"] & 
X''\ar[d,"\eta_{X''}"] \\
RFX\ar[d,equal]\ar[r,bul,"RFM"]\ar[drr,phantom,"\Two\rho_{FM,FM'}"] & RFX'\ar[r,bul,"RFM'"] & RFX''\ar[d,equal] \\
RFX\ar[rr,bul,"R(FM'\odot FM)"'] && RFX''
\end{tikzcd}\;\;
\begin{tikzcd}
X\ar[r,bul,"1_X"]\ar[d,"\eta_X"']\ar[dr,phantom,"\Two\eta_{1_X}"] & X\ar[d,"\eta_X"] \\
RFX\ar[d,equal]\ar[r,bul,"RF(1_X)"]\ar[dr,phantom,"\Two R\phi_0"] & RFX\ar[d,equal] \\
RFX\ar[r,bul,"R(1_{FX})"'] & RFX
\end{tikzcd}{=}
\begin{tikzcd}
X\ar[r,bul,"1_X"]\ar[d,"\eta_X"']\ar[dr,phantom,"\Two 1_{\eta_X}"] & X\ar[d,"\eta_X"] \\
RFX\ar[d,equal]\ar[r,bul,"1_{RFX}"]\ar[dr,phantom,"\Two\rho_0"] & RFX\ar[d,equal] \\
RFX\ar[r,bul,"R(1_{FX})"'] & RFX
\end{tikzcd}
\end{equation}
However, as mentioned also in \cite[Remark 3.3a]{Adjointfordoublecats}, this and condition \cref{eq:rhobyphi} in fact specify the 
structure cells of the 
functors involved in terms of one another. For example, \cref{eq:rhobyphi} express $(\rho,\rho_0)$ of $G$ in terms of 
$(\phi,\phi_0)$ of $F$ by providing their transposes under the adjunction $F_1\dashv G_1$.
One can verify, using naturality of the 
structure maps and (co)units, that just one of \cref{eq:rhobyphi} or \cref{eq:phibyrho} is enough to imply the other, therefore can be safely omitted 
from the definition.
\end{rmk}

This situation is reminiscent of the well-known \emph{doctrinal adjunction}
for adjoints between monoidal categories, where oplax monoidal structures on 
the left adjoint bijectively correspond to lax monoidal structures on the right adjoint. Following the above discussion, the theorem below gives 
simple conditions under which an oplax double functor has a lax double right adjoint.

\begin{thm}\label{thm:doubleadjoint}
 Suppose $F\colon\dc{D}\to\dc{E}$ is an oplax double functor. If
 \begin{itemize}
  \item $F_0$ and $F_1$ have ordinary right adjoints $R_0$ and $R_1$ respectively; and
  \item the source and target functors form maps of adjunctions depicted in
  \begin{displaymath}
\begin{tikzcd}[column sep=.6in, row sep=.4in]
\dc{D}_1\ar[r,shift left=2,"F_1"]\ar[d,"\mathfrak s/\mathfrak t"']\ar[r,phantom,"\bot"] & \dc{E}_1\ar[d,"\mathfrak s/\mathfrak t"]\ar[l,shift 
left=2,"R_1"] \\
\dc{D}_0\ar[r,shift left=2,"F_0"]\ar[r,phantom,"\bot"] & \dc{E}_0,\ar[l,shift left=2,"R_0"]
\end{tikzcd}   
  \end{displaymath}
 \end{itemize}
then $(R_0,R_1)$ forms a lax double functor, which is part of an oplax/lax adjunction $F\dashv R$.
\end{thm}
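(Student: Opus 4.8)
The plan is to build the lax structure on $R=(R_0,R_1)$ by the mate correspondence, in direct analogy with doctrinal adjunction for monoidal functors, and then to check that the three double-categorical constraints—globularity of the structure cells, the lax coherence axioms, and the counit compatibility of \cref{def:oplaxlaxajd}—all follow from the two ingredients we are handed: the triangle identities of the ordinary adjunctions $F_0\dashv R_0$, $F_1\dashv R_1$ (with units/counits written $\eta^0,\varepsilon^0$ and $\eta^1,\varepsilon^1$), and the hypothesis that $(\Gr s,\Gr s)$ and $(\Gr t,\Gr t)$ are maps of adjunctions, which yields the functor identities $\Gr s R_1=R_0\Gr s$, $\Gr t R_1=R_0\Gr t$ together with the compatibilities $\Gr s(\eta^1)=\eta^0\Gr s$, $\Gr s(\varepsilon^1)=\varepsilon^0\Gr s$, and likewise for $\Gr t$.

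\emph{Defining the cells.} For horizontally composable $N\colon Y\bular Y'$ and $N'\colon Y'\bular Y''$ in $\dc E$, the identities $\Gr s R_1=R_0\Gr s$ and $\Gr t R_1=R_0\Gr t$ guarantee that $R_1N'\odot R_1N$ is defined and lies over $R_0Y\to R_0Y''$; set $A\coloneqq R_1N'\odot R_1N$. I define $\rho_{N,N'}\colon R_1N'\odot R_1N\Rightarrow R_1(N'\odot N)$ to be the $(F_1\dashv R_1)$-transpose of the $2$-morphism
\[
\gamma\colon\quad F_1A\ \overset{\phi_{R_1N',R_1N}}{\Longrightarrow}\ F_1R_1N'\odot F_1R_1N\ \overset{\varepsilon^1_{N'}\odot\varepsilon^1_N}{\Longrightarrow}\ N'\odot N,
\]
and dually I set $\rho_0\colon 1_{R_0Y}\Rightarrow R_1(1_Y)$ to be the transpose of $F_1(1_{R_0Y})\overset{\phi_0}{\Longrightarrow}1_{F_0R_0Y}\overset{1_{\varepsilon^0_Y}}{\Longrightarrow}1_Y$. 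With these definitions the defining equations of $\rho,\rho_0$ are precisely the counit-compatibility squares \cref{eq:rhobyphi}, so that clause of \cref{def:oplaxlaxajd} holds by construction.

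\emph{Globularity.} The essential point, where the maps-of-adjunctions hypothesis is genuinely needed, is that $\rho_{N,N'}$ is globular. Writing the transpose as $\rho_{N,N'}=R_1\gamma\circ\eta^1_A$, I compute its left vertical boundary $\Gr s(\rho_{N,N'})=R_0(\Gr s\gamma)\circ\Gr s(\eta^1_A)$. Since $\phi$ is globular and $\Gr s(\varepsilon^1_N)=\varepsilon^0_Y$, we get $\Gr s\gamma=\varepsilon^0_Y$; and $\Gr s(\eta^1_A)=\eta^0_{\Gr sA}=\eta^0_{R_0Y}$. Hence
\[
\Gr s(\rho_{N,N'})=R_0(\varepsilon^0_Y)\circ\eta^0_{R_0Y}=\mathrm{id}_{R_0Y}
\]
by the triangle identity of $F_0\dashv R_0$, and symmetrically $\Gr t(\rho_{N,N'})=\mathrm{id}_{R_0Y''}$; the same cancellation makes $\rho_0$ globular. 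Note that the naive expectation that a transpose is globular on the nose fails—the unit $\eta^0$ survives unless the factor $R_0\varepsilon^0$ coming from the counits on the boundary of $\gamma$ cancels it.

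\emph{Naturality, coherence, and assembly.} Naturality of $\rho$ and $\rho_0$ follows from naturality of $\phi,\phi_0$ and of the (co)units together with functoriality of transposition. The associativity and two unit coherence axioms for the lax double functor $R$ are the mates of the corresponding oplax coherence axioms for $\phi,\phi_0$: transposing each lax coherence diagram for $\rho$ under $F_1\dashv R_1$ produces exactly the oplax axiom for $\phi$, which holds by hypothesis, the only additional care being that all intermediate cells stay well-typed via $\Gr sR_1=R_0\Gr s$ and $\Gr tR_1=R_0\Gr t$. I expect this to be the longest but most routine part, and would present it as a mate-calculus transfer rather than by drawing every pasting diagram; it is here, if anywhere, that the bulk of the work concentrates. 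Finally, collecting the pieces—$(R_0,R_1,\rho,\rho_0)$ a lax double functor, the two given adjunctions respecting $\Gr s,\Gr t$, and \cref{eq:rhobyphi} holding by the very definition of $\rho,\rho_0$—verifies every clause of \cref{def:oplaxlaxajd}; by \cref{rem:redundant} the remaining axiom \cref{eq:phibyrho} is then automatic, so $F\dashv R$ is an oplax/lax adjunction.
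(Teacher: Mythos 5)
Your proposal is correct and follows essentially the same route as the paper: define $\rho,\rho_0$ by transposing $(\varepsilon\odot\varepsilon)\circ\phi$ and $1_\varepsilon\circ\phi_0$ under $F_1\dashv R_1$ (so that \cref{eq:rhobyphi} holds by construction), then transfer naturality and the lax coherence axioms from the oplax ones by computing adjuncts. Your explicit verification that the structure cells are globular — via $\Gr s(\rho)=R_0(\varepsilon^0)\circ\eta^0=\mathrm{id}$ and the triangle identity — is a detail the paper leaves implicit behind the maps-of-adjunctions hypothesis, and is a welcome addition rather than a deviation.
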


\begin{proof}
We define the required lax structure 2-cells $\rho_{M,N}\colon RN\odot RM\to R(N\odot M)$ and $\rho_0\colon 1_{RY}\to R(1_Y)$ for the pair $(R_0,R_1)$ by giving their adjuncts precisely as in \cref{eq:rhobyphi}. For this data to form a lax double functor structure as in \cref{defi:doublefunctor}, we already know that $R_0, 
R_1$ strictly commute with the source and 
target functors from the second clause above, and also naturality of $\rho,\rho_0$ follows from naturality of $\phi,\phi_0$ and $\varepsilon$. 
Finally, the coherence axioms can be verified, performing the relevant calculations `under the adjunction'. As a sample check, we verify that 
for composable horizontal 1-cells $M,N,U$, we have an equality of globular 2-cells $\rho_{N\circ M,U}(\mathrm{id}_{RU}\circ\rho_{M,N})=\rho_{M,U\circ 
N}(\rho_{N,U}\circ\mathrm{id}_{RM})$.
Translating under the adjunction $F_1\dashv G_1$, suppressing associativity for horizontal composition, we have the following steps
\begin{displaymath}
\scalebox{0.9}{
 \begin{tikzcd}[column sep=.8in,ampersand replacement=\&]
 FRX\ar[d,equal]\ar[r,bul,"{F(RU\odot RN\odot RM)}"]\ar[dr,phantom,"\Two F(\id\odot\rho)"] \& FRW\ar[d,equal] \\
 FRX\ar[r,bul,"{F(RU\odot R(N\odot M))}"]\ar[d,equal]\ar[dr,phantom,"\Two F\rho"] \& FRW\ar[d,equal] \\
 FRX\ar[d,"\varepsilon"']\ar[r,bul,"{FR(U\odot N\odot M)}"]\ar[dr,phantom,"\Two\varepsilon"] \& FRW\ar[d,"\varepsilon"] \\
 X\ar[r,bul,"U\odot N\odot M"'] \& W
 \end{tikzcd}}\stackrel{\cref{eq:rhobyphi}}{=}
\scalebox{0.9}{ \begin{tikzcd}[ampersand replacement=\&]
 FRX\ar[d,equal]\ar[rr,bul,"{F(RU\odot RN\odot RM)}"]\ar[drr,phantom,"\Two F(\id\odot\rho)"] \&\& FRW\ar[d,equal] \\
 FRX\ar[rr,bul,"{F(RU\odot R(N\odot M))}"]\ar[d,equal]\ar[drr,phantom,"\Two\phi"] \&\& FRW\ar[d,equal] \\
 FRX\ar[d,"\varepsilon"']\ar[r,bul,"{FR(N\odot M)}"]\ar[dr,phantom,"\Two\varepsilon"] \& 
FRZ\ar[r,bul,"FRU"]\ar[d,"\varepsilon"]\ar[dr,phantom,"\Two\varepsilon"] \& FRW\ar[d,"\varepsilon"] \\
 X\ar[r,bul,"N\odot M"'] \& Z\ar[r,bul,"U"'] \& W  
 \end{tikzcd}}\stackrel{(*)}{=}
\scalebox{0.9}{\begin{tikzcd}[column sep=.5in,ampersand replacement=\&]
 FRX\ar[d,equal]\ar[rr,bul,"{F(RU\odot RN\odot RM)}"]\ar[drr,phantom,"\Two \phi"] \&\& FRW\ar[d,equal] \\
 FRX\ar[r,bul,"{F(RN\odot RM)}"]\ar[d,equal]\ar[dr,phantom,"\Two F\rho"] \& FRZ\ar[r,bul,"FRU"]\ar[d,equal] \& FRW\ar[d,equal] \\
FRX\ar[d,"\varepsilon"']\ar[r,bul,"{FR(N\odot M)}"]\ar[dr,phantom,"\Two\varepsilon"] \& 
FRZ\ar[r,bul,"FRU"]\ar[d,"\varepsilon"]\ar[dr,phantom,"\Two\varepsilon"] \& FRW\ar[d,"\varepsilon"] \\
 X\ar[r,bul,"N\odot M"'] \& Z\ar[r,bul,"U"'] \& W  
 \end{tikzcd}}
\end{displaymath}
\begin{displaymath}
\stackrel{\cref{eq:rhobyphi}}{=}
\scalebox{0.9}{\begin{tikzcd}[ampersand replacement=\&]
 FRX\ar[d,equal]\ar[rrr,bul,"{F(RU\odot RN\odot RM)}"]\ar[drrr,phantom,"\Two \phi"] \&\&\& FRW\ar[d,equal] \\
 FRX\ar[rr,bul,"{F(RN\odot RM)}"]\ar[d,equal]\ar[drr,phantom,"\Two \phi"] \&\& FRZ\ar[r,bul,"FRU"]\ar[d,equal] \& FRW\ar[d,equal] \\
FRX\ar[d,"\varepsilon"']\ar[r,bul,"{FRM}"]\ar[dr,phantom,"\Two\varepsilon"] \& 
FRY\ar[d,"\varepsilon"]\ar[dr,phantom,"\Two\varepsilon"]\ar[r,bul,"FRN"] \&
FRZ\ar[r,bul,"FRU"]\ar[d,"\varepsilon"]\ar[dr,phantom,"\Two\varepsilon"] \& FRW\ar[d,"\varepsilon"] \\
 X\ar[r,bul,"M"'] \& Y\ar[r,bul,"N"'] \& Z\ar[r,bul,"U"'] \& W  
 \end{tikzcd}}\stackrel{(**)}{=}
\scalebox{0.9}{\begin{tikzcd}[ampersand replacement=\&]
 FRX\ar[d,equal]\ar[rrr,bul,"{F(RU\odot RN\odot RM)}"]\ar[drrr,phantom,"\Two \phi"] \&\&\& FRW\ar[d,equal] \\
 FRX\ar[r,bul,"{FRM}"]\ar[d,equal] \& FRY\ar[drr,phantom,"\Two \phi"]\ar[d,equal]\ar[rr,bul,"{F(RU\odot RN)}"] \&\& FRW\ar[d,equal] \\
FRX\ar[d,"\varepsilon"']\ar[r,bul,"{FRM}"]\ar[dr,phantom,"\Two\varepsilon"] \& 
FRY\ar[d,"\varepsilon"]\ar[dr,phantom,"\Two\varepsilon"]\ar[r,bul,"FRN"] \&
FRZ\ar[r,bul,"FRU"]\ar[d,"\varepsilon"]\ar[dr,phantom,"\Two\varepsilon"] \& FRW\ar[d,"\varepsilon"] \\
 X\ar[r,bul,"M"'] \& Y\ar[r,bul,"N"'] \& Z\ar[r,bul,"U"'] \& W  
 \end{tikzcd}}
\end{displaymath}
which produces the adjunct of the desired composite, 
where $(*)$ follows by naturality of $\phi$ and $(**)$ by the respective coherence axiom for $\phi$. The other two unit axioms can be proved 
analogously.

To conclude the proof, notice that all clauses of \cref{def:oplaxlaxajd} hold by default.
\end{proof}

Let us remark that the above result could be viewed as a special case of the more general \cite[Thm.~4.8]{Wobbly}, where the right 
adjoints 
$G_0$ and $G_1$ are required to commute with the source and target only up to isomorphism via the corresponding mates, hence in particular cannot 
form a double functor right away.
However, a direct calculation as in the above proof (which does not include the `wobbly functors' of \emph{loc. cit.}) is clearer for our setting. 

Moving on towards our goal, namely specifying a certain type of double adjoint for the tensor product of a monoidal double category, we now discuss 
\emph{parameterized} double adjunctions. Recall the original concept of an ordinary `adjunction with a parameter' found e.g. in \cite[\S IV.7]{MacLane}.

\begin{thm}\label{thm:parameterizeddouble}
 Suppose that $F\colon\dc{A}\times\dc{B}\to\dc{C}$ is an oplax double functor. If
 \begin{enumerate}[(i),ref=\thedefi(\roman*)]
\item $F_0$ and $F_1$ have ordinary parameterized adjoints $R_0$ and $R_1$ respectively; and
\item the source and target functors are maps of adjunctions as follows 
\begin{displaymath}
\begin{tikzcd}[column sep=.6in, row sep=.4in]
\dc{A}_1\ar[r,shift left=2,"{F_1(\mi,B)}"]\ar[d,"\mathfrak s"']\ar[r,phantom,"\bot"] & \dc{C}_1\ar[d,"\mathfrak s"]\ar[l,shift 
left=2,"{R_1(B,\mi)}"] \\
\dc{A}_0\ar[r,shift left=2,"{F_0(\mi,{\mathfrak s}B)}"]\ar[r,phantom,"\bot"] & \dc{C}_0\ar[l,shift left=2,"{R_0({\mathfrak s}B,\mi)}"]
\end{tikzcd}\qquad
\begin{tikzcd}[column sep=.6in, row sep=.4in]
\dc{A}_1\ar[r,shift left=2,"{F_1(\mi,B)}"]\ar[d,"\mathfrak t"']\ar[r,phantom,"\bot"] & \dc{C}_1\ar[d,"\mathfrak t"]\ar[l,shift 
left=2,"{R_1(B,\mi)}"] \\
\dc{A}_0\ar[r,shift left=2,"{F_0(\mi,{\mathfrak t}B)}"]\ar[r,phantom,"\bot"] & \dc{C}_0,\ar[l,shift left=2,"{R_0({\mathfrak t}B,\mi)}"]
\end{tikzcd}
\end{displaymath}
 \end{enumerate}
then $(R_0,R_1)\colon\dc{B}^\op\times\dc{C}\to\dc{A}$ forms a lax double functor, called the \emph{parameterized double adjoint} of $F$. 
\end{thm}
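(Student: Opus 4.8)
The plan is to run the argument of \cref{thm:doubleadjoint} with the $\dc{B}$-variable carried along as a passive parameter, so that the genuinely new content is only the verification that this parameter interacts correctly with the adjuncts. First I would assemble $(R_0,R_1)$ into a candidate double functor $\dc{B}^\op\times\dc{C}\to\dc{A}$. By the classical parameterized adjoint theorem \cite[\S IV.7,Thm.~3]{MacLane}, the objectwise right adjoints $R_0(B,\mi)$ and $R_1(B,\mi)$ extend uniquely to functors $R_0\colon\dc{B}_0^\op\times\dc{C}_0\to\dc{A}_0$ and $R_1\colon\dc{B}_1^\op\times\dc{C}_1\to\dc{A}_1$. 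That these strictly commute with source and target is exactly the content of hypothesis (ii): since a map of adjunctions forces the squares of \emph{right} adjoints to commute, the two displayed maps of adjunctions give $\Gr{s}\,R_1(B,\mi)=R_0(\Gr{s}B,\mi)\,\Gr{s}$ and $\Gr{t}\,R_1(B,\mi)=R_0(\Gr{t}B,\mi)\,\Gr{t}$, which is precisely the compatibility of $(R_0,R_1)$ with $\Gr{s},\Gr{t}$ (recalling that $\dc{B}^\op$ leaves horizontal sources and targets unchanged).

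Next I would define the lax structure cells. For composable pairs $(B_1,N_1)$ and $(B_2,N_2)$ the required cell is a globular 2-morphism $\rho\colon R(B_2,N_2)\odot R(B_1,N_1)\Rightarrow R(B_2\odot B_1,\,N_2\odot N_1)$ in $\dc{A}$, and I define it exactly as in \cref{eq:rhobyphi} by prescribing its adjunct under the parameterized adjunction $F_1(\mi,B_2\odot B_1)\dashv R_1(B_2\odot B_1,\mi)$: one applies $F_1(\mi,B_2\odot B_1)$ to $R(B_2,N_2)\odot R(B_1,N_1)$, inserts the two-variable oplax comparison $\phi$ of $F$ to split it as $F(R(B_2,N_2),B_2)\odot F(R(B_1,N_1),B_1)$, and post-composes with the counits $\varepsilon_{(B_2,N_2)}$ and $\varepsilon_{(B_1,N_1)}$ of the adjunctions at the parameters $B_2$ and $B_1$. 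The unit comparison $\rho_0$ is defined dually from $\phi_0$. It is here that hypothesis (ii) does real work: it guarantees that the counits at the three distinct parameters $B_1$, $B_2$ and $B_2\odot B_1$ lie over the counits of the $\Gr{s}$- and $\Gr{t}$-adjunctions, so that the above pasting is well-typed and its adjunct $\rho$ comes out globular.

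Finally I would check that $(R_0,R_1,\rho,\rho_0)$ satisfies the axioms of a lax double functor. Strict commutation with $\Gr{s},\Gr{t}$ is already in hand; naturality of $\rho$ and $\rho_0$ follows from naturality of $\phi$, $\phi_0$ and of the counits, transported through the adjunctions; and the associativity and unit coherence diagrams are verified by the same ``under the adjunction'' diagram chase carried out explicitly in the proof of \cref{thm:doubleadjoint}, invoking the naturality and coherence axioms of $\phi$.

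I expect the main obstacle to be precisely this coherence bookkeeping. Unlike the single-variable situation, the comparison $\phi$ is now the two-variable oplax constraint $\phi_{(A_1,B_1),(A_2,B_2)}$, and one must transport three distinct parameters through the adjuncts simultaneously while matching the counits over the relevant source and target 0-cells at each step, each time appealing to hypothesis (ii). Granting that this chase goes through as in \cref{thm:doubleadjoint}, all clauses of a lax double functor hold and $(R_0,R_1)$ is the desired parameterized double adjoint.
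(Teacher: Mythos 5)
Your proposal follows the paper's proof essentially verbatim: the two-variable right adjoint is assembled from the objectwise ones, hypothesis (ii) yields strict compatibility with $\Gr{s},\Gr{t}$ (the paper outsources this step to \cite[Thm.~3.4]{EnrichedFibration}, which also covers the action of $R_1$ on morphisms in the $\dc{B}$-variable --- a point your map-of-adjunctions argument only addresses for each fixed $B$, though it does follow from the counit compatibility you invoke later), and the lax constraints $\rho,\rho_0$ are defined by the same adjunct formulas in terms of $\phi,\phi_0$ and the counits, then checked by the same under-the-adjunction coherence chase as in \cref{thm:doubleadjoint}. No genuine gap.
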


\begin{proof}
By definition of a double functor, $(F_0,F_1)$ is a morphism of two variables in $\mathsf{Cat}^\mathbf{2}$
\begin{displaymath}
\begin{tikzcd}[column sep=.6in, row sep=.4in]
\dc{A}_1\times\dc{B}_1\ar[r,"F_1"]\ar[d,"\mathfrak s\times\mathfrak s"'] & \dc{C}_1\ar[d,"\mathfrak s"] \\
\dc{A}_0\times\dc{B}_0\ar[r,"F_0"'] & \dc{C}_0
\end{tikzcd}
\end{displaymath}
and similarly a morphism from $\mathfrak t\times\mathfrak t$ to $\mathfrak t$. As also briefly recalled in \cref{sec:enrichedfibrations}, 
\cite[Thm.~3.4]{EnrichedFibration} ensures that under these assumptions, the pair of ordinary parameterized adjoints $(R_0,R_1)$ induced already 
by $(i)$ is such that the following square commutes
\begin{displaymath}
 \begin{tikzcd}[column sep=.6in, row sep=.4in]
\dc{B}^\op_1\times\dc{C}_1\ar[r,"R_1"]\ar[d,"\mathfrak s^\op\times\mathfrak s"'] & \dc{A}_1\ar[d,"\mathfrak s"] \\
\dc{B}^\op_0\times\dc{C}_0\ar[r,"R_0"'] & \dc{A}_0
 \end{tikzcd}
\end{displaymath}
and similarly for $\mathfrak t$. Finally, the oplax double structure maps $(\rho,\rho_0)$ for $F$ uniquely induce structure maps $(\phi,\phi_0)$ that make 
$(R_0,R_1)$ into a lax double functor in 
a completely analogous way to the proof of \cref{thm:doubleadjoint}, but now in the parameterized setting. In more detail, the adjuncts of the required maps, for horizontal 
1-cells 
$M\colon X\bular Y$, $N\colon Y\bular Z$, $M'\colon X'\bular Y'$ and $N'\colon Y'\bular Z'$, are given by
\begin{displaymath}
\scalebox{0.75}{
\begin{tikzcd}[column sep=1.4in,ampersand replacement=\&]
F\big(R(X,X'),X\big)\ar[r,bul,"{F\big(R(N,N')\odot R(M,M'),N\odot M\big)}"]\ar[d,equal]\ar[dr,phantom,"\Two {F(\rho,\mathrm{id})}"] \& 
F\big(R(Z,Z'),Z\big)\ar[d,equal] \\
F\big(R(X,X'),X\big)\ar[d,"\varepsilon"']\ar[r,bul,"{F\big(R(N\odot M,N'\odot M'),N\odot M\big)}"]\ar[dr,phantom,pos=.4,"\Two\varepsilon"] \& 
F\big(R(Z,Z'),Z\big)\ar[d,"\varepsilon"] \\
X'\ar[r,bul,"{M'\odot N'}"'] \& Z'
\end{tikzcd}=
\begin{tikzcd}[column sep=.8in,ampersand replacement=\&]
F\big(R(X,X'),X\big)\ar[rr,bul,"{F\big(R(N,N')\odot R(M,M'),N\odot M\big)}"]\ar[d,equal]\ar[drr,phantom,"\Two \phi"] \&\& 
F\big(R(Z,Z'),Z\big)\ar[d,equal] \\
F\big(R(X,X'),X\big)\ar[d,"\varepsilon"']\ar[r,bul,"{F\big(R(M,M'),M\big)}"]\ar[dr,phantom,pos=.4,"\Two\varepsilon"] \& 
F\big(R(Y,Y'),Y\big)\ar[r,bul,"{F\big(R(N,N'),N\big)}"]\ar[d,"\varepsilon"]\ar[dr,phantom,pos=.4,"\Two\varepsilon"] \& 
F\big(R(Z,Z'),Z\big)\ar[d,"\varepsilon"] \\
X'\ar[r,bul,"M'"'] \& Y'\ar[r,bul,"N'"'] \& Z' 
\end{tikzcd}}
\end{displaymath}
\begin{equation}\label{eq:parameterizedcounitcompat}
\scalebox{.75}{
\begin{tikzcd}[column sep=.7in,ampersand replacement=\&]
F\big(R(X,X'),X\big)\ar[r,bul,"{F\big(1_{R(X,X')},1_X\big)}"]\ar[dr,phantom,"\Two {F(\rho_0,\mathrm{id})}"]\ar[d,equal] \& 
F\big(R(X,X'),X\big)\ar[d,equal] \\
F\big(R(X,X'),X\big)\ar[d,"\varepsilon"']\ar[r,bul,"{F\big(R(1_{(X,Y)}),1_X\big)}"]\ar[dr,phantom,pos=.4,"\Two\varepsilon"] \& 
F\big(R(X,X'),X\big)\ar[d,"\varepsilon"] \\
X'\ar[r,bul,"{1_{X'}}"'] \& X'
\end{tikzcd}=
\begin{tikzcd}[column sep=.7in,ampersand replacement=\&]
F\big(R(X,X'),X\big)\ar[r,bul,"{F\big(1_{R(X,X')},1_X\big)}"]\ar[dr,phantom,"\Two \phi_0"]\ar[d,equal] \& F\big(R(X,X'),X\big)\ar[d,equal] \\
F\big(R(X,X'),X\big)\ar[d,"\varepsilon"']\ar[r,bul,"{1_{F\big(R(X,X'),X\big)}}"]\ar[dr,phantom,pos=.4,"\Two1_\varepsilon"] \& 
F\big(R(X,X'),X\big)\ar[d,"\varepsilon"] \\
X'\ar[r,bul,"{1_{X'}}"'] \& X'
\end{tikzcd}}
\end{equation}
\end{proof}

As pointed out above, the induced structure maps $(\phi,\phi_0)$ on $R$ are the parameterized versions of \cref{eq:rhobyphi}, 
and the proof that they form a lax structure follows the parameterized analogue of the doctrinal adjunction for monoidal categories: 
oplax monoidal structures on functors of two variables on the left, bijectively correspond to lax monoidal structures on the right parameterized adjoint of two variables -- see 
\cite[Prop.~3.2.3]{PhDChristina} for an explicit proof or \cite[Prop.~2]{Monoidalbicatshopfalgebroids} for a higher dimensional version.

Finally, notice how in the ordinary setting, a parameterized adjunction is essentially defined via the following formulation, whereas in the double categorical setting it is strictly weaker than the presence of a fully fledged double adjoint.

\begin{cor}\label{cor:impliesobjectwise}
For a parameterized oplax/lax double adjunction $F\dashv G$ between functors of two variables $F\colon\dc{A}\times\dc{B}\to\dc{C}$ and $G\colon\dc{B}^\op\times\dc{C}\to\dc{A}$, in particular we obtain oplax/lax double adjunctions $F(\mi,X)\dashv G(X,\mi)$ for any $X\in\dc{B}$. 
\end{cor}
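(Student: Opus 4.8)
The plan is to verify that the pair $\big(F(\mi,X),G(X,\mi)\big)$ satisfies every clause of \cref{def:oplaxlaxajd}, where for a fixed $0$-cell $X\in\dc{B}$ we put $F(\mi,X)_0=F_0(\mi,X)$, $F(\mi,X)_1=F_1(\mi,1_X)$ and $G(X,\mi)_0=R_0(X,\mi)$, $G(X,\mi)_1=R_1(1_X,\mi)$. The entire argument is a \emph{specialization} of the parameterized data of \cref{thm:parameterizeddouble} at the parameter $X$ together with its horizontal unit $1_X$; the substitution is harmonious with source and target because $\Gr{s}(1_X)=\Gr{t}(1_X)=X$.

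Conceptually, fixing $X$ is precomposition with the pseudo double functor $\B{1}\to\dc{B}$ (respectively $\B{1}\to\dc{B}^\op$) that selects the $0$-cell $X$ and sends the horizontal unit of the terminal double category $\B{1}$ to $1_X$, its comparison cell being the unitor $1_X\odot1_X\cong1_X$. Since precomposing an oplax (respectively lax) double functor with a pseudo one again yields an oplax (respectively lax) double functor, $F(\mi,X)$ is oplax and $G(X,\mi)$ is lax; explicitly, the oplax structure cells of $F(\mi,X)$ are $\phi^X_{M,N}\colon F_1(M\odot N,1_X)\cong F_1(M\odot N,1_X\odot1_X)\xrightarrow{\phi}F_1(M,1_X)\odot F_1(N,1_X)$ and $\phi^X_0=\phi_0\colon F_1(1_A,1_X)\to1_{F_0(A,X)}$, and dually $G(X,\mi)$ carries the cells $\rho^X,\rho^X_0$ restricted from $(\rho,\rho_0)$.

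For the two ordinary adjunctions demanded by \cref{def:oplaxlaxajd}, I would invoke the very definition of a parameterized adjoint recalled from \cite[\S IV.7]{MacLane}: fixing the parameter yields honest adjunctions $F_0(\mi,X)\dashv R_0(X,\mi)$ and $F_1(\mi,1_X)\dashv R_1(1_X,\mi)$, i.e.\ $F(\mi,X)_i\dashv G(X,\mi)_i$ for $i=0,1$. The second hypothesis of \cref{thm:parameterizeddouble}, evaluated at the single parameter $B=1_X$ (so that $\Gr{s}B=\Gr{t}B=X$ and the two squares there collapse to one), states exactly that $(\Gr{s},\Gr{s})$ and $(\Gr{t},\Gr{t})$ are maps of these adjunctions. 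The last clause, the counit compatibility \cref{eq:rhobyphi}, is then obtained by setting $M=N=1_X$ in the two pasting identities proved for \cref{thm:parameterizeddouble} --- the composition identity and the unit identity \cref{eq:parameterizedcounitcompat}; under this substitution $R(1_X,M')=G(X,\mi)_1(M')$ and $F(R(1_X,M'),1_X)=F(\mi,X)_1G(X,\mi)_1(M')$, so the parameterized equalities restrict verbatim to the two halves of \cref{eq:rhobyphi}.

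The step demanding the most care, and the expected main obstacle, is keeping track of the unitor isomorphisms $1_X\odot1_X\cong1_X$ that are inserted when the structure cells $\phi,\rho$ are restricted to the diagonal parameter: one must confirm that the cells $\phi^X,\rho^X$ defined above coincide with those appearing after the substitution $M=N=1_X$, so that the counit equalities of the parameterized proof descend to \cref{eq:rhobyphi} with no residual constraints. This is a routine coherence verification, leveraging the coherence axioms of \cref{defi:doublefunctor} for $F$ and $G$; once it is in place, all clauses of \cref{def:oplaxlaxajd} hold and $F(\mi,X)\dashv G(X,\mi)$ is the claimed oplax/lax double adjunction.
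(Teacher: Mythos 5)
Your proposal is correct and follows essentially the same route as the paper: it checks each clause of \cref{def:oplaxlaxajd} for the pair $(F_0(\mi,X),F_1(\mi,1_X))\dashv(G_0(X,\mi),G_1(1_X,\mi))$, obtaining the two ordinary adjunctions directly from the parameterized ones and the counit compatibility by specializing the parameterized identities such as \cref{eq:parameterizedcounitcompat} at the horizontal unit $1_X$. The only cosmetic difference is that the paper invokes dinaturality of the counits where you foreground the unitor coherence $1_X\odot 1_X\cong 1_X$; both are part of the same routine verification.
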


\begin{proof}
Checking all clauses of \cref{def:oplaxlaxajd} for the oplax and lax, respectively, double functors $(F_0(\mi,X),F_1(\mi,1_X))\colon\dc{A}\to\dc{C}$ and $(G_0(X,\mi),G_1(1_X,\mi))\colon\dc{C}\to\dc{A}$, 
by default we have two ordinary adjunctions $F_0(\mi,X)\dashv G_0(X,\mi)$ and $F_1(\mi,1_X)\dashv G_1(1_X,\mi)$ that respect the source and target. For the second family of adjunctions, notice that $F_1\dashv G_1$ being parameterized adjoints is strictly more general, since the fixed-variable adjunctions exist for arbitrary horizontal 1-cells and not just those of the form $1_X$. Finally, it can be shown that the counits are compatibe with the structure maps, using the respective parameterized compatibilities \cref{eq:parameterizedcounitcompat} and dinaturality of counits.
\end{proof}

We have now laid all necessary groundwork in order to position the notion of a monoidal closed double category in its natural context.

\begin{defi}\label{def:locclosed}
A monoidal double category $(\dc{D},\otimes,I)$ is \emph{closed} if the pseudo 
double functor $\otimes\colon\dc{D}\times\dc{D}\to\dc{D}$ has a parameterized double adjoint, namely a lax double functor
\begin{equation}\label{eq:internalhomdouble}
H=(H_0,H_1)\colon\dc{D}^\op\times\dc{D}\longrightarrow\dc{D}
\end{equation}
such that:
\begin{enumerate}[(i)]
\item $\otimes_0\dashv H_0$ and $\otimes_1\dashv H_1$ are parameterized adjunctions, so $\dc{D}_0$ and $\dc{D}_1$ are monoidal closed 
categories;
\item \label{item-ii}for every horizontal 1-cell $M\colon X\bular Y$ in $\dc{D}$, the pairs of functors $(\Gr{s},\Gr{s})$ and $(\Gr{t},\Gr{t})$ as 
below are maps 
of adjunctions:
\begin{displaymath}
\begin{tikzcd}[column sep=.6in, row sep=.4in]
\dc{D}_1\ar[r,shift left=2,"{\mi\ot M}"]\ar[d,"\mathfrak s"']\ar[r,phantom,"\bot"] & \dc{D}_1\ar[d,"\mathfrak s"]\ar[l,shift left=2,"{H(M,\mi)}"] \\
\dc{D}_0\ar[r,shift left=2,"{\mi\ot X}"]\ar[r,phantom,"\bot"] & \dc{D}_0\ar[l,shift left=2,"{H(X,\mi)}"]
\end{tikzcd}\qquad\qquad
\begin{tikzcd}[column sep=.6in, row sep=.4in]
\dc{D}_1\ar[r,shift left=2,"{\mi\ot M}"]\ar[d,"\mathfrak t"']\ar[r,phantom,"\bot"] & \dc{D}_1\ar[d,"\mathfrak t"]\ar[l,shift left=2,"{H(M,\mi)}"] \\
\dc{D}_0\ar[r,shift left=2,"{\mi\ot Y}"]\ar[r,phantom,"\bot"] & \dc{D}_0.\ar[l,shift left=2,"{H(Y,\mi)}"]
\end{tikzcd}
\end{displaymath}
\end{enumerate}
\end{defi}
The above explicit description follows from \cref{thm:parameterizeddouble}. Condition \ref{item-ii} boils down 
to the fact that the counit of each adjunction $\mi\otimes M\dashv H(M,\mi)$ at every $N\colon Z\bular W$ has components of the following form 
\begin{equation}\label{eq:coev}
\begin{tikzcd}[column sep=5em, row sep=3.5em]
	H(X,Z)\otimes X\ar[d,"\mathrm{ev}_{X,Z}"']\ar[r,bul,"{H(M,N)\otimes M}"]\ar[dr,phantom,"\Two\mathrm{ev}_{M,N}"] & 
H(Y,W)\otimes Y\ar[d,"\mathrm{ev}_{Y,W}"] \\
	Z\ar[r,bul,"{N}"'] & W
\end{tikzcd}
\end{equation}
namely its source/target vertical morphisms are the corresponding components of the counit of the adjunction in $\dc{D}_0$. 
Equivalently (\cite[\S~IV.7]{MacLane}), the source/target of an adjunct 2-cell in $\dc{D}_1$ is the adjunct of the source/target vertical map in 
$\dc{D}_0$.
Also notice that the definition implies that $\Gr{s},\Gr{t}$ are closed functors. 

\begin{rmk}\label{rmk:comparewithBob}
In \cite[\S5.5]{Adjointfordoublecats}, Grandis and Par{\'e} briefly define a \emph{weakly monoidal closed} double category, as one where each oplax double functor $\mi\ot X$ has a lax right double adjoint. In light of \cref{cor:impliesobjectwise}, such a formulation is strictly less general than \cref{def:locclosed} and any monoidal closed double category is in particular a weakly such. 

In fact, following terminology introduced by Niefield in the setting of \emph{cartesian closed} double categories \cite{NiefieldCCl}, one could call such a weakly monoidal closed double category an \emph{object-wise} monoidal closed one. That latter work is certainly relevant to the current material and although independently developed, the observation that a parameterized adjunction can equivalently be defined via object-wise adjunctions in the ordinary setting whereas that equivalence breaks in the double categorical setting, is key.
\end{rmk}

\begin{rmk}\label{rmk:clarifying}
The original definition of a locally closed monoidal double category in \cite[\S~3.3]{VCocats} did not explicitly include condition \ref{item-ii}
-- as well as the parameterized double adjunction framework that backs it up. 
However, the said condition is in fact necessary for certain constructions, e.g. the induced monoidal closed structure on the endomorphism category 
$\dc{D}_{1}^{\bullet}$:
as in \cite[Prop.~3.21]{VCocats}, we restrict the natural isomorphisms $\dc{D}_{1}(M\otimes_{1}N,P)\cong\dc{D}_{1}(M,H_{1}(N,P))$ to endo-1-cells and 
endo-2-morphisms. In 
particular, the latter statement implies that the conjugate of an endo-2-morphism along the adjunction $\otimes_{1}\dashv H_{1}$ is again 
an \emph{endo}-2-morphism.
\end{rmk}

\begin{ex}\label{ex:VMMatclosed}
The double category $\VMMat$ from \cref{ex:VMMatmonoidal} is monoidal closed, when $\ca{V}$ is furthermore monoidal closed with products. 
The relevant structure was detailed in \cite[Cor.~4.4]{VCocats} but in the current setting requires less checks, since for example the lax double structure of $H$ 
follows from \cref{thm:parameterizeddouble}. 

To recall the structure, the clauses of \cref{def:locclosed} are satisfied as follows.
On the level of objects, the internal hom is the ordinary exponentiation functor, whereas for the 
category of arrows we have
\begin{displaymath}
	H_1:\xymatrix @C=1.3in
	{\VMMat_1^\op\times\VMMat_1
		\ar[r] & \VMMat_1}\phantom{ABC}
\end{displaymath}\vspace{-0.2in}
\begin{displaymath}
	\xymatrix @C=.05in @R=.25in
	{(X\ar[rrr]|-{\object@{|}}^S\ar[d]_-f
		&\rtwocell<\omit>{<4>{\alpha}}&& Y\ar[d]^-g
		& , & Z\ar[rrr]^-T|-{\object@{|}}\ar[d]_-h
		&\rtwocell<\omit>{<4>\beta}&& W)\ar[d]^-k
		\ar@{|.>}[rrr] &&& Z^X\ar[rrrr]^-{H(S,T)}|-{\object@{|}}
		\ar[d]_-{h^f} &\rtwocell<\omit>{<4>\qquad H(\alpha,\beta)}
		&&& W^Y\ar[d]^-{k^g} \\
		(X'\ar[rrr]_-{S'}|-{\object@{|}} &&& Y' & , &
		Z'\ar[rrr]_-{T'}|-{\object@{|}} &&& W')
		\ar@{|.>}[rrr] &&& Z'^{X'}\ar[rrrr]_-{H(S',T')}|-{\object@{|}} 
			&&&& {W'}^{Y'}}
\end{displaymath}
which on horizontal 1-cells is given by families of objects in $\ca{V}$, for $n\in Z^X, m\in W^Y$, 
\begin{displaymath}
	H_1(S,T)(m,n)=\prod_{x,y}\left[S(y,x),T\left(m(y),n(x)\right)\right]. 
\end{displaymath}
It can be verified that $\mi\ot_1S\dashv H_1(S,\mi)$ and the source and target are respected by construction. 
Finally, the units and the counits are indeed `above' one another as in \cref{eq:coev}: for example the evaluation maps
\begin{displaymath}
\begin{tikzcd}[column sep=.6in]
Z^X\ot X\ar[r,tick,"{H(S,T)\ot T}"]\ar[d,"\ev"']\ar[dr,phantom,"\Two\ev"] & W^Y\ot Y\ar[d,"\ev"] \\
Z\ar[r,tick,"T"'] & W
\end{tikzcd}
\end{displaymath}
in this case are given by arrows $H(S,T)(m,n)\otimes S(y,x)\to 
T(\ev(m,y),\ev(n,x))$ in $\ca{V}$ for appropriate elements $m,n,y,x$, which correspond to
\begin{displaymath}
\left(\prod_{y',x'}{[S(y',x'),T(m(y),n(x))]}\right)\otimes S(y,x)\xrightarrow{\pi_{y,x}\ot1}[S(y,x),T(m(y),n(x))]\ot 
S(y,x)\xrightarrow{\mathrm{ev}}T(m(y),n(x)). 
\end{displaymath}
\end{ex}

\begin{ex}
 
The double category $\dc{R}\nc{el}(\ca{\nc{Set}})$ is monoidal closed. The vertical part of the required structure comes from the familiar cartesian closure of $\nc{Set}$. In addition, for any $f\in A^{X}$ and $g\in B^{Y}$ define $(f,g)\in S^{R}$ to mean that the 
implication $(x,y)\in R\rightarrow (f(x),g(y))\in S$ holds for all $x\in X$ and $y\in Y$, yielding a lax double 
functor $\dc{R}\nc{el}(\ca{\nc{Set}})^{\op}\times\dc{R}\nc{el}(\ca{\nc{Set}})\to\dc{R}\nc{el}(\ca{\nc{Set}})$. For the clauses of \cref{def:locclosed}, we verify that there is a bijection between 
2-cells of the following forms
\begin{displaymath}
	\begin{tikzcd}
		C\times X\ar[r,bul,"T\times R"]\ar[d,"f"']\ar[dr,phantom,"\Two"] & D\times Y\ar[d,"g"] \\
		A\ar[r,bul,"S"'] & B	
	\end{tikzcd}
	\qquad
	\begin{tikzcd}
		C\ar[r,bul,"T"]\ar[d,"\tilde{f}"']\ar[dr,phantom,"\Two"] & D\ar[d,"\tilde{g}"] \\
		A^X\ar[r,bul,"S^R"'] & B^Y	
	\end{tikzcd}
\end{displaymath}
i.e. the inclusion of relations $g(T\times R)f^{\circ}\subseteq S$ is equivalent to $\tilde{g}T\tilde{f}^{\circ}\subseteq S^R$.
	
Assume the first of these holds and consider any $c\in C$ and $d\in D$ such that $(c,d)\in T$. We need to show that $(\tilde{f}(c),\tilde{g}(d))\in 
S^R$. So let $x\in X$ and $y\in Y$ be such that $(x,y)\in R$. Then $((c,x),(d,y))\in T\times R$ and hence by assumption we have $(f(c,x),g(d,y))\in 
S$, as desired.
	
Conversely, assume that $\tilde{g}T\tilde{f}^{\circ}\subseteq S^R$ and let $(c,x)\in C\times X$ and $(d,y)\in D\times Y$ be such that 
$((c,x),(d,y))\in T\times R$. Then we have $(x,y)\in R$ and $(c,d)\in T$. Since $(\tilde{f}(c),\tilde{g}(d))\in S^R$ by our assumption, we deduce 
that $(\tilde{f}(c)(x),\tilde{g}(d)(y))\in S$, i.e. that $(f(c,x),g(d,y))\in S$.

Finally, it is clear from the description of the bijection given above that the counits of the adjunctions $-\times R\dashv (-)^{R}$ in $\dc{R}\nc{el}(\ca{\nc{Set}})_1$ have vertical morphisms the corresponding counits in $\nc{Set}$. In other words they are of the form
\begin{displaymath}
	\begin{tikzcd}
		A^X\times X\ar[r,bul,"S^{R}\times R"]\ar[d,"\mathrm{ev}_{X,A}"']\ar[dr,phantom,"\Two\mathrm{ev}_{R,S}"] & B^Y\times Y\ar[d,"\mathrm{ev}_{Y,B}"] \\
		A\ar[r,bul,"S"'] & B	
	\end{tikzcd}
\end{displaymath}
which gives the second clause of definition \ref{def:locclosed}.

\end{ex}

\subsection{Monads and comonads}\label{sec:monadscomonadsdouble}

Similarly to monads and comonads in bicategories recalled in \cref{sec:monmodbicats}, we can define such structures in double categories. In fact, 
as objects 
they coincide with those of the corresponding horizontal bicategory, however the morphisms are more flexible.

\begin{defi}\label{Monadindoublecat}
	A \emph{monad} in a double category $\dc{D}$ is a horizontal endo-1-cell $A\colon X\bular X$ equipped with
	\begin{displaymath}
		\begin{tikzcd}[sep=.3in]
			X\ar[r,bul,"A"]\ar[d,equal]\ar[drr,phantom,"\Two\mu"] & X\ar[r,bul,"A"] & X\ar[d,equal] \\
			X\ar[rr,bul,"A"'] && X
		\end{tikzcd}\qquad
		\globtwocell{X}{1_X}{X}{X}{A}{X}{\eta}
	\end{displaymath}
	satisfying the usual associativity and unit laws. A \emph{monad morphism} is a 2-morphism ${}^f\alpha^f\colon A\Rightarrow B$ where
	\begin{equation}\label{monadhom}
		\begin{tikzcd}[sep=.3in]
			X\ar[r,bul,"A"]\ar[d,"f"']\ar[dr,phantom,"\Two\alpha"] & X\ar[r,bul,"A"]\ar[d,"f"]\ar[dr,phantom,"\Two\alpha"] & X\ar[d,"f"] \\
			Y\ar[r,bul,"B"'] \ar[d,equal]\ar[drr,phantom,"\Two\mu"] & Y\ar[r,bul,"B"'] & Y\ar[d,equal] \\
			Y\ar[rr,"B"'] && Y
		\end{tikzcd}=
		\begin{tikzcd}[sep=.3in]
			X\ar[r,bul,"A"]\ar[d,equal]\ar[drr,phantom,"\Two\mu"] & X\ar[r,bul,"A"] & X\ar[d,equal] \\
			X\ar[rr,bul,"A"'] \ar[d,"f"']\ar[drr,phantom,"\Two\alpha"] && X\ar[d,"f"] \\
			Y\ar[rr,"B"'] && Y,
		\end{tikzcd}\qquad\qquad
		\begin{tikzcd}[sep=.3in]
			X\ar[r,bul,"1_X"]\ar[d,equal]\ar[dr,phantom,"\Two\eta"] & X\ar[d,equal] \\
			X\ar[r,bul,"A"']\ar[d,"f"']\ar[dr,phantom,"\Two\alpha"] & X\ar[d,"f"] \\
			Y\ar[r,bul,"B"'] & Y
		\end{tikzcd}=
		\begin{tikzcd}[sep=.3in]
			X\ar[r,bul,"1_X"]\ar[d,"f"']\ar[dr,phantom,"\Two1_f"] & X\ar[d,"f"] \\
			Y\ar[r,bul,"1_Y"']\ar[d,equal]\ar[dr,phantom,"\Two\eta"] & Y\ar[d,equal] \\
			Y\ar[r,bul,"B"'] & Y 
		\end{tikzcd}
	\end{equation}
	Dually, a \emph{comonad} in $\dc{D}$ is an endo-1-cell $C\colon Z\bular Z$ equipped with globular 2-morphisms $\delta\colon C\Rightarrow C\odot 
C$ and $\epsilon\colon C\Rightarrow1_U$
satisfying the usual coassociativity and counit axioms.
A \emph{comonad morphism} is a 2-morphism ${}^f\alpha^f\colon C\Rightarrow D$ 
respecting the comultiplications and counits.
\end{defi}

We obtain a category $\Mnd(\dc{D})$, which is in fact the vertical category of a double category of monads introduced in 
\cite[Def.~2.4]{Monadsindoublecats}, as well as a category $\Cmd(\dc{D})$.

\begin{ex}\label{ex:monadsRel}
Monads in a double category $\dc{R}\nc{el}(\ca{C})$, for $\ca{C}$ a regular category, are relations $A\colon X\bular X$ 
which are transitive and reflexive. Indeed, the monad laws are trivial here and so we are left with the two inclusions $AA\subseteq A$ 
and 
$1_{X}=\Delta_{X}\subseteq A$. Similarly, a monad morphism $A\to B$ reduces to a morphism $f\colon X\to Y\in\ca{C}$ satisfying $f[A]\subseteq B$, 
where $f[A]$ denotes the image of the relation $A$ along the morphism $f$. Thus $\Mnd(\dc{R}\nc{el}(\ca{C}))$ is precisely the category of $\nc{Preord}(\ca{C})$ internal preorders and order-preserving morphisms in $\ca{C}$.
	
On the other hand, a comonad $C\colon Z\bular Z$ is a relation satisfying $C\subseteq\Delta_{U}$ and $C\subseteq CC$. These determine precisely a 
subobject $C\hookrightarrow U$ in $\ca{C}$ 
and then a comonad morphism $C\to D$ is a morphism 
$f\colon U\to V\in\ca{C}$ such that $f[C]\subseteq D$.
\end{ex}

\begin{ex}\label{ex:VCatsaremonadsinVMat}
As discussed in detail in \cite[\S~4.2,4.3]{VCocats}, monads and comonads in the double category $\VMMat$ of $\ca{V}$-matrices (\cref{ex:VMMat}) are precisely 
$\ca{V}$-categories and $\ca{V}$-cocategories\footnote{Notice the difference between $\ca{V}$-cocategories described here, and $\ca{V}$-\emph{opcategories} namely $\ca{V}^\op$-categories, with cocomposition
arrows landing on products. See \cite[Rem.~4.25]{VCocats} for further discussion.}. For 
example, a monad is a family $\{A(y,x)\}_{(y,x)\in X\times X}$ of objects in $\ca{V}$, with unit picking 
out an element $\eta_{x,x}\colon I\to A(x,x)$ for each $x\in X$ and multiplication $\mu_{x,y}\colon\sum_{y\in Y} A(z,y)\otimes A(y,x)\to A(z,x)$ 
equivalently given by a collection of arrows $A(z,y)\otimes A(y,x)\to S(z,x)$ for every $x,y,z\in X$, satisfying the usual axioms. Dually,
	a comonad $C\colon Z\tickar Z$ comes with appropriate cocomposition and coidentity arrows
	\begin{gather*}
		\delta_{x,z}\colon C(x,z)\to\sum_{y\in X}{C(x,y)\otimes C(y,z)}\\
		\epsilon_{x,y}\colon C(x,y)\to 1_X(x,y)\equiv
		\begin{cases}
			C(x,x)\xrightarrow{\epsilon_{x,x}}I,\quad \mathrm{if }\;x=y\\
			C(x,y)\xrightarrow{\epsilon_{x,y}}0,\quad \mathrm{if }\;x\neq y
		\end{cases}
	\end{gather*}
	satisfying coassociativity and counitality laws.
	Since (co)monad morphisms in $\VMMat$ are precisely $\ca{V}$-(co)functors, $\Mnd(\VMMat)\cong\VCat$ and 
$\Cmd(\VMMat)\cong\VCocat$. We note that this illustrates a conceptual advantage of double categories as opposed to bicategories, since we 
can equally well describe $\ca{V}$-categories as monads in the (horizontal) bicategory $\VMat$, however, it is then no longer 
true that monad morphisms bijectively correspond to $\ca{V}$-functors.
\end{ex}

In a similar spirit to \cref{globalvslocal}, the (op)fibration $\langle\Gr{s},\Gr{t}\rangle$ of 
\cref{eq:stbifibration} defining a fibrant double category induces similar structures for endocells and (co)monads, see 
\cite[Prop.~3.3]{Monadsindoublecats} or \cite[Prop.~3.15,3.17]{VCocats}.

\begin{prop}\label{prop:MonComonfibred}
Suppose $\dc{D}$ is a fibrant double category.
\begin{enumerate}[(i),ref=\thedefi(\roman*)]
 \item $\dc{D}_1^\bullet$ is bifibred over $\dc{D}_0$;
 \item $\Mnd(\dc{D})$ is fibred over 
$\dc{D}_0$ and $\Cmd(\dc{D})$ is opfibred over $\dc{D}_0$.
\end{enumerate}
\end{prop}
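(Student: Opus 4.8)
The plan is to handle part (i) structurally and then bootstrap part (ii) from it. For (i), I would use the description of $\dc{D}_1^\bullet$ as the pullback \cref{eq:D1bulpul} of $\langle\Gr{s},\Gr{t}\rangle$ along the diagonal $\Delta\colon\dc{D}_0\to\dc{D}_0\times\dc{D}_0$. Since $\dc{D}$ is fibrant, the functor $\langle\Gr{s},\Gr{t}\rangle$ of \cref{eq:stbifibration} is simultaneously a fibration and an opfibration, and both notions are stable under pullback along an arbitrary functor; hence the projection $\dc{D}_1^\bullet\to\dc{D}_0$ is a bifibration. To make the liftings explicit, I would specialize the cartesian liftings of \cref{eq:D1targetlifts} to the diagonal case $g=f$: the cartesian lift of an endo-1-cell $N\colon Y\bular Y$ along $f\colon X\to Y$ is $\wc{f}\odot N\odot\wh{f}\colon X\bular X$, and the cocartesian lift of $M\colon X\bular X$ is $\wh{f}\odot M\odot\wc{f}\colon Y\bular Y$; both are again endo-1-cells, so the liftings stay inside $\dc{D}_1^\bullet$. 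Thus the fibre over $X$ is the endo-hom-category $\ca{H}(\dc{D})(X,X)$, with reindexing functors the conjugations $\wc{f}\odot\mi\odot\wh{f}$ and $\wh{f}\odot\mi\odot\wc{f}$.

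For (ii), the key observation is that the fibre of $\Mnd(\dc{D})\to\dc{D}_0$ over $X$ is exactly $\Mon\bigl(\ca{H}(\dc{D})(X,X)\bigr)$, the monoids in the monoidal hom-category $(\ca{H}(\dc{D})(X,X),\odot,1_X)$, and dually the fibre of $\Cmd(\dc{D})\to\dc{D}_0$ is $\Comon\bigl(\ca{H}(\dc{D})(X,X)\bigr)$. I would then promote the reindexing functor $f^{*}=\wc{f}\odot\mi\odot\wh{f}$ of part (i) to a \emph{lax monoidal} functor, whose laxator
\[
(\wc{f}\odot B\odot\wh{f})\odot(\wc{f}\odot B'\odot\wh{f})\Rightarrow\wc{f}\odot(B\odot B')\odot\wh{f}
\]
is obtained by inserting the counit $\wh{f}\odot\wc{f}\Rightarrow 1_Y$ of the adjunction $\wh{f}\dashv\wc{f}$ from \cref{lem:fibrantproperties} in the middle, and whose unit $1_X\Rightarrow\wc{f}\odot 1_Y\odot\wh{f}$ uses the unit $1_X\Rightarrow\wc{f}\odot\wh{f}$ of the same adjunction. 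Because lax monoidal functors induce functors on categories of monoids as in \cref{eq:MonF}, this lifts $f^{*}$ to the required reindexing $\Mon(\ca{H}(\dc{D})(Y,Y))\to\Mon(\ca{H}(\dc{D})(X,X))$ on fibres. More concretely, I would transport the monad structure of $B$ to its cartesian lift $\wc{f}\odot B\odot\wh{f}$ via exactly these 2-morphisms, check that the structure 2-morphism of the lift is a monad morphism, and verify that the unique $\dc{D}_1^\bullet$-factorization through it is automatically a monad morphism, so the lift is cartesian in $\Mnd(\dc{D})$.

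The comonad case is dual: the covariant reindexing $f_{!}=\wh{f}\odot\mi\odot\wc{f}$ carries an \emph{oplax} monoidal structure, the comparison $\wh{f}\odot(C\odot C')\odot\wc{f}\Rightarrow(\wh{f}\odot C\odot\wc{f})\odot(\wh{f}\odot C'\odot\wc{f})$ now inserting the unit $1_X\Rightarrow\wc{f}\odot\wh{f}$ in the middle; since oplax monoidal functors preserve comonoids, $f_{!}$ induces $\Comon(\ca{H}(\dc{D})(X,X))\to\Comon(\ca{H}(\dc{D})(Y,Y))$ and equips the cocartesian lifts of part (i) with a canonical comonad structure, making $\Cmd(\dc{D})\to\dc{D}_0$ an opfibration. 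I expect the main obstacle to be the bookkeeping of the transported (co)monad axioms and the coherence of the (op)lax monoidal structures: associativity and unitality of the new (co)multiplication have to be deduced from those of $B$ (resp.\ $C$) together with the triangle identities of $\wh{f}\dashv\wc{f}$, and one must confirm that the factorization property from part (i) respects this extra structure, so that cartesianness (resp.\ cocartesianness) genuinely transfers from $\dc{D}_1^\bullet$ to $\Mnd(\dc{D})$ (resp.\ $\Cmd(\dc{D})$).
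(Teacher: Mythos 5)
Your proposal is correct and follows essentially the same route as the paper's (sketched) proof: part (i) via the pullback description of $\dc{D}_1^\bullet$ and the conjugation reindexing functors $\wc{f}\odot\mi\odot\wh{f}$ and $\wh{f}\odot\mi\odot\wc{f}$, and part (ii) by identifying the fibres with $\Mon(\ca{H}(\dc{D})(X,X))$ and $\Comon(\ca{H}(\dc{D})(X,X))$ and restricting the reindexing functors along their lax, respectively oplax, monoidal structures coming from the unit and counit of $\wh{f}\dashv\wc{f}$. You merely spell out the laxators that the paper leaves implicit; there is no substantive difference in approach.
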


\begin{proof}[Sketch]
$(i)$ The fiber above an object $X\in\dc{D}_0$ is the endo-hom category $\ca{H}(\dc{D})(X,X)$, and the reindexing functors, for any 
vertical morphism $f\colon X\to Y$, are 
\begin{displaymath}
\begin{tikzcd}[column sep=.6in]
\ca{H}(\dc{D})(X,X)\ar[r,shift left=2,"\wh{f}\odot\mi\odot\wc{f}"]\ar[r,phantom,"\bot"] & \ca{H}(\dc{D})(Y,Y).\ar[l,shift 
left=2,"\wc{f}\odot\mi\odot\wh{f}"]  
\end{tikzcd}
\end{displaymath}

$(ii)$ These are of course the source/target functors down to $\dc{D}_0$, mapping (co)monads to their carrier objects. Recall that a (co)monad $X\bular X$ in $\dc{D}$, equivalently a (co)monad in the horizontal bicategory $\ca{H}(\dc{D})$, is the same as a 
	(co)monoid in the endo-hom category $(\ca{H}(\dc{D})(X,X),\circ,1_X)$. The pseudofunctors that give rise to the (op)fibrations are
	\begin{displaymath}
		\begin{tikzcd}[row sep=.05in,/tikz/column 1/.append style={anchor=base east},/tikz/column 2/.append style={anchor=base west},/tikz/column 
			5/.append style={anchor=base east},/tikz/column 6/.append style={anchor=base west}]
			\dc{D}_0^\op\ar[r] & \Cat &&& \dc{D}_0\ar[r] & \Cat \\
			X\ar[r,mapsto]\ar[dd,"f"'] & \Mon(\ca{H}(\dc{D})(X,X)) &&& X\ar[r,mapsto]\ar[dd,"f"'] & 
			\Comon(\ca{H}(\dc{D})(X,X)) \ar[dd,"\wh{f}\odot\mi\odot\wc{f}"]\\
			\hole \\
			Y\ar[r,mapsto] & \Mon(\ca{H}(\dc{D})(Y,Y))\ar[uu,"\wc{f}\odot\mi\odot\wh{f}"'] &&& Y\ar[r,mapsto] & \Comon(\ca{H}(\dc{D})(Y,Y))
		\end{tikzcd}
	\end{displaymath}
This is due to the fact that the reindexing functors for $\dc{D}_1^\bullet$ from $(i)$ are lax and oplax monoidal respectively,
which leads to their restriction between  the categories of monoids and comonoids therein.
\end{proof}

\begin{ex}
In the context of the double category $\dc{D}=\VMMat$, \cref{prop:MonComonfibred} gives us a bifibration $\VGrph\to\Set$ that maps a graph to its set of 
objects, and also a fibration $\VCat\to\nc{Set}$ and an opfibration $\VCocat\to\nc{Set}$. In more detail, given a function $f\colon X\to Y$ and a 
$\ca{V}$-category $B\colon Y\tickar 
Y$, the cartesian lifting of $f$ to $B$ is the morphism $^{f}1^{f}\colon f^{*}\circ B\circ f_{*}\Rightarrow B$, where $(f^{*}\circ B\circ 
f_{*})(x,x')=B(f(x),f(x'))$, for all $x,x'\in X$. On the other hand, for a function $f\colon U\to V$ and a $\ca{V}$-cocategory $C\colon Z\tickar Z$, 
the cocartesian lifting of $f$ to $C$ is a morphism of the form $C\Rightarrow f_{*}\circ C\circ f^{*}$ where $(f_{*}\circ C\circ 
f^{*})(v,v)=\sum_{f(z)=v,f(z')=v'}C(z,z')$. The required morphism then has components, for all $u,u'\in U$, 
the evident coproduct injections
$$C(u,u')\to\sum\limits_{\substack{f(a)=f(u)\\f(a')=f(u')}}C(a,a').$$
\end{ex}

Next, we record some information on the existence of (co)limits in categories of (co)monads, under suitable assumptions on the double 
category $\dc{D}$, see \cref{def:parallelcolimits}. First recall that by \cref{lem:(co)limits in D_1 bullet}, in the fibrant setting, $\dc{D}_1^\bullet$ 
inherits (co)limits from $\dc{D}_1$ in a parallel (co)complete double category $\dc{D}$.

\begin{prop}\label{prop:(co)limits in (co)monads}
If a fibrant double category $\dc{D}$ has parallel $\ca{I}$-limits, then the category $\Mnd(\dc{D})$ has all $\ca{I}$-limits and the inclusion 
$\Mnd(\dc{D})\to\dc{D}_{1}^{\bullet}$ creates them. Dually, if $\dc{D}$ has parallel $\ca{I}$-colimits, the $\Cmd(\dc{D})$ has $\ca{I}$-colimits 
and 
$\Cmd(\dc{D})\to\dc{D}_{1}^{\bullet}$ creates them.
\end{prop}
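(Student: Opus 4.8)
The plan is to prove the monad statement by showing that the forgetful functor $U\colon\Mnd(\dc{D})\to\dc{D}_{1}^{\bullet}$ \emph{creates} $\ca{I}$-limits; since \cref{lem:(co)limits in D_1 bullet} guarantees that $\dc{D}_1^\bullet$ has $\ca{I}$-limits under the stated hypotheses, existence in $\Mnd(\dc{D})$ follows at once. So I would fix a diagram $D\colon\ca{I}\to\Mnd(\dc{D})$ with underlying monads $A_i\colon X_i\bular X_i$ and connecting monad morphisms ${}^{f_\alpha}(D\alpha)^{f_\alpha}$, and first form the limit $(\lambda_i\colon L\Rightarrow A_i)_{i\in\ca{I}}$ of $UD$ in $\dc{D}_1^\bullet$, where $L\colon X\bular X$ and each $\lambda_i$ has vertical legs some $p_i\colon X\to X_i$. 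Because the inclusion $\dc{D}_1^\bullet\hookrightarrow\dc{D}_1$ preserves this limit and $\Gr{s},\Gr{t}$ preserve $\ca{I}$-limits by parallel completeness, the $p_i$ form a limit cone in $\dc{D}_0$; in particular the only vertical endomorphism of $X$ commuting with them is $\id_X$, which is what will force all structure $2$-cells below to be globular.

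Next I would equip $L$ with a monad structure via the universal property of $L$. The $2$-cells $\mu_i\,(\lambda_i\odot\lambda_i)\colon L\odot L\Rightarrow A_i$ constitute a cone over $D$ in $\dc{D}_1^\bullet$: compatibility along $\alpha\colon i\to j$ uses that $D\alpha$ respects multiplications together with $(D\alpha)\,\lambda_i=\lambda_j$ and functoriality of $\odot$, giving $(D\alpha)\,\mu_i(\lambda_i\odot\lambda_i)=\mu_j\big(((D\alpha)\lambda_i)\odot((D\alpha)\lambda_i)\big)=\mu_j(\lambda_j\odot\lambda_j)$. Hence there is a unique $\mu_L\colon L\odot L\Rightarrow L$ with $\lambda_i\,\mu_L=\mu_i(\lambda_i\odot\lambda_i)$; likewise the cone $\eta_i\,1_{p_i}\colon 1_X\Rightarrow A_i$ induces a unique $\eta_L\colon 1_X\Rightarrow L$. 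Both are globular by the observation on the $p_i$ above, and these defining equations are \emph{exactly} the conditions of \cref{monadhom} asserting that each $\lambda_i$ is a monad morphism $L\Rightarrow A_i$.

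The monad axioms for $(L,\mu_L,\eta_L)$ are then verified using joint monicity of the limit cone $(\lambda_i)$: two globular $2$-cells into $L$ agree once their composites with every $\lambda_i$ agree. For associativity one computes $\lambda_i\,\mu_L(\mu_L\odot\id_L)$ and $\lambda_i\,\mu_L(\id_L\odot\mu_L)$ by repeatedly substituting $\lambda_i\,\mu_L=\mu_i(\lambda_i\odot\lambda_i)$ and invoking the interchange law for $\odot$, reducing both to the associativity of $\mu_i$; the unit axioms are analogous. To see $(L,\lambda_i)$ is a limit in $\Mnd(\dc{D})$, take any monad $B$ with a cone of monad morphisms $(\beta_i\colon B\Rightarrow A_i)$; the unique $\dc{D}_1^\bullet$-factorization $\beta\colon B\Rightarrow L$ is again a monad morphism, since $\lambda_i\,\mu_L(\beta\odot\beta)=\mu_i(\beta_i\odot\beta_i)=\beta_i\,\mu_B=\lambda_i\,\beta\,\mu_B$ for all $i$ forces $\mu_L(\beta\odot\beta)=\beta\,\mu_B$ by joint monicity (similarly for units). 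Finally, any monad structure on $L$ rendering all $\lambda_i$ monad morphisms must satisfy the same defining equations and hence equals $(\mu_L,\eta_L)$, so the lift is unique: this is precisely creation of limits. The comonad statement is entirely dual, replacing limits, cones and joint monicity by colimits, cocones and joint epicity, and using the dual half of \cref{lem:(co)limits in D_1 bullet}.

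I expect the only real obstacle to be bookkeeping: carrying out the interchange-law manipulations in the associativity check, and confirming at each stage that the composites in play genuinely live in $\dc{D}_1^\bullet$ (equal source and target legs) rather than merely in $\dc{D}_1$. Conceptually nothing is delicate once joint monicity and functoriality of $\odot$ are available — the whole content is that horizontal composition propagates through the limit along the cone $(\lambda_i)$.
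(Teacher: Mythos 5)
Your proof is correct and follows essentially the same strategy as the paper's: form the (co)limit in $\dc{D}_{1}^{\bullet}$, induce the structure 2-cells on the (co)limit object via its universal property, deduce globularity from the fact that $\Gr{s},\Gr{t}$ preserve the (co)limit so the vertical legs form a (co)limit cone in $\dc{D}_0$, and then verify the (co)monad axioms and the universal property by joint monicity/epicity of the cone. The only difference is that you treat the monad/limit half directly and obtain comonads by duality, whereas the paper argues the comonad/colimit half and dualizes to monads.
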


\begin{proof}
We prove the result about comonads, as the corresponding one for monads follows by duality. Consider a diagram $D\colon\ca{I}\to\Cmd(\dc{D})$ whose 
colimit 
cocone in $\dc{D}_{1}^{\bullet}$ is 
\begin{equation}\label{eq:phii}
	\begin{tikzcd}
		X_{i}\ar[r,bul,"C_{i}"]\ar[d,"f_{i}"']\ar[dr,phantom,"\Two\phi_{i}"] & X_{i}\ar[d,"f_{i}"] \\
		X\ar[r,bul,"C"'] & X
	\end{tikzcd}
\end{equation}
(which is obtained from the colimit in $\dc{D}_1$ using a normalization explained in \cref{eq:normalizing} due to fibrancy).
We will show that $C$ has an induced comonad structure which exhibits it as the colimit in $\Cmd(\dc{D})$. 
	
First, observe that for all $\alpha\colon i\to j\in\ca{I}$ we have the following equalities
\begin{displaymath}
	\begin{tikzcd}
		X_{i}\ar[rr,bul,"C_{i}"]\ar[d,"h_\alpha"']\ar[drr,phantom,"\Two D\alpha"] && X_{i}\ar[d,"h_\alpha"] \\
		X_{j}\ar[rr,bul,"C_{j}"]\ar[d,equal]\ar[drr,phantom,"\Two\delta_{j}"] && X_{j}\ar[d,equal] \\
		X_{j}\ar[r,bul,"C_{j}"]\ar[d,"f_{j}"']\ar[dr,phantom,"\Two\phi_{j}"] & X_{j}\ar[r,bul,"C_{j}"]\ar[d,"f_{j}"']\ar[dr,phantom,"\Two\phi_{j}"] & 
X_{j}\ar[d,"f_{j}"] \\
		X\ar[r,bul,"C"'] & X\ar[r,bul,"C"'] & X	
	\end{tikzcd}=
	\begin{tikzcd}
		X_{i}\ar[rr,bul,"C_{i}"]\ar[d,equal]\ar[drr,phantom,"\Two\delta_{i}"] && X_{i}\ar[d,equal] \\
		X_{i}\ar[r,bul,"C_{i}"]\ar[d,"h_\alpha"']\ar[dr,phantom,"\Two D\alpha"] & 
X_{i}\ar[r,bul,"C_{i}"]\ar[d,"h_\alpha"']\ar[dr,phantom,"\Two D\alpha"] & X_{i}\ar[d,"h_\alpha"] \\
		X_{j}\ar[r,bul,"C_{j}"]\ar[d,"f_{j}"']\ar[dr,phantom,"\Two\phi_{j}"] & X_{j}\ar[r,bul,"C_{j}"]\ar[d,"f_{j}"']\ar[dr,phantom,"\Two\phi_{j}"] & 
X_{j}\ar[d,"f_{j}"] \\
		X\ar[r,bul,"C"'] & X\ar[r,bul,"C"'] & X	
	\end{tikzcd}=
	\begin{tikzcd}
		X_{i}\ar[rr,bul,"C_{i}"]\ar[d,equal]\ar[drr,phantom,"\Two\delta_{i}"] && X_{i}\ar[d,equal] \\
		X_{i}\ar[r,bul,"C_{i}"]\ar[d,"f_{i}"']\ar[dr,phantom,"\Two\phi_{i}"] & X_{i}\ar[r,bul,"C_{i}"]\ar[d,"f_{i}"']\ar[dr,phantom,"\Two\phi_{i}"] & 
X_{i}\ar[d,"f_{i}"] \\
		X\ar[r,bul,"C"'] & X\ar[r,bul,"C"'] & X
	\end{tikzcd}
\end{displaymath}
due to the fact that $D\alpha$ is a morphism of comonads and $(\phi_i)_{i\in\ca{I}}$ is a cocone in $\dc{D}_1^\bullet$ respectively. Thus, 
$((\phi_{i}\odot\phi_{i})\delta_{i}\colon C_{i}\Rightarrow C\odot C)_{i\in\ca{I}}$ is a cocone over $D$ in $\dc{D}_{1}^{\bullet}$, hence there 
exists a unique morphism ${}^u\delta^u\colon C\Rightarrow C\odot C$ from the colimit $C$
such that for all $i\in\ca{I}$ we have the following equality
\begin{displaymath}
	\begin{tikzcd}
		X_{i}\ar[rr,bul,"C_{i}"]\ar[d,"f_{i}"']\ar[drr,phantom,"\Two\phi_{i}"] && X_{i}\ar[d,"f_{i}"] \\
		X\ar[rr,bul,"C"]\ar[d,"u"']\ar[drr,phantom,"\Two\delta"] && X\ar[d,"u"]\\
		X\ar[r,bul,"C"'] & X\ar[r,bul,"C"'] & X
	\end{tikzcd}=
	\begin{tikzcd}
		X_{i}\ar[rr,bul,"C_{i}"]\ar[d,equal]\ar[drr,phantom,"\Two\delta_{i}"] && X_{i}\ar[d,equal] \\
		X_{i}\ar[r,bul,"C_{i}"]\ar[d,"f_{i}"']\ar[dr,phantom,"\Two\phi_{i}"] & X_{i}\ar[r,bul,"C_{i}"]\ar[d,"f_{i}"']\ar[dr,phantom,"\Two\phi_{i}"] & 
X_{i}\ar[d,"f_{i}"] \\
		X\ar[r,bul,"C"'] & X\ar[r,bul,"C"'] & X
	\end{tikzcd}
\end{displaymath}	
To first ensure that $\delta$ -- the candidate for comultiplication  -- is actually a globular morphism, observe that since $\Gr{s}$ (or 
$\Gr{t}$) preserves colimits, the cocone $(f_{i}\colon X_{i}\to X)_{i\in\ca{I}}$ is a colimit in the category $\dc{D}_{0}$. Then, the above in 
particular implies that $uf_{i}=f_{i}$ for all $i\in\ca{I}$, hence $u=\id_{X}$.

In a very analogous way, the following equalities
\begin{displaymath}
	\begin{tikzcd}
		X_{i}\ar[r,bul,"C_{i}"]\ar[d,"h_\alpha"']\ar[dr,phantom,"\Two D{\alpha}"] & X_{i}\ar[d,"h_\alpha"] \\
		X_{j}\ar[r,bul,"C_{j}"]\ar[d,equal]\ar[dr,phantom,"\Two\epsilon_{j}"] & X_{j}\ar[d,equal] \\
		X_{j}\ar[d,"f_j"']\ar[r,bul,"1_{X_{j}}"']\ar[dr,phantom,"\Two1_{f_j}"] & X_{j}\ar[d,"f_j"] \\
		X\ar[r,bul,"1_X"'] & X
	\end{tikzcd}=
	\begin{tikzcd}
		X_{i}\ar[r,bul,"C_{i}"]\ar[d,equal]\ar[dr,phantom,"\Two\epsilon_{i}"] & X_{i}\ar[d,equal] \\
		X_{i}\ar[r,bul,"1_{X_i}"]\ar[d,"h_\alpha"']\ar[dr,phantom,"\Two1_{h_\alpha}"] & X_{i}\ar[d,"h_\alpha"] \\
		X_{j}\ar[d,"f_j"']\ar[r,bul,"1_{X_{j}}"']\ar[dr,phantom,"\Two1_{f_j}"] & X_{j}\ar[d,"f_j"] \\
		X\ar[r,bul,"1_X"'] & X
	\end{tikzcd}=
	\begin{tikzcd}
		X_{i}\ar[r,bul,"C_{i}"]\ar[d,equal]\ar[dr,phantom,"\Two\epsilon_{i}"] & X_{i}\ar[d,equal] \\
		X_{i}\ar[r,bul,"1_{X_i}"]\ar[d,"f_i"']\ar[dr,phantom,"\Two1_{f_i}"] & X_{i}\ar[d,"f_i"] \\
		X\ar[r,bul,"1_X"'] & X
	\end{tikzcd}
\end{displaymath}
imply the existence of a unique morphism $^v\epsilon^v$ in $\dc{D}_{1}^{\bullet}$ from $C$ to the colimit of the $1_{X_i}$'s
that satisfy $\epsilon\phi_i=(1_{f_i})\epsilon_i$ for all $i\in\ca{I}$,
which in particular gives $v=\id_{X}$, i.e. $\epsilon$ is globular.
	
We will now show that $\delta$ together with $\epsilon$ endow $C$ with a comonad structure. Note that, once 
this goal has been accomplished, the ${}^{f_i}\phi_{i}^{f_i}$ from \cref{eq:phii} will immediately be morphisms of comonads by the properties 
defining $\delta,\epsilon$ above. As an example, for one of the counit axioms we have
\begin{displaymath}
	\begin{tikzcd}
		X_{i}\ar[rr,bul,"C_{i}"]\ar[d,"f_i"']\ar[drr,phantom,"\Two\phi_{i}"] && X_{i}\ar[d,"f_i"] \\
		X\ar[rr,bul,"C"]\ar[d,equal]\ar[drr,phantom,"\Two\delta"] && X\ar[d,equal] \\
		X\ar[r,bul,"C"]\ar[d,equal]\ar[dr,phantom,"\Two\epsilon"] & X\ar[r,bul,"C"]\ar[d,equal] & X\ar[d,equal] \\
		X\ar[r,bul,"1_X"'] & X\ar[r,bul,"C"'] & X
	\end{tikzcd}{=}
	\begin{tikzcd}
		X_{i}\ar[rr,bul,"C_{i}"]\ar[d,equal]\ar[drr,phantom,"\Two\delta_{i}"] && X_{i}\ar[d,equal] \\ 
		X_{i}\ar[r,bul,"C_{i}"]\ar[d,"f_i"']\ar[dr,phantom,"\Two\phi_{i}"] & X_{i}\ar[r,bul,"C_{i}"]\ar[d,"f_i"']\ar[dr,phantom,"\Two\phi_{i}"] & 
X_{i}\ar[d,"f_i"] \\
		X\ar[r,bul,"C"]\ar[d,equal]\ar[dr,phantom,"\Two\epsilon"] & X\ar[r,bul,"C"]\ar[d,equal] & X\ar[d,equal] \\
		X\ar[r,bul,"1_X"'] & X\ar[r,bul,"C"'] & X
	\end{tikzcd}{=}
	\begin{tikzcd}
		X_{i}\ar[rr,bul,"C_{i}"]\ar[d,equal]\ar[drr,phantom,"\Two\delta_{i}"] && X_{i}\ar[d,equal] \\ 
		X_{i}\ar[r,bul,"C_{i}"]\ar[d,equal]\ar[dr,phantom,"\Two\epsilon_{i}"] & X_{i}\ar[r,bul,"C_{i}"]\ar[d,equal] & X_{i}\ar[d,equal] \\
		X_{i}\ar[r,bul,"C_{i}"]\ar[d,"f_i"']\ar[dr,phantom,"\Two1_{f_i}"] & X_{i}\ar[r,bul,"C_{i}"]\ar[d,"f_i"]\ar[dr,phantom,"\Two\phi_{i}"] & 
X_{i}\ar[d,"f_i"] \\
		X\ar[r,bul,"1_X"'] & X\ar[r,bul,"C"'] & X
	\end{tikzcd}{=}
	\begin{tikzcd}
		X_{i}\ar[r,bul,"1_{X_i}"]\ar[d,"f_i"']\ar[dr,phantom,"\Two1_{f_i}"] & 
X_{i}\ar[r,bul,"C_{i}"]\ar[d,"f_i"']\ar[dr,phantom,"\Two\phi_{i}"] & X_{i}\ar[d,"f_i"] \\
	X\ar[r,bul,"1_X"'] & 
X\ar[r,bul,"C"'] & X
	\end{tikzcd}
\end{displaymath}
Since this holds for all $i\in\ca{I}$, by the colimit property in $\dc{D}_{1}^{\bullet}$ we deduce that $(\epsilon\odot1)\delta\cong1$. 
Coassociativity is established in an analogous way through calculations providing an equality $(\Delta\odot1)\Delta\phi_i=(1\odot\Delta)\Delta\phi_i$.

Finally, one can use similar arguments to show that the uniquely induced morphism $C\Rightarrow B$ for a different colimit candidate $B$ is in 
fact a morphism of comonads, as soon as $B\in\Cmd(\dc{D})$ is the vertex of a cocone over the diagram $D$ in $\Cmd(\dc{D})$.
\end{proof}

\begin{cor}\label{cor:MndD0complete}
Let $\dc{D}$ be a fibrant double category. If it is parallel complete, then the fibration $\Mnd(\dc{D})\to\dc{D}_0$ has all fibred limits. If $\dc{D}$ 
is parallel cocomplete, then the opfibration $\Cmd(\dc{D})\to\dc{D}_0$ has all fibred colimits.  
\end{cor}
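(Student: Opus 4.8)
The plan is to deduce the statement directly from the three preceding results \cref{prop:MonComonfibred}, \cref{prop:(co)limits in (co)monads} and \cref{prop:fiberwiselimits}, together with \cref{lem:(co)limits in D_1 bullet}; it is purely a matter of assembling them. I treat the monad case, the comonad statement being formally dual. First I would note that parallel completeness of $\dc{D}$ in particular makes $\dc{D}_0$ complete, since by \cref{def:parallelcolimits} it provides $\ca{I}$-limits in $\dc{D}_0$ for every small $\ca{I}$. As $P\colon\Mnd(\dc{D})\to\dc{D}_0$ is a fibration by \cref{prop:MonComonfibred}, \cref{prop:fiberwiselimits} applies to $P$ and reduces the assertion that $P$ has fibred $\ca{I}$-limits to the equivalent statement that $\Mnd(\dc{D})$ has all $\ca{I}$-limits and that $P$ preserves them. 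Since $\dc{D}$ has parallel $\ca{I}$-limits for every small $\ca{I}$, it suffices to establish this for an arbitrary fixed $\ca{I}$.

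The existence of $\ca{I}$-limits in $\Mnd(\dc{D})$ is exactly \cref{prop:(co)limits in (co)monads}, which moreover records that the inclusion $\Mnd(\dc{D})\hookrightarrow\dc{D}_1^\bullet$ creates them. The one remaining point is that $P$ preserves these limits, and here I would use that $P$ factors as the composite $\Mnd(\dc{D})\hookrightarrow\dc{D}_1^\bullet\xrightarrow{\Gr{s}}\dc{D}_0$, since $P$ sends a monad $A\colon X\bular X$ to its carrier $X=\Gr{s}(A)$. The inclusion preserves $\ca{I}$-limits by the creation property just noted; and $\Gr{s}\colon\dc{D}_1^\bullet\to\dc{D}_0$ preserves $\ca{I}$-limits because, by \cref{lem:(co)limits in D_1 bullet}, the inclusion $\dc{D}_1^\bullet\hookrightarrow\dc{D}_1$ preserves $\ca{I}$-limits, while $\Gr{s}\colon\dc{D}_1\to\dc{D}_0$ preserves them by the hypothesis of parallel completeness. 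Composing two limit-preserving functors, $P$ preserves $\ca{I}$-limits, so the equivalent condition of \cref{prop:fiberwiselimits} is met and the fibration has fibred $\ca{I}$-limits. Letting $\ca{I}$ range over all small categories yields all fibred limits.

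I do not expect a genuine obstacle: the corollary is bookkeeping on top of earlier work. The only point warranting a moment's care is confirming that $P$ is literally the restriction of $\Gr{s}$ (equivalently $\Gr{t}$) along the forgetful inclusion into $\dc{D}_1^\bullet$, so that its limit-preservation is the composite of two known preservations rather than something to be checked afresh. The dual comonad case replaces limits by colimits throughout, uses $\Gr{s}$-colimit-preservation under parallel cocompleteness together with the colimit half of \cref{lem:(co)limits in D_1 bullet}, and invokes the opfibred dual of \cref{prop:fiberwiselimits} to conclude that the opfibration $\Cmd(\dc{D})\to\dc{D}_0$ has all fibred colimits.
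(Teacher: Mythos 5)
Your proposal is correct and follows essentially the same route as the paper: both establish that limits are preserved through the composite $\Mnd(\dc{D})\hookrightarrow\dc{D}_1^\bullet\to\dc{D}_1\to\dc{D}_0$ using \cref{prop:(co)limits in (co)monads}, \cref{lem:(co)limits in D_1 bullet} and parallel completeness, and then conclude via \cref{prop:fiberwiselimits}. Your write-up merely makes explicit the factorization and the creation-implies-preservation step that the paper's proof leaves implicit.
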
 

\begin{proof}
In a parallel complete double category (\cref{def:parallelcolimits}), the functors $\Gr{s}/\Gr{t}\colon\dc{D}_1\to\dc{D}_0$ are continuous. 
In a fibrant such double category, \cref{prop:(co)limits in (co)monads,lem:(co)limits in D_1 bullet} ensure that limits are preserved through
$\Mnd(\dc{D})\to\dc{D}_{1}^{\bullet}\to\dc{D}_1$. Therefore the fibration $\Mnd(\dc{D})\to\dc{D}_0$ of \cref{prop:MonComonfibred} (ii) which is the composite of the above functors
has all fibred limits, by \cref{prop:fiberwiselimits}. Analogously we verify the case of comonads.

In particular, this means that the reindexing functors $\wc{f}\odot\mi\odot\wh{f}\colon\Mnd(\dc{D})_Y\to\Mnd(\dc{D})_X$ preserve all limits, 
and the reindexing functors $\wh{f}\odot\mi\odot\wc{f}\colon\Cmd(\dc{D})_X\to\Cmd(\dc{D})_Y$ preserve all colimits, for any $f\colon X\to Y$.
\end{proof}

Similarly to lax or oplax monoidal functors between monoidal categories, double functors preserve monads or comonads accordingly in a 
natural way. For a full proof, see \cite[Prop.~3.12]{VCocats}.

\begin{prop}\label{prop:MonFdouble}
	Any lax double functor $F=(F_0,F_1)\colon\dc{D}\to\dc{E}$ induces an ordinary functor
	\begin{displaymath}
		\Mnd F:\Mnd(\dc{D})\to\Mnd(\dc{E})
	\end{displaymath}
	between their categories of monads, by restricting 
	$F_1$ to $\Mnd(\dc{D})$. Dually, any oplax double functor induces $\Cmd F\colon\Cmd(\dc{D})\to\Cmd(\dc{E})$
	between the categories of comonoids.
\end{prop}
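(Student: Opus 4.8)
The plan is to adapt the classical fact that lax monoidal functors preserve monoids to the double-categorical setting, taking extra care with the more flexible monad morphisms of \cref{ex:VCatsaremonadsinVMat}. Given a monad $(A,\mu,\eta)$ on $X$ in $\dc{D}$ as in \cref{Monadindoublecat}, I would equip the horizontal endo-1-cell $F_1A\colon F_0X\bular F_0X$ with structure 2-morphisms built by whiskering the lax structure cells of $F$ (from \cref{defi:doublefunctor}) against the images of $\mu,\eta$: the multiplication $\mu_{F_1A}$ is the vertical composite of $\phi_{A,A}\colon F_1A\odot F_1A\Rightarrow F_1(A\odot A)$ with $F_1\mu\colon F_1(A\odot A)\Rightarrow F_1A$, and the unit $\eta_{F_1A}$ is the composite of $\phi_0\colon 1_{F_0X}\Rightarrow F_1(1_X)$ with $F_1\eta\colon F_1(1_X)\Rightarrow F_1A$. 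Since $\Gr{s}\circ F_1=F_0\circ\Gr{s}$ and $\Gr{t}\circ F_1=F_0\circ\Gr{t}$, and both $\phi_{A,A}$ and $\phi_0$ are globular, these composites are globular 2-morphisms as required.

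Next I would verify the monad axioms for $(F_1A,\mu_{F_1A},\eta_{F_1A})$. Associativity follows by pasting the image under $F_1$ of the associativity square for $\mu$ together with the associativity coherence axiom of the lax double functor (relating the two ways of composing $\phi_{A,A}$, one factor of $\id$, and the associator), invoking naturality of $\phi$ to move the structure cells past $F_1\mu$. The two unit laws follow in the same manner from the unit coherence axioms relating $\phi$ and $\phi_0$ together with the unit laws of $(A,\mu,\eta)$. These are precisely the standard diagram chases establishing that lax functors preserve monads, carried out here by vertical pasting of globular 2-morphisms in $\dc{E}$, and they constitute the only genuinely technical part of the argument.

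For the action on morphisms, a monad morphism ${}^f\alpha^f\colon A\Rightarrow B$ is sent to ${}^{F_0f}(F_1\alpha)^{F_0f}\colon F_1A\Rightarrow F_1B$, and I would check it is again a monad morphism. Applying $F_1$ to the two compatibility squares of \cref{monadhom} and then splicing in the naturality squares of $\phi$ and $\phi_0$ yields exactly the compatibility conditions for $F_1\alpha$ with respect to the induced multiplications and units; concretely, naturality of $\phi$ at the pair $(\alpha,\alpha)$ gives $\phi_{B,B}\circ(F_1\alpha\odot F_1\alpha)=F_1(\alpha\odot\alpha)\circ\phi_{A,A}$, which combined with $F_1$ applied to the multiplication compatibility of $\alpha$ produces the multiplication compatibility of $F_1\alpha$, and similarly for the unit using $\phi_0$. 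Preservation of composition and identities of monad morphisms is then immediate, since $\Mnd F$ is by definition the restriction of the functor $F_1$. The comonad statement is obtained by dualizing throughout: an oplax double functor carries its structure cells in the opposite direction, so that $1_{F_0Z}\Leftarrow F_1(1_Z)$ and $F_1(C\odot C)\Leftarrow F_1C\odot\! F_1C$-type cells assemble a comonad structure on $F_1C$ and send comonad morphisms to comonad morphisms. I note in passing that a high-level shortcut — observing that $\ca{H}(F)$ restricts to a lax monoidal functor $(\ca{H}(\dc{D})(X,X),\odot)\to(\ca{H}(\dc{E})(F_0X,F_0X),\odot)$ preserving monoids — only accounts for globular monad morphisms, so the direct verification above is still required to handle the general vertical legs of morphisms in $\Mnd(\dc{D})$.
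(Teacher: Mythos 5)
Your proposal is correct and coincides with the paper's argument: the paper likewise equips $F_1A$ with the structure $(F_1\mu\circ\phi_{A,A},\,F_1\eta\circ\phi_0)$ and verifies the axioms via the lax coherence cells, deferring the detailed diagram chases to \cite[Prop.~3.12]{VCocats}. Your closing remark that the hom-categorywise monoidal-functor shortcut only captures globular monad morphisms is also exactly the point the paper makes when contrasting $\Mnd(\VMMat)$ with monads in the horizontal bicategory.
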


\begin{proof}[Sketch]
A monad $(A,\mu,\eta)$ in $\dc{D}$ is mapped to the monad $(FA,F(\mu)\phi,F(\eta)\phi_0)$ in $\dc{E}$, i.e. vertically composing the mapped 
monad structures with the lax double functor structure maps.
\end{proof}

\begin{rmk}\label{rem:oneobjectcase}
Notice that a double category $\dc{D}$ with only one object $*$ and one vertical arrow (its vertical identity) can be thought of as a 
monoidal category $\ca{V}$. Indeed, objects are horizontal arrows $M\colon *\bular *$ and morphisms are the (globular) 2-morphisms $M\Rightarrow N$. 
Horizontal composition then provides a tensor product, with $1_*$ as the unit. More generally, fixing an object $a$ we obtain the 
well-known fact that $(\ca{H}(\dc{D})(a,a),\odot,1_a)$ is a monoidal category for any double category $\dc{D}$. 
	
Clearly a (co)monad in such a special double category becomes a usual (co)monoid in $\ca{V}$, whereas a lax double functor
becomes a lax monoidal functor between the corresponding monoidal categories, therefore reducing \cref{prop:MonFdouble} to the standard 
\cref{eq:MonF}, as was the case for bicategories and lax functors.
\end{rmk}

As a corollary to \cref{prop:MonFdouble}, we deduce that the double functor $\otimes\colon\dc{D}\times\dc{D}\to\dc{D}$ for a 
monoidal double category induces tensor product functors for (co)monads.

\begin{prop}\label{prop:MonDComonDmonoidal}
	If $\dc{D}$ is a (braided/symmetric) monoidal double category, then the categories $\Mnd(\dc{D})$ and $\Cmd(\dc{D})$ inherit a 
(braided/symmetric) monoidal structure from $\dc{D}_1$. 
\end{prop}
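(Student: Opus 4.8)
The plan is to transport the pseudomonoid structure that $\dc{D}$ carries as a monoidal double category through the monad construction $\Mnd(-)$, exploiting that a pseudo double functor is in particular lax (resp. oplax) and hence subject to \cref{prop:MonFdouble}. Concretely, the tensor $\otimes\colon\dc{D}\times\dc{D}\to\dc{D}$ and the unit $I\colon\B{1}\to\dc{D}$ are pseudo double functors, so both induce functors on categories of monads; the whole monoidal structure on $\Mnd(\dc{D})$ will then be assembled from these, with coherence inherited from the monoidal category $\dc{D}_1$.

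First I would record that $\Mnd$ preserves finite products. A monad in $\dc{D}\times\dc{D}$ has all of its data (endo-1-cell, multiplication, unit) and all of its axioms componentwise, so $\Mnd(\dc{D}\times\dc{D})\cong\Mnd(\dc{D})\times\Mnd(\dc{D})$, and likewise $\Mnd(\B{1})$ is terminal. Applying \cref{prop:MonFdouble} to the lax double functor $\otimes$ and precomposing with this identification yields a functor $\otimes\colon\Mnd(\dc{D})\times\Mnd(\dc{D})\to\Mnd(\dc{D})$. Unwinding the description in the proof of \cref{prop:MonFdouble}, for monads $(A,\mu_A,\eta_A)$ and $(B,\mu_B,\eta_B)$ the horizontal 1-cell $A\otimes_1 B$ becomes a monad with multiplication $(A\otimes_1 B)\odot(A\otimes_1 B)\xrightarrow{\cong}(A\odot A)\otimes_1(B\odot B)\xrightarrow{\mu_A\otimes_1\mu_B}A\otimes_1 B$ and unit obtained in the same manner, where the first isomorphism is the lax structure cell of $\otimes$, i.e. the globular isomorphism \cref{eq:monoidaldoubleiso}. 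The unit object of $\Mnd(\dc{D})$ is the image under $I$ of the unique monad of $\B{1}$, namely the horizontal unit $1_I$ equipped with its canonical monad structure.

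It then remains to supply the coherence isomorphisms and verify the monoidal axioms. The strategy is to take the associator $a$ and unitors $\ell,r$ of the monoidal category $\dc{D}_1$ and check that their components at monads are monad morphisms for the induced structures; this is exactly the content of applying $\Mnd$ to the coherence transformations of the pseudomonoid $\dc{D}$. Once this is granted, the forgetful functor $\Mnd(\dc{D})\to\dc{D}_1$ (factoring through $\dc{D}_1^\bullet$) is by construction strictly monoidal and faithful, so the pentagon and triangle identities for $\Mnd(\dc{D})$ follow from those already holding in $\dc{D}_1$. For the braided/symmetric statement, the braiding $s$ of $\dc{D}_1$ is treated identically: its components $A\otimes_1 B\to B\otimes_1 A$ are monad morphisms by the braiding-compatibility axioms of a braided monoidal double category, and the hexagon (resp. symmetry) axiom is reflected from $\dc{D}_1$ by faithfulness.

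The main obstacle is precisely the verification that the structure isomorphisms $a,\ell,r$ (and $s$) of $\dc{D}_1$ respect the induced multiplications and units, i.e. are morphisms in $\Mnd(\dc{D})$ rather than merely in $\dc{D}_1$. This reduces to a diagram chase combining naturality of $a,\ell,r,s$ with the coherence conditions relating these constraints to the globular isomorphisms \cref{eq:monoidaldoubleiso}, which are built into \cref{defi:monoidaldoublecategory}. The comonad case is entirely dual: one uses instead that $\otimes$ is an oplax double functor together with the dual clause of \cref{prop:MonFdouble}, the comultiplication on $C\otimes_1 D$ being $C\otimes_1 D\xrightarrow{\delta_C\otimes_1\delta_D}(C\odot C)\otimes_1(D\odot D)\xrightarrow{\cong}(C\otimes_1 D)\odot(C\otimes_1 D)$, and the same coherence argument applies verbatim.
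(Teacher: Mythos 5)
Your proposal is correct and follows essentially the same route as the paper: the result is obtained as a corollary of \cref{prop:MonFdouble} applied to the pseudo double functor $\otimes\colon\dc{D}\times\dc{D}\to\dc{D}$ (lax for monads, oplax for comonads), with the multiplication/comultiplication on $A\otimes_1 B$ built from the interchange isomorphism \cref{eq:monoidaldoubleiso} exactly as you describe, the unit being $1_I$, and coherence reflected from $\dc{D}_1$. The paper states this more tersely, only displaying the induced (co)monad structure on a tensor of comonads, whereas you spell out the verification that $a,\ell,r,s$ are monad morphisms — a detail the paper leaves implicit.
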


For exposition purposes, if $C\colon Z\bular Z$ and $D\colon W\bular W$ are comonads in $\dc{D}$, then $C\ot D$ is a comonad in $\dc{D}$ with 
comultiplication and counit
\begin{displaymath}
 \begin{tikzcd}
Z\ot W\ar[rr,bul,"C\ot D"]\ar[d,equal]\ar[drr,phantom,"\Two\delta\ot\delta"] && Z\ot W\ar[d,equal] \\
Z\ot W\ar[rr,bul,"{(C\circ C)\ot(D\circ D)}"]\ar[d,equal]\ar[drr,phantom,"\Two\cref{eq:monoidaldoubleiso}"] && Z\ot W\ar[d,equal] \\
Z\ot W\ar[r,bul,"C\ot D"'] & Z\ot W\ar[r,bul,"C\ot D"'] & Z\ot W
 \end{tikzcd}\quad
 \begin{tikzcd}
X\ot Y\ar[r,bul,"C\ot D"]\ar[d,equal]\ar[dr,phantom,"\Two\epsilon\ot\epsilon"] & X\ot Y\ar[d,equal] \\
X\ot Y\ar[r,bul,"1_X\ot 1_Y"']\ar[d,equal]\ar[dr,phantom,"\Two\cref{eq:monoidaldoubleiso}"] & X\ot Y\ar[d,equal] \\
X\ot Y\ar[r,bul,"1_{X\ot Y}"'] & X\ot Y
 \end{tikzcd}
\end{displaymath}

In the fibrant setting, the fibration and opfibration of \cref{prop:MonComonfibred} actually become monoidal according to \cref{monoidalfibration}, 
see \cite[Prop.~3.18]{VCocats}.

\begin{prop}\label{prop:Mndmonfib}
For a (braided/symmetric) monoidal fibrant double category $\dc{D}$, the fibration $\Mnd(\dc{D})\to\dc{D}_0$ and the opfibration 
$\Cmd(\dc{D})\to\dc{D}_0$ are (braided/symmetric) monoidal. 
\end{prop}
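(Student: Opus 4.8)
The plan is to verify the two defining conditions of a (braided/symmetric) monoidal fibration from \cref{monoidalfibration}: that the projection functor is strict (braided/symmetric) monoidal, and that the tensor product preserves cartesian liftings; the opfibration statement for $\Cmd(\dc{D})$ is then obtained dually. The first condition is immediate. The projection $\Mnd(\dc{D})\to\dc{D}_0$ is the restriction of $\Gr{s}=\Gr{t}$ (which agree on endo-1-cells) to the subcategory $\Mnd(\dc{D})\hookrightarrow\dc{D}_1$, and the monoidal structure on $\Mnd(\dc{D})$ is the one inherited from $\dc{D}_1$ by \cref{prop:MonDComonDmonoidal}. Since $\Gr{s}$ is strict (braided/symmetric) monoidal by \cref{defi:monoidaldoublecategory}, so is its restriction: the tensor $A\otimes B$ of monads, which lives on $X\otimes_0 Y$, is sent to $X\otimes_0 Y$, and the unit monad $1_I$ to $I$.

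For the substantive condition I would fix vertical 1-cells $f\colon X\to Y$, $g\colon X'\to Y'$ and monads $B\colon Y\bular Y$, $B'\colon Y'\bular Y'$, and compare the reindexing of the tensor with the tensor of the reindexings. By \cref{prop:MonComonfibred} the cartesian lift of $f$ at $B$ has carrier $f^*B=\wc{f}\odot B\odot\wh{f}$, and analogously for the other data. The key input is that the pseudo double functor $\otimes\colon\dc{D}\times\dc{D}\to\dc{D}$ preserves companions and conjoints (\cref{lem:fibrantproperties}), so that $\wh{f}\otimes\wh{g}$ is a companion and $\wc{f}\otimes\wc{g}$ a conjoint of $f\otimes g$; by uniqueness of companions and conjoints this yields canonical globular isomorphisms $\wh{f\otimes g}\cong\wh{f}\otimes\wh{g}$ and $\wc{f\otimes g}\cong\wc{f}\otimes\wc{g}$. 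Substituting these and applying the interchange isomorphisms \cref{eq:monoidaldoubleiso} twice, I would obtain
\begin{align*}
(f\otimes g)^*(B\otimes B')
&=\wc{f\otimes g}\odot(B\otimes B')\odot\wh{f\otimes g}
\cong(\wc{f}\otimes\wc{g})\odot(B\otimes B')\odot(\wh{f}\otimes\wh{g}) \\
&\cong(\wc{f}\odot B\odot\wh{f})\otimes(\wc{g}\odot B'\odot\wh{g})
=f^*B\otimes g^*B',
\end{align*}
identifying the two carriers.

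The main obstacle is not producing this isomorphism of horizontal 1-cells but checking that it upgrades to an isomorphism in $\Mnd(\dc{D})$ which moreover intertwines the two cartesian 2-morphisms. I expect this to be formal: the comparison is assembled entirely from the companion/conjoint structure cells and the coherence data \cref{eq:monoidaldoubleiso}, so compatibility with the multiplications and units of the reindexed monads reduces to the coherence axioms of the monoidal double category. For the 2-morphisms, the cartesian lift of the monad fibration is obtained by pasting the cells $p_i,q_j$ of \cref{def:compconj} with an identity on the carrier, exactly as in \cref{eq:D1targetlifts}; since $\otimes$ is a double functor it carries these to the corresponding cells of the tensored companion and conjoint, and the interchange comparisons intertwine the resulting pastings, so that $\Cart(f,B)\otimes\Cart(g,B')$ is cartesian over $f\otimes g$. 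This gives preservation of cartesian liftings, and the braided/symmetric refinement is inherited from that of $\Gr{s}$ with no extra work. The opfibration $\Cmd(\dc{D})\to\dc{D}_0$ is handled by the evident dual argument, using the opreindexing $\wh{f}\odot\mi\odot\wc{f}$ and cocartesian liftings in place of the above.
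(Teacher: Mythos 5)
Your proposal is correct and follows essentially the argument the paper relies on: the paper itself defers this statement to \cite[Prop.~3.18]{VCocats}, but its own proof of the module-level analogue (\cref{prop:ModMndmonoidal}) uses exactly your two ingredients, namely the canonical isomorphisms $\wc{f\otimes g}\cong\wc{f}\otimes\wc{g}$, $\wh{f\otimes g}\cong\wh{f}\otimes\wh{g}$ coming from \cref{lem:fibrantproperties} together with the interchange isomorphisms \cref{eq:monoidaldoubleiso}, to identify $(f\otimes g)^*(B\otimes B')$ with $f^*B\otimes g^*B'$ compatibly with the cartesian 2-morphisms. The only place you are slightly terser than ideal is in verifying that the comparison is a morphism of monads, but this is handled by the same universality trick used throughout the paper (the tensor of the two cartesian lifts is a monad morphism, so the induced comparison in $\Mnd(\dc{D})$ is forced to coincide with the $\dc{D}_1$-level isomorphism and is hence invertible).
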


\begin{rmk}\label{rem:duoidal}
	As recalled in \cref{rem:oneobjectcase}, a double category with a single object 
	and a single vertical arrow reduces to a monoidal category. 
	Clearly, since horizontal composition plays the role of the tensor product, 
	this monoidal category is not expected to be braided. 
		
	When the double category is monoidal, $\dc{D}_1$ 
	has two monoidal structures $\odot$ and $\otimes$ that interact as in \cref{eq:monoidaldoubleiso} 
	and also share their monoidal unit. By an 
	Eckmann-Hilton type argument, see \cite[Prop.~6.11]{Species}, it can be verified 
	that this (one object,one vertical arrow)-case of a monoidal double category 
	gives rise to a braided monoidal category, according also to the so-called 
	\emph{periodic table} \cite{PeriodicTable} for the one-object case of a monoidal 
	bicategory. In that case, $\odot\cong\otimes$ and the categories of (co)monoids 
	-- seen as (co)monads in the trivial case -- become monoidal as discussed 
	earlier. 
	
	Notice that when $\otimes$ is braided, in its suspended case it 
	automatically becomes symmetric \cite[Prop.~6.13]{Species} thus monoids and 
	comonoids inherit it directly as such. 
\end{rmk}

Again due to \cref{prop:MonFdouble}, in a monoidal closed double category $\dc{D}$ the lax double functor $H$ as in 
\cref{eq:internalhomdouble} induces a functor
\begin{equation}\label{eq:MonHdouble}
H\colon\Cmd(\dc{D})^\op\times\Mnd(\dc{D}
)\to\Mnd(\dc{D}) 
\end{equation}
which, for a comonad $C_U$ and a monad $A_X$, produces a monad $H(C,A)\colon H(U,X)\bular H(U,X)$ with multiplication and unit 
\begin{displaymath}
	\begin{tikzcd}[column sep=.4in]
		H(U,X)\ar[r,bul,"{H(C,A)}"]\ar[d,equal]\ar[drr,phantom,"{\Two H_{\odot}}"] & H(U,X)\ar[r,bul,"{H(C,A)}"] & H(U,X)\ar[d,equal] \\
		H(U,X)\ar[rr,bul,"{H(C\odot C, A\odot A)}"] \ar[d, equal]\ar[drr,phantom,"{\Two H(\delta,\mu)}"] && H(U,X)\ar[d,equal] \\
		H(U,X)\ar[rr,bul,"{H(C,A)}"'] && H(U,X)
	\end{tikzcd}\qquad
	\begin{tikzcd}[column sep=.4in]
		H(U,X)\ar[r,bul,"{1_{H(U,X)}}"]\ar[d,equal]\ar[dr,phantom,"{\Two H_U}"] & H(U,X)\ar[d,equal] \\
		H(U,X)\ar[r,bul,"{H(1_U,1_X)}"]\ar[d,equal]\ar[dr,phantom,"{\Two H(\epsilon,\eta)}"] & H(U,X)\ar[d,equal] \\
		H(U,X)\ar[r,bul,"{H(C,A)}"'] & H(U,X)
	\end{tikzcd}
\end{displaymath}
This of course generalizes the analogous fact for monoidal closed categories \cref{defMon[]}, and can be written under the adjunction in the 
analogous convolution structure expression. 

The following proposition establishes that this functor $H$ between the categories of comonads and monads,
is part of a fibred 1-cell between the fibrations of \cref{prop:MonComonfibred} (ii).

\begin{prop}\label{prop:MonHcartesian}
Suppose $\dc{D}$ is a braided monoidal closed and fibrant double category. The top functor 
\begin{displaymath}
\begin{tikzcd}
\Cmd(\dc{D})^\op\times\Mnd(\dc{D})\ar[r,"H"]\ar[d] & \Mnd(\dc{D})\ar[d] \\
\dc{D}_0^\op\times\dc{D}_0\ar[r,"H"] & \dc{D}_0
\end{tikzcd} 
\end{displaymath}
preserves cartesian liftings, and therefore $(H,H)$ is a fibred 1-cell. 
\end{prop}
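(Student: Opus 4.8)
The plan is to reduce the statement to a single natural isomorphism asserting that the functor $H$ of \cref{eq:MonHdouble} commutes with the reindexing functors of the two (op)fibrations, and then to assemble that isomorphism out of companions and conjoints. Fix a comonad $C'$ on $U'$, a monad $A'$ on $X'$, and vertical maps $f\colon U'\to U$, $g\colon X\to X'$. By \cref{prop:MonComonfibred}, the cartesian lift of $(f^\op,g)$ at $(C',A')$ in the domain fibration $\Cmd(\dc{D})^\op\times\Mnd(\dc{D})\to\dc{D}_0^\op\times\dc{D}_0$ is the pair consisting of the reversal of the cocartesian lift $C'\Rightarrow f_!C'=\wh{f}\odot C'\odot\wc{f}$ and of the cartesian lift $g^*A'=\wc{g}\odot A'\odot\wh{g}\Rightarrow A'$. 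Since the bottom functor sends $(f^\op,g)$ to $H(f,g)\colon H(U,X)\to H(U',X')$, and the cartesian lift of the latter for $\Mnd(\dc{D})\to\dc{D}_0$ is $\wc{H(f,g)}\odot H(C',A')\odot\wh{H(f,g)}$, it suffices to produce a globular isomorphism of monads
\begin{equation}\label{eq:theiso}
H\bigl(\wh{f}\odot C'\odot\wc{f},\ \wc{g}\odot A'\odot\wh{g}\bigr)\;\cong\;\wc{H(f,g)}\odot H(C',A')\odot\wh{H(f,g)}
\end{equation}
compatible with the structure $2$-morphisms down to $H(C',A')$.

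Before treating \cref{eq:theiso}, I would first observe that cartesianness at the level of monads can be tested at the level of $\dc{D}_1$. Indeed, the fibration $\Mnd(\dc{D})\to\dc{D}_0$ factors as $\Mnd(\dc{D})\hookrightarrow\dc{D}_1^\bullet\to\dc{D}_0$, and its cartesian lifts have underlying $2$-morphisms the $\langle\Gr{s},\Gr{t}\rangle$-cartesian cells of the form $\wc{h}\odot\mi\odot\wh{h}$; moreover any $\dc{D}_1$-factorisation through such a cell is automatically a monad morphism, by the standard fact that the unique factorisation through a cartesian lift respects whatever algebraic structure is present. Hence it is enough to prove that $H_1\colon\dc{D}_1^\op\times\dc{D}_1\to\dc{D}_1$ carries $\langle\Gr{s},\Gr{t}\rangle$-cartesian cells to $\langle\Gr{s},\Gr{t}\rangle$-cartesian cells, i.e.\ that $(H_1,H_0\times H_0)$ is a fibred $1$-cell over the fibration \cref{eq:stbifibration}; restricting to the diagonal and then to (co)monads yields the proposition. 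The part of this which is immediate is that $H_1$-images are \emph{lifts} of the correct base morphisms: $H$ strictly commutes with $\Gr{s},\Gr{t}$, and by clause \ref{item-ii} of \cref{def:locclosed} the source and target vertical maps of any $H_1$-image are the $H_0$-images of the originals, so $H_1$ of a lift of $(a,b)$ is again a lift of $(H_0a,H_0b)$.

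The heart of the matter is then \cref{eq:theiso}, which I would split into two identifications. First, the companion and conjoint of $H(f,g)$ should be computed by $H_1$ from those of $f,g$, namely
\begin{equation}\label{eq:compconjH}
\wh{H(f,g)}\cong H_1(\wc{f},\wh{g}),\qquad \wc{H(f,g)}\cong H_1(\wh{f},\wc{g}),
\end{equation}
which is precisely what \cref{lem:fibrantproperties} would deliver if $H$ were a \emph{pseudo} double functor, once one reads the companions of $(f,g)$ in $\dc{D}^\op\times\dc{D}$ as the pair $(\wc{f},\wh{g})$. Second, since the horizontal composition $\odot$ is unchanged by passing to $\dc{D}^\op$, the lax comparison cell $\rho$ of $H$ is invertible on the composites in \cref{eq:theiso}, giving
\[
H_1\bigl(\wh{f}\odot C'\odot\wc{f},\ \wc{g}\odot A'\odot\wh{g}\bigr)\cong H_1(\wh{f},\wc{g})\odot H_1(C',A')\odot H_1(\wc{f},\wh{g});
\]
combined with \cref{eq:compconjH} this is exactly \cref{eq:theiso}.

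The main obstacle is establishing \cref{eq:compconjH} together with the invertibility of these particular $\rho$-cells: because $H$ is only \emph{lax}, it need not preserve companions or conjoints in general (the lax unit constraint $\rho_0\colon1\Rightarrow H_1(1,1)$ points the wrong way to supply the $p_1,q_1$ cells), so \cref{lem:fibrantproperties}\emph{(v)} is not directly applicable. The resolution I would use is that $H$ is the parameterized double adjoint of the \emph{pseudo} double functor $\otimes$, which by \cref{lem:fibrantproperties} genuinely preserves companions and conjoints, and that $(\Gr{s},\Gr{s})$ and $(\Gr{t},\Gr{t})$ are maps of adjunctions by clause \ref{item-ii} of \cref{def:locclosed}. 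Transposing the comparison cells under the parameterized adjunction $\ot_1\dashv H_1$ rewrites them as pastings of the interchange isomorphisms \cref{eq:monoidaldoubleiso} and of the companion/conjoint structure cells $p_i,q_j$, all of which are invertible; since the transpose of an isomorphism is an isomorphism, the comparison cells are invertible, yielding \cref{eq:compconjH} and the factorisation above. Finally, one checks that \cref{eq:theiso} is compatible with the convolution multiplication and unit of \cref{eq:MonHdouble} — this is routine, as both sides arise by applying $H_1$ to the same pasting of $\delta,\mu$ and the structure cells — completing the verification that $(H,H)$ is a fibred $1$-cell.
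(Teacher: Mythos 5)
There is a genuine gap, and it sits exactly where you flagged the ``main obstacle''. Your two identifications $\wh{H(f,g)}\cong H_1(\wc{f},\wh{g})$ and $\wc{H(f,g)}\cong H_1(\wh{f},\wc{g})$ are false in general, and so is the invertibility of the lax comparison cells of $H$ on the composites you need. A concrete counterexample lives in $\VMMat$ for $\ca{V}=\Vect_k$: take $f=\id_{U'}$ and $g\colon X\to X'$, so that $H_0(\id,g)=g^{U'}$ is post-composition and its canonical companion satisfies $\wh{g^{U'}}(m,n)=k$ if $m=g\circ n$ and $0$ otherwise; on the other hand, by the formula of \cref{ex:VMMatclosed}, $H_1(1_{U'},\wh{g})(m,n)\cong\prod_{u\in U'}\wh{g}\bigl(m(u),n(u)\bigr)$, which equals $k^{U'}$ when $m=g\circ n$ and equals $k^{S}\neq 0$ whenever the agreement set $S=\{u:m(u)=g(n(u))\}$ is a nonempty proper subset of $U'$. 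So $H_1(\wc{f},\wh{g})$ is not a companion of $H_0(f,g)$ as soon as $|U'|\geq 2$. The proposed repair cannot rescue this because ``the transpose of an isomorphism is an isomorphism'' is not a valid principle for an adjunction: the bijection $\dc{D}_1(M\ot N,P)\cong\dc{D}_1(M,H_1(N,P))$ matches $2$-cells between \emph{different} pairs of objects, so invertibility on one side says nothing about invertibility on the other. The same example shows that the lax constraint $\rho$ of $H$ is not invertible on the composites appearing in your displayed isomorphism, since it compares a sum of products with a product of homs out of sums.

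The comparison isomorphism you aim for is true, but it has to be produced globally rather than assembled from companion/conjoint pieces. The paper's proof treats the two variables separately and never decomposes anything: clause (ii) of \cref{def:locclosed} says precisely that $(\Gr{s},\Gr{s})$ and $(\Gr{t},\Gr{t})$ are maps of adjunctions for $\mi\ot M\dashv H(M,\mi)$ over $\mi\ot X\dashv H(X,\mi)$, so $H(M,\mi)$, as the top leg of a right adjoint in $\Cat^{\mathbf{2}}$ between fibrations, preserves cartesian liftings outright (and dually $H(\mi,N)$ via the double-dualization adjunction, which is where the braiding enters); one then restricts from $\dc{D}_1^\bullet$ to $\Cmd(\dc{D})^\op\times\Mnd(\dc{D})$ by a uniqueness-of-liftings argument. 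Your preliminary reduction of cartesianness from monads down to the level of underlying endo-$1$-cells is essentially that restriction step and is fine; it is the construction of the isomorphism out of $H_1(\wh{f},\wc{g})$, $H_1(\wc{f},\wh{g})$ and the $\rho$-cells that cannot work.
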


\begin{proof}
We will show that $H$ preserves cartesian liftings in each variable separately, since cartesian lifts of a product of two fibrations are the pairs of 
lifts in each one, see \cite[Prop.~8.1.14]{Handbook2}.

By \cref{def:locclosed} of a monoidal closed double category, we know that $(\mathfrak s,\mathfrak s)$ and $(\mathfrak t,\mathfrak 
t)$ are maps of adjunctions. Hence restricting to the monoidal closed $\dc{D}_1^\bullet$ (\cref{rmk:clarifying}) 
produces a map of adjunctions
\begin{equation}\label{eq:mapofadjunctions1}
\begin{tikzcd}[column sep=.6in, row sep=.4in]
\dc{D}_1^\bullet\ar[r,shift left=2,"{\mi\ot M}"]\ar[d,"\mathfrak s/\mathfrak t"']\ar[r,phantom,"\bot"] & \dc{D}_1^\bullet\ar[d,"\mathfrak 
s/\mathfrak t"]\ar[l,shift left=2,"{H(M,\mi)}"] \\
\dc{D}_0\ar[r,shift left=2,"{\mi\ot X}"]\ar[r,phantom,"\bot"] & \dc{D}_0\ar[l,shift left=2,"{H(X,\mi)}"]
\end{tikzcd}
\end{equation}
for any endocell $M\colon X\bular X$.
In the fibrant context, we furthermore know that the legs in the above diagram 
are fibrations by \cref{prop:MonComonfibred}. 
Since 
right adjoints in $\mathsf{Cat}^\mathbf{2}$ preserve cartesian arrows (see 
\cite[Ex.~9.4.4]{Jacobs}), we deduce that $H(M,\mi)$ preserves cartesian liftings. Finally, it is verified that its restriction 
$H(C,\mi)\colon\Mnd(\dc{D})\to\Mnd(\dc{D})$ for any comonad $C\colon Z\to Z$ also preserves cartesian liftings, essentially because $\Mnd(\dc{D})$ is 
closed under liftings in $\dc{D}_1^\bullet$. In more detail, starting with a cartesian lifting on the left, we apply $(H(C,\mi),H(Z,\mi))$ on the right
\begin{displaymath}
\begin{tikzcd}[column sep=.2in]
&&&& H(C,\wc{f}\odot B\odot\wh{f})\ar[drr,bend left=15,"{H(1,\Cart(f,B))}"]\ar[d,dashed,"\exists!k"'] &&& \\
\wc{f}\odot B\odot\wh{f}\ar[rr,"{\Cart}"]\ar[d,-,dotted] && B\ar[d,-,dotted] & \textrm{in }\Mnd(\dc{D}) & 
\wc{H(1,f)}\odot H(C,B)\odot\wh{H(1,f)}\ar[rr,"{\Cart}"]\ar[d,-,dotted] && H(C,B)\ar[d,-,dotted] & \textrm{in }\Mnd(\dc{D}) \\
X\ar[rr,"{f}"] && Y & \textrm{in }\dc{D}_0\ar[ur,phantom,near start,"\mapsto"] & H(Z,X)\ar[rr,"{H(1,f)}"] && H(Z,Y) & \textrm{in }\dc{D}_0
\end{tikzcd}
\end{displaymath}
and there exists a unique map $k$ that makes the above triangle commute in $\Mnd(\dc{D})$, therefore in particular in $\dc{D}_1^\bullet$. 
Now since cartesian liftings of the underlying endoarrows in $\dc{D}_1^\bullet$ coincide with those in $\Mnd(\dc{D})$, by the above discussion
$H(1,\Cart(f,B))$ is cartesian in $\dc{D}_1^\bullet$ above $H(1,f)$ therefore there exists a unique isomorphism $u\colon H(C,\wc{f}\odot B\odot\wh{f})\cong 
\wc{H(1,f)}\odot H(C,B)\odot\wh{H(1,f)}$ in $\dc{D}_1^\bullet$ that commutes with $\Cart$ and $H(1,\Cart)$. Although $u$ is not 
required to be a monad map to begin with, universality forces $k$ to be equal to $u$ hence an isomorphism of monads, and the proof is complete.

For preservation of liftings by $H$ on the first variable, consider the following square
\begin{displaymath}
\begin{tikzcd}[column sep=.6in, row sep=.4in]
\dc{D}_1^\bullet\ar[r,shift left=2,"{H(\mi,N)}^\op"]\ar[d,"\mathfrak s"']\ar[r,phantom,"\bot"] & (\dc{D}_1^\bullet)^\op\ar[d,"\mathfrak 
s^\op"]\ar[l,shift left=2,"{H(\mi,N)}"] \\
\dc{D}_0\ar[r,shift left=2,"{H(\mi,Y)^\op}"]\ar[r,phantom,"\bot"] & \dc{D}_0^\op\ar[l,shift left=2,"{H(\mi,Y)}"]
\end{tikzcd} 
\end{displaymath}
that commutes on both sides, where such adjunctions arise in any ordinary braided monoidal closed category. This is also a map of adjunctions, since the counit (the usual 
double-dual embedding) in $\dc{D}_1$ on the left is defined via its tensor-hom adjunct on the right
\begin{displaymath}
\begin{tikzcd}[column sep=.8in]
X\ar[d]\ar[r,tick,"M"]\ar[dr,phantom,pos=.65,"\Two"] & X\ar[d] \\
H(H(X,Z),Z)\ar[r,tick,"{H(H(M,N),N)}"'] & H(H(X,Z),Z)
\end{tikzcd}\qquad
\begin{tikzcd}[column sep=.8in]
H(X,Z)\ot X\ar[d,"\mathrm{ev}_{X,Z}"']\ar[r,tick,"{H(M,N)\ot M}"]\ar[dr,phantom,"\Two\mathrm{ev}_{M,N}"] & H(X,Z)\ot X\ar[d,"\mathrm{ev}_{X,Z}"] \\
Z\ar[r,tick,"N"'] & Z
\end{tikzcd}
\end{displaymath}
hence the source/target of the counit 2-cell on the left is the counit vertical map for $\dc{D}_0$ as per \cref{eq:coev}. Thus viewing 
the legs as fibrations (since $\mathfrak s{=}\mathfrak t$ 
are bifibrations), the right adjoint $H(\mi,N)$ preserves cartesian liftings. In a similar way to above, we can verify that the restricted functor 
between monads $H(\mi,B)\colon\Mnd(\dc{D}^\op)=\Cmd(\dc{D})^\op\to\Mnd(\dc{D})$ for a fixed monad $B$ still preserves cartesian liftings, essentially 
because $\Cmd(\dc{D})$ and $\Mnd(\dc{D})$ are closed under cocartesian and cartesian liftings respectively in $\dc{D}_1^\bullet$ as seen in 
\cref{prop:MonComonfibred}.
\end{proof}

\begin{rmk}\label{rmk:fixedaction}
It is the case that for a monoidal closed double category $\dc{D}$, both functors 
$H_1^\bullet\colon(\dc{D}_1^\bullet)^\op\times\dc{D}_1^\bullet\to\dc{D}_1^\bullet$ which is the internal hom of 
$\dc{D}_1^\bullet$ as per \cref{rmk:clarifying}, and $H\colon\Cmd(\dc{D})^\op\times\Mnd(\dc{D})\to\Mnd(\dc{D})$ induced by $H_1$, are actions (e.g. 
\cite[Lem.~3.22]{VCocats}\footnote{The condition therein that $\dc{D}$ is \emph{braided} as a monoidal double category is in fact superfluous.}). 
We here record the action structure maps in $\dc{D}_1^\bullet$
\begin{equation}\label{eq:actionstructuredouble}
\begin{tikzcd}[column sep=.6in]
H(Z,H(W,Y))\ar[r,bul,"{H(C,H(D,B))}"]\ar[d,"\chi_0"']\ar[dr,phantom,"\Two\chi_1"] & H(Z,H(W,Y))\ar[d,"\chi_0"] \\
H(Z\ot W,Y)\ar[r,bul,"{H(C\ot D,B)}"'] & H(Z\ot W,Y)
\end{tikzcd}\quad
\begin{tikzcd}[column sep=.6in]
Y\ar[r,bul,"B"]\ar[d,"\nu_0"']\ar[dr,phantom,"\Two\nu_1"] & Y\ar[d,"\nu_0"] \\
H(I,Y)\ar[r,bul,"{H(1_I,B)}"'] & H(I,Y)
\end{tikzcd}
\end{equation}
which correspond under the adjunction $\otimes_1\dashv H_1$ to
\begin{equation}\label{eq:chiunder}
\begin{tikzcd}[column sep=1in]
H(Z,H(W,Y))\ot Z\ot W\ar[d,"\mathrm{ev}\ot1"']\ar[r,bul,"{H(C,H(D,B))\ot C\ot D}"]\ar[dr,phantom,"\Two\mathrm{ev}\ot1"] & 
H(Z,H(W,Y))\ot Z\ot W\ar[d,"\mathrm{ev}\ot1"] \\
H(W,Y)\ot W\ar[d,"\mathrm{ev}"']\ar[r,bul,"{H(D,B)\ot D}"']\ar[dr,phantom,"\Two\mathrm{ev}\phantom{\ot111}"] & H(W,Y)\ot W\ar[d,"\mathrm{ev}"] \\
Y\ar[r,bul,"B"'] & Y
\end{tikzcd}\quad
\begin{tikzcd}[row sep=.8in]
Y\ot I\ar[r,bul,"{B\ot1_I}"]\ar[d,"\rho"']\ar[dr,phantom,"\Two\rho"] & Y\ot I\ar[d,"\rho"] \\
Y\ar[r,bul,"B"'] & Y
\end{tikzcd}
\end{equation}
using \cref{eq:coev} and the right unitor, as in any ordinary monoidal closed category. Notice that by the above diagrams, it is ensured that the vertical source and target 
of the 2-cells $\chi_1$ and $\nu_1$ are the structure maps $\chi_0$ and $\nu_0$ of the action $H_0\colon\dc{D}^\op_0\times\dc{D}_0\to\dc{D}_0$.
When $C\colon Z\bular Z$ and $D\colon W\bular W$ are comonads and $B\colon Y\bular Y$ is a monad in $\dc{D}$, the cells \cref{eq:actionstructuredouble} are indeed verified to commute with the monad structures of the involved endo-1-cells, hence they are monad maps.
\end{rmk}

\begin{ex}
In the case $\dc{D}=\VMMat$ for a $\ca{V}$ which is monoidal closed with products, the induced functor \cref{eq:MonHdouble} between the categories of 
(co)monads (\cref{ex:VCatsaremonadsinVMat}) is 
\begin{displaymath}
	H\colon\VCocat^\op\times\VCat\longrightarrow\VCat
\end{displaymath}
mapping a pair $(C_X,B_Y)$ of $\ca{V}$-cocategory and a $\ca{V}$-category to a $\ca{V}$-category $H(C,B)_{Y^X}$ with hom-objects 
$H(C,B)(s,k)=\prod_{x,x'}[C(x,x'),B(sx,kx')]$, and composition arrows
\begin{displaymath}
	\prod_{a,a'}{[C(a,a'),B(sa,ka')]}\otimes\prod_{b,b'}{[C(b,b'),B(kb,tb')]}\to\prod_{c,c'}{[C(c,c'),B(sc,tc')]}
\end{displaymath}
defined via their transpose under the tensor-hom adjunction in $\ca{V}$, using the (co)composition arrows and the fact that $\otimes$ commutes with 
sums accordingly, generalizing the convolution product.
\end{ex}

\subsection{Locally presentable double categories and enrichment}\label{sec:lpdoublecats}

In Theorems 3.23 and 3.24 of \cite{VCocats}, the enrichment of monads in comonads as well as the enriched fibration structure is 
subject to the extra condition that $H$ as well as the pair 
$(H\colon\Cmd(\dc{D})^\op\times\Mnd(\dc{D})\to\Mnd(\dc{D}),H_0\colon\dc{D}_0^\op\times\dc{D}_0\to\dc{D}_0)$ as in \cref{prop:MonHcartesian} have appropriate adjoints. 
Considering whether this is true in a general double context led us to consider a notion of a \emph{locally 
presentable} double category, or essentially one on the `local' level of $\dc{D}_0$ and $\dc{D}_1$, as a 
sufficient setting. Although a general investigation of such a topic would go beyond the scope of the current work, given how even on the level of 
2-categories relevant research is still ongoing (see e.g. \cite{IvanFosco,BourkeAcc2cat}), we here introduce such a notion which in the same spirit 
as \cref{def:locclosed} serves for our purposes. By doing so, we manage to obtain the relevant results in a more general double categorical 
context than \cite{VCocats}, as exhibited in \cref{thm:big1}. 

Recall that ${}^X\dc{D}_1$ and $\dc{D}_1^X$ are the the fixed-domain and fixed-codomain subcategories of $\dc{D}_1$ as defined in the beginning of \cref{sec:doublecats}.

\begin{defi}\label{defi:locallypresentabledoublecat}
A fibrant double category $\dc{D}$ will be called \emph{locally $\lambda$-presentable}, for a regular cardinal $\lambda$, if both categories $\dc{D}_0$, $\dc{D}_1$ are locally $\lambda$-presentable, the functors 
$\Gr{s},\Gr{t}\colon\dc{D}_{1}\to\dc{D}_{0}$ are 
cocontinuous right 
adjoints and, for 
every horizontal 1-cell $M\colon X\bular Y$, the functors $-\odot M\colon^{Y}\dc{D}_{1}\to{}^{X}\dc{D}_{1}$ and $M\odot 
-\colon\dc{D}_{1}^{X}\to\dc{D}_{1}^{Y}$ are accessible.
$\dc{D}$ will be called \emph{locally presentable} if it is locally $\lambda$-presentable for some regular cardinal $\lambda$.
\end{defi}

%

As a direct implication of the above definition, we obtain the following.

\begin{cor}\label{prop:lpparcomp}
	A locally presentable double category $\dc{D}$ is parallel complete and cocomplete. Moreover, $\mathfrak s$ and $\mathfrak t$ both have all fibred limits and opfibred colimits.
\end{cor}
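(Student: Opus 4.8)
The plan is to unwind the definitions and then feed the result into the equivalence already established in \cref{prop:equivalentdefparcom}. First I would observe that, by \cref{defi:locallypresentabledoublecat}, both $\dc{D}_0$ and $\dc{D}_1$ are locally $\lambda$-presentable; since every locally presentable category is complete as well as cocomplete, both categories possess all small limits and colimits. The same definition records that $\Gr{s},\Gr{t}\colon\dc{D}_1\to\dc{D}_0$ are cocontinuous right adjoints: being right adjoints they are automatically continuous, and cocontinuity is assumed outright. Thus for every small $\ca{I}$ the categories $\dc{D}_0,\dc{D}_1$ have all $\ca{I}$-limits and $\ca{I}$-colimits, and $\Gr{s},\Gr{t}$ preserve both. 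By \cref{def:parallelcolimits} this is exactly the statement that $\dc{D}$ is parallel complete and parallel cocomplete.

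For the second assertion I would invoke \cref{prop:equivalentdefparcom}, whose hypotheses are now met: fibrancy is part of \cref{defi:locallypresentabledoublecat}, and via \cref{globalvslocal} it makes $\Gr{s},\Gr{t}$ into bifibrations, so the notions of fibred limit and opfibred colimit apply. Since $\dc{D}_0$ is complete and $\dc{D}$ is parallel complete, the equivalence of \cref{prop:equivalentdefparcom} yields that the fibrations $\Gr{s},\Gr{t}$ have all fibred limits. Dually, since $\dc{D}_0$ is cocomplete and $\dc{D}$ is parallel cocomplete, the colimit half of the same proposition yields that the opfibrations $\Gr{s},\Gr{t}$ have all opfibred colimits, which is the remaining claim.

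Finally, I would point out that the accessibility conditions on $-\odot M$ and $M\odot-$ imposed in \cref{defi:locallypresentabledoublecat} play no role in this corollary; they are reserved for later conclusions about the local presentability of the various $1$-categories involved. Consequently I do not expect a genuine obstacle here: the statement is essentially bookkeeping, a direct consequence of the definition of a locally presentable double category combined with \cref{prop:equivalentdefparcom}, the only non-formal ingredient being the standard fact that locally presentable categories are complete.
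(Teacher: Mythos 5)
Your proof is correct and follows essentially the same route as the paper: the first claim is immediate from \cref{defi:locallypresentabledoublecat} (locally presentable categories are complete and cocomplete, and $\Gr{s},\Gr{t}$ are cocontinuous right adjoints, hence continuous and cocontinuous), and the second claim follows from the fibred-limit machinery. The only cosmetic difference is that you cite the packaged equivalence \cref{prop:equivalentdefparcom} whereas the paper appeals directly to the underlying \cref{prop:fiberwiselimits}, which is the lemma on which that equivalence rests.
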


\begin{proof}
	\cref{def:parallelcolimits} is straight away satisfied by the clauses of the locally presentable double category definition. Moreover, the source and
	target fibrations of \cref{globalvslocal} satisfy the clauses of \cref{prop:fiberwiselimits}.
\end{proof}

Perhaps some remarks are in order concerning definition \ref{defi:locallypresentabledoublecat}. One thing that the reader might immediately notice is that the functors $-\odot M\colon^{Y}\dc{D}_{1}\to{}^{X}\dc{D}_{1}$ and $M\odot 
-\colon\dc{D}_{1}^{X}\to\dc{D}_{1}^{Y}$ are not required to be specifically $\lambda$-accessible. In order to elaborate on this, but also because the accessibility of fixed domain/codomain categories $^{X}\dc{D}_1$, 
$\dc{D}_{1}^{X}$ will be of importance later on, we begin by making the following observation concerning fibrations between accessible categories.

\begin{lem}\label{lem:accessiblefibers}
	Suppose that $P\colon\ca{A}\to\ca{X}$ is a 
	fibration where both categories $\ca{A},\ca{X}$ and the functor $P$ are accessible. Then
	for any object $X\in\ca{X}$, the fiber $\ca{A}_{X}$ of $P$ above $X$ is an accessible category.
\end{lem}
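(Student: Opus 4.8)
The plan is to exhibit the fiber $\ca{A}_X$ as a pullback of accessible functors between accessible categories, and then invoke the Limit Theorem for accessible categories, being careful about the distinction between strict and pseudo pullbacks.

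First I would observe that $\ca{A}_X$ is exactly the strict pullback
\begin{displaymath}
\begin{tikzcd}
\ca{A}_X\ar[r]\ar[d] & \ca{A}\ar[d,"P"] \\
\B{1}\ar[r,"X"'] & \ca{X}
\end{tikzcd}
\end{displaymath}
where $\B{1}$ is the terminal category and $X\colon\B{1}\to\ca{X}$ is the functor selecting the object $X$: by construction the objects of this pullback are precisely those $A\in\ca{A}$ with $PA=X$, and its arrows are those lying above $1_X$, which is exactly the definition of the fiber. The category $\B{1}$ is locally presentable, hence accessible, and the functor $X$ is trivially accessible, there being no nontrivial filtered colimits in $\B{1}$ to preserve. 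Together with the hypotheses that $\ca{A}$, $\ca{X}$ and $P$ are accessible, this means that both corners at the bottom and the two legs of the square are accessible.

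The delicate point -- and what I expect to be the main obstacle -- is that strict pullbacks are \emph{not} among the flexible $2$-limits under which accessible categories are closed, so the Limit Theorem does not apply to this square directly. I would resolve this exactly as in \cref{rem:D1bul}: since $P$ is a fibration it is in particular an isofibration, and therefore the strict pullback above is equivalent to the corresponding \emph{pseudopullback}, namely the iso-comma category whose objects are pairs $(A,\theta)$ with $\theta\colon PA\xrightarrow{\cong}X$ (see \cite{Joyal1993}). This pseudopullback is a flexible $2$-limit of the accessible functors $P$ and $X$ between accessible categories, hence is itself accessible by the Limit Theorem for accessible categories \cite[Thm.~5.1.6]{MakkaiPare}. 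Since accessibility is invariant under equivalence of categories, the strict fiber $\ca{A}_X$ -- being equivalent to this accessible pseudopullback -- is accessible as well, which completes the argument.
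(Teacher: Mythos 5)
Your proof is correct and follows essentially the same route as the paper's: both exhibit $\ca{A}_X$ as the strict pullback of $P$ along the functor $\B{1}\to\ca{X}$ picking out $X$, use that $P$ is an isofibration to identify this with the bi-pullback (pseudopullback), and conclude by the Limit Theorem for accessible categories \cite[Thm.~5.1.6]{MakkaiPare}. Your explicit handling of the strict-versus-pseudo pullback issue is exactly the point the paper flags as well.
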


\begin{proof}
	For any object $X\in\ca{X}$, the fiber $\ca{A}_{X}$ of $P$ above $X$ can be seen as the following pullback 
	\begin{displaymath}
		\begin{tikzcd}
			\ca{A}_{X}\ar[d]\ar[r] & \ca{A}\ar[d,"P"] \\
			\B{1}\ar[r,"p_X"'] & \ca{X}
		\end{tikzcd}
	\end{displaymath}
	where $\B{1}=\{*\}$ is the terminal category and $p_X$ denotes the functor that picks out $X\in\ca{C}$. It is clear that all limits and colimits 
	exist (and are just the object $*$) and $\B{1}$ is locally $\lambda$-presentable for every $\lambda$. It is furthermore not too hard to see that the 
	functor $p_X$ preserves \emph{connected} (co)limits, so in particular filtered colimits. Hence, we can apply the Limit Theorem for accessible 
	categories \cite[Thm.~5.1.6]{MakkaiPare}, taking into account that $P$ being a fibration implies that the pullback is also a bi-pullback, to deduce that $\ca{A}_X$ is an 
	accessible 
	category. The same works for $P$ an opfibration, since the crucial ingredient for the pullback to coincide with the bi-pullback is that $P$ be an 
	\emph{iso}fibration.
\end{proof}

If we apply this observation to the context of a locally presentable double category, we deduce the following, for the fixed-domain/codomain subcategories
of $\dc{D}_1$.

\begin{prop}\label{prop:lp fibers double cats}
	Suppose that $\dc{D}$ is a locally presentable double category. Then for every $X, Y\in\dc{D}_0$ the categories $^{X}\dc{D}_1$, 
	$\dc{D}_{1}^{Y}$ and ${}^X\dc{D}_1^Y=\ca{H}(\dc{D})(X,Y)$ are locally presentable.
\end{prop}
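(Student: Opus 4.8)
The plan is to exhibit each of the three categories as a fiber of a suitable fibration whose total category, base, and projection are all accessible, and then to apply \cref{lem:accessiblefibers} to obtain accessibility, before upgrading to local presentability by combining this with (co)completeness. The key observation is that $^X\dc{D}_1$ and $\dc{D}_1^Y$ are by definition the fibers of the fibrations $\Gr{s},\Gr{t}\colon\dc{D}_1\to\dc{D}_0$ (\cref{globalvslocal}) above $X$ and $Y$ respectively, while $\ca{H}(\dc{D})(X,Y)={}^X\dc{D}_1^Y$ is the fiber above the pair $(X,Y)$ of the fibration $\langle\Gr{s},\Gr{t}\rangle\colon\dc{D}_1\to\dc{D}_0\times\dc{D}_0$ of \cref{eq:stbifibration}, since its objects are exactly the horizontal 1-cells $X\bular Y$ and its morphisms are the globular 2-morphisms.

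First I would check the hypotheses of \cref{lem:accessiblefibers} for each of these three fibrations. Both $\dc{D}_0$ and $\dc{D}_1$ are accessible, being locally $\lambda$-presentable, and hence so is the product $\dc{D}_0\times\dc{D}_0$. By \cref{defi:locallypresentabledoublecat} the functors $\Gr{s}$ and $\Gr{t}$ are cocontinuous; a cocontinuous functor between accessible categories preserves $\lambda$-filtered colimits for a common $\lambda$ and is therefore accessible. Since colimits in a product are computed componentwise, $\langle\Gr{s},\Gr{t}\rangle$ is likewise cocontinuous, hence accessible. Applying \cref{lem:accessiblefibers} to $\Gr{s}$, $\Gr{t}$ and $\langle\Gr{s},\Gr{t}\rangle$ in turn then yields that $^X\dc{D}_1$, $\dc{D}_1^Y$ and $\ca{H}(\dc{D})(X,Y)$ are all accessible categories.

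To conclude I would supply (co)completeness and invoke the characterization that a category is locally presentable precisely when it is accessible and cocomplete \cite[Cor.~2.47]{LocallyPresentable}. By \cref{prop:lpparcomp} the double category $\dc{D}$ is parallel complete and cocomplete, and $\dc{D}_0$ is in particular complete and cocomplete; feeding this into \cref{prop:equivalentdefparcom} and its dual, the equivalences $(i)\Leftrightarrow(ii)\Leftrightarrow(iii)$ established there give that $^X\dc{D}_1$, $\dc{D}_1^Y$ and $\ca{H}(\dc{D})(X,Y)$ are complete and cocomplete. Combined with the accessibility of the previous paragraph, this finishes the argument. I expect the only genuinely delicate point to be the accessibility of the projections: one must be careful to use that $\Gr{s},\Gr{t}$ are \emph{cocontinuous} right adjoints between accessible categories — which suffices directly — rather than trying to route through the weaker clause of \cref{defi:locallypresentabledoublecat} about $-\odot M$ and $M\odot-$ being accessible. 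The other subtlety, namely that the strict pullback defining each fiber may be replaced by a bi-pullback because a fibration is an isofibration, is precisely the mechanism already built into the proof of \cref{lem:accessiblefibers}, so no extra work is needed there.
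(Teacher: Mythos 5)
Your argument is correct, and for the two categories $^{X}\dc{D}_1$ and $\dc{D}_{1}^{Y}$ it coincides with the paper's proof: both are treated as fibers of the cocontinuous (hence accessible) fibrations $\Gr{s},\Gr{t}\colon\dc{D}_1\to\dc{D}_0$, accessibility comes from \cref{lem:accessiblefibers}, and cocompleteness from \cref{prop:equivalentdefparcom}, so that \cite[Cor.~2.47]{LocallyPresentable} finishes the job. Where you genuinely diverge is the hom-category ${}^X\dc{D}_1^Y$. The paper realizes it as a fiber of the \emph{composite} $^X\dc{D}_1\hookrightarrow\dc{D}_1\to\dc{D}_0$, which forces it to first argue that the inclusion $^X\dc{D}_1\hookrightarrow\dc{D}_1$ is accessible — this is done by observing that the inclusion preserves connected colimits (the dual of \cref{connected}) and hence filtered ones — before applying \cref{lem:accessiblefibers} to the composite. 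You instead take the fiber of $\langle\Gr{s},\Gr{t}\rangle\colon\dc{D}_1\to\dc{D}_0\times\dc{D}_0$ over the single object $(X,Y)$, which is legitimately the same category, and since $\langle\Gr{s},\Gr{t}\rangle$ is cocontinuous (colimits in the product being componentwise) and is a fibration by \cref{eq:stbifibration}, a single application of \cref{lem:accessiblefibers} suffices. Your route is more economical, bypassing the connected-colimits step entirely; what the paper's route buys in exchange is the intermediate fact that the inclusions $^X\dc{D}_1\hookrightarrow\dc{D}_1$ are themselves accessible, which is mildly useful elsewhere but not needed for the statement at hand. Your closing caveat — that one should use cocontinuity of $\Gr{s},\Gr{t}$ rather than the accessibility clause for $-\odot M$ and $M\odot-$ — is exactly the right reading of \cref{defi:locallypresentabledoublecat}.
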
 

\begin{proof}
	The categories $^{X}\dc{D}_1$ and 
	$\dc{D}_{1}^{Y}$ are fibers of the (op)fibrations $\Gr{s}\colon\dc{D}_1\to\dc{D}_0$ and $\Gr{t}\colon\dc{D}_1\to\dc{D}_0$ respectively. Since $\Gr{s},\Gr{t}$ are cocontinuous, \cref{lem:accessiblefibers} shows that both categories are accessible, while in addition being cocomplete by \ref{prop:equivalentdefparcom}.

	Similarly, the category ${}^X\dc{D}_1^Y$ can be viewed as a fiber of the restricted source opfibration $^X\dc{D}_1\hookrightarrow\dc{D}_1\to\dc{D}_0$ (see \cref{globalvslocal}). If $\lambda$ is a regular cardinal for which both $^X\dc{D}_1$ and $\dc{D}_1$ are $\lambda$-accessible, then the inclusion $^X\dc{D}_1\hookrightarrow\dc{D}_1$ is $\lambda$-accessible because it preserves all connected colimits by (the dual of) \cref{connected}. Thus, the composite $^{X}\dc{D}_1\to\dc{D}_0$ is $\lambda$-accessible and the same arguments as above apply.
\end{proof}

As an example, we establish that categories of the form $^X\VMMat_{1}$ and $\VMMat_{1}^{Y}$ are locally $\lambda$-presentable when $\ca{V}$ is, and we also recover the well-known fact that
the categories ${}^X\VMMat_{1}^{Y}=\ca{V}^{X\times Y}$ are locally $\lambda$-presentable (\cite[Corollary~1.54]{LocallyPresentable}).

\begin{rmk}
Notice that if $\dc{D}$ is locally $\lambda$-presentable it does not follow that the categories $^{X}\dc{D}_1$, 
$\dc{D}_{1}^{Y}$ and ${}^X\dc{D}_1^Y$ are themselves locally $\lambda$-presentable for the same $\lambda$. This is because the proof of \ref{prop:lp fibers double cats} relies on \ref{lem:accessiblefibers}, whose proof in turn uses the Limit Theorem for \emph{accessible} categories and in which the index of accessibility $\lambda$ is not invariant. On the other hand, the Limit Theorem for locally presentable categories (in which $\lambda$ remains fixed) cannot be applied because the functor $\B{1}\to\ca{X}$ does not preserve arbitrary (co)limits.

Coming back to the definition of locally $\lambda$-presentable double category in \ref{defi:locallypresentabledoublecat}, it seems reasonable that one could choose to require that all categories $^{X}\dc{D}_{1}$ and $\dc{D}_1^Y$ are also locally $\lambda$-presentable and then that the functors $-\odot M$ and $M\odot-$ are all $\lambda$-accessible. We opted against this choice for two main reasons. Firstly, on a more practical level, the specific cardinal $\lambda$ does not really play a role in the main results of this paper. Secondly, we felt that such considerations would lead in the direction of a general theory of accessible double categories, which is beyond the scope of the current paper and best left for future work. By the same token, we do not require the functor $\B{1}\colon\dc{D}_0\to\dc{D}_1$ to be accessible, even though that would again be a natural condition to consider and indeed holds for $\VMMat$.
\end{rmk}

\begin{rmk}\label{rmk:compbetweenfibers}
	Given any horizontal 1-cell $M\colon X\bular Y$ in a parallel cocomplete fibrant double category $\dc{D}$, the requirement that $-\odot 
	M\colon^{Y}\dc{D}_{1}\to{}^{X}\dc{D}_{1}$ preserve (filtered) colimits is equivalent to requiring that all functors $-\odot 
	M\colon^{Y}\dc{D}_{1}^Z\to{}^{X}\dc{D}^Z_{1}$ preserve (filtered) colimits. This follows from \cref{prop:continuous fibred 1-cells} because
	the functor $-\odot M$ constitutes an opfibred functor
	\begin{displaymath}
		\begin{tikzcd}
			^{Y}\dc{D}_1\ar[dr]\ar[rr,"-\odot M"] && ^{X}\dc{D}_1\ar[dl] \\
			& \dc{D}_0 & 
		\end{tikzcd}
	\end{displaymath}
	with respect to the restricted target opfibrations, which are opfibred cocomplete by \cref{prop:equivalentdefparcom}.
	A similar remark applies to $M\odot-$. This observation can be useful in practice in order to check the accessibility of $-\odot M$ and $M\odot-$ required in \cref{defi:locallypresentabledoublecat}, as the fixed domain and codomain categories ${}^{Y}\dc{D}^Z_{1}=\ca{H}(\dc{D})(Y,Z)$ are often easier to handle.
\end{rmk}

\begin{ex}
The double category $\dc{R}\nc{el}(\nc{Set})$ is locally finitely presentable.
	
To begin with, the source and target functors $\Gr{s},\Gr{t}\colon\dc{R}\nc{el}(\nc{Set})_{1}\to\nc{Set}$ have both a left and a right adjoint. For $\Gr{s}$ the left adjoint maps any set $X$ to the empty relation $X\bular\emptyset$, while the right adjoint maps it to $T_X=X\times 1\colon X\bular 1$. Swapping sides gives the corresponding adjoints for $\Gr{t}$. It is easy to see that a relation $R\colon X\bular Y$ is a finitely presentable object of $\dc{R}\nc{el}(\nc{Set})_{1}$ precisely if it is a finite subset of $X\times Y$, and every relation can be written as the filtered colimit of its finite sub-relations. If $R_{i}\colon X_{i}\bular Y, i\in\ca{I}$ is a filtered diagram in $\dc{R}\nc{el}(\nc{Set})_{1}^Y$, then its colimit is the relation $\rm{Colim}R_{i}\colon\rm{Colim}X_{i}\bular Y$ given by $\rm{Colim}R_{i}=\{(q_{i}(x),y)|i\in\ca{I}, (x,y)\in R_i\}$, where $q_{i}\colon X_{i}\to\rm{Colim}X_i$ are the legs of the coproduct in $\nc{Set}$. It is then clear that this colimit is preserved by $M\odot -$ for any $M\colon Y\bular Z$ and similarly for the functors $-\odot M$.
\end{ex}


As far as we are concerned in the present paper, the most important aspect of \cref{defi:locallypresentabledoublecat} lies in the fact that it manages to capture the example of $\dc{D}=\VMMat$ of \cref{ex:VMMat}. We particularly
thank Steve Lack for the first step of the proof below.

\begin{prop}\label{prop:VMatlp}
	Suppose that $\ca{V}$ is locally presentable with colimits which are preserved by $\otimes$ in each variable. Then the double category $\VMMat$ is locally presentable.
\end{prop}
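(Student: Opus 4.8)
The plan is to verify the clauses of \cref{defi:locallypresentabledoublecat} for $\dc{D}=\VMMat$. Since $\VMMat_0=\Set$ is locally finitely presentable, the substantive point is the local presentability of the category of arrows $\VMMat_1$; the structural clauses concerning $\Gr{s},\Gr{t}$ and horizontal composition will then follow from the colimit hypotheses on $\ca{V}$ together with facts already recorded for $\VMMat$.

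The key observation, due to Lack, is that $\VMMat_1$ arises as a pullback of locally presentable categories along accessible functors. Write $\mathrm{Fam}(\ca{V})$ for the free coproduct completion of $\ca{V}$, with objects set-indexed families $(A,(V_a)_{a\in A})$ and morphisms $(A,V)\to(B,W)$ given by a function $h\colon A\to B$ together with maps $V_a\to W_{h(a)}$, and let $\pi\colon\mathrm{Fam}(\ca{V})\to\Set$ be the (split) fibration $(A,V)\mapsto A$. By \cref{ex:VMMat}, a $\ca{V}$-matrix $S\colon X\tickar Y$ is exactly a family indexed by $Y\times X$ and a $2$-morphism $^{f}\alpha^{g}$ is exactly a morphism of families over $g\times f$, so that $\VMMat_1$ is the strict pullback
\begin{equation*}
\begin{tikzcd}
\VMMat_1\ar[r]\ar[d,"{\langle\Gr{s},\Gr{t}\rangle}"'] & \mathrm{Fam}(\ca{V})\ar[d,"\pi"] \\
\Set\times\Set\ar[r,"\times"'] & \Set
\end{tikzcd}
\end{equation*}
with $\times\colon(X,Y)\mapsto Y\times X$. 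The first task is to show $\mathrm{Fam}(\ca{V})$ is locally presentable: it is cocomplete, and it is accessible because the singleton families $(\{*\},G)$ on a generating family of $\lambda$-presentable objects $G$ of $\ca{V}$ constitute a small strong generator of presentable objects. Granting this, $\times$ and $\pi$ are both accessible (binary products in $\Set$ are finitary, and $\pi$ is cocontinuous), and since $\pi$ is an isofibration the strict pullback coincides with the bipullback; the Limit Theorem for accessible categories \cite[Thm.~5.1.6]{MakkaiPare} then yields that $\VMMat_1$ is accessible. As $\VMMat$ is parallel cocomplete by \cref{ex:parallelcolimitsinVMMat}, $\VMMat_1$ is in particular cocomplete, and a cocomplete accessible category is locally presentable \cite[Cor.~2.47]{LocallyPresentable}.

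For the structural clauses, $\Gr{s},\Gr{t}\colon\VMMat_1\to\Set$ are cocontinuous, once more by the parallel cocompleteness of \cref{ex:parallelcolimitsinVMMat}; and each is a right adjoint, with the left adjoint of $\Gr{t}$ sending a set $Y$ to the empty-source matrix $\emptyset\tickar Y$ and that of $\Gr{s}$ sending $X$ to the empty-target matrix $X\tickar\emptyset$, since a $2$-morphism out of either such matrix is determined by a single function. Finally, to see that $-\odot M$ and $M\odot-$ are accessible I would appeal to \cref{rmk:compbetweenfibers} and reduce to the restricted functors between the hom-categories $\ca{H}(\VMMat)(Y,Z)=\ca{V}^{Z\times Y}$, in which colimits are computed pointwise; there $-\odot M$ acts by $N\mapsto\bigl(\sum_{y\in Y}N(z,y)\otimes M(y,x)\bigr)_{z,x}$, which preserves all colimits because $\otimes$ preserves colimits in each variable and coproducts commute with colimits, and similarly for $M\odot-$. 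Hence both functors are even cocontinuous on fibres, a fortiori accessible.

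I expect the main obstacle to be the local presentability of $\mathrm{Fam}(\ca{V})$ — equivalently the accessibility of the pullback $\VMMat_1$ — since this requires verifying that the singleton families are genuinely presentable and strongly generate, and marshalling the isofibration and bilimit hypotheses so that the Limit Theorem applies to the strict pullback.
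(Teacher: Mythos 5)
Your proof is correct and follows essentially the same route as the paper: the same pullback of $\nc{Fam}(\ca{V})$ and $\Set\times\Set$ over $\Set$ (which the paper attributes to Lack), an appeal to the Limit Theorem to obtain (local) presentability of $\VMMat_1$, and a reduction of the accessibility of $-\odot M$ and $M\odot-$ to a pointwise colimit computation. The only minor differences are that the paper invokes the Limit Theorem for \emph{locally presentable} categories directly, which simultaneously yields that the pullback projections --- hence $\Gr{s},\Gr{t}$ --- are accessible right adjoints (where you instead use the accessible version plus parallel cocompleteness and exhibit explicit left adjoints), and that it checks the accessibility of $M\odot-$ by a direct calculation with the colimit formula of \cref{ex:colimitsinVMMatfibers} in $\VMMat_1^{Y}$ rather than via the reduction of \cref{rmk:compbetweenfibers} to the hom-categories.
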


\begin{proof}
	Suppose that $\ca{V}$ is locally 
	$\lambda$-presentable. Of course $\VMMat_{0}=\nc{Set}$ is locally $\lambda$-presentable, but we furthermore have a pullback square as follows:
	\begin{displaymath}
		\begin{tikzcd}
			\VMMat_{1}\ar[d,"{(\Gr{s},\Gr{t})}"']\ar[r] & \nc{Fam}(\ca{V})\ar[d] \\
			\nc{Set}^{2}\ar[r,"\times"'] & \nc{Set}.
		\end{tikzcd}
	\end{displaymath}
	The bottom functor is a $\lambda$-accessible right adjoint, while the right vertical $\nc{Fam}(\ca{V})\to\nc{Set}$ has both a left and a right 
	adjoint 
	and the category $\nc{Fam}(\ca{V})$ is itself locally $\lambda$-presentable. In addition, $\nc{Fam}(\ca{V})\to\nc{Set}$ is an isofibration, which 
	implies that the pullback is also a bipullback. Thus, by the Limit Theorem for locally presentable categories (e.g. \cite[Ex.~2.l]{LocallyPresentable} or \cite[Thm.~2.18]{Bird}) we deduce that $\VMMat_{1}$ is locally $\lambda$-presentable and the two pullback projections are $\lambda$-accessible right adjoints. The 
	latter implies in particular that both $\Gr{s}$, $\Gr{t}$ are themselves $\lambda$-accessible right adjoints. Furthermore, by \cref{ex:parallelcolimitsinVMMat} we know that $\Gr{s}$ and $\Gr{t}$ preserve all colimits.
	
	Finally, we claim that for every $M\colon Y\bular Z$, the 
	functor $M\odot-\colon\VMMat_{1}^{Y}\to\VMMat_{1}^{Z}$ preserves filtered colimits. This follows from the description of colimits in such fibers 
	established in \ref{ex:colimitsinVMMatfibers} and the following sequence of isomorphisms
	\begin{align*}
		M\odot C(z,x) &=\sum\limits_{y\in Y}M(z,y)\otimes C(y,x)\cong\sum\limits_{y\in 
			Y}\left(M(z,y)\otimes\Big(\mathrm{Colim}_{i}\sum\limits_{q_{i}(x_i)=x}Di(y,x_i)\Big)\right) \\
		&\cong\sum\limits_{y\in 
			Y}\left(\mathrm{Colim}_{i}\Big(M(z,y)\otimes\sum\limits_{q_{i}(x_i)=x}Di(y,
		x_i)\Big)\right)\cong\sum\limits_{y\in 
			Y}\mathrm{Colim}_{i}\left(\sum\limits_{q_{i}(x_i)=x}M(z,y)\otimes 
		Di(y,x_i)\right) \\
		&\cong\mathrm{Colim}_{i}\sum\limits_{q_{i}(x_i)=x}\sum\limits_{y\in 
			Y}\Big(M(z,y)\otimes 
		Di(y,x_i)\Big)\cong\mathrm{Colim}_{i}\sum\limits_{q_{i}(x_i)=x}(M\odot 
		Di)(z,x_i)
	\end{align*}
	and the latter is precisely the value of $\mathrm{Colim}_{i}(M\odot Di)$ at $(z,x)$.
	
	A similar calculation is valid for the functors $-\odot M\colon{}^{Z}\VMMat_{1}\to{}^{Y}\VMMat_{1}$.
\end{proof}

\begin{prop}\label{thm:D_1 bullet lp}
	Suppose that $\dc{D}$ is a locally presentable double category. Then the category $\dc{D}_{1}^{\bullet}$ is also locally presentable.
\end{prop}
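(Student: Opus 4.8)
The plan is to recognize $\dc{D}_{1}^{\bullet}$ as the strict pullback \cref{eq:D1bulpul} and apply the Limit Theorem for locally presentable categories, following the same template as the proof of \cref{prop:VMatlp}. First I would fix a regular cardinal $\lambda$ for which both $\dc{D}_{0}$ and $\dc{D}_{1}$ are locally $\lambda$-presentable; then $\dc{D}_{0}\times\dc{D}_{0}$ is locally $\lambda$-presentable as well, and it remains to analyze the cospan $\dc{D}_{1}\xrightarrow{\langle\Gr{s},\Gr{t}\rangle}\dc{D}_{0}\times\dc{D}_{0}\xleftarrow{\Delta}\dc{D}_{0}$ whose pullback is $\dc{D}_{1}^{\bullet}$.

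I would check that both legs are $\lambda$-accessible right adjoints and that one of them is an isofibration. For $\langle\Gr{s},\Gr{t}\rangle$: by \cref{defi:locallypresentabledoublecat} the functors $\Gr{s},\Gr{t}$ are cocontinuous right adjoints, so $\langle\Gr{s},\Gr{t}\rangle$ is cocontinuous (colimits in the product being computed componentwise) and hence $\lambda$-accessible; writing $\Gr{s}^{L}\dashv\Gr{s}$, $\Gr{t}^{L}\dashv\Gr{t}$ and using cocompleteness of $\dc{D}_{1}$, the assignment $(A,B)\mapsto\Gr{s}^{L}A+\Gr{t}^{L}B$ is a left adjoint to $\langle\Gr{s},\Gr{t}\rangle$, so it is a right adjoint too; and fibrancy of $\dc{D}$ makes $\langle\Gr{s},\Gr{t}\rangle$ a fibration \cref{eq:stbifibration}, in particular an isofibration. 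For $\Delta$: since $\dc{D}_{0}$ is complete and cocomplete, $\Delta$ has both a left adjoint (coproduct) and a right adjoint (product), so it is a cocontinuous right adjoint, hence $\lambda$-accessible.

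With both conditions in hand, the isofibration property of $\langle\Gr{s},\Gr{t}\rangle$ identifies the strict pullback \cref{eq:D1bulpul} with the corresponding bipullback (as already exploited in \cref{rem:D1bul}), and the Limit Theorem for locally presentable categories \cite[Ex.~2.l]{LocallyPresentable},\cite[Thm.~2.18]{Bird} yields that this bilimit of locally $\lambda$-presentable categories along $\lambda$-accessible right adjoints is again locally $\lambda$-presentable, proving the claim. The only step demanding genuine care is verifying that the pairing $\langle\Gr{s},\Gr{t}\rangle$ is itself a right adjoint rather than merely $\Gr{s}$ and $\Gr{t}$ separately --- this is exactly where cocompleteness of $\dc{D}_{1}$ enters. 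Should one wish to bypass this point, an alternative is to invoke only the accessible Limit Theorem \cite[Thm.~5.1.6]{MakkaiPare} to obtain accessibility of the bipullback, and to supply completeness and cocompleteness of $\dc{D}_{1}^{\bullet}$ directly from \cref{prop:lpparcomp,lem:(co)limits in D_1 bullet}, concluding via the characterization of local presentability as accessibility together with (co)completeness \cite[Cor.~2.47]{LocallyPresentable}.
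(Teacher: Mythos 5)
Your proof is correct and follows essentially the same route as the paper: both recognize $\dc{D}_{1}^{\bullet}$ as the pullback \cref{eq:D1bulpul}, observe that $\Delta$ and $\langle\Gr{s},\Gr{t}\rangle$ are accessible right adjoints (the left adjoint of the latter built from the left adjoints of $\Gr{s},\Gr{t}$ and binary coproducts in $\dc{D}_1$, exactly as you do), and conclude by the Limit Theorem for locally presentable categories, with the isofibration property identifying the pullback with the bipullback as in \cref{rem:D1bul}. Your explicit tracking of the cardinal $\lambda$ and the alternative closing argument via \cite[Cor.~2.47]{LocallyPresentable} are fine refinements but do not change the substance of the argument.
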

\begin{proof}
	As already observed in \cref{rem:D1bul} for any fibrant $\dc{D}$, $\dc{D}_{1}^{\bullet}$ can be viewed as the following pullback 
	\begin{displaymath}
		\begin{tikzcd}
			\dc{D}_{1}^{\bullet}\ar[r]\ar[d] & \dc{D}_{1}\ar[d,"{\langle \Gr{s},\Gr{t}\rangle}"] \\
			\dc{D}_{0}\ar[r,"\Delta"'] & \dc{D}_{0}\times\dc{D}_{0}
		\end{tikzcd}
	\end{displaymath}
	In this square we observe first of all that the diagonal functor $\Delta\colon\dc{D}_{0}\to\dc{D}_{0}\times\dc{D}_{0}$ is accessible and has a left 
	adjoint, the latter because $\dc{D}_{0}$ has binary coproducts. Similarly, $\langle\Gr{s},\Gr{t}\rangle$ is accessible and has a left adjoint because 
	both $\Gr{s}$ and $\Gr{t}$ have left adjoints and $\dc{D}_{1}$ has binary coproducts. Thus, the 
	Limit 
	Theorem for locally presentable categories applies to yield that 
	$\dc{D}_{1}^{\bullet}$ is locally presentable, while the inclusion functor $\dc{D}_{1}^{\bullet}\to\dc{D}_{1}$ (the top horizontal map in the 
	pullback) is an accessible right adjoint.	
\end{proof}

As an example, in the case of $\dc{D}=\VMMat$, the above proposition provides an elegant argument proving the fact that the category of $\ca{V}$-graphs and $\ca{V}$-graph homomorphisms $\VGrph$ is a locally presentable category
when $\ca{V}$ is, see \cite[Prop.~4.4]{KellyLack}.

\begin{cor}\label{cor:MonComonfibreslp}
	If $\dc{D}$ is a locally presentable double category, the fibres of $\Mnd(\dc{D})\to\dc{D}_0$ and $\Cmd(\dc{D})\to\dc{D}_0$ are locally 
	presentable categories. 
\end{cor}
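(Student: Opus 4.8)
The plan is to prove the statement fibrewise, reducing it to a presentability result for categories of (co)monoids in a locally presentable monoidal category. First I would recall from \cref{prop:MonComonfibred}(ii) that, over an object $X\in\dc{D}_0$, the fibre of $\Mnd(\dc{D})\to\dc{D}_0$ is the category $\Mon(\ca{V}_X)$ of monoids and the fibre of $\Cmd(\dc{D})\to\dc{D}_0$ is the category $\Comon(\ca{V}_X)$ of comonoids in the endo-hom monoidal category $\ca{V}_X\coloneqq(\ca{H}(\dc{D})(X,X),\odot,1_X)$. By \cref{prop:lp fibers double cats}, the underlying category $\ca{H}(\dc{D})(X,X)={}^X\dc{D}_1^X$ is locally presentable, so the task becomes to pin down the colimit-preservation properties of $\odot$ on $\ca{V}_X$ and then feed $\ca{V}_X$ into the appropriate version of \cref{prop:MonComonlp}.

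The second step is to check that the tensor $\odot$ on $\ca{V}_X$ is accessible in each variable. For a fixed endo-$1$-cell $M\colon X\bular X$, \cref{defi:locallypresentabledoublecat} guarantees that the global functors $-\odot M\colon{}^X\dc{D}_1\to{}^X\dc{D}_1$ and $M\odot-\colon\dc{D}_1^X\to\dc{D}_1^X$ are accessible; restricting these to the fibre $\ca{V}_X$ preserves the relevant $\lambda$-filtered colimits by the fibrewise characterisation of colimit-preservation in \cref{rmk:compbetweenfibers} (an instance of \cref{prop:continuous fibred 1-cells} with identity functor on the base). Hence each $-\odot M$ and $M\odot-$ is accessible as an endofunctor of $\ca{V}_X$. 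Passing from separate accessibility in each variable to joint accessibility of $\odot\colon\ca{V}_X\times\ca{V}_X\to\ca{V}_X$ — which is what permits forming the accessible functor $A\mapsto A\odot A$ used below — rests on the standard fact that a bifunctor between accessible categories which is accessible in each variable separately is accessible; concretely one selects a single regular cardinal $\lambda$ for which $\ca{V}_X$ is locally $\lambda$-presentable and $\odot$ preserves $\lambda$-filtered colimits in each variable, and separate $\lambda$-continuity then yields joint $\lambda$-continuity on $\lambda$-filtered diagrams.

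The third step is to conclude that $\Mon(\ca{V}_X)$ and $\Comon(\ca{V}_X)$ are locally presentable, which is the $\lambda$-accessible analogue of \cref{prop:MonComonlp}. Writing both categories as iterated inserters and equifiers built from the locally presentable $\ca{V}_X$, the identity functor, the functor $A\mapsto A\odot A$, and the unit-selecting functor $\B{1}\to\ca{V}_X$ (all accessible), the Limit Theorem for accessible categories \cite[Thm.~5.1.6]{MakkaiPare} shows that $\Mon(\ca{V}_X)$ and $\Comon(\ca{V}_X)$ are accessible. Cocompleteness of $\Comon(\ca{V}_X)$, created by the comonadic forgetful functor to $\ca{V}_X$, and completeness of $\Mon(\ca{V}_X)$, created by the monadic forgetful functor, then upgrade accessibility to local presentability via the criterion that an accessible category which is complete or cocomplete is locally presentable \cite[Cor.~2.47]{LocallyPresentable}. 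Equivalently, one may invoke \cref{thm:monadalgebras} after noting that the free-monoid monad and cofree-comonoid comonad on $\ca{V}_X$ are $\lambda$-accessible rather than merely finitary.

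The main obstacle is precisely the cardinal bookkeeping underlying the second and third steps. Since \cref{defi:locallypresentabledoublecat} only asks each $-\odot M$ and $M\odot-$ to be accessible, with an index that may depend on $M$, one cannot directly invoke \cref{prop:MonComonlp} (stated for a finitary tensor) and must instead secure a \emph{uniform} regular cardinal $\lambda$ making $\odot$ jointly $\lambda$-accessible, and then re-run the inserter/equifier argument at that $\lambda$. Once the joint accessibility of $\odot$ and the $\lambda$-accessible upgrade of \cref{prop:MonComonlp} are in place, the remainder is a routine transcription of the one-object (monoidal-category) case into the fibrewise double-categorical setting.
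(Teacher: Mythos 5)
Your proof follows essentially the same route as the paper's: identify the fibres as $\Mon(\ca{H}(\dc{D})(X,X))$ and $\Comon(\ca{H}(\dc{D})(X,X))$ via \cref{prop:MonComonfibred}, use \cref{prop:lp fibers double cats} for local presentability of the endo-hom category and \cref{rmk:compbetweenfibers} to transfer accessibility of $\odot$ in each variable from the global functors of \cref{defi:locallypresentabledoublecat} to the fibre, then feed this into the (co)monoid presentability result. Your extra care about fixing a uniform regular cardinal $\lambda$ -- since \cref{prop:MonComonlp} is stated for a tensor preserving filtered colimits whereas \cref{defi:locallypresentabledoublecat} only guarantees accessibility at some unspecified index -- is a legitimate refinement of a point the paper's one-line appeal to \cref{prop:MonComonlp} glosses over, but it does not change the substance of the argument.
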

\begin{proof}
	The fibres of $\Mnd(\dc{D})\to\dc{D}_0$ and $\Cmd(\dc{D})\to\dc{D}_0$ above any $X\in\dc{D}_0$ are the categories $\Mon(\ca{H}(\dc{D})(X,X))$ and $\Comon(\ca{H}(\dc{D})(X,X))$ respectively, see \cref{prop:MonComonfibred}. By \cref{prop:MonComonlp}, these are locally presentable: $\ca{H}(\dc{D})(X,X)$ itself is locally presentable by \cref{prop:lp fibers double cats}, and tensoring -- in this case, taking the horizontal composition -- with a fixed endo-1-cell $M\colon X\bular X$ on either side preserves filtered colimits by \cref{rmk:compbetweenfibers}. 
\end{proof}

Below we establish monoidal closedness of $\Cmd(\dc{D})$ under the running assumptions, with a proof strategy is reiterated in the proof of our main 
\cref{thm:big1}. We note that this result is new, since the weaker previous assumptions in \cite{VCocats} did not suffice to obtain it in a general context.

\begin{prop}\label{prop:Cmdclosed}
Let $\dc{D}$ be a monoidal closed double category which is locally presentable. Then the category of comonads $\Cmd(\dc{D})$ is a monoidal 
closed category. 
\end{prop}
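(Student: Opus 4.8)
The plan is to construct, for each comonad $D\colon W\bular W$, a right adjoint to $-\otimes D\colon\Cmd(\dc{D})\to\Cmd(\dc{D})$; together with \cref{prop:MonDComonDmonoidal}, which already supplies the monoidal structure on $\Cmd(\dc{D})$, this exhibits $\Cmd(\dc{D})$ as monoidal closed. Since $\dc{D}$ is locally presentable it is in particular fibrant, so the opfibration $\Cmd(\dc{D})\to\dc{D}_0$ of \cref{prop:MonComonfibred} is available, and by \cref{prop:Mndmonfib} it is moreover monoidal, meaning $\otimes$ preserves cocartesian liftings. Hence $(-\otimes D,\,-\otimes W)$ is an opfibred $1$-cell over $\dc{D}_0$, where the base functor $-\otimes W$ has the right adjoint $H_0(W,-)$ coming from the monoidal closedness of $\dc{D}_0$ in clause (i) of \cref{def:locclosed}. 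This is exactly the setting of \cref{thm:totaladjointthm}, whose conclusion yields the desired internal hom $[D,-]$ on the total category, provided its fibrewise hypothesis holds.

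Thus the task reduces to showing that, for each $Y\in\dc{D}_0$ and with $\varepsilon$ the counit of $-\otimes W\dashv H_0(W,-)$, the composite
\[
\Cmd(\dc{D})_{H_0(W,Y)}\xrightarrow{\;-\otimes D\;}\Cmd(\dc{D})_{H_0(W,Y)\otimes W}\xrightarrow{\;(\varepsilon_Y)_!\;}\Cmd(\dc{D})_Y
\]
has a right adjoint. I would obtain this from \cref{thm:Kelly}: the source fibre is $\Comon(\ca{H}(\dc{D})(H_0(W,Y),H_0(W,Y)))$, which is locally presentable by \cref{cor:MonComonfibreslp} and so possesses a small dense subcategory, while the target fibre is cocomplete; it therefore suffices to check that the composite is cocontinuous. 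The second factor $(\varepsilon_Y)_!$ is a cocartesian-lifting (corestriction) functor, hence a left adjoint and cocontinuous, so everything comes down to the cocontinuity of the fibrewise $-\otimes D$.

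To handle that factor, I would pass to underlying objects. Each fibre $\Cmd(\dc{D})_X=\Comon(\ca{H}(\dc{D})(X,X))$ is comonadic over $\ca{H}(\dc{D})(X,X)$ by \cref{prop:MonComonlp}, whose hypotheses hold because $\ca{H}(\dc{D})(X,X)$ is locally presentable (\cref{prop:lp fibers double cats}) and horizontal composition preserves filtered colimits in each variable (\cref{rmk:compbetweenfibers}); consequently the forgetful functor creates colimits. The underlying-object functors intertwine the fibrewise $-\otimes D$ with $-\otimes_1 D\colon\ca{H}(\dc{D})(X,X)\to\ca{H}(\dc{D})(X\otimes W,X\otimes W)$, so it is enough that the latter be cocontinuous. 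For this I would apply the dual of \cref{prop:continuous fibred 1-cells} to the bifibration $\dc{D}_1^\bullet\to\dc{D}_0$, whose fibre over $X$ is precisely $\ca{H}(\dc{D})(X,X)$ (\cref{prop:MonComonfibred}): the endofunctor $-\otimes_1 D$ of $\dc{D}_1^\bullet$ is cocontinuous, since it is the restriction of the left adjoint $-\otimes_1 D$ on $\dc{D}_1$ (a left adjoint as $\dc{D}_1$ is monoidal closed) and the inclusion $\dc{D}_1^\bullet\hookrightarrow\dc{D}_1$ creates colimits by \cref{lem:(co)limits in D_1 bullet}; moreover it preserves cocartesian liftings over $-\otimes W$ by the interchange isomorphisms and the behaviour of companions and conjoints under $\otimes$ (\cref{lem:fibrantproperties}). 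Since $\dc{D}_1^\bullet\to\dc{D}_0$ has opfibred colimits (\cref{prop:lpparcomp} together with the dual of \cref{prop:fiberwiselimits}) and $-\otimes W$ preserves colimits, the dual of \cref{prop:continuous fibred 1-cells} transfers total cocontinuity to cocontinuity of each fibre restriction.

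Assembling these facts, the fibrewise composite is a cocontinuous functor out of a locally presentable category, so \cref{thm:Kelly} supplies its right adjoint and \cref{thm:totaladjointthm} then produces $[D,-]$, completing the proof. I expect the main obstacle to be precisely this middle step: the colimits of the endo-hom categories $\ca{H}(\dc{D})(X,X)$ do not coincide with those of $\dc{D}_1$ on non-connected diagrams (compare the normalization of \cref{eq:normalizing}), so the cocontinuity of $-\otimes_1 D$ after restriction to fibres cannot be read off directly from its cocontinuity in $\dc{D}_1$, and must instead be routed through $\dc{D}_1^\bullet$ and the fibred-$1$-cell transfer.
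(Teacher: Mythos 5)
Your proposal is correct and follows the same overall strategy as the paper's proof: the monoidal structure comes from \cref{prop:MonDComonDmonoidal}, the internal hom is produced by applying \cref{thm:totaladjointthm} to the opfibred $1$-cell $(-\otimes D,-\otimes W)$ over $\dc{D}_0$, and the fibrewise hypothesis is verified by combining local presentability of the fibres (\cref{cor:MonComonfibreslp}) with cocontinuity of the composite functor. Your way of getting cocartesianness of $-\otimes D$ from monoidality of the opfibration (\cref{prop:Mndmonfib}), by tensoring a cocartesian lift with the identity lift on $D$, is a clean alternative to the paper's argument via the map of adjunctions \cref{eq:mapofadjunctions1}.

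Two local points deserve comment. First, you justify the cocontinuity of $(\varepsilon_Y)_!$ by declaring it a left adjoint; that is not available, since $\Cmd(\dc{D})\to\dc{D}_0$ is only established as an opfibration, not a bifibration -- the candidate right adjoint $\wc{f}\odot-\odot\wh{f}$ on $\dc{D}_1^\bullet$ is lax rather than oplax monoidal and so does not restrict to comonads. The conclusion you need is nonetheless true: \cref{cor:MndD0complete} states precisely that these reindexing functors are cocontinuous, and that is what the paper cites. Second, for the cocontinuity of the fibrewise $-\otimes D$ the paper stays at the level of $\Cmd(\dc{D})$: the total $-\otimes D$ is cocontinuous because $\Cmd(\dc{D})\to\dc{D}_1^\bullet$ creates colimits (\cref{prop:(co)limits in (co)monads}) and $-\otimes_1 D$ is a left adjoint on the monoidal closed $\dc{D}_1^\bullet$ (\cref{rmk:clarifying}), after which \cref{prop:continuous fibred 1-cells} transfers cocontinuity to the fibres of $\Cmd(\dc{D})\to\dc{D}_0$. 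You instead first descend to underlying endo-$1$-cells via comonadicity of $\Comon(\ca{H}(\dc{D})(X,X))$ and then run the fibred-$1$-cell transfer on $\dc{D}_1^\bullet\to\dc{D}_0$. Both routes are valid and both must negotiate the normalization issue you correctly flag at the end (colimits in $\ca{H}(\dc{D})(X,X)$ versus $\dc{D}_1$); the paper's ordering simply keeps the comonoid structure in play throughout and avoids the extra comonadicity step.
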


\begin{proof}
Recall by \cref{prop:MonDComonDmonoidal} that $\Cmd(\dc{D})$ inherits the monoidal structure of $\dc{D}_1$, therefore the following diagram involving tensors commutes 
 \begin{displaymath}
  \begin{tikzcd}[column sep=.5in]
 \Cmd(\dc{D})\ar[r,"{\mi\ot C}"]\ar[d] & \Cmd(\dc{D})\ar[d] \\
 \dc{D}_0\ar[r,shift left=2,"{\mi\ot Z}"]\ar[r,phantom,"\bot"] & \dc{D}_0\ar[l,shift left=2,"{H(Z,\mi)}"]
  \end{tikzcd}
 \end{displaymath}
for any comonad $C\colon Z\bular Z$, 
and the adjunction between the bases is the monoidal closure of $\dc{D}_0$. In the fibrant setting, the legs are opfibrations by 
\cref{prop:MonComonfibred}, and moreover we now verify that the top functor preserves cocartesian liftings. This can be done either by a direct computation
or via an argument analogous to the proof of \cref{prop:MonHcartesian} 
as follows. Since the diagram 
\cref{eq:mapofadjunctions1} is a map of adjunctions in any monoidal closed double category, and the legs are bifibrations thus in particular 
opfibrations, the total left adjoint $\mi\otimes C\colon\dc{D}_1^\bullet\to\dc{D}_1^\bullet$ preserves all cocartesian morphisms (see e.g. \cite[Ex.~9.4.4]{Jacobs}).
Since $\Cmd(\dc{D})$ 
is closed under cocartesian liftings in $\dc{D}_1^\bullet$, the restriction of $\mi\otimes C$ to comonads is verified to also be a cocartesian 
functor.

We can now apply \cref{thm:totaladjointthm} to obtain the adjoint of $\mi\otimes C$. Indeed, the composite functor between the fibers
\begin{displaymath}
\Cmd(\dc{D})_{H(Z,W)}\xrightarrow{(\mi\otimes C)_{H(Z,W)}}\Cmd(\dc{D})_{H(Z,W)\otimes Z}\xrightarrow{(\varepsilon_W)_!}\Cmd(\dc{D})_W
\end{displaymath}
has a right adjoint as follows. The domain is a locally presentable category by \cref{cor:MonComonfibreslp} and the reindexing functor 
$(\varepsilon_W)_!$ is cocontinuous by \cref{cor:MndD0complete}. Finally, the functor $(\mi\otimes C)_{H(Z,W)}$ 
between the fibers is cocontinuous by \cref{prop:continuous fibred 1-cells}, since the total functor is cocontinuous 
by the following commutative square
\begin{displaymath}
\begin{tikzcd}[column sep=.6in]
\Cmd(\dc{D})\ar[d]\ar[r,"{\mi\ot C}"] & \Cmd(\dc{D})\ar[d] \\
\dc{D}_1^\bullet\ar[r,"{\mi\ot C}"] & \dc{D}_1^\bullet
\end{tikzcd}
\end{displaymath}
where $C$ below is the underlying endoarrow of the comonad, both legs create colimits by \cref{prop:(co)limits in (co)monads} and the bottom functor 
preserves colimits since it has a right adjoint in the monoidal closed $\dc{D}_1^\bullet$.

Therefore we get an adjunction $\mi\otimes C\dashv\Hom_\Cmd(C,\mi)$ for any comonad $C$, giving rise to
\begin{displaymath}
 \Hom_\Cmd\colon\Cmd(\dc{D})^\op\times\Cmd(\dc{D})\to\Cmd(\dc{D})
\end{displaymath}
where the comonad $\Hom_\Cmd(C,D)$ for comonads $C\colon Z\bular  Z$ and $D\colon W\bular W$ is a horizontal 1-cell $H(Z,W)\bular H(Z,W)$.
\end{proof}

We close up this section with a central result, that exhibits the effectiveness of the new parts of the theory on locally presentable double categories (\cref{defi:locallypresentabledoublecat}) and monoidal closed double categories (\cref{def:locclosed}) discussed in these past few sections. In more detail, we re-establish the main \cite[Theorems 3.23 \& 3.24]{VCocats} in a broader context, dropping technical conditions that previously needed to be verified by hand in each specific double category of interest.

\begin{thm}\label{thm:big1}(Sweedler theory of (co)monads)
Let $\dc{D}$ be a braided monoidal closed double category, which is locally presentable.
\begin{enumerate}
 \item The category of monads $\Mnd(\dc{D})$ is tensored and cotensored enriched in the category of comonads $\Cmd(\dc{D})$.
 \item The fibration $\Mnd(\dc{D})\to\dc{D}_0$ is enriched in the opfibration $\Cmd(\dc{D})\to\dc{D}_0$.
\end{enumerate}
\end{thm}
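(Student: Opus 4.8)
The plan is to deduce both statements from the abstract action‑induced enrichment results of \cref{sec:enrichedfibrations}, namely \cref{thm:cotensorenrich} for (1) and \cref{thm:enrichedfib} for (2), with the enriching (op)monoidal data supplied by the internal‑hom functor $H$ of \cref{eq:MonHdouble}. The structural inputs are already in place: by \cref{prop:MonDComonDmonoidal} the category $\Cmd(\dc{D})$ carries a braided monoidal structure inherited from $(\dc{D}_1,\otimes)$; by \cref{rmk:fixedaction} the functor $H\colon\Cmd(\dc{D})^\op\times\Mnd(\dc{D})\to\Mnd(\dc{D})$ is an action; and by \cref{prop:Cmdclosed} the category $\Cmd(\dc{D})$ is moreover monoidal closed, which will directly furnish the cotensors demanded by \cref{thm:cotensorenrich}. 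Thus the entire content of the theorem reduces to producing two families of parameterized adjoints: a right adjoint $R(\mi,A)$ to $H(\mi,A)^\op$ for every monad $A$ (the universal measuring comonad functor, supplying the enrichment together with its hom‑objects $R(A,B)$), and a left adjoint $C\triangleright\mi$ to $H(C,\mi)$ for every comonad $C$ (supplying the tensors).

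To produce these adjoints I would reuse the fibred‑adjunction strategy of the proof of \cref{prop:Cmdclosed}, which is exactly where local presentability intervenes. For the measuring adjoint I view $H(\mi,A)^\op\colon\Cmd(\dc{D})\to\Mnd(\dc{D})^\op$ as an opfibred $1$‑cell over the base functor $H_0(\mi,X)^\op\colon\dc{D}_0\to\dc{D}_0^\op$; the latter has a right adjoint by the self‑adjointness $[\mi,X]^\op\dashv[\mi,X]$ of the internal hom in the braided monoidal closed $\dc{D}_0$, and $H$ preserves (co)cartesian liftings by \cref{prop:MonHcartesian}. One then applies \cref{thm:totaladjointthm}: the relevant fibrewise composite has a right adjoint because the fibres $\Comon(\ca{H}(\dc{D})(X,X))$ are locally presentable by \cref{cor:MonComonfibreslp}, the reindexing functors are cocontinuous by \cref{cor:MndD0complete}, and the fibrewise restriction of the action is cocontinuous by \cref{prop:continuous fibred 1-cells}, so that \cref{thm:Kelly} yields the desired fibrewise adjoint. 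The tensor adjoint $C\triangleright\mi$ is obtained dually, lifting the base adjunction $\mi\otimes Z\dashv H_0(Z,\mi)$ along the fibred $1$‑cell $H(C,\mi)$ via the dual of \cref{thm:totaladjointthm}, again invoking local presentability of the fibres to secure the fibrewise left adjoint. With all clauses of \cref{thm:cotensorenrich} verified, part (1) follows.

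For part (2) I would feed the same data into \cref{thm:enrichedfib}. The monoidal opfibration there is $T=\big(\Cmd(\dc{D})\to\dc{D}_0\big)$, which is braided monoidal by \cref{prop:Mndmonfib}, and the opfibred $1$‑cell exhibiting its action on the opposite of the fibration $\Mnd(\dc{D})\to\dc{D}_0$ (\cref{prop:MonComonfibred}) is obtained by taking opposites of the fibred $1$‑cell $(H,H_0)$ of \cref{prop:MonHcartesian}, cartesian liftings becoming cocartesian ones. It then remains to check the hypothesis that $(\pitchfork^\op,\square^\op)$ admits a right parameterized adjoint in $\mathsf{Cat}^\mathbf{2}$; but this is precisely the assertion that the total adjunction $H(\mi,A)^\op\dashv R(\mi,A)$ constructed above and the base adjunction in $\dc{D}_0$ (the canonical self‑enrichment of the monoidal closed $\dc{D}_0$) assemble into a map of adjunctions as in \cref{eq:parameterizedCat2}, with counits lying above one another. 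This compatibility holds exactly because $H$ is a fibred $1$‑cell, so the counits of the total adjunction project onto those of the base; invoking \cref{thm:enrichedfib} then delivers the enriched fibration.

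The main obstacle, and the technical heart of the argument, is the verification of the fibrewise cocontinuity and adjoint‑existence conditions feeding \cref{thm:totaladjointthm} and its dual: one must confirm that the fibrewise restrictions of the action functor are (co)continuous—so that \cref{thm:Kelly} applies on each locally presentable fibre—and that the resulting total adjoints are genuinely compatible with the base adjunctions in the parameterized $\mathsf{Cat}^\mathbf{2}$ sense. These are exactly the two technical conditions that in \cite[Thms.~3.23\&3.24]{VCocats} had to be imposed and checked by hand in each example; the present framework discharges them uniformly from local presentability of $\dc{D}$ (through \cref{cor:MonComonfibreslp}) together with the cartesian‑lifting preservation of \cref{prop:MonHcartesian}.
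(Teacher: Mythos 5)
Your proposal follows essentially the same route as the paper's own proof: part (1) via \cref{thm:cotensorenrich} applied to the action $H$ of \cref{eq:MonHdouble}, with the measuring and tensor adjoints produced fibrewise through \cref{thm:totaladjointthm} (and its dual) using \cref{prop:MonHcartesian}, \cref{cor:MonComonfibreslp}, \cref{cor:MndD0complete} and \cref{prop:continuous fibred 1-cells}, cotensors from \cref{prop:Cmdclosed}; and part (2) via \cref{thm:enrichedfib} with the compatibility of (co)units guaranteed by the \cref{thm:totaladjointthm} construction. The argument is correct and the dependencies you cite match those the paper actually uses.
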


\begin{proof} $(1)$ We apply \cref{thm:cotensorenrich} for the functor 
$H\colon\Cmd(\dc{D})^\op\times\Mnd(\dc{D})\to\Mnd(\dc{D})$ as in 
\cref{eq:MonHdouble}, induced by the monoidal closed structure of 
$\dc{D}$.

First recall that $\Cmd(\dc{D})$ is indeed a braided monoidal category by \cref{prop:MonDComonDmonoidal}. Moreover, $H$ is 
an action as discussed in \cref{rmk:fixedaction}. Finally, the functor $H^\op(\mi,B)$ for any monad $B\colon Y\bular Y$ has a right adjoint using 
\cref{thm:totaladjointthm} as follows. There is a square of categories and functors
\begin{displaymath}
\begin{tikzcd}[column sep=.6in]
\Cmd(\dc{D})\ar[r,"{H^\op(\mi,B)}"]\ar[d] & \Mnd(\dc{D})^\op\ar[d] \\
\dc{D}_0\ar[r,"{H^\op(\mi,Y)}"'] & \dc{D}_0^\op 
\end{tikzcd}
\end{displaymath}
which is an opfibred 1-cell between the opfibrations of \cref{prop:MonComonfibred}, since $H^\op(\mi,B)$ preserves cocartesian 
liftings by \cref{prop:MonHcartesian}. There is an adjunction $H^\op(\mi,Y)\dashv H(\mi,Y)$ between the base categories, as is the case in any 
braided monoidal closed category -- here $\dc{D}_0$. Finally, the composite functor between the fibres
\begin{displaymath}
\Cmd(\dc{D})_{H(X,Y)}\xrightarrow{H^\op(\mi,B)_{H(X,Y)}}\Mnd(\dc{D})^\op_{H(H(X,Y),Y)}\xrightarrow{(\varepsilon_Y)_!}\Mnd(\dc{D})^\op_{X}
\end{displaymath}
has a right adjoint as follows. First of all, the domain $\Cmd(\dc{D})_{H(X,Y)}=\Comon(\ca{H}(\dc{D})(H(X,Y),H(X,Y))$ is locally presentable by 
\cref{cor:MonComonfibreslp}, and the reindexing functor $\wc{\varepsilon_Y}\odot\mi\odot\wh{\varepsilon_Y}$ is continuous by 
\cref{cor:MndD0complete}, hence its opposite for the opfibration $\Mnd(\dc{D})^\op\to\dc{D}_0^\op$ is cocontinuous. Finally, the functor $H^\op(\mi,B)_{H(X,Y)}$ between 
the fibers is cocontinuous\footnote{As an alternative proof, one could consider the cocontinuous functor 
$H(\mi,Y)^\op_{H(X,Y)}$ induced between the fibers of 
$\dc{D}_1^\bullet$ by $H(\mi,Y)$, and take its lifting between (co)monadic categories of (co)monoids respectively, see \cref{prop:MonComonfibred}.} 
by \cref{prop:continuous fibred 1-cells}, because $H^\op(\mi,B)$ is so by the following commutative square
\begin{displaymath}
\begin{tikzcd}[column sep=.6in]
\Cmd(\dc{D})\ar[d]\ar[r,"{H^\op(\mi,B)}"] & \Mnd(\dc{D})^\op\ar[d] \\
\dc{D}_1^\bullet\ar[r,"{H^\op(\mi,B)}"] & (\dc{D}_1^\bullet)^\op
\end{tikzcd}
\end{displaymath}
where both legs create colimits by \cref{prop:(co)limits in (co)monads} and the base functor is cocontinuous as the internal 
hom of the braided monoidal closed $\dc{D}_1^\bullet$.
Therefore $\Mnd(\dc{D})$ is enriched in $\Cmd(\dc{D})$, with enriched hom functor the induced parameterized 
adjoint 
\begin{equation}\label{eq:P}
P\colon\Mnd(\dc{D})^\op\times\Mnd(\dc{D})\to\Cmd(\dc{D})
\end{equation}
of $H$ via $H(\mi,B)^\op\dashv P(\mi,B)$, where $P(A_X,B_Y)$ for two monads is a comonad with carrier object $H(X,Y)$.

Next, $\Cmd(\dc{D})$ is a monoidal closed category by \cref{prop:Cmdclosed}, therefore the enrichment admits cotensors $H(C,B)\colon 
H(Z,Y)\bular H(Z,Y)$ for any comonad $C$ and monad $B$. Finally, the enrichment also admits tensors $C\triangleright B$ since 
$H(C,\mi)$ has a left adjoint $C\triangleright\mi$, using the dual of \cref{thm:totaladjointthm} as follows. The following commutative diagram
\begin{displaymath}
 \begin{tikzcd}[column sep=.6in]
\Mnd(\dc{D})\ar[r,"{H(C,\mi)}"]\ar[d] & \Mnd(\dc{D})\ar[d] \\
\dc{D}_0\ar[r,"{H(Z,\mi)}"] & \dc{D}_0
 \end{tikzcd}
\end{displaymath}
is a fibred 1-cell since $H(C,\mi)$ preserves cartesian liftings by \cref{prop:MonHcartesian}, and moreover there is an adjunction $Z\ot\mi\dashv 
H(Z,\mi)$ between the bases with unit $\eta\colon X\to H(Z,X\ot Z)$ since $\dc{D}_0$ is braided monoidal closed. Furthermore, the composite functor 
between fibers
\begin{displaymath}
\Mnd(\dc{D})_{X\ot Z}\xrightarrow{H(C,\mi)_{X\ot Z}}\Mnd(\dc{D})_{H(Z,X\ot Z)}\xrightarrow{(\eta_X)^*}\Mnd(\dc{D})_{X}
\end{displaymath}
has a left adjoint, because its domain $\Mnd(\dc{D})_{X\ot Z}=\Mon(\ca{H}(\dc{D})(X\ot Z,X\ot Z))$ is locally presentable by 
\cref{cor:MonComonfibreslp}, the reindexing functor is continuous by \cref{cor:MndD0complete}, and the functor between
the fibers is continuous by \cref{prop:continuous fibred 1-cells} similarly to above: in the commutative
\begin{displaymath}
 \begin{tikzcd}[column sep=.6in]
 \Mnd(\dc{D})\ar[r,"{H(C,\mi)}"]\ar[d] & \Mnd(\dc{D})\ar[d] \\
 \dc{D}_1^\bullet\ar[r,"{H(C,\mi)}"] & \dc{D}_1^\bullet
 \end{tikzcd}
\end{displaymath}
both legs create limits by \cref{prop:(co)limits in (co)monads} and the bottom functor is a right adjoint.
As a result, $H(C,\mi)$ has a left adjoint $C\triangleright\mi$ between the total categories for any $C$,
which induces a left parameterized adjoint
\begin{displaymath}
\triangleright\colon\Mnd(\dc{D})\times\Cmd(\dc{D})\to\Mnd(\dc{D})
\end{displaymath}
that gives tensors $C\triangleright B\colon Z\ot Y\bular Z\ot Y$ for the enrichment.

$(2)$ This is established using \cref{thm:enrichedfib}. First recall that the opfibration $\Cmd(\dc{D})\to\dc{D}_0$ of \cref{prop:MonComonfibred} for a monoidal fibrant double category $\dc{D}$ is monoidal by \cref{prop:Mndmonfib}. 
Under the above assumptions, it acts on the opfibration $\Mnd(\dc{D})^\op\to\dc{D}^\op_0$ according to \cref{Trepresentation} via the opfibred 1-cell
\begin{displaymath}
\begin{tikzcd}
\Cmd(\dc{D})\times\Mnd(\dc{D})^\op\ar[r,"H^\op","\cref{eq:MonHdouble}"']\ar[d] & 
\Mnd(\dc{D})^\op\ar[d] \\
\dc{D}_0\times\dc{D}^\op_0\ar[r,"H^\op"'] & \dc{D}^\op_0
\end{tikzcd}
\end{displaymath}
since both functors labelled by $H^\op$ are actions -- as opposites of actions -- and their structure 
isomorphisms are compatible to one another as discussed in \cref{rmk:fixedaction}.
Finally, $(H^\op,H^\op)$ has an ordinary right parameterized adjoint \cref{eq:parameterizedCat2} as reasoned in the above part of proof, namely
\begin{displaymath}
 \begin{tikzcd}
\Mnd(\dc{D})^\op\times\Mnd(\dc{D})\ar[d]\ar[r,"P","\cref{eq:P}"'] & \Cmd(\dc{D})\ar[d] \\
\dc{D}_0^\op\times\dc{D}_0\ar[r,"H"] & \dc{D}_0
\end{tikzcd} 
\end{displaymath}
which was obtained via \cref{thm:totaladjointthm} that in fact ensures compatibility of (co)units.
\end{proof}

\begin{rmk}
Although at first sight, the braiding of $\dc{D}$ is not needed for the second part of the above proof (also according to \cref{rmk:braidingnotneeded}), it is actually needed for the fact that $H^\op$ has a parameterized adjoint $H$ for the ordinary braided monoidal closed category $\dc{D}_0$.
\end{rmk}

The corollary below recovers \cite[Theorems 4.37 \& 4.38]{VCocats} in a straightforward way, without the extra verifications needed therein.

\begin{cor}\label{cor:VCatenrVCocat}
Suppose that $\ca{V}$ is a symmetric monoidal closed category which is locally presentable. The category $\ca{V}\mi\Cat$ is tensored and cotensored enriched in the braided monoidal closed $\VCocat$, and the fibration $\ca{V}\mi\Cat\to\Set$ is enriched in the monoidal opfibration $\VCocat\to\Set$.
\end{cor}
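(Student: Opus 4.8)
The plan is to deduce this entirely as a specialization of \cref{thm:big1} to the double category $\dc{D}=\VMMat$ of \cref{ex:VMMat}. The content of the proof therefore consists of just two verifications, namely that under the stated hypotheses on $\ca{V}$ the double category $\VMMat$ is (i) braided monoidal closed and (ii) locally presentable; once these are in place the conclusion follows by reading \cref{thm:big1} through the standard identifications $\VMMat_0=\Set$, $\Mnd(\VMMat)\cong\VCat$ and $\Cmd(\VMMat)\cong\VCocat$.

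First I would establish that $\VMMat$ is braided monoidal closed. Here the symmetry of $\ca{V}$ (rather than mere braiding) is used in an essential way: by \cref{ex:VMMatmonoidal}, a braided $\ca{V}$ only makes $\VMMat$ monoidal, but a symmetric $\ca{V}$ makes $\VMMat$ a \emph{symmetric} monoidal double category, hence in particular braided monoidal as required by \cref{thm:big1}. For closedness, I would note that a locally presentable $\ca{V}$ is complete and so has all products; together with the assumed monoidal closedness of $\ca{V}$, \cref{ex:VMMatclosed} then supplies a monoidal closed structure on $\VMMat$ in the sense of \cref{def:locclosed}.

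Next I would check local presentability. Since $\ca{V}$ is monoidal closed, the functor $\mi\ot\mi$ is a left adjoint in each variable, with right adjoint the internal hom, and hence preserves all colimits separately in each variable. This is precisely the colimit-preservation hypothesis of \cref{prop:VMatlp}, which, combined with the local presentability of $\ca{V}$, yields that $\VMMat$ is a locally presentable double category in the sense of \cref{defi:locallypresentabledoublecat}. This is the step I expect to carry the genuine weight of the argument, as it is where the subtle accessibility of the composition functors $M\odot\mi$ and $\mi\odot M$ on fixed source/target fibres is needed; however, that work is already done in \cref{prop:VMatlp}, so at the level of this corollary it reduces to a citation.

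With both hypotheses confirmed, \cref{thm:big1} applies to $\dc{D}=\VMMat$. It remains only to translate: by \cref{ex:VCatsaremonadsinVMat} we have $\Mnd(\VMMat)\cong\VCat$ and $\Cmd(\VMMat)\cong\VCocat$, where $\VCocat$ is braided monoidal closed by \cref{prop:MonDComonDmonoidal,prop:Cmdclosed}, and $\VMMat_0=\Set$ by \cref{ex:VMMat}. Part (1) of \cref{thm:big1} then reads as the tensored and cotensored enrichment of $\VCat$ in $\VCocat$, and part (2) as the enrichment of the fibration $\VCat\to\Set$ in the opfibration $\VCocat\to\Set$, which is exactly the claim. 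Thus there is no real obstacle beyond matching the abstract conditions to the concrete example, the only mildly nontrivial point being to remember that the symmetry of $\ca{V}$ is what upgrades $\VMMat$ from monoidal to braided.
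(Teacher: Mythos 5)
Your proposal is correct and follows the same route as the paper: verify that $\VMMat$ is fibrant, braided (indeed symmetric) monoidal, monoidal closed and locally presentable by citing \cref{ex:VMMatfibrant,ex:VMMatmonoidal,ex:VMMatclosed,prop:VMatlp}, then apply \cref{thm:big1} under the identifications $\Mnd(\VMMat)\cong\VCat$, $\Cmd(\VMMat)\cong\VCocat$. Your additional observations --- that monoidal closedness of $\ca{V}$ supplies the colimit-preservation hypothesis of \cref{prop:VMatlp}, that local presentability supplies the products needed in \cref{ex:VMMatclosed}, and that symmetry (not mere braiding) of $\ca{V}$ is what makes $\VMMat$ braided monoidal --- are all accurate and in fact spell out details the paper's proof leaves implicit.
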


\begin{proof}
Under the above assumptions, the double category $\VMMat$ of $\ca{V}$-matrices is fibrant by \cref{ex:VMMatfibrant}, braided (in fact symmetric) monoidal by \cref{ex:VMMatmonoidal}, monoidal closed by \cref{ex:VMMatclosed} and locally presentable by \cref{prop:VMatlp}. Hence all clauses of \cref{thm:big1} hold, and the results follow. 
\end{proof}

\begin{rmk}
A toy example where the conditions of \ref{thm:big1} are also satisfied is given by the double category $\dc{R}\nc{el}(\nc{Set})$. Recall  by \ref{ex:monadsRel} that $\Mnd(\dc{R}\nc{el}(\nc{Set}))$ is the category $\nc{Preord}$ of preordered sets and monotone functions. On the other hand, objects of $\Cmd(\dc{R}\nc{el}(\nc{Set}))$ are sets with a given subset, while morphisms are functions which map the prescribed subset of their domain inside the corresponding subset of their codomain. Now the enrichment of monads in comonads says something very easy and natural.

Given two monads $A\colon X\bular X$ and $A'\colon Y\bular Y$, we have that $\nc{Preord}(A,A')$ is the set of monotone functions $X\to Y$. This can canonically be viewed as a subset of $Y^X$, i.e. as a comonad $Y^X\bular Y^X$, which is precisely what transpires if one follows the proof of \ref{thm:big1} above. Similarly, given a third monad $A''\colon Z\bular Z$, the fact that the composition morphism $\nc{Preord}(A,A')\times\nc{Preord}(A',A'')\to\nc{Preord}(A,A'')$ is a morphism in $\Cmd(\dc{R}\nc{el}(\nc{Set}))$ is simply the statement that composing two monotone functions $X\to Y$ and $Y\to Z$ yields a monotone function $X\to Z$.
\end{rmk}

\begin{rmk}\label{rmk:thisrmk}
Following \cref{rem:oneobjectcase,rem:duoidal}, the (one-object,one-vertical map) case of \cref{thm:big1} states that for a symmetric monoidal category $\ca{V}$ which is monoidal closed and 
locally presentable (since by \cref{def:locclosed,defi:locallypresentabledoublecat} these structures are directly inherited by the ordinary category of arrows which here becomes $\ca{V}$),
the category of monoids is tensored and cotensored enriched in the category of comonoids, namely \cref{thm:MonenrichedComon}.

On the other hand, the well-known fact that an one-object $\ca{V}$-category `is' precisely a monoid in $\ca{V}$ 
highlights a slightly different aspect of these one- versus many-object cases we have so far discussed: in the context of the double
category $\VMMat$, the tensored and cotensored enrichment of specifically $\VCat=\Mnd(\VMMat)$ in the symmetric monoidal 
$\VCocat=\Cmd(\VMMat)$ of \cref{cor:VCatenrVCocat} is indeed a many-object generalization of \cref{thm:MonenrichedComon} for a fixed monoidal category $\ca{V}$.

This discussion also propagates in the context of modules, as explained in the analogous \cref{rmk:thatrmk}.
\end{rmk}

\section{Modules and comodules in double categories}\label{sec:modscomods}

This chapter begins, in \cref{sec:ModsComodsdoublecats}, by setting up the general double categorical framework for modules and comodules, determining 
their key categorical properties and obtaining results that shall be of use in what follows. The setting of $\ca{V}$-matrices continues to be the leading example.
Some important results include \cref{prop:Mod(D)monadicComod(D)comonadic} which renders both categories of (co)modules (co)monadic over specific pullback categories, \cref{prop:ModfibredoverMnd,prop:sourcetargetbifib} where (op)fibration structures of (co)modules over (co)monads and objects are established in the fibrant setting, 
and \cref{prop:ModMndmonoidal,prop:ModD0monfib} verifying these (op)fibrations are monoidal.

In \cref{sec:enrichedfibdouble}, we turn to more specific structures that induce an enriched fibration of modules over monads in comodules over comonads, in a general double categorical setting under certain suitable conditions. The central \cref{thm:big2} which establishes the enrichment of modules in comodules is what we could refer to as `Sweedler theory for (co)modules' in double categories,
providing a vast generalization of \cref{thm:ModenrichedComod,thm:ModenrichedComodfib} for monoidal categories. \cref{cor:bigthm2} and \cref{rmk:VMod} discuss the examples of enriched modules and comodules, in the context of enriched matrices.

\subsection{Basic constructions}\label{sec:ModsComodsdoublecats}

Analogously to monads in double categories (\cref{Monadindoublecat}), modules in double categories as objects coincide with those in their horizontal 
bicategory (\cref{def:modulebicat}), whereas the morphisms between them are more general than their bicategorical counterpart. 

\begin{defi}\label{def:leftmodulesdouble}
 A \emph{left} $A$-\emph{module} for a monad $A\colon X\bular X$ in a double category $\dc{D}$ is a horizontal 1-cell $M\colon U\bular X$ equipped 
with a 2-morphism
 \begin{displaymath}
  \begin{tikzcd}
U\ar[r,bul,"M"]\ar[d,equal]\ar[drr,phantom,"\Two\lambda"] & X\ar[r,bul,"A"] & X\ar[d,equal] \\
U\ar[rr,bul,"M"'] && X
  \end{tikzcd}
 \end{displaymath}
called the $A$-\emph{action}, satisfying the following two axioms
\begin{equation}\label{eq:moduleaxioms}
 \begin{tikzcd}[column sep=.3in]
U\ar[r,bul,"M"]\ar[d,equal] & X\ar[r,bul,"A"]\ar[d,equal]\ar[drr,phantom,"\Two\mu"] & X\ar[r,bul,"A"] & X\ar[d,equal] \\
U \ar[d,equal]\ar[r,bul,"M"]\ar[drrr,phantom,"\Two\lambda"] & X\ar[rr,bul,"A"] && X\ar[d,equal] \\
U\ar[rrr,bul,"M"'] &&& X
 \end{tikzcd}=
 \begin{tikzcd}[column sep=.3in]
U\ar[r,bul,"M"]\ar[d,equal]\ar[drr,phantom,"\Two \lambda"] & X\ar[r,bul,"A"] & X\ar[r,bul,"A"]\ar[d,equal] & X\ar[d,equal] \\
U\ar[rr,bul,"M"]\ar[d,equal]\ar[drrr,phantom,"\Two\lambda"] && X\ar[r,bul,"A"] & X\ar[d,equal] \\
U\ar[rrr,bul,"M"'] &&& X,
 \end{tikzcd}\qquad
 \begin{tikzcd}[column sep=.3in]
U\ar[r,bul,"M"]\ar[d,equal] & X\ar[r,bul,"1_X"]\ar[d,equal]\ar[dr,phantom,"\Two\eta"] & X\ar[d,equal] \\
U\ar[r,bul,"M"]\ar[d,equal]\ar[drr,phantom,"\Two\lambda"] & X\ar[r,bul,"A"] & X\ar[d,equal] \\
U\ar[rr,bul,"M"'] && X
 \end{tikzcd}=\ell_M
\end{equation}
We usually denote a left $A$-module as $M_A$. A \emph{module morphism} $M_A\to N_B$ consists of a monad morphism $^f\alpha^f\colon A\Rightarrow B$ along 
with a 2-morphism (with specified right boundary)
 \begin{displaymath}
  \begin{tikzcd}
 U\ar[r,bul,"M"]\ar[dr,phantom,"\Two\zeta"]\ar[d,"g"'] & X\ar[d,"f"] \\
 T\ar[r,bul,"N"'] & Y
  \end{tikzcd}
 \end{displaymath}
 that satisfies the action compatibility
\begin{equation}\label{eq:modulemapaxiom}
\begin{tikzcd}
 U\ar[r,bul,"M"]\ar[d,equal]\ar[drr,phantom,"\Two\lambda"] & X\ar[r,bul,"A"] & X\ar[d,equal] \\
 U\ar[rr,bul,"M"]\ar[drr,phantom,"\Two\zeta"]\ar[d,"g"'] && X\ar[d,"f"] \\
 T\ar[rr,bul,"N"'] && Y
\end{tikzcd}=
\begin{tikzcd}
 U\ar[r,bul,"M"]\ar[d,"g"']\ar[dr,phantom,"\Two\zeta"] & X\ar[d,"f"]\ar[dr,phantom,"\Two\alpha"]\ar[r,bul,"A"] & X\ar[d,"f"] \\
 T\ar[r,bul,"N"]\ar[d,equal]\ar[drr,phantom,"\Two\lambda"] & Y\ar[r,bul,"B"] & Y\ar[d,equal] \\
 T\ar[rr,bul,"N"'] && Y
\end{tikzcd}
\end{equation}
These form the \emph{global category of (left) modules}, denoted by $\Mod(\dc{D})$ for any double category $\dc{D}$.
\end{defi}

The terminology `left module' comes of course from writting the action as $\lambda\colon A\odot M\Rightarrow M$. Notice that \emph{bimodules} in the 
setting of fibrant double categories appeared in \cite[\S~11]{Framedbicats}, where it was shown that they are the horizontal 1-cells of a fibrant 
double category of monads (therein called monoids), under certain assumptions on $\dc{D}$. In our context, we work with one-sided modules as defined above, in a general (and not 
necessarily fibrant) double category.

We can consider a subcategory $_A\Mod(\dc{D})$ of left $A$-modules for a fixed monad $A$, where morphisms are of the form $^g\zeta^\id$ and have as 
underlying monad map the identity. 
Another subcategory of interest is $^U\Mod(\dc{D})$ of modules of arbitrary monads, but with fixed domain $U$, and the corresponding module morphisms 
are now of the form $^\id\zeta^f$. 
Finally, there is a subcategory $^U_A\Mod(\dc{D})$ with objects all $A$-modules for a monad $A$ and with fixed source 
$U$, and morphisms globular 2-cells. This category coincides with the category of modules in the horizontal bicategory $\ca{H}(\dc{D})$ as seen 
in \cref{def:modulebicat}, which by \cref{rem:modulesaremonadic} is the category of Eilenberg-Moore algebras for the ordinary 
`post-composition with $A$' monad on the hom-category $\ca{H}(\dc{D})(U,X)={}^U\dc{D}_1^X$.

Dually, we have the notion of a (left) comodule over a comonad in a double category.
\begin{defi}
A \emph{left} $C$-\emph{comodule} for a comonad $C\colon Z\bular Z$ in a double category $\dc{D}$ is a horizontal 1-cell $K\colon V\bular Z$ equipped 
with a globular 2-morphism
\begin{displaymath}
		\begin{tikzcd}
			V\ar[rr,bul,"K"]\ar[d,equal]\ar[drr,phantom,"\Two\gamma"] && Z\ar[d,equal] \\
			V\ar[r,bul,"K"'] & Z\ar[r,bul,"C"'] & Z
		\end{tikzcd}
\end{displaymath}
satisfying the following two axioms
\begin{displaymath}
	\begin{tikzcd}[column sep=.3in]
		V\ar[rrr,bul,"K"]\ar[drrr,phantom,"\Two\gamma"]\ar[d,equal] &&& Z\ar[d,equal] \\			 
		V\ar[d,equal]\ar[r,bul,"K"] & Z\ar[rr,bul,"C"]\ar[drr,phantom,"\Two\delta"]\ar[d,equal] && Z\ar[d,equal] \\
			V\ar[r,bul,"K"']& Z\ar[r,bul,"C"'] & Z\ar[r,bul,"C"']& Z
	\end{tikzcd}=
	\begin{tikzcd}[column sep=.3in]
		V\ar[rrr,bul,"K"]\ar[drrr,phantom,"\Two\gamma"]\ar[d,equal] &&& Z\ar[d,equal] \\
		V\ar[d,equal]\ar[rr,bul,"K"]\ar[drr,phantom,"\Two\gamma"] && Z\ar[r,bul,"C"]\ar[d,equal] & Z\ar[d,equal]\\
		V\ar[r,bul,"K"']& Z\ar[r,bul,"C"'] & Z\ar[r,bul,"C"']& Z,
	\end{tikzcd}\qquad
	\begin{tikzcd}[column sep=.3in]
		V\ar[rr,bul,"K"]\ar[d,equal]\ar[drr,phantom,"\Two\gamma"] && Z\ar[d,equal] \\
		V\ar[r,bul,"K"]\ar[d,equal] & Z\ar[r,bul,"C"]\ar[d,equal]\ar[dr,phantom,"\Two\epsilon"] & Z\ar[d,equal] \\
		V\ar[r,bul,"K"'] & Z\ar[r,bul,"1_{Z}"'] & Z
	\end{tikzcd}=\ell^{\mi1}_K
\end{displaymath}
We usually denote a left $C$-comodule $(K,\gamma)$ as $K_C$. A \emph{comodule morphism} $K_C\to L_D$ consists of a comonad morphism $^f\alpha^f\colon 
C\to D$ along with a 2-morphism
\begin{displaymath}
	\begin{tikzcd}
		V\ar[r,bul,"K"]\ar[dr,phantom,"\Two\phi"]\ar[d,"g"'] & Z\ar[d,"f"] \\
		S\ar[r,bul,"L"'] & W
	\end{tikzcd}
\end{displaymath}
that satisfies a coaction compatibility condition dual to \cref{eq:modulemapaxiom}.
We thus have the \emph{global category of (left) comodules}, denoted by $\Comod(\dc{D})$ for any double category $\dc{D}$.
\end{defi}

As for modules, by fixing the comonad or the domain of the comodules or both, we respectively obtain categories $_C\Comod(\dc{D})$, 
$^V\Comod(\dc{D})$ and $^V_C\Comod(\dc{D})$. The latter coincides with the category of comodules in the horizontal bicategory $\ca{H}(\dc{D})$, 
namely 
the category of Eilenberg-Moore coalgebras for the comonad `post-composition with $C$' on the hom-category $\ca{H}(\dc{D})(V,Z)={}^V\dc{D}_1^Z$.

\begin{rmk}\label{rmk:modoneobject}
Similarly to \cref{rem:oneobjectcase}, in the (one-object,one-vertical arrow) case of a double category, the concepts of a module and a comodule
reduce to ordinary (left) modules and comodules over some monoid and comonoid in a monoidal category. We will later see that appropriate structure (e.g. monoidal) of the double category
also passes onto the categories of modules and comodules, in an analogous way as for monads and comonads of \cref{sec:monadscomonadsdouble}, hence also reduce to respective well-known facts in monoidal categories of 
\cref{sec:monoidsmodules}.
\end{rmk}

\begin{ex}
A left module in $\dc{R}\nc{el}(\ca{C})$ for a regular category $\ca{C}$ over a 
monad $A\colon X\bular X$ (see \cref{ex:monadsRel}) is a relation $M\colon U\bular X$ such that 
$AM\subseteq M$. Denoting the preorder relation $A$ by $\leq_{X}$ instead, this 
would read in terms of elements as the implication $(z,x)\in M\;\wedge 
\; x\leq_{X}x'\implies (z,x')\in M$. i.e. $M$ is increasing on the 
right with respect to the preorder $\leq_{X}$.
	
Dually, a left comodule $K\colon V\bular Z $ over a comonad $C\colon Z\bular Z$ 
is a relation $V\bular Z$ whose image is contained in the subobject $C\subseteq 
Z$.
\end{ex}

\begin{ex}\label{ex:twoVMod}
Let us consider modules in the double category $\VMMat$ of \cref{ex:VMMat}, where a monad $A\colon X\tickar X$ is a $\ca{V}$-category 
(\cref{ex:VCatsaremonadsinVMat}). Then a left $A$-module $M\colon U\tickar X$ consists of a collection of objects $\{M(x,u)\}_{(x,u)\in X\times U}$ 
in $\ca{V}$ together with an action given by a family  
 \begin{displaymath}
 \lambda_{x,u}\colon\sum_{x'\in X}A(x,x')\otimes M(x',u)\to M(x,u)
 \end{displaymath}
 or equivalently maps $\lambda_{x,x',u}\colon A(x,x')\ot M(x',u)\to M(x,u)$ for all $x,x'\in X, u\in U$, 
satisfying the following axioms (where associativity is suppressed)\:
\begin{equation}\label{eq:modassun}
\begin{tikzcd}
A(x,x')\otimes A(x',x'')\otimes M(x'',u)\ar[r,"{\mu\otimes 1}"]\ar[d,"{1\ot\lambda_{x',x'',u}}"'] & 
A(x,x'')\otimes M(x'',u)\ar[d,"{\lambda_{x,x'',u}}"]\\
A(x,x')\otimes M(x',u)\ar[r,"{\lambda_{x,x',u}}"'] & M(x,u)
\end{tikzcd}\quad 
\begin{tikzcd}
A(x,x)\otimes M(x,u)\ar[r,"\lambda_{x,x,u}"] & M(x,u) \\
I\ot M(x)\ar[ur,"\cong"']\ar[u,"\eta\ot1"] &
\end{tikzcd}
\end{equation}
Notice that such an object looks like the classical notion of a $\ca{V}$-\emph{bimodule}, see e.g. \cite{Lawvereclosedcats}, however without a right 
action from another $\ca{V}$-category. Equivalently, under our running assumptions on $\ca{V}$, left $A$-modules with domain $U$ in $\VMMat$ can be 
viewed as ($A$,$\mathrm{disc}(U)$)-bimodules, where $\mathrm{disc}(U)$ is the discrete $\ca{V}$-category on the set $U$ with hom-objects 
$\mathrm{disc}(U)(u,u')=I$ if $u=u'$ or the initial object $0$ if $u\neq u'$.

A module morphism $M_A\to N_B$ between an $A$-module $M\colon U\tickar X$ and a 
$B$-module $N\colon T\tickar Y$ is a $\ca{V}$-functor $F\colon A\to B$ together 
with a function $g\colon U\to T$ and a family of morphisms 
\begin{equation}\label{eq:Modmaps}
\zeta_{x,u}\colon M(x,u)\to N(Fx,gu)_{(x,u)\in X\times U}
\end{equation}
rendering diagrams of the following 
form commutative
\begin{displaymath}
	\begin{tikzcd}[column sep=.6in]
		A(x,x')\otimes M(x',u)\ar[r,"\lambda_{x,x',u}"]\ar[d,"F_{x,x'}\otimes\zeta_{x',u}"'] & M(x,u)\ar[d,"\zeta_{x,u}"] \\
		B(F(x),F(x'))\otimes N(F(x'),g(u))\ar[r,"\lambda_{Fx,Fx',gu}"'] & N(F(x),g(u))
	\end{tikzcd}
\end{displaymath}
for all $x'\in X$, 
where the horizontal arrows are the components of the corresponding actions on $M$ and $N$.

If we fix the domain of the module as the singleton set $1=\{*\}$, then a left $A$-module in $\VMMat$ is a family $\{M(x)\}_{x\in X}$ with action 
morphisms $$\lambda_{x}\colon\sum\limits_{x'\in X}A(x,x')\otimes M(x')\to M(x)$$ making appropriate diagrams commute.
Furthermore, a module morphism $M_A\to N_B$ between a left $A$-module and a left $B$-module for a monad $B\colon Y\tickar Y$ with domains $1$ 
consists of a monad morphism $F\colon A\to B$ namely a $\ca{V}$-functor, and a family of arrows
\begin{displaymath}
	\gamma_{x}\colon M(x)\to N(F(x'))
\end{displaymath}
for all $x\in X$ making a respective diagram commute.
These objects are classically called $\ca{V}$-\emph{modules}: they are 
special `one-sided' cases of enriched bimodules where one of the 
$\ca{V}$-categories acting is the terminal $1$, and can be equivalently viewed as 
$\ca{V}$-copresheaves
$M\colon 
A\to\ca{V}$ when $\ca{V}$ is a monoidal closed category.

Both categories of left modules $\Mod(\VMMat)$ and fixed singleton 
domain modules $^{1}\Mod(\VMMat)$ in the double category of $\ca{V}$-matrices 
can be thought of as many-object versions of the global category $\Mod_{\ca{V}}$ 
of left $\ca{V}$-modules discussed in \cref{sec:monoidsmodules}, and clearly the 
reason for considering only one-sided modules in this work is that they set the 
generalized framework for modules for monoids, e.g. modules for rings in 
$\ca{V}=\mathsf{Ab}$.
We shall denote by $^1\Mod(\VMMat)=\VMod$ the category of $\ca{V}$-modules, and simply by $\Mod(\VMMat)$ the more general module category whose objects we call \emph{two-indexed} $\ca{V}$-\emph{modules}. 
\end{ex}

\begin{ex}\label{ex:twoVComod}
For comodules in the double category $\VMMat$, where a comonad $C\colon Z\tickar Z$ is a $\ca{V}$-cocategory 
(\cref{ex:VCatsaremonadsinVMat}), in the general case we have \emph{two-indexed} left $\ca{V}$-\emph{comodules} $K\colon V\tickar Z$ given by
objects $\{K(z,v)\}_{z\in Z, v\in V}$ in $\ca{V}$ equipped with a cocategory coaction given by
\begin{displaymath}
\gamma_{z,v}\colon K(z,v)\longrightarrow\sum_{z'\in Z}C(z,z')\ot K(z',v)
\end{displaymath}
satisfying coassociativity and counitality; since structure maps are now landing on sums, the axioms cannot be written precisely dual to \cref{eq:modassun}, rather
they become more involved as in
\begin{displaymath}
\begin{tikzcd}[column sep=.4in]
K(z,v)\ar[r,"\gamma_{z,v}"]\ar[d,"\gamma_{z,v}"'] & \sum\limits_{z'\in Z} C(z,z'){\ot} K(z',v)\ar[d,"\sum1\ot\gamma_{z',v}"] \\
\sum\limits_{z''\in Z}C(z,z''){\ot} K(z'',v)\ar[r,"\sum\delta\ot1"'] & \sum\limits_{z',z''\in Z} C(z,z'){\ot} C(z',z''){\ot} K(z'',v)
\end{tikzcd}
\begin{tikzcd}
K(z,v)\ar[r,"\gamma_{z,v}"]\ar[dr,"\cong"'] & \sum\limits_{z'\in Z} C(z,z'){\ot} K(z',v)\ar[d,"\sum\epsilon\ot 1"] \\
 & {\phantom{\sum\limits_{zZ}}}I{\ot} K(z,v)
\end{tikzcd}
\end{displaymath}
using that $\otimes$ preserves coproducts in both variables. With the respective notion of comodule morphism, we denote this category as $\Comod(\VMMat)$, whereas the special case where the domain is the singleton is denoted as $\VComod$.
\end{ex}

The following result, generalizing \cref{prop:Modmonadic}, shows that the global category of modules in any double category can naturally be expressed as the category of algebras over an 
appropriate pullback of the category of arrows and the category of monads.

\begin{prop}\label{prop:Mod(D)monadicComod(D)comonadic}
The functor $\Mod(\dc{D})\to\dc{D}_1\times_{\dc{D}_0}\Mnd(\dc{D})$ mapping $M_A$ to the pair $(M,A)$ is monadic. Dually, the functor 
$\Comod(\dc{D})\to\dc{D}_1\times_{\dc{D}_0}\Cmd(\dc{D})$ mapping $K_C$ to $(K,C)$ is comonadic.
\end{prop}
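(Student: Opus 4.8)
The plan is to exhibit $\Mod(\dc{D})$ directly as the Eilenberg--Moore category of an explicit monad on the pullback $\ca{E}\coloneqq\dc{D}_1\times_{\dc{D}_0}\Mnd(\dc{D})$, so that the functor in the statement becomes the Eilenberg--Moore forgetful functor. This is the global, double-categorical analogue of \cref{rem:modulesaremonadic}, where modules for a fixed monad over a fixed object were identified with algebras for the post-composition monad on a hom-category. I deliberately avoid Beck's theorem, since the statement assumes nothing about $\dc{D}$ (no fibrancy, no (co)completeness); instead I use that the forgetful functor of \emph{any} Eilenberg--Moore category automatically has a left adjoint, the free-algebra functor, so no limits or colimits in $\ca{E}$ need be invoked.

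First I would pin down $\ca{E}$: the pullback is taken along $\Gr{t}\colon\dc{D}_1\to\dc{D}_0$ and the carrier functor $\Mnd(\dc{D})\to\dc{D}_0$, so its objects are pairs $(M,A)$ with $A\colon X\bular X$ a monad and $M\colon U\bular X$ a horizontal $1$-cell satisfying $\Gr{t}(M)=X$, while a morphism $(M,A)\to(N,B)$ is a pair $({}^{g}\zeta^{f},{}^{h}\alpha^{h})$ of a $2$-morphism $\zeta\colon M\Rightarrow N$ and a monad morphism $\alpha\colon A\Rightarrow B$ with $f=h$. On $\ca{E}$ I define the endofunctor $T(M,A)=(A\odot M,A)$, sending a morphism $({}^{g}\zeta^{f},{}^{f}\alpha^{f})$ to $(\alpha\odot\zeta,\alpha)$; functoriality is exactly the interchange law for $\odot\colon\dc{D}_1\times_{\dc{D}_0}\dc{D}_1\to\dc{D}_1$. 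The unit and multiplication are built from the monad data of $A$ together with the coherence isomorphisms of $\dc{D}$, namely $\eta^{T}_{(M,A)}$ from $M\cong 1_X\odot M\xrightarrow{\eta_A\odot 1}A\odot M$ and $\mu^{T}_{(M,A)}$ from $A\odot(A\odot M)\xrightarrow{a^{-1}}(A\odot A)\odot M\xrightarrow{\mu_A\odot 1}A\odot M$ (each paired with $1_A$ in the monad coordinate). That these satisfy the monad axioms is the standard fact that $A\odot(-)$ is a monad for a monoid $A$, reducing to the monoid axioms for $(\mu_A,\eta_A)$ and the pentagon/triangle coherence of $\dc{D}$.

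Next I would identify $\ca{E}^{T}$ with $\Mod(\dc{D})$ over $\ca{E}$. The key point is that the unit law $a\circ\eta^{T}=\mathrm{id}$ forces any algebra structure $a\colon T(M,A)\to(M,A)$ to have monad component $1_A$ and identity vertical boundaries; hence $a$ is a globular $2$-morphism $\lambda\colon A\odot M\Rightarrow M$, and the two remaining algebra axioms are precisely the associativity and unit conditions \cref{eq:moduleaxioms} of a left $A$-module. Likewise a $T$-algebra morphism unwinds, via \cref{eq:modulemapaxiom}, to a module morphism. This yields an isomorphism $\ca{E}^{T}\cong\Mod(\dc{D})$ commuting with the forgetful functors, so the functor of the statement is (up to this isomorphism) $U^{T}$, hence monadic. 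The comodule assertion follows by the formally dual argument: one forms the comonad $G(K,C)=(C\odot K,C)$ on $\dc{D}_1\times_{\dc{D}_0}\Cmd(\dc{D})$ and identifies its category of coalgebras with $\Comod(\dc{D})$.

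The main obstacle I anticipate is the bookkeeping forced by the pseudo nature of $\odot$: since associativity and unitality hold in $\dc{D}$ only up to the coherent isomorphisms $a,\ell,r$, one must check that $T$ is a genuine monad rather than merely a pseudomonad, and that the algebra axioms reproduce \cref{eq:moduleaxioms} on the nose. Controlling the vertical boundaries in the pullback --- in particular showing that algebra structure maps are globular, and that $T$ is well defined and natural as $A$ ranges over all monads (via the interchange law together with naturality of $a$ and $\ell$) --- is where care is required, though each step is routine once the coherence of $\dc{D}$ and the monad axioms for $A$ are invoked.
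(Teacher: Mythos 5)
Your proposal is correct and takes essentially the same route as the paper: both identify $\Mod(\dc{D})$ with the Eilenberg--Moore category of the monad $(M,A)\mapsto(A\odot M,A)$ on the pullback, with the unit axiom forcing algebra structures to be globular $2$-cells with identity monad component, whence the module axioms \cref{eq:moduleaxioms} and \cref{eq:modulemapaxiom}. The only (inessential) difference is packaging: the paper first builds the free-module functor $F(M,A)=A\odot M$ and verifies the adjunction by an explicit hom-set bijection, deriving the monad from it, whereas you construct the monad $T$ directly (so you must check its monad axioms and its naturality in $\alpha$ by hand, using the monad-morphism conditions \cref{monadhom} alongside the coherence of $\odot$) and then invoke the automatic free-algebra left adjoint.
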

\begin{proof}
We first construct the left adjoint $F\colon\dc{D}_1\times_{\dc{D}_0}\Mnd(\dc{D})\to\Mod(\dc{D})$, where the pullback is over the target functor 
$\Gr{t}\colon\dc{D}_1\to\dc{D}_0$ and the source (or equivalently target) functor $\Mnd(\dc{D})\to\dc{D}_0$. Given any horizontal 1-cell $M\colon 
U\bular X$ and any monad $A\colon X\bular X$, we set $F(A,M)=A\odot M$ where $A\odot M\colon U\bular X$ becomes an $A$-module with the following action
\begin{displaymath}
\begin{tikzcd}		
	U\ar[d,equal]\ar[r,bul,"M"] & X\ar[d,equal]\ar[r,bul,"A"]\ar[drr,phantom,"\Two\mu"] & X\ar[r,bul,"A"] & X\ar[d,equal] \\
		U\ar[r,bul,"M"'] & X\ar[rr,bul,"A"'] && X
\end{tikzcd}
\end{displaymath}	
The module axioms reduce precisely to the monad axioms for $A$. Furthermore, if $(\phi,\alpha)$ is a morphism $(M,A)\to (N,B)$ in 
$\dc{D}_1\times_{\dc{D}_0}\Mnd(\dc{D})$, i.e. a 2-morphism $^f\phi^g\colon M\Rightarrow N$ and a monad morphism $^g\alpha^g\colon A\Rightarrow B$, 
then $\alpha\odot\phi\colon A\odot M\to B\odot N$ with underlying monad morphism $\alpha$ is a module homomorphism with respect to the actions 
defined above. This is simply the equality  
\begin{displaymath}
\begin{tikzcd}
	U\ar[d,equal]\ar[r,bul,"M"] & \ar[d,equal]\ar[r,bul,"A"]\ar[drr,phantom,"\Two\mu"] & X\ar[r,bul,"A"] & X\ar[d,equal] \\
	U\ar[d,"f"']\ar[r,bul,"M"']\ar[dr,phantom,"\Two\phi"] & X\ar[d,"g"]\ar[rr,bul,"A"']\ar[drr,phantom,"\Two\alpha"] && Y\ar[d,"g"] \\
	T\ar[r,bul,"N"'] & Y\ar[rr,bul,"B"'] && Y
\end{tikzcd}\stackrel{\cref{monadhom}}{=}
\begin{tikzcd}
	U\ar[d,"f"']\ar[r,bul,"M"]\ar[dr,phantom,"\Two\phi"] & X\ar[d,"g"]\ar[r,bul,"A"]\ar[dr,phantom,"\Two\alpha"] & 
X\ar[d,"g"]\ar[r,bul,"A"]\ar[dr,phantom,"\Two\alpha"] & X\ar[d,"g"] \\
	T\ar[d,equal]\ar[r,bul,"N"'] & Y\ar[d,equal]\ar[r,bul,"B"']\ar[drr,phantom,"\Two\mu"] & Y\ar[r,bul,"B"'] & Y\ar[d,equal] \\
	T\ar[r,bul,"N"'] & Y\ar[rr,bul,"B"'] && Y
\end{tikzcd}
\end{displaymath}
We next prove that there is a natural bijection between morphisms $F(M,A)\to N_B$ in $\Mod(\dc{D})$ and morphisms $(M,A)\to (N,B)$ in 
$\dc{D}_1\times_{\dc{D}_0}\Mnd(\dc{D})$. By first considering such a morphism $(\phi,\alpha)$ in the latter category, namely a pair of a 2-morphism 
$^f\phi^g\colon M\Rightarrow N$ and a monad morphism $^g\alpha^g\colon A\Rightarrow B$, we obtain a module morphism $\bar{\phi}\colon (A\odot M)_A\to 
N_{B}$ with underlying monad map $\alpha$, defined as 
\begin{displaymath}
\bar{\phi}:=
\begin{tikzcd}
	U\ar[d,"f"']\ar[r,bul,"M"]\ar[dr,phantom,"\Two\phi"] & X\ar[d,"g"]\ar[r,bul,"A"]\ar[dr,phantom,"\Two\alpha"] & X\ar[d,"g"] \\
	T\ar[d,equal]\ar[r,bul,"N"']\ar[drr,phantom,"\Two\lambda"] & Y\ar[r,bul,"B"'] & Y\ar[d,equal] \\
	T\ar[rr,bul,"N"'] && Y
\end{tikzcd}
\end{displaymath}
We can verify that this is indeed a module morphism namely it satisfies \cref{eq:modulemapaxiom}, using the fact that $\alpha$ is 
monad morphism \cref{monadhom} and $N$ is a $B$-module \cref{eq:moduleaxioms}.
In the converse direction, for any module morphism $\zeta\colon F(M,A){=}(A\odot M)_A\Rightarrow N_B$ with monad map $^g\alpha^g$, we have an induced 
morphism $(M,A)\to (N,B)\in\dc{D}_1\times_{\dc{D}_0}\Mnd(\dc{D})$ given by the same monad map and the following 2-morphism
\begin{displaymath}
\begin{tikzcd}
	U\ar[d,equal]\ar[r,bul,"M"] & X\ar[d,equal]\ar[r,bul,"1_X"]\ar[dr,phantom,"\Two\eta"] & X\ar[d,equal] \\
	U\ar[d,"f"']\ar[r,bul,"M"']\ar[drr,phantom,"\Two\zeta"] & X\ar[r,bul,"A"'] & X\ar[d,"g"] \\
	T\ar[rr,bul,"N"'] && Y
\end{tikzcd}
\end{displaymath}
The fact that these two processes are inverse to each other is contained in the following:
\begin{displaymath}
\scalebox{.7}
{\begin{tikzcd}[ampersand replacement=\&]
	U\ar[d,equal]\ar[r,bul,"M"] \& X\ar[d,equal]\ar[r,bul,"1_X"]\ar[dr,phantom,"\Two\eta_A"] \& X\ar[d,equal] \\
	U\ar[d,"f"']\ar[r,bul,"M"']\ar[dr,phantom,"\Two\phi"] \& X\ar[r,bul,"A"']\ar[d,"g"]\ar[dr,phantom,"\Two\alpha"] \& X\ar[d,"g"] \\
	T\ar[d,equal]\ar[r,bul,"N"']\ar[drr,phantom,"\Two\lambda_N"] \& Y\ar[r,bul,"B"'] \& Y\ar[d,equal] \\
	T\ar[rr,bul,"N"'] \&\& Y
\end{tikzcd}}{\scriptstyle\stackrel{\cref{monadhom}}{=}}
\scalebox{.7}
{\begin{tikzcd}[ampersand replacement=\&]
	U\ar[dd,"f"']\ar[r,bul,"M"]\ar[ddr,phantom,"\Two\phi"] \& X\ar[d,"g"]\ar[r,bul,"1_X"]\ar[dr,phantom,"\Two 1_g"] \& X\ar[d,"g"] \\
	\& Y\ar[d,equal]\ar[r,bul,"1_Y"']\ar[dr,phantom,"\Two\eta"]\& Y\ar[d,equal] \\
	T\ar[d,equal]\ar[r,bul,"N"']\ar[drr,phantom,"\Two\lambda"] \& Y\ar[r,bul,"B"'] \& Y\ar[d,equal] \\
	T\ar[rr,bul,"N"'] \&\& Y
\end{tikzcd}}{\scriptstyle\stackrel{\cref{eq:moduleaxioms}}{=}}{\scriptstyle\phi,}\quad
\scalebox{.7}
{\begin{tikzcd}[ampersand replacement=\&]
	U\ar[d,equal]\ar[r,bul,"M"] \& X\ar[d,equal]\ar[r,bul,"1_X"]\ar[dr,phantom,"\Two\eta"] \& 
X\ar[d,equal]\ar[r,bul,"A"]\ar[ddr,phantom,"\Two\alpha"] \& X\ar[dd,"g"] \\
	U\ar[d,"f"']\ar[r,bul,"M"']\ar[drr,phantom,"\Two\zeta"] \& X\ar[r,bul,"A"'] \& X\ar[d,"g"] \& \\
	T\ar[d,equal]\ar[rr,bul,"N"']\ar[rrrd,phantom,"\Two\lambda"] \&\& Y\ar[r,bul,"B"'] \& Y\ar[d,equal] \\
	T\ar[rrr,bul,"N"'] \&\&\& Y
\end{tikzcd}}{\scriptstyle\stackrel{\cref{eq:modulemapaxiom}}{=}}
\scalebox{.7}
{\begin{tikzcd}[ampersand replacement=\&]
	U\ar[d,equal]\ar[r,bul,"M"] \& X\ar[d,equal]\ar[r,bul,"1_X"]\ar[dr,phantom,"\Two\eta"] \& X\ar[d,equal]\ar[r,bul,"A"] \& X\ar[d,equal] \\
	U\ar[d,equal]\ar[r,bul,"M"'] \& X\ar[d,equal]\ar[r,bul,"A"']\ar[drr,phantom,"\Two\mu"] \& X\ar[r,bul,"A"'] \& X\ar[d,equal] \\
	U\ar[d,"f"']\ar[r,bul,"M"]\ar[drrr,phantom,"\Two\zeta"] \& X\ar[rr,bul,"A"] \&\& X\ar[d,"g"] \\
	T\ar[rrr,bul,"N"'] \&\&\& Y
\end{tikzcd}}{\scriptstyle\stackrel{\cref{monadhom}}{=}}{\scriptstyle\zeta}	
\end{displaymath}
Note that by the above, the unit of the adjunction has components $\eta_{(M,A)}\colon (M,A)\to (A\odot M, A)$ given by pairs 
$(\eta_A\odot\rm{id}_M,\rm{id}_A)$.

Now consider an algebra for the monad induced by the above adjunction, namely a pair $(M,A)\in\dc{D}_1\times_{\dc{D}_0}\Mnd(\dc{D})$ equipped with a 
pair of a 2-morphism and a monad morphism $(\lambda\colon A\odot M\Rightarrow M,\beta\colon A\Rightarrow A)$. The unit axiom for the algebra shows 
in particular that $\beta=\rm{id}_A$. Then the remainder of the two algebra axioms translate precisely to the fact that $\lambda\colon A\odot 
M\Rightarrow M$ is an action making $M$ into an $A$-module. Similarly, the property of $(\phi,\alpha)\colon (M,A)\to (N,B)$ being an algebra morphism 
is then precisely the statement that this pair defines a morphism in $\Mod(\dc{D})$. 
\end{proof}

By running the same proof as above, but now with either a fixed monad $A\colon X\bular 
X$ or a fixed domain $U$ for our modules, we obtain the following two results respectively.

\begin{prop}\label{_AModmonadic_CComodcomonadic}
Given any monad $A\colon X\bular X$ in a double category $\dc{D}$, the forgetful 
functor $_{A}\Mod(\dc{D})\to \dc{D}_1^{X}$ is monadic. Dually, for every comonad $C\colon Z\bular Z$ the forgetful functor $_{C}\Comod(\dc{D})\to\dc{D}^{Z}_{1}$ is comonadic.
\end{prop}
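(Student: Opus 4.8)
The plan is to specialize the argument of \cref{prop:Mod(D)monadicComod(D)comonadic} to a fixed monad $A\colon X\bular X$. The first thing I would observe is that fixing $A$ amounts to replacing the base $\dc{D}_1\times_{\dc{D}_0}\Mnd(\dc{D})$ by the fixed-codomain category $\dc{D}_1^X$: a left $A$-module $M_A$ has underlying horizontal 1-cell $M\colon U\bular X$ with $\Gr{t}(M)=X$, and a morphism in ${}_A\Mod(\dc{D})$ is by definition of the form $^g\zeta^\id$ whose underlying monad map is the identity $\id_A$, so that its right boundary vertical arrow is forced to be $1_X$ --- that is, precisely a morphism of $\dc{D}_1^X$. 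Hence the forgetful functor ${}_A\Mod(\dc{D})\to\dc{D}_1^X$ is well defined, sending $M_A$ to $M$ and acting as the identity on 2-morphisms.

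Next I would construct the left adjoint $F\colon\dc{D}_1^X\to{}_A\Mod(\dc{D})$ by setting $F(M)=(A\odot M)_A$, where the free $A$-action on $A\odot M\colon U\bular X$ is the multiplication $\mu$ of $A$, exactly as in the cited proof; on a morphism $^g\zeta^\id\colon M\Rightarrow N$ one puts $F(\zeta)=\id_A\odot\zeta$, whose compatibility with the actions is the verification displayed in the proof of \cref{prop:Mod(D)monadicComod(D)comonadic} read with $\alpha$ taken to be the globular identity $\id_A$. The adjunction $F\dashv{}$(forgetful) then follows from the same natural bijection established there, between module morphisms $(A\odot M)_A\to N_A$ and 2-morphisms $M\Rightarrow N$ in $\dc{D}_1^X$, the two passages being mutually inverse by the identical chains of equalities invoking the module axioms \cref{eq:moduleaxioms}, the morphism axiom \cref{eq:modulemapaxiom}, and \cref{monadhom}, now with the monad map held fixed at $\id_A$. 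In particular the unit has components $\eta_M=\eta_A\odot\id_M\colon M\to A\odot M$.

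I would then identify the induced monad $T=A\odot(-)$ on $\dc{D}_1^X$, whose multiplication and unit come from those of $A$, and check that its Eilenberg--Moore category is isomorphic to ${}_A\Mod(\dc{D})$ over $\dc{D}_1^X$: a $T$-algebra on $M$ is a 2-morphism $\lambda\colon A\odot M\Rightarrow M$ whose algebra axioms translate verbatim into the two action axioms of \cref{eq:moduleaxioms}, and $T$-algebra morphisms are exactly the morphisms of ${}_A\Mod(\dc{D})$. The comparison functor is thus an isomorphism, and the forgetful functor is monadic. The comodule statement follows by the formal dual: the forgetful functor ${}_C\Comod(\dc{D})\to\dc{D}_1^Z$ acquires a right adjoint, the cofree comodule functor $C\odot(-)$ with coaction induced by $\delta$, and reading all 2-cells in the opposite direction shows that the Eilenberg--Moore coalgebras for the comonad $C\odot(-)$ are exactly the $C$-comodules, so that this functor is comonadic.

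The only point requiring care --- rather than a genuine obstacle --- is confirming that fixing the monad map to $\id_A$ (and correspondingly $\beta=\id_A$ in the algebra analysis) does not disturb any of the diagrammatic verifications inherited from \cref{prop:Mod(D)monadicComod(D)comonadic}; since every such diagram remains valid under that specialization, and since the construction $A\odot M$ uses only horizontal composition, which is available in any double category, no additional hypotheses on $\dc{D}$ are needed.
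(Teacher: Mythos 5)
Your proposal is correct and follows essentially the same route as the paper: restricting the adjunction and monadicity argument of \cref{prop:Mod(D)monadicComod(D)comonadic} to the fixed monad $A$, with left adjoint $M\mapsto A\odot M$ and induced monad $T=A\odot(-)$ on $\dc{D}_1^X$, then dualizing for comodules. The extra detail you supply about the comparison functor is consistent with the paper's (more terse) verification.
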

\begin{proof}
	First of all, observe that when 
	$\Mod(\dc{D})\to\dc{D}_1\times_{\dc{D}_0}\Mnd(\dc{D})$ 
	restricts to $_{A}\Mod(\dc{D})$ its image is $\dc{D}_{1}^{X}$. Furthermore, the natural bijection giving the adjunction in the previous proposition clearly restricts to a bijection between morphisms in $_{A}\Mod(\dc{D})$ and morphisms in $\dc{D}_1^{X}$. Thus, $_{A}\Mod(\dc{D})\to \dc{D}_1^{X}$ has a left adjoint which maps any $M\colon U\bular X$ to $A\odot M$ with the action induced by the multiplication of $A$. The remainder of the previous proof now goes through unchanged.
	
	We observe here, for future reference, that by the above arguments the monad on $\dc{D}_1^{X}$ for which $_{A}\Mod(\dc{D})$ is the corresponding Eilenberg-Moore category is precisely the functor $T\colon\dc{D}_1^{X}\to\dc{D}_1^{X}$ with $T(M)=A\odot M$.
\end{proof}

\begin{prop}
	Given any object $U$ in a double category $\dc{D}$, the forgetful functor $^{U}\Mod(\dc{D})\to{}^U\dc{D}_{1}\times_{\dc{D}_{0}}\Mnd(\dc{D})$ is 
monadic. Dually, $^{V}\Comod(\dc{D})\to{}^{V}\dc{D}_{1}\times_{\dc{D}_{0}}\Cmd(\dc{D})$ is comonadic.
\end{prop}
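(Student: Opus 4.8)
The plan is to observe that the entire argument of \cref{prop:Mod(D)monadicComod(D)comonadic} restricts to the fixed-domain setting, precisely as \cref{_AModmonadic_CComodcomonadic} restricts it to the fixed-monad setting. First I would check that the forgetful functor $\Mod(\dc{D})\to\dc{D}_1\times_{\dc{D}_0}\Mnd(\dc{D})$ carries $^U\Mod(\dc{D})$ into $^U\dc{D}_{1}\times_{\dc{D}_{0}}\Mnd(\dc{D})$, where the latter pullback is formed over the target functor $\Gr{t}\colon{}^U\dc{D}_1\to\dc{D}_0$ and the carrier functor $\Mnd(\dc{D})\to\dc{D}_0$. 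Indeed, an object $M_A$ with $M\colon U\bular X$ has underlying pair $(M,A)$ with $M\in{}^U\dc{D}_1$, and a module morphism of the form $^\id\zeta^f$ has underlying 2-morphism $\zeta$ with identity domain vertical arrow, hence a morphism of $^U\dc{D}_1$, paired with the monad morphism $\alpha$. Thus the functor in question is exactly the corestriction to the subcategory of $\dc{D}_1\times_{\dc{D}_0}\Mnd(\dc{D})$ cut out by the fixed-domain condition.

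Next I would verify that the left adjoint $F(M,A)=A\odot M$ built in the main proposition restricts: for $M\colon U\bular X$ the composite $A\odot M\colon U\bular X$ retains domain $U$, so it lands in $^U\Mod(\dc{D})$, and the unit component $\eta_{(M,A)}=(\eta_A\odot\id_M,\id_A)$ has identity domain vertical arrow. More importantly, the adjunction bijection of \cref{prop:Mod(D)monadicComod(D)comonadic} --- sending a pair $(\phi,\alpha)$ to the module morphism $\bar{\phi}$ and back via the unit --- preserves the identity-domain property throughout, since every 2-morphism appearing in the pasting computations keeps $\id_U$ along its left boundary whenever the input data does. Hence $F$ is left adjoint to the (restricted) forgetful functor over $^U\dc{D}_{1}\times_{\dc{D}_{0}}\Mnd(\dc{D})$, with induced monad sending $(M,A)\mapsto(A\odot M,A)$.

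Finally I would identify the algebras for this monad: an algebra structure on $(M,A)$ amounts to a pair $(\lambda\colon A\odot M\Rightarrow M,\;\beta\colon A\Rightarrow A)$, the unit law forcing $\beta=\id_A$ and $\lambda$ to be globular-on-the-left, and the remaining axiom saying exactly that $\lambda$ makes $M$ into a left $A$-module with domain $U$; algebra morphisms correspond to morphisms in $^U\Mod(\dc{D})$. Monadicity then follows by Beck's theorem exactly as in \cref{prop:Mod(D)monadicComod(D)comonadic}. The comodule statement is the formal dual, obtained by applying the module result in $\dc{D}^\op$ (where $^V\Comod(\dc{D})$ and $^V\dc{D}_1$ correspond to the fixed-domain module data). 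I expect no genuine obstacle here; the only point requiring care is bookkeeping the fixed-domain condition --- confirming that $^U\dc{D}_{1}\times_{\dc{D}_{0}}\Mnd(\dc{D})$ is the subcategory of $\dc{D}_1\times_{\dc{D}_0}\Mnd(\dc{D})$ on which the restriction of both the functor and the adjunction is literal --- so that the Beck monadicity verification transfers verbatim.
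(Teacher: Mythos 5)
Your proposal is correct and matches the paper's intended argument exactly: the paper gives no separate proof for this statement, saying only that one reruns the proof of \cref{prop:Mod(D)monadicComod(D)comonadic} with the domain $U$ fixed, which is precisely the restriction you carry out. The bookkeeping you single out --- that the left adjoint $(M,A)\mapsto A\odot M$, the unit, and the adjunction bijection all preserve the identity-on-the-left-boundary condition --- is the only real content, and you have it right.
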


The results so far concern modules in general double categories. Moving to the 
setting of fibrant double categories, we can establish certain natural fibration 
structures that involve categories of fixed monad or domain modules as follows.

\begin{prop}\label{prop:ModfibredoverMnd}
Suppose $\dc{D}$ is a fibrant double category. The forgetful functor $\Mod(\dc{D})\to\Mnd(\dc{D})$ that maps an $A$-module $M$ to its monad 
$A$ is a fibration; dually $\Comod(\dc{D})\to\Cmd(\dc{D})$ is an opfibration.
\end{prop}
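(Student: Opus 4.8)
The plan is to construct the cartesian liftings explicitly, realizing the fibration as a double-categorical \emph{restriction of scalars} that generalizes the left-hand pseudofunctor of \cref{eq:globalfibs}. Fix a monad morphism ${}^f\alpha^f\colon A\Rightarrow B$ in $\Mnd(\dc{D})$, so $A\colon X\bular X$, $B\colon Y\bular Y$ and $f\colon X\to Y$, and let $N_B$ be a $B$-module with $N\colon T\bular Y$ and action $\lambda\colon B\odot N\Rightarrow N$. Since $\dc{D}$ is fibrant, $f$ has a conjoint $\wc f\colon Y\bular X$, and the cartesian lifting of $N$ along $f$ for the target fibration $\Gr{t}\colon\dc{D}_1\to\dc{D}_0$ (see \cref{globalvslocal,eq:reindexingt,eq:D1targetlifts}) is $\wc f\odot N\colon T\bular X$ together with a cartesian $2$-morphism $\Cart(f,N)\colon\wc f\odot N\Rightarrow N$ whose vertical boundaries are $\id_T$ and $f$. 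I would declare this horizontal $1$-cell, equipped with the $A$-action described below, to be the cartesian lift $f^{*}N_B$ of $N_B$ along $\alpha$.

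To equip $\wc f\odot N$ with an $A$-action, I would use the globular \emph{mate} $\wc\alpha\colon A\odot\wc f\Rightarrow\wc f\odot B$ of the monad morphism obtained from the conjoint bijection \cref{eq:glob2map}, and set the action to be the pasting
\[
A\odot\wc f\odot N\ \overset{\wc\alpha\odot N}{\Longrightarrow}\ \wc f\odot B\odot N\ \overset{\wc f\odot\lambda}{\Longrightarrow}\ \wc f\odot N
\]
(associators suppressed). The two module axioms of \cref{eq:moduleaxioms} then reduce to the compatibility of $\wc\alpha$ with multiplications and units --- which is exactly the monad-morphism condition \cref{monadhom} transported across \cref{eq:glob2map} --- together with the associativity and unit laws of $\lambda$ as a $B$-action. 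This is the first routine verification.

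Next I would check that $\Cart(f,N)$ is itself a module morphism $(\wc f\odot N)_A\to N_B$ lying over $\alpha$; concretely, that the pair $(\Cart(f,N),\alpha)$ satisfies \cref{eq:modulemapaxiom}. This again unwinds to the defining property of the mate $\wc\alpha$ --- that is, the conjoint structure $2$-morphisms $q_1,q_2$ of \cref{def:compconj} --- paired against the action $\lambda$. Finally, for the universal property: given any module morphism $\zeta\colon M_{A'}\to N_B$ over a monad morphism $\beta\colon A'\Rightarrow B$, together with a factorization $\beta=\alpha\circ\gamma$ in $\Mnd(\dc{D})$, the underlying $2$-morphism of $\zeta$ factors uniquely through $\Cart(f,N)$ by cartesianness of the latter in the target fibration. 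The crux of the argument --- and the step I expect to be the main obstacle --- is verifying that this uniquely induced underlying $2$-morphism again satisfies the module-morphism axiom \cref{eq:modulemapaxiom}, so that it genuinely defines a morphism of $\Mod(\dc{D})$ over $\gamma$; this is a diagram chase that plays the universal property of $\Cart(f,N)$ off against the explicit form of the action. (One could instead try to deduce the result from the monadicity of \cref{prop:Mod(D)monadicComod(D)comonadic} together with the fact that the pullback projection $\dc{D}_1\times_{\dc{D}_0}\Mnd(\dc{D})\to\Mnd(\dc{D})$ is a fibration, but since the inducing monad does not strictly preserve cartesian liftings, the direct construction above is cleaner.) The opfibration statement for $\Comod(\dc{D})\to\Cmd(\dc{D})$ is entirely dual, using companions $\wh f$, cocartesian liftings $\wh f\odot K$, and the comonad-morphism mate in place of $\wc\alpha$.
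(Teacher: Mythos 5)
Your proposal is correct and essentially coincides with the paper's proof: the cartesian lift is the same $\wc f\odot N$ with the $A$-action $(1_{\wc f}\odot\lambda)(\wc\alpha\odot 1_N)$ built from the mate of the monad morphism, and the cartesian $2$-morphism is the same pasting of $\id_N$ with the conjoint cell $q_1$. The paper merely packages this as the Grothendieck construction of the pseudofunctor $A\mapsto{}_A\Mod(\dc{D})$ with reindexing $\wc f\odot\mi$, noting (as you do) that this is equivalent to verifying the universal property of the explicit liftings directly.
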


\begin{proof}
In compact terms, $\Mod(\dc{D})\to\Mnd(\dc{D})$ is the Grothendieck fibration that corresponds to the pseudofunctor 
\begin{equation}\label{eq:_Mod}
	\begin{tikzcd}[row sep=.02in,/tikz/column 1/.append style={anchor=base east},/tikz/column 2/.append style={anchor=base west}]
		\Mnd(\dc{D})^\op\ar[r] & \Cat \\
		A\ar[r,mapsto]\ar[dd,"^f\alpha^f"'] & _A\Mod(\dc{D}) \\
		\hole \\
		B\ar[r,mapsto] & _B\Mod(\dc{D})\ar[uu,"\wc{f}\odot\mi"']
	\end{tikzcd}
\end{equation}
Indeed, given a $B$-module $N\colon T\bular Y$, we make 
$\begin{tikzcd}[sep=small,cramped]T\ar[r,bul,"N"] 
& Y\ar[r,bul,"\wc{f}"] & X \end{tikzcd}$ into an $A$-module via 
\begin{displaymath}
 \begin{tikzcd}
T\ar[r,bul,"N"]\ar[d,equal] & Y\ar[r,bul,"\wc{f}"]\ar[d,equal]\ar[drr,phantom,"\Two\wc{\alpha}"] & X\ar[r,bul,"A"] & X\ar[d,equal] \\
T\ar[r,bul,"N"]\ar[d,equal]\ar[drr,phantom,"\Two\lambda"] & Y\ar[r,bul,"B"] & Y\ar[d,equal]\ar[r,bul,"\wc{f}"] & X\ar[d,equal] \\
T\ar[rr,bul,"N"'] && Y\ar[r,bul,"\wc{f}"'] & X
 \end{tikzcd}
\end{displaymath}
where $\wc{\alpha}$ corresponds to the monad map ${}^f\alpha^f$ as in 
\cref{eq:glob2map}. Axioms \cref{eq:moduleaxioms} are satisfied due to $\alpha$ being a monad map \cref{monadhom} and $N$ being a $B$-module.
This mapping forms a pseudofunctor because for composable 
vertical arrows, there is an isomorphism $\wc{gf}\cong\wc{f}\odot\wc{g}$ by 
\cref{lem:fibrantproperties}.

Equivalently, it can be verified that the cartesian liftings of
$\Mod(\dc{D})\to\Mnd(\dc{D})$ are given by
\begin{equation}\label{eq:Modcartlifts}
 \begin{tikzcd}[column sep=.3in]
\wc{f}\odot N  \ar[rr,"{\Cart(\alpha,N)}"]\ar[d,-,dotted] && N\ar[d,-,dotted] & \textrm{in }\Mod(\dc{D}) \\
A\ar[rr,"{^f\alpha^f}"] && B & \textrm{in }\Mnd(\dc{D})
 \end{tikzcd}\quad\textrm{ where }\Cart(\alpha,N)=
 \begin{tikzcd}
  T\ar[r,bul,"N"]\ar[d,equal]\ar[dr,phantom,"\scriptstyle\id"] & Y\ar[d,equal]\ar[r,bul,"\wc{f}"]\ar[dr,phantom,"\Two q_1"] & X\ar[d,"f"] \\
 T\ar[r,bul,"N"'] & Y\ar[r,bul,"1_Y"'] & Y
 \end{tikzcd}
\end{equation}

Similarly, the fibres of the opfibration $\Comod(\dc{D})\to\Cmd(\dc{D})$ are $_{C}\Comod(\dc{D})$ with reindexing functors $\wh{f}\odot-$, while the 
cocartesian liftings are
\begin{equation}\label{eq:Comodcartlifts}
 \begin{tikzcd}[column sep=.3in]
\K\ar[rr,"{\Cocart(\alpha,\K)}"]\ar[d,-,dotted] && \wh{f}\odot \K\ar[d,-,dotted] & \textrm{in }\Comod(\dc{D}) \\
C\ar[rr,"{^f\alpha^f}"] && D & \textrm{in }\Cmd(\dc{D})
 \end{tikzcd}\quad\textrm{ where }\Cocart(\alpha,\K)=
 \begin{tikzcd}
  V\ar[r,bul,"\K"]\ar[d,equal]\ar[dr,phantom,"\scriptstyle\id"] & Z\ar[d,equal]\ar[r,bul,"1_Z"]\ar[dr,phantom,"\Two p_2"] & Z\ar[d,"f"] \\
 V\ar[r,bul,"\K"'] & Z\ar[r,bul,"\wh{f}"'] & V
 \end{tikzcd}
\end{equation}
\end{proof}

In fact, both structures above have sub(op)fibrations involving the fixed-domain (co)module categories in the following sense.

\begin{prop}\label{prop:ZModfibredoverMnd}
Suppose $\dc{D}$ is a fibrant double category. For any 0-cell $U$ in $\dc{D}$, 
$^U\Mod(\dc{D})\to\Mnd(\dc{D})$ is a fibration and dually $^V\Comod(\dc{D})\to\Cmd(\dc{D})$ is an opfibration.
\end{prop}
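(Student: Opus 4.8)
The plan is to realize $^U\Mod(\dc{D})$ as a subcategory of the total category of the fibration $\Mod(\dc{D})\to\Mnd(\dc{D})$ from \cref{prop:ModfibredoverMnd} that is closed under its cartesian liftings, and to argue that the cartesian property is inherited. First I would record that an object of $^U\Mod(\dc{D})$ is a module $M_A$ whose carrier $M\colon U\bular X$ has fixed source $U$, while a morphism has the shape $^\id\zeta^f$, i.e. its underlying $2$-morphism has identity source vertical $1$-cell on $U$. Hence the forgetful functor $^U\Mod(\dc{D})\to\Mnd(\dc{D})$ is simply the restriction of $\Mod(\dc{D})\to\Mnd(\dc{D})$, and the fibre over a monad $A$ is exactly $^U_A\Mod(\dc{D})$.

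The key observation is that the cartesian lifting $\Cart(\alpha,N)$ of \cref{eq:Modcartlifts}, for a monad map $^f\alpha^f\colon A\Rightarrow B$ and a $B$-module $N$ with source $U$, produces the $A$-module $\wc{f}\odot N\colon U\bular X$, whose source is again $U$ because $\wc{f}$ is pasted on the target side; moreover $\Cart(\alpha,N)$ carries an identity source vertical $1$-cell, so it is itself a morphism of $^U\Mod(\dc{D})$. Thus $^U\Mod(\dc{D})$ is closed under these liftings. It then remains to verify that the unique factorization guaranteed by cartesianness in $\Mod(\dc{D})$ does not leave $^U\Mod(\dc{D})$: given $u\colon M'\to N$ in $^U\Mod(\dc{D})$ lying over $\alpha\circ g$ for a monad map $g$, the larger fibration yields a unique $v\colon M'\to\wc{f}\odot N$ with $\Cart(\alpha,N)\circ v=u$. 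Applying $\Gr{s}$ to the underlying $2$-morphisms and using its functoriality, the source vertical of $u$ equals the source vertical of $\Cart(\alpha,N)$ postcomposed with that of $v$; since the former is $\id_U$ and $u$ has source vertical $\id_U$, this forces the source vertical of $v$ to be $\id_U$. Hence $v$ lies in $^U\Mod(\dc{D})$, establishing the cartesian property there.

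Finally I would dualize for comodules using the opfibration of \cref{prop:ModfibredoverMnd}: its cocartesian liftings $\Cocart(\alpha,K)\colon K\to\wh{f}\odot K$ of \cref{eq:Comodcartlifts} again preserve the source $V$ (the companion $\wh{f}$ being pasted on the target) and carry identity source vertical $1$-cells, and the same source-vertical bookkeeping keeps the couniversal factorization inside $^V\Comod(\dc{D})$. I do not expect a genuine obstacle; the only point requiring care is precisely this bookkeeping of source vertical $1$-cells under vertical composition of $2$-morphisms, which ensures the factorizations stay in the fixed-domain subcategory. Alternatively, the whole statement can be packaged as a Grothendieck construction: $^U\Mod(\dc{D})$ is the total category of the restriction of the pseudofunctor \cref{eq:_Mod} to the subfibres $^U_A\Mod(\dc{D})$, which is well defined since $\wc{f}\odot\mi$ preserves the source $U$ and is pseudofunctorial via $\wc{gf}\cong\wc{f}\odot\wc{g}$ from \cref{lem:fibrantproperties}.
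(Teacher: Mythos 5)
Your proposal is correct and takes essentially the same route as the paper: the paper's (shorter) proof likewise observes that $^U\Mod(\dc{D})$ is closed under the cartesian liftings of $\Mod(\dc{D})\to\Mnd(\dc{D})$, because the reindexing functors $\wc{f}\odot\mi$ act on the target side and hence restrict to the fixed-domain fibres $^U_A\Mod(\dc{D})$. Your additional bookkeeping with $\Gr{s}$ showing that the universal factorization has identity source vertical $1$-cell is exactly the detail the paper leaves implicit, and the dual argument for comodules matches as well.
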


\begin{proof}
This follows from the fact that the category 
$^U\Mod(\dc{D})\subseteq\Mod(\dc{D})$ 
is closed under cartesian liftings over $\Mnd(\dc{D})$.
Indeed, as is clear from the above proof, the reindexing functors 
$\wc{f}\odot\mi$ restrict between the subcategories
$_A^U\Mod(\dc{D})\subseteq {}_A\Mod(\dc{D})$ and $_B^U\Mod(\dc{D})\subseteq {}_B\Mod(\dc{D})$ of the fibres with fixed domain 
$U$.
\end{proof}

\cref{prop:ModfibredoverMnd,prop:ZModfibredoverMnd} establish $\Mod(\dc{D})$ and $^U\Mod(\dc{D})$ as fibred over $\Mnd(\dc{D})$ mapping a module to 
its monad, and dually for the opfibrations mapping a comodule to its comonad. However, we can also view $\Mod(\dc{D})$ and $\Comod(\dc{D})$ as fibred over 
$\dc{D}_0$, and in fact in more than one ways, as can be seen below.

\begin{prop}\label{prop:sourcetargetbifib}
	Suppose $\dc{D}$ is a fibrant double category. The source and target functors 
	$\Mod(\dc{D})\rightrightarrows\dc{D}_0$ are a bifibration and a fibration respectively, and the source 
	and target functors $\Comod(\dc{D})\rightrightarrows\dc{D}_0$ are a bifibration and an opfibration respectively.
\end{prop}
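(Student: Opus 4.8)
The plan is to dispatch the two ``target'' claims by composing (op)fibrations already at hand, and to build the two ``source'' (bi)fibration structures by hand, the guiding principle being that the monad action sits on the \emph{target} side of a module, while the source functor only records its domain.

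First, for the target functor $\Gr{t}\colon\Mod(\dc{D})\to\dc{D}_0$, which sends an $A$-module $M\colon U\bular X$ to the carrier $X$, I would note that it is the composite
\begin{displaymath}
\Mod(\dc{D})\longrightarrow\Mnd(\dc{D})\longrightarrow\dc{D}_0
\end{displaymath}
of the fibration of \cref{prop:ModfibredoverMnd} (sending $M_A$ to $A$) with the fibration $\Mnd(\dc{D})\to\dc{D}_0$ of \cref{prop:MonComonfibred} (sending $A\colon X\bular X$ to $X$); since fibrations compose, $\Gr{t}$ is a fibration. It fails to be an opfibration in general precisely because $\Mnd(\dc{D})$ is only fibred, not opfibred, over $\dc{D}_0$. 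Dually, the target functor $\Comod(\dc{D})\to\dc{D}_0$ is a composite of two opfibrations, hence an opfibration.

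For the source functor $\Gr{s}\colon\Mod(\dc{D})\to\dc{D}_0$, which records the domain $U$ of $M\colon U\bular X$, the key point is that reindexing on the source side commutes with the $A$-action on the target side by associativity of $\odot$. Concretely, for $g\colon U'\to U$ and an $A$-module $M$ above $U$, I would take as cartesian lift the module $(M\odot\wh{g})_A$ --- precomposition with the companion $\wh{g}$ being exactly the cartesian lift of the source bifibration $\Gr{s}\colon\dc{D}_1\to\dc{D}_0$ of \cref{globalvslocal} --- equipped with the action
\begin{displaymath}
A\odot(M\odot\wh{g})\xrightarrow{a^{-1}}(A\odot M)\odot\wh{g}\xrightarrow{\lambda\odot\id}M\odot\wh{g},
\end{displaymath}
and with the comparison module morphism $(M\odot\wh{g})_A\to M_A$ given by the identity monad map together with the $2$-morphism $M\odot\wh{g}\Rightarrow M$ of left boundary $g$ and trivial right boundary, obtained by pasting the companion cell $p_1\colon\wh{g}\Rightarrow 1_U$ with the unitor. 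Symmetrically, precomposition with the conjoint $\wc{g}$ yields a cocartesian lift $(M\odot\wc{g})_A$ built from the conjoint cell, so that $\Gr{s}$ will be a \emph{bi}fibration. The comodule source statement is then entirely dual, with the coaction sitting on the target side of $K\colon V\bular Z$.

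The one genuine thing to check --- and the step I expect to be the main obstacle --- is that these liftings are universal \emph{inside} $\Mod(\dc{D})$, and not merely in $\dc{D}_1$: given any module morphism $u\colon Q_C\to M_A$ whose source component factors through $g$, cartesianness of $\Gr{s}\colon\dc{D}_1\to\dc{D}_0$ produces a unique factoring $2$-morphism $Q\Rightarrow M\odot\wh{g}$, and one must verify that it automatically obeys the action-compatibility axiom \cref{eq:modulemapaxiom}, i.e.\ is a module morphism. I would settle this by a diagram chase: both sides of the axiom coincide after postcomposition with the action-compatible cartesian cell $M\odot\wh{g}\Rightarrow M$, whence they are equal by the cancellation property of cartesian $2$-morphisms. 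A cleaner packaging that avoids the hands-on chase proceeds through \cref{prop:Mod(D)monadicComod(D)comonadic}: the source functor factors as $\Mod(\dc{D})\to\dc{D}_1\times_{\dc{D}_0}\Mnd(\dc{D})\xrightarrow{\Gr{s}\circ\pi_1}\dc{D}_0$, where the second functor is a bifibration (closed under the source-reindexing $(M,A)\mapsto(M\odot\wh{g},A)$, the monad $A$ being left fixed), and the monad $T(M,A)=(A\odot M,A)$ of \cref{prop:Mod(D)monadicComod(D)comonadic} is fibred over it since $A\odot(\mi\odot\wh{g})\cong(A\odot\mi)\odot\wh{g}$; the standard fact that the Eilenberg--Moore category of a fibred monad on a bifibration is again a bifibration over the same base then yields the claim and dualizes to comodules.
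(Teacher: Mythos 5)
Your proposal is correct and follows essentially the same route as the paper: the source functor is obtained by restricting the source bifibration $\Gr{s}\colon\dc{D}_1\to\dc{D}_0$ (cartesian/cocartesian lifts by precomposition with companions/conjoints, with the restricted adjunction $(\mi\odot\wc{g})\dashv(\mi\odot\wh{g})$), and the target functor is the composite $\Mod(\dc{D})\to\Mnd(\dc{D})\to\dc{D}_0$ of the two fibrations already established. Your extra care about universality holding \emph{inside} $\Mod(\dc{D})$ (via cancellation against the cartesian cell, or via the fibred-monad packaging) is a valid elaboration of what the paper dismisses as ``easily verified.''
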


\begin{proof}
It can be easily verified that in a fibrant double category, appropriately restricting the source bifibration \cref{eq:stbifibrations} induces a sub-bifibration $\Mod(\dc{D})\subseteq\dc{D}_1\to\dc{D}_0$. In more detail, given a $B$-module $N\colon T\bular Y$ in the fibre $^T\Mod(\dc{D})$ and 
$f\colon U\to T$, $N\odot\wh{f}$ is still a $B$-module via the action
\begin{displaymath}
	\begin{tikzcd}
		U\ar[r,bul,"\wh{f}"]\ar[d,equal] & 
		T\ar[drr,phantom,"{\Two\lambda}"]\ar[r,bul,"N"]\ar[d,equal] & Y\ar[r,bul,"B"] & 
		Y\ar[d,equal] \\
		U\ar[r,bul,"\wh{f}"'] & T\ar[rr,bul,"N"'] && Y
	\end{tikzcd}
\end{displaymath}
Moreover, the adjunction $(\mi\odot\wc{f})\dashv(\mi\odot\wh{f})$ between ${}^U\dc{D}_1$ and ${}^T\dc{D}_1$ from \cref{globalvslocal} restricts to 
	$\begin{tikzcd}{}^U\Mod(\dc{D})\ar[r,shift 
		left=2,"\mi\odot\wc{f}"]\ar[r,phantom,"\bot"] &
		{}^T\Mod(\dc{D})\ar[l,shift left=2,"\mi\odot\wh{f}"]\end{tikzcd}$ thus $\Gr{s}\colon\Mod(\dc{D})\to\dc{D}_0$ is a bifibration.
In fact, by construction this further reduces to a sub-bifibration $\Gr{s}\colon {}_B\Mod(\dc{D})\to\dc{D}_0$ with fibres $^T_B\Mod(\dc{D})$.
	
Next, the restricted target fibration \cref{eq:stbifibrations} also induces a sub-fibration of modules over the category of objects $\Gr{t}\colon\Mod(\dc{D})\to\dc{D}_0$, which is however more involved in its construction -- and also is not a bifibration anymore. A natural way to see it is as a composite of fibrations (\cite[Prop.~8.1.12]{Handbook2})
\begin{displaymath}
	\begin{tikzcd}[column sep=.5in]
		\Mod(\dc{D})\ar[rr,bend left,"\mathfrak t"]\ar[r,"{\ref{prop:ModfibredoverMnd}}"] & \Mnd(\dc{D})\ar[r,"{\ref{prop:MonComonfibred}}"] & 
\dc{D}_0 
	\end{tikzcd}
\end{displaymath}
mapping a $B$-module $N\colon T\bular Y$ first to its monad $B\colon Y\bular Y$ and then to its carrier object $Y$, namely to $N$'s 
codomain. The reindexing functors are of the form $\wc{g}\odot\mi$ for $g\colon X\to Y$, and notice how the lifting $\wc{g}\odot N$ is a $(\wc{g}\odot B\odot\wh{g})$-module via the action
\begin{displaymath}
	\begin{tikzcd}
		T\ar[r,bul,"N"]\ar[d,equal] & 
		Y\ar[r,bul,"\wc{g}"]\ar[d,equal]\ar[drr,phantom,"\Two\varepsilon"] & 
		X\ar[r,bul,"\wh{g}"] & Y\ar[r,bul,"B"]\ar[d,equal] & 
		Y\ar[r,bul,"\wc{g}"]\ar[d,equal] & X\ar[d,equal] \\
		T\ar[r,bul,"N"]\ar[drrrr,phantom,"\Two\lambda"]\ar[d,equal] & Y\ar[rr,bul,"1_Y"] 
		&& Y\ar[r,bul,"B"] & Y\ar[r,bul,"\wc{g}"]\ar[d,equal] & X\ar[d,equal] \\
		T\ar[rrrr,bul,"N"'] &&&& Y\ar[r,bul,"\wc{g}"'] & X
	\end{tikzcd}
\end{displaymath}
Analogous results hold for $\Cmd(\dc{D})$ over $\dc{D}_0$. More specifically (and since both comodules and modules currently have a left action) $\Gr{s}\colon\Cmd(\dc{D})\to\dc{D}_0$ is a sub-bifibration of the source functor, and $\Gr{t}\colon\Cmd(\dc{D})\to\dc{D}_0$ is a sub-opfibration of the target functor, formed as a composite of the opfibrations of \cref{prop:ModfibredoverMnd,prop:MonComonfibred}.
\end{proof}

\begin{rmk}
Notice how in the above bifibration structures, opposite to those of \cref{prop:ZModfibredoverMnd,prop:ModfibredoverMnd}, fixed-domain (co)module categories are the fibers and fixed-(co)monad (co)module categories are the total categories. Pictorially, several of the above structures appear together as follows, where we suppress $\dc{D}$ from the respective categories:
\begin{displaymath}
\begin{tikzcd}
& \Mod\ar[dl,bend right,no head,thick]\ar[dr,bend left,no head,gray,thick] & \\
\circled{{}_A\Mod}\ar[ddr,bend right,thick]\ar[dr,bend left=5,no head,gray,thick] && 
\circled{{}^U\Mod}\ar[dddl,bend left,gray,thick]\ar[dl,bend right,no head,thick] \\
& \tboxed{{}_A^U\Mod}\ar[d,thick]\ar[dd,bend left,gray,thick] & \\
& \Mnd\ar[d] & \\
& \dc{D}_0 &
\end{tikzcd}
\end{displaymath}
The gray arrow on the right is the source bifibration of \cref{prop:sourcetargetbifib} with fibers ${}^U\Mod$, which also restricts to a bifibration emanating from ${}_A\Mod$ with fibers ${}_A^U\Mod$. The black arrow on the left is the underlying monad fibration of \cref{prop:ModfibredoverMnd} with fibers ${}_A\Mod$, which also restricts to a fibration emanating from ${}^U\Mod$ with fibers ${}_A^U\Mod$. Finally, composing the underlying monad fibration with the bottom fibration of \cref{prop:MonComonfibred} we obtain the target fibration of \cref{prop:sourcetargetbifib}.
\end{rmk}

\begin{ex}
In the case of the double category $\dc{D}=\VMMat$ considered in \cref{ex:twoVMod}, \cref{prop:ModfibredoverMnd} says that the functor $\Mod(\VMMat)\to\VCat$ is a fibration. In more detail, 
given $\ca{V}$-categories $A\colon X\tickar X$ and $B\colon Y\tickar Y$, a 
$\ca{V}$-functor $^{f}\alpha^{f}\colon A\to B$ and a $B$-module $N\colon T\tickar Y$, the cartesian lifting of $\alpha$ to $N_B$ has domain the 
$A$-module $f^{*}\odot N$ given by $(f^{*}\odot N)(x,t)\cong N(f(x),t)$ for all 
$(x,t)\in X\times T$, and whose action is determined by the compositions
$$A(x,x')\otimes N(f(x'),t)\xrightarrow{\alpha_{x,x'}\otimes 
		1}B(f(x),f(x'))\otimes 
	N(f(x'),t)\xrightarrow{\lambda}N(f(x),t).$$ 
The morphism $\Cart(\alpha,N)\colon f^{*}\odot N\to N$ as in \cref{eq:Modcartlifts} in $\Mod(\VMMat)$ consists of the $\ca{V}$-functor $^f\alpha^f$, the identity function on $T$ and the family of identity morphisms $(1_{x,t}\colon N(f(x),t)\to N(f(x),t))_{(x,t)\in X\times T}$ as described in \cref{eq:Modmaps}.
Moreover, \cref{prop:ZModfibredoverMnd} for $U=1$ the singleton says that the functor $\VMod\to\VCat$ is also a fibration\footnote{\label{note8}This was established by hand in \cite[Prop.~7.6.9\&7.6.10]{PhDChristina}.}. 
	
Dually, for the category of $\ca{V}$-comodules discussed in \cref{ex:twoVComod}, the functors $\Comod(\VMMat)\to\VCocat$ and $\VComod\to\VCocat$ are opfibrations. Given $\ca{V}$-cocategories $C\colon Z\tickar Z$ and $D\colon W\tickar W$, a $\ca{V}$-cofunctor $^{f}\alpha^{f}\colon C\to D$ and a $C$-comodule $K\colon V\tickar Z$, the cocartesian lifting of $\alpha$ to $K_C$ has codomain the $D$-comodule $f_{*}\odot K$ defined by $(f_{*}\odot K)(w,v)=\sum\limits_{f(z)=w}K(z,v)$ and whose coaction is given by the following composition
\begin{center}
	\begin{tikzcd}
		\sum\limits_{f(z)=w}K(z,v)\ar[r,"\sum\gamma_{K}"] & \sum\limits_{f(z)=w}\left(\sum\limits_{z'}C(z,z')\otimes 
		K(z',v)\right)\ar[r,"\sum(\sum\alpha\otimes 1)"] & \sum\limits_{f(z)=w}\left(\sum\limits_{z'}D(f(z),f(z'))\otimes K(z',v)\right)\ar[d,hook] \\
		&& \sum\limits_{w'}\left( 
		D(w,w')\otimes\left(\sum\limits_{f(z')=w'}K(z',v)\right)\right)
	\end{tikzcd}
\end{center}
The morphism $\Cocart(\alpha,K)\colon K\to f_{*}\odot K$ as in \cref{eq:Comodcartlifts} in $\VComod$ is given by the $\ca{V}$-cofunctor ${}^f\alpha^f$, the identity function on $V$ and the family of coproduct injections 
$(K(z,v)\to\sum\limits_{f(z')=f(z)}K(z',v))_{(z,v)\in Z\times V}$. Again, for $V=1$ the singleton,  \cref{prop:ZModfibredoverMnd} says that the functor $\VComod\to\VCocat$ is an opfibration\cref{note8}.
	
Moving to the results of \cref{prop:sourcetargetbifib}, we deduce that there is a bifibration $\Gr{s}\colon\Mod(\VMMat)\to\nc{Set}$ and a fibration 
$\Gr{t}\colon\Mod(\VMMat)\to\nc{Set}$. For the source functor, the cartesian lifting of a function $f\colon U\to T$ to a $B$-module $N\colon T\tickar Y$ has as 
domain the $B$-module $N\odot f_{*}\colon U\tickar Y$, where $(N\odot f_{*})(y,u)=N(y,f(u))$ and with action the restriction of the action of $N$, namely $B(y,y')\ot N(y',f(u))\to N(y,f(u))$ for all $y,y',u$.  
The cartesian lifting of $f$ is then essentially given by the family of identity morphisms $N(y,f(u))\to N(y,f(u))$, $(y,u)\in Y\times U$. For the target functor, the cartesian lifting of $g\colon X\to Y$ to a $B$-module $N\colon T\tickar Y$ has domain the $f^{*}\odot N\odot f_{*}$-module $f^{*}\odot N$, where $(f^{*}\odot N\odot 
f_{*})(x,x')=B(f(x),f(x'))$ for all $x,x'\in X$ and $f^{*}\odot N$ is as described earlier. The action is again a restriction of that of $N$ and 
similarly the lifting itself is a family of identity morphisms.
\end{ex}

Under the assumption of fibrancy on $\dc{D}$, one can use the earlier monadicity results to easily deduce (co)completeness for categories of 
(co)modules. 

\begin{prop}\label{prop:Mod(D)complete}
	Let $\dc{D}$ be a fibrant double category which is parallel complete. Then $\Mod(\dc{D})$ is complete and both forgetful functors 
$\Mod(\dc{D})\to\Mnd(\dc{D})$ and $\Mod(\dc{D})\to\dc{D}_1$ preserve limits.
\end{prop}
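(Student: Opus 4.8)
The plan is to build the proof on the monadicity statement \cref{prop:Mod(D)monadicComod(D)comonadic}, which exhibits the forgetful functor
\[
V\colon\Mod(\dc{D})\to\dc{D}_1\times_{\dc{D}_0}\Mnd(\dc{D})
\]
as monadic. Since a monadic functor creates all limits that exist in its codomain (the standard creation property of Eilenberg--Moore forgetful functors), it suffices to prove that the pullback category $\dc{D}_1\times_{\dc{D}_0}\Mnd(\dc{D})$ is complete: completeness of $\Mod(\dc{D})$, together with the fact that $V$ preserves (indeed creates) limits, then follows immediately. The two remaining forgetful functors factor through the projections as $\Mod(\dc{D})\xrightarrow{V}\dc{D}_1\times_{\dc{D}_0}\Mnd(\dc{D})\to\Mnd(\dc{D})$ and $\Mod(\dc{D})\xrightarrow{V}\dc{D}_1\times_{\dc{D}_0}\Mnd(\dc{D})\to\dc{D}_1$, so once I show that both projections preserve limits, preservation by the two forgetful functors will follow by composition.

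To treat the pullback, I would first recall that the projection $\Pi\colon\dc{D}_1\times_{\dc{D}_0}\Mnd(\dc{D})\to\Mnd(\dc{D})$ is obtained by pulling back the target fibration $\Gr{t}\colon\dc{D}_1\to\dc{D}_0$ (a fibration by fibrancy, \cref{globalvslocal}) along the underlying-object functor $\Mnd(\dc{D})\to\dc{D}_0$. As fibrations are stable under pullback, $\Pi$ is a fibration whose fibre over a monad $B\colon Y\bular Y$ is the fixed-codomain category $\dc{D}_1^Y$ and whose reindexing functors are the $\wc{g}\odot\mi$. In the parallel complete fibrant setting, \cref{prop:equivalentdefparcom} guarantees that each $\dc{D}_1^Y$ is complete and that these reindexing functors preserve limits, i.e. $\Pi$ has all fibred limits; moreover $\Mnd(\dc{D})$ is itself complete by \cref{prop:(co)limits in (co)monads}. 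Applying \cref{prop:fiberwiselimits} to $\Pi$ then yields that $\dc{D}_1\times_{\dc{D}_0}\Mnd(\dc{D})$ is complete and that $\Pi$ preserves limits.

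Symmetrically, the other projection $\Pi'\colon\dc{D}_1\times_{\dc{D}_0}\Mnd(\dc{D})\to\dc{D}_1$ is the pullback of the monad fibration $\Mnd(\dc{D})\to\dc{D}_0$ of \cref{prop:MonComonfibred} along $\Gr{t}$, hence again a fibration; its fibres are the $\Mnd(\dc{D})_Y=\Mon(\ca{H}(\dc{D})(Y,Y))$ and its reindexing functors the $\wc{g}\odot\mi\odot\wh{g}$, which by \cref{cor:MndD0complete} furnish fibred limits over the (complete) base $\dc{D}_1$. A second application of \cref{prop:fiberwiselimits}, now to $\Pi'$, shows that $\Pi'$ likewise preserves limits. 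Combining the two, both projections are continuous, so by the factorisations above both $\Mod(\dc{D})\to\Mnd(\dc{D})$ and $\Mod(\dc{D})\to\dc{D}_1$ preserve limits.

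I expect the only genuinely delicate point to be the bookkeeping that the two fibration structures carried by the single category $\dc{D}_1\times_{\dc{D}_0}\Mnd(\dc{D})$ produce \emph{compatible} limits, so that $\Pi$ preserving them does not conflict with $\Pi'$ preserving them; this is harmless since completeness makes limits unique up to canonical isomorphism, but it is worth stating explicitly rather than silently assuming. A secondary matter to be careful about is the identification of the fibres and reindexing functors of the two pulled-back fibrations with those of $\Gr{t}$ and of $\Mnd(\dc{D})\to\dc{D}_0$ respectively, which is precisely where fibrancy (through \cref{globalvslocal,prop:MonComonfibred}) and the explicit descriptions of \cref{prop:equivalentdefparcom,cor:MndD0complete} are used.
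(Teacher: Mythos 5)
Your proposal is correct, and its first half coincides with the paper's proof: both reduce the statement, via the monadicity of $\Mod(\dc{D})$ over $\dc{D}_1\times_{\dc{D}_0}\Mnd(\dc{D})$ from \cref{prop:Mod(D)monadicComod(D)comonadic}, to showing that this pullback is complete with limit-preserving projections. Where you diverge is in how you establish that last fact. The paper observes that, since $\Gr{t}$ is an isofibration in the fibrant setting, the strict pullback is a bi-pullback, and then invokes the $2$-categorical fact that complete categories and continuous functors form a $2$-category of algebras admitting all pseudolimits (\cite[Thm.~2.6]{2-dimmonadtheory}), so the pullback of the complete $\dc{D}_1$ and $\Mnd(\dc{D})$ along the continuous $\Gr{t}$ and $\Mnd(\dc{D})\to\dc{D}_0$ is complete with continuous projections. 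You instead exhibit each projection as a fibration pulled back from $\Gr{t}\colon\dc{D}_1\to\dc{D}_0$, respectively from $\Mnd(\dc{D})\to\dc{D}_0$, check that these inherit fibred limits (via \cref{prop:equivalentdefparcom} and \cref{cor:MndD0complete}) over a complete base, and apply \cref{prop:fiberwiselimits} twice. Your route is more elementary and stays entirely within the fibred-limit machinery already developed in the paper, at the cost of a slightly longer argument and the (harmless, as you note) bookkeeping that the two applications of \cref{prop:fiberwiselimits} produce the same limits up to isomorphism; the paper's route is shorter but leans on an external $2$-monad-theoretic result. Both are sound.
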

\begin{proof}
Recall by \cref{def:parallelcolimits} that both $\dc{D}_0$, $\dc{D}_1$ are complete categories and that $\Gr{s},\Gr{t}$ are continuous functors.
We know by \cref{prop:Mod(D)monadicComod(D)comonadic} that $\Mod(\dc{D})$ is monadic over $\dc{D}_{1}\times_{\dc{D}_0}\Mnd(\dc{D})$, hence has any 
limits that the latter category has. Observe furthermore that, since $\Gr{t}$ is a fibration in the fibrant setting, the pullback 
\begin{displaymath}
\begin{tikzcd}
		\dc{D}_{1}\times_{\dc{D}_0}\Mnd(\dc{D})\ar[d]\ar[r] & \Mnd(\dc{D})\ar[d,"\Gr{s}/\Gr{t}"] \\
		\dc{D}_1\ar[r,"\Gr{t}"'] & \dc{D}_0
\end{tikzcd}
\end{displaymath}
is also a bi-pullback. Thus, since both $\Mnd(\dc{D})$ (\cref{prop:(co)limits in (co)monads}) and $\dc{D}_1$ have limits and 
the functors $\Mnd(\dc{D})\to\dc{D}_0$ (\cref{cor:MndD0complete}) and $\dc{D}_1\to\dc{D}_0$ preserve them, we deduce that $\dc{D}_{1}\times_{\dc{D}_0}\Mnd(\dc{D})$ has all 
small limits and that the functors $\dc{D}_{1}\times_{\dc{D}_0}\Mnd(\dc{D})\to\Mnd(\dc{D})$ and $\dc{D}_{1}\times_{\dc{D}_0}\Mnd(\dc{D})\to\dc{D}_1$ 
preserve them (e.g. by \cite[Thm.~2.6]{2-dimmonadtheory}).	
\end{proof}

Combining the above with the fact that $\Mod(\dc{D})\to\Mnd(\dc{D})$ is a fibration by \cref{prop:ModfibredoverMnd}, we obtain the following.

\begin{cor}\label{Mod(D)completefibration}
	Let $\dc{D}$ be a parallel complete and fibrant double category. Then the fibration $\Mod(\dc{D})\to\Mnd(\dc{D})$ has all small fibred limits.
\end{cor}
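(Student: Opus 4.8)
The plan is to derive the result directly from \cref{prop:fiberwiselimits}, applied to the fibration $P\colon\Mod(\dc{D})\to\Mnd(\dc{D})$ furnished by \cref{prop:ModfibredoverMnd}. Recall that $P$ is said to have fibred $\ca{I}$-limits exactly when each fibre ${}_A\Mod(\dc{D})$ has $\ca{I}$-limits and these are preserved by all the reindexing functors $\wc{f}\odot\mi$ described in \cref{eq:_Mod}. Since \cref{prop:fiberwiselimits} recasts this fibrewise condition as a condition on the total category, the strategy is to verify its three hypotheses and then invoke the implication (1)$\Rightarrow$(2).

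First I would observe that the base $\Mnd(\dc{D})$ has all small limits: this is \cref{prop:(co)limits in (co)monads}, which applies since $\dc{D}$ is fibrant and, being parallel complete (\cref{def:parallelcolimits}), has parallel $\ca{I}$-limits for every small $\ca{I}$. Next, $P$ is a fibration by \cref{prop:ModfibredoverMnd}. Finally -- and this is where the bulk of the work has already been done -- \cref{prop:Mod(D)complete} guarantees, under the present hypotheses, that $\Mod(\dc{D})$ is complete and that $P$ preserves all limits. These are precisely clause (1) of \cref{prop:fiberwiselimits}, so the equivalence yields clause (2): $P$ has all small fibred limits. Unwinding the construction in that proof, the limit of a diagram in a fibre ${}_A\Mod(\dc{D})$ is computed by taking its limit in $\Mod(\dc{D})$, noting that $P$ sends it over the limit of the constant diagram at $A$, and then cartesian-lifting along the resulting comparison morphism.

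The one point that requires a little care is the strictness qualifier in \cref{prop:fiberwiselimits}(1), which asks that limits in the total category be constructed over chosen limits in the base. I expect this to be the main (and essentially only) obstacle, but it is resolved by the proof of \cref{prop:Mod(D)complete}: there the limits of $\Mod(\dc{D})$ are created through monadicity over $\dc{D}_1\times_{\dc{D}_0}\Mnd(\dc{D})$, whose own limits are in turn built over the limits of $\Mnd(\dc{D})$ via the relevant bi-pullback. Consequently the limit cone in $\Mod(\dc{D})$ sits strictly above the prescribed limit cone in $\Mnd(\dc{D})$, which is exactly the strict preservation needed, so no additional computation is required.
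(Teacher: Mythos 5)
Your proposal is correct and follows essentially the same route as the paper: completeness of $\Mod(\dc{D})$ and limit-preservation by the forgetful functor come from \cref{prop:Mod(D)complete}, completeness of the base from \cref{prop:(co)limits in (co)monads}, and then \cref{prop:fiberwiselimits} converts this into fibred limits. Your extra remark on the strictness qualifier -- that the limits of $\Mod(\dc{D})$ are built via monadicity over the bi-pullback and hence sit over chosen limits in $\Mnd(\dc{D})$ -- is a point the paper leaves implicit, but it does not change the argument.
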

\begin{proof}
By \ref{prop:Mod(D)complete} we have that the total category $\Mod(\dc{D})$ has all small limits and that these are preserved by the functor 
$\Mod(\dc{D})\to\Mnd(\dc{D})$. The result then follows from \ref{prop:fiberwiselimits}.	In particular, this says that all fibers ${}_A\Mod(\dc{D})$ are complete and that
the reindexing functors $\wc{f}\odot\mi$ are continuous functors.
\end{proof}

Similarly, we obtain the following results for comodules.

\begin{prop}\label{prop:Comod(D)cocomplete}
Let $\dc{D}$ be a fibrant double category which is parallel cocomplete. Then $\Comod(\dc{D})$ is cocomplete, $\Comod(\dc{D})\to\dc{D}_1$ preserves colimits and 
the opfibration $\Comod(\dc{D})\to\Cmd(\dc{D})$ has all small opfibred colimits.
\end{prop}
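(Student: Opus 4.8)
The plan is to argue entirely by dualizing the proof of \cref{prop:Mod(D)complete} together with \cref{Mod(D)completefibration}, since the present statement is their formal dual for comodules over comonads.

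First I would invoke \cref{prop:Mod(D)monadicComod(D)comonadic}, which makes the forgetful functor $\Comod(\dc{D})\to\dc{D}_1\times_{\dc{D}_0}\Cmd(\dc{D})$ comonadic, the pullback being taken over $\Gr{t}\colon\dc{D}_1\to\dc{D}_0$ and the source-equals-target functor $\Cmd(\dc{D})\to\dc{D}_0$. Since comonadic functors create all colimits present in the base, it suffices to show that the pullback category $\dc{D}_1\times_{\dc{D}_0}\Cmd(\dc{D})$ is cocomplete. By \cref{def:parallelcolimits}, in a parallel cocomplete $\dc{D}$ the category $\dc{D}_1$ is cocomplete and $\Gr{t}$ preserves colimits; by the dual part of \cref{prop:(co)limits in (co)monads}, $\Cmd(\dc{D})$ is cocomplete, and the composite $\Cmd(\dc{D})\to\dc{D}_1^\bullet\to\dc{D}_1\to\dc{D}_0$ preserves colimits (using \cref{lem:(co)limits in D_1 bullet} together with parallel cocompleteness), i.e. $\Cmd(\dc{D})\to\dc{D}_0$ preserves colimits as already recorded in \cref{cor:MndD0complete}.

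Next, exactly as in the module case, I would use that in the fibrant setting $\Gr{t}$ is a fibration, hence an isofibration, so the strict pullback coincides with the bi-pullback; by \cite[Thm.~2.6]{2-dimmonadtheory} the bi-pullback of cocomplete categories along colimit-preserving functors is cocomplete, and both projections preserve colimits. Combining this with the comonadicity step shows that $\Comod(\dc{D})$ is cocomplete and that the comonadic functor creates, hence preserves, colimits; composing with the colimit-preserving projection $\dc{D}_1\times_{\dc{D}_0}\Cmd(\dc{D})\to\dc{D}_1$ then yields that $\Comod(\dc{D})\to\dc{D}_1$ preserves colimits.

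Finally, for the opfibred claim I would appeal to \cref{prop:ModfibredoverMnd}, by which $\Comod(\dc{D})\to\Cmd(\dc{D})$ is an opfibration whose total and base categories are now both known to be cocomplete and which preserves colimits (being the composite of the two colimit-preserving projections above). The dual of \cref{prop:fiberwiselimits} then gives that this opfibration has all small opfibred colimits, meaning each fiber ${}_C\Comod(\dc{D})$ is cocomplete and the reindexing functors $\wh{f}\odot\mi$ of \cref{eq:Comodcartlifts} are cocontinuous. The main obstacle is essentially bookkeeping rather than substance: the only points requiring care are confirming the bi-pullback condition and tracking precisely which functors are colimit-preserving, and neither requires any computation beyond transcribing the module argument into its dual.
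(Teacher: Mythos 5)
Your proposal is correct and follows essentially the same route as the paper, whose own proof simply states that the arguments are dual to those of \cref{prop:Mod(D)complete} and \cref{Mod(D)completefibration}: comonadicity over the bi-pullback $\dc{D}_1\times_{\dc{D}_0}\Cmd(\dc{D})$, cocompleteness of that pullback via \cite[Thm.~2.6]{2-dimmonadtheory}, and the dual of \cref{prop:fiberwiselimits} for the opfibred colimits. The only cosmetic slip is describing $\Comod(\dc{D})\to\Cmd(\dc{D})$ as a composite of two projections, when it is the comonadic forgetful functor followed by one projection; the substance is unaffected.
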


\begin{proof}
The arguments are dual to those from above. In particular, for the cocomplete $\Comod(\dc{D})\to\Cmd(\dc{D})$, the fiber categories ${}_C\Comod(\dc{D})$ are cocomplete and the reindexing functors $\wh{f}\odot\mi$ are cocontinuous.
\end{proof}

In order to establish our main results in this paper, we will further assume that our double category $\dc{D}$ is locally presentable, as per \cref{defi:locallypresentabledoublecat}. A 
preliminary result which will be of use is the following.

\begin{prop}\label{_AMod_CComodLocallyPresentable}
Suppose $\dc{D}$ is a locally presentable double category. For any monad $A\colon X\bular X$,
the category $_{A}\Mod(\dc{D})$ is locally presentable. Similarly, for every comonad $C\colon Z\bular Z$ in $\dc{D}$ the category $_{C}\Comod(\dc{D})$ is locally presentable.
Moreover, all categories of the form $^U_A\Mod(\dc{D})$ and $^V_C\Comod(\dc{D})$ are locally presentable.
\end{prop}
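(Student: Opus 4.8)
The plan is to reduce every case to a single fact about locally presentable categories: the Eilenberg--Moore category of an accessible (co)monad on a locally presentable category is again locally presentable. Each of the four categories in the statement is, by the (co)monadicity results already proved, the category of (co)algebras for a (co)monad of the form ``horizontal composition with the fixed (co)monad'' on one of the fixed-codomain or hom fibres of $\dc{D}_1$, and each such fibre is locally presentable by \cref{prop:lp fibers double cats}. So the only thing to verify in each case is that the inducing (co)monad is accessible, which is exactly what \cref{defi:locallypresentabledoublecat} supplies.

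First I would treat $_{A}\Mod(\dc{D})$. By \cref{_AModmonadic_CComodcomonadic} the forgetful functor $_{A}\Mod(\dc{D})\to\dc{D}_1^{X}$ is monadic, the inducing monad being the endofunctor $T=A\odot\mi$ on $\dc{D}_1^{X}$. Here $\dc{D}_1^{X}$ is locally presentable by \cref{prop:lp fibers double cats}, and $A\odot\mi\colon\dc{D}_1^{X}\to\dc{D}_1^{X}$ is accessible, this being precisely the accessibility clause of \cref{defi:locallypresentabledoublecat} instantiated at the horizontal endo-1-cell $A\colon X\bular X$. Following exactly the reasoning recalled just before \cref{thm:monadalgebras} (namely \cite[\S~2.78]{LocallyPresentable} via the Limit Theorem for accessible categories, together with the fact \cite[Cor.~2.47]{LocallyPresentable} that complete-plus-accessible equals locally presentable), the Eilenberg--Moore category of an accessible monad on a locally presentable category is complete, since the forgetful functor creates limits, and accessible, hence locally presentable. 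This gives $_{A}\Mod(\dc{D})$ locally presentable. Dually, $_{C}\Comod(\dc{D})$ is comonadic over the locally presentable $\dc{D}_1^{Z}$ via the accessible comonad $C\odot\mi$, and the dual argument (its forgetful functor creates colimits, so the coalgebra category is cocomplete and accessible) yields local presentability of $_{C}\Comod(\dc{D})$.

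For the fixed-domain categories, recall from the discussion following \cref{def:leftmodulesdouble} that $^U_A\Mod(\dc{D})$ is the Eilenberg--Moore category of the ``post-composition with $A$'' monad, i.e. $A\odot\mi$, on the hom-category $\ca{H}(\dc{D})(U,X)={}^U\dc{D}_1^{X}$. Once again $^U\dc{D}_1^{X}$ is locally presentable by \cref{prop:lp fibers double cats}, so it remains to check that the restricted endofunctor $A\odot\mi\colon{}^U\dc{D}_1^{X}\to{}^U\dc{D}_1^{X}$ is accessible. This is where \cref{rmk:compbetweenfibers} enters: via \cref{prop:continuous fibred 1-cells} applied to the source opfibrations (which are opfibred cocomplete by \cref{prop:equivalentdefparcom}, $\dc{D}$ being parallel cocomplete by \cref{prop:lpparcomp}), the accessibility of the global $A\odot\mi\colon\dc{D}_1^{X}\to\dc{D}_1^{X}$ is equivalent to the accessibility of all its fibrewise restrictions $A\odot\mi\colon{}^U\dc{D}_1^{X}\to{}^U\dc{D}_1^{X}$. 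With an accessible monad on a locally presentable base, the same (co)monadic argument as above shows $^U_A\Mod(\dc{D})$ is locally presentable, and dually for $^V_C\Comod(\dc{D})$.

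The hard part will be the passage from the hypothesis of \cref{defi:locallypresentabledoublecat}, that the inducing functor is merely \emph{accessible}, to local presentability of the (co)algebra category. One cannot cite \cref{thm:monadalgebras} verbatim, since as stated it concerns only \emph{finitary} (co)monads; instead one must run the genuine accessibility argument (Limit Theorem plus ``(co)complete and accessible equals locally presentable''), which is exactly the more general statement sketched in the paragraph preceding \cref{thm:monadalgebras}. A secondary point needing care is that neither \cref{prop:lp fibers double cats} nor the transfer of accessibility in \cref{rmk:compbetweenfibers} is guaranteed to preserve the specific index of accessibility $\lambda$; this is harmless, however, because local presentability only requires accessibility for \emph{some} regular cardinal.
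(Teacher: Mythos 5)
Your proof is correct and follows essentially the same route as the paper's: (co)monadicity of each category over the relevant locally presentable fibre of $\dc{D}_1$ (via \cref{_AModmonadic_CComodcomonadic} and \cref{prop:lp fibers double cats}), accessibility of the inducing (co)monad supplied by \cref{defi:locallypresentabledoublecat} (and transferred to the hom-category fibres by \cref{rmk:compbetweenfibers}), and the standard fact that Eilenberg--Moore (co)algebras for an accessible (co)monad on a locally presentable category form a locally presentable category. Your two closing caveats --- that \cref{thm:monadalgebras} as stated covers only finitary (co)monads and must be replaced by the general accessible version, and that the index of accessibility $\lambda$ need not be preserved --- are both well taken, and are in fact glossed over in the paper's own proof, which simply asserts that $A\odot\mi$ ``preserves filtered colimits.''
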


\begin{proof}
By \cref{_AModmonadic_CComodcomonadic}, $_{A}\Mod(\dc{D})$ is monadic over the category $\dc{D}_{1}^{X}$, where the monad is $A\odot -$. The latter 
functor preserves filtered colimits and the category $\dc{D}_{1}^{X}$ is locally presentable (\cref{prop:lp fibers double cats}), both because the double category $\dc{D}$ is assumed 
locally presentable. The result now follows immediately, by \cref{thm:monadalgebras}.
For $_{C}\Comod(\dc{D})$ the argument is similar, since in this case the relevant endofunctor of $\dc{D}_1^{Z}$ exhibiting comonadicity is $-\odot C$, 
which also preserves filtered colimits by local presentability of $\dc{D}$. 

Finally, for the fixed-domain, fixed-(co)monad categories, the result follows again by \cref{prop:lp fibers double cats} since as mentioned in the beginning of this section, $^U_A\Mod(\dc{D})$ and $^V_C\Comod(\dc{D})$ are 
categories of Eilenberg-Moore algebras and coalgebras on the hom-categories ${}^U\dc{D}_1^X$ and ${}^V\dc{D}_1^Z$ respectively.
\end{proof}

Notice that the above generalizes the first part of \cref{prop:ModComodlp}, whereas the respective result for the global category of (co)modules is work in progress.

Moving towards monoidal structure of categories of modules and comodules, as a first step we show that double functors preserve modules, similarly to \cref{prop:MonFdouble}. 

\begin{prop}\label{prop:laxdoublefunmod}
Any lax double functor $F=(F_0,F_1)\colon\dc{D}\to\dc{E}$ induces an ordinary functor
\begin{displaymath}
	\Mod F:\Mod(\dc{D})\to\Mod(\dc{E})
\end{displaymath}
between their categories of modules, by restricting 
$F_1$. Dually, any oplax double functor induces a functor $\Comod F\colon\Comod(\dc{D})\to\Comod(\dc{E})$
between the categories of comodules.
\end{prop}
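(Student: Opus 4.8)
The plan is to mimic the proof of \cref{prop:MonFdouble} by transporting module data along the lax structure cells of $F$, exactly as actions of monads are built from multiplications there. First I would recall that a lax double functor $(F,\phi,\phi_0)$ already induces $\Mnd F\colon\Mnd(\dc{D})\to\Mnd(\dc{E})$ by \cref{prop:MonFdouble}, so that for a monad $A$ its image $FA$ carries the monad structure $\big(FA,F(\mu)\phi_{A,A},F(\eta)\phi_0\big)$. The key observation is that an $A$-action $\lambda\colon A\odot M\Rightarrow M$ for a module $M\colon U\bular X$ is precisely the type of datum to which the lax constraint $\phi$ applies: I would define the image action on $FM\colon FU\bular FX$ as the globular composite
\begin{displaymath}
\begin{tikzcd}[column sep=.35in]
FU\ar[r,bul,"FM"]\ar[d,equal]\ar[drr,phantom,"\Two\phi_{M,A}"] & FX\ar[r,bul,"FA"] & FX\ar[d,equal] \\
FU\ar[rr,bul,"F(M\odot A)"']\ar[d,equal]\ar[drr,phantom,"\Two F\lambda"] && FX\ar[d,equal] \\
FU\ar[rr,bul,"FM"'] && FX
\end{tikzcd}
\end{displaymath}
that is, $F(\lambda)\circ\phi_{M,A}\colon FM\odot FA\Rightarrow FM$, vertically pasting the mapped action with the lax comparison cell, in complete analogy with the monad case.

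The main body of the argument is then to verify the two module axioms \cref{eq:moduleaxioms} for this candidate action. This is a diagrammatic calculation in $\dc{E}_1$ that uses three ingredients: the coherence (associativity) axiom for the lax constraint $\phi$ of $F$, the naturality of $\phi$ with respect to the $2$-morphisms $\mu$ and $\lambda$, and the original module axioms for $(M,\lambda)$ transported under $F_1$. The associativity axiom unwinds by inserting $\phi$ cells so that $F$ applied to the associativity square for $\lambda$ matches up with the multiplication of $FA=\big(FA,F(\mu)\phi_{A,A},\dots\big)$; the unit axiom uses $\phi_0$ together with $F(\eta)$ and the unitor coherence for $\phi$, mirroring the last step in the proof of \cref{thm:doubleadjoint} where structure cells are pasted "under" the constraint. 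For a module morphism $(\zeta,\alpha)\colon M_A\to N_B$, I would set $\Mod F(\zeta,\alpha)=(F\zeta,\Mnd F(\alpha))$ and check the compatibility \cref{eq:modulemapaxiom} for $F\zeta$ by the same bookkeeping: naturality of $\phi$ plus the fact that $F\alpha$ is a monad morphism (which holds since $\Mnd F(\alpha)$ is a monad map by \cref{prop:MonFdouble}) plus the original compatibility for $\zeta$. Functoriality of $\Mod F$ is then immediate from functoriality of $F_1$.

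For the dual statement about comodules I would simply note that an oplax double functor $G$ has oplax constraints pointing the opposite way, namely $\psi_{M,N}\colon G(M\odot N)\Rightarrow GM\odot GN$, which is exactly what is needed to transport a coaction $\gamma\colon K\Rightarrow C\odot K$ to $\psi_{C,K}\circ G(\gamma)\colon GK\Rightarrow GC\odot GK$; the comodule axioms follow by the coalgebra-side coherences of $\psi$ dually. I expect the main obstacle to be purely notational rather than conceptual: carefully orienting all the globular $2$-cells and checking that the source/target boundaries match at every pasting, since $M\colon U\bular X$ is not an endo-$1$-cell and so the relevant $\phi$ cell is $\phi_{M,A}$ rather than $\phi_{A,A}$. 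Once the boundaries are tracked correctly, every step reduces to a coherence or naturality axiom already available, so no genuinely new idea is required beyond the one already used for monads in \cref{prop:MonFdouble}.
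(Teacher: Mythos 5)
Your proposal is correct and follows the paper's proof exactly: the action on $FM$ is defined by pasting the lax constraint $\phi$ with $F\lambda$, the module axioms follow from those of $(M,\lambda)$ together with naturality of $\phi,\phi_0$, a morphism $(\zeta,\alpha)$ is sent to $(F\zeta,F\alpha)$, and the comodule statement is dual. The only slip is notational: with the paper's convention $\Gr{s}(M\odot N)=\Gr{s}(N)$, the composite you draw is $A\odot M$ (not $M\odot A$), so the relevant constraint cell is $\phi\colon FA\odot FM\Rightarrow F(A\odot M)$.
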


\begin{proof}
Consider a module $M\colon U\bular X$ over the monad $A\colon X\bular X$. We can define an action of the induced monad $FA\colon FX\bular FX$
on $FM\colon FU\bular FX$ via the following composite 2-cell
\begin{displaymath}
	\begin{tikzcd}[sep=0.3in]
		FU\ar[r,bul,"FM"]\ar[d,equal]\ar[drr,phantom,"\Two \phi"] & FX\ar[r,bul,"FA"] & FX\ar[d,equal] \\
		FU\ar[rr,bul,"F(A\odot M)"]\ar[d,equal]\ar[drr,phantom,"\Two F\lambda"] && FX\ar[d,equal] \\
		FU\ar[rr,bul,"FM"'] && FX
	\end{tikzcd}
\end{displaymath}
The module axioms now follow from the corresponding axioms for the $A$-module $M$ together with the naturality of $\phi,\phi_0$. Similarly, a 
module morphism $M_{A}\to N_{B}$ consisting of a monad morphism $^f\alpha^f$ and a 2-cell $^g\beta^f$ as in \cref{def:leftmodulesdouble}
induces a module morphism $(FM)_{FA}\to (FN)_{FB}$, given by the monad map 
$F\alpha$ and the 2-cell $F\beta$.

Finally, all the above can be dualized to statements about comodules. Notice that by construction, $\Mod F$ and $\Comod F$ ``sit above'' $\Mnd F$ and $\Cmd F$, namely the image of
an $A$-module $M$ under $F$ is an $FA$-module and similarly for the maps.
\end{proof}	

Let us also note that the induced functor $\Mod F\colon\Mod(\dc{D})\to\Mod(\dc{E})$ clearly specializes to functors 
$^U\Mod(\dc{D})\to{}^{FU}\Mod(\dc{E})$, $_A\Mod(\dc{D})\to{} _{FA}\Mod(\dc{E})$ and $_A^U\Mod(\dc{D})\to{}_{FA}^{FU}\Mod(\dc{E})$, for any object 
$U\in\dc{D}_0$ and monad $A\colon X\bular X$ in $\dc{D}$.

As a particular class of examples of \cref{prop:laxdoublefunmod}, we can 
consider the pseudo double functor $\otimes:\dc{D}\times\dc{D}\to\dc{D}$ of a 
monoidal double category $\dc{D}$. Recall from \cref{prop:MonDComonDmonoidal} 
that we then have an induced monoidal structure on $\Mnd(\dc{D})$ and 
$\Cmd(\dc{D})$, obtained by restricting $\otimes_1$. Now we obtain a similar 
result for the global categories of modules and comodules in $\dc{D}$.

\begin{prop}\label{prop:ModComodmonoidal}
If $\dc{D}$ is a monoidal double category, then the categories $\Mod(\dc{D})$ and $\Comod(\dc{D})$ inherit a monoidal structure from $\dc{D}_1$. 
Moreover, they are braided or symmetric monoidal when $\dc{D}$ is.
\end{prop}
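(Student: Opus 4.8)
The plan is to obtain the tensor product functors by applying \cref{prop:laxdoublefunmod} to the pseudo double functors $\otimes\colon\dc{D}\times\dc{D}\to\dc{D}$ and $I\colon\B{1}\to\dc{D}$ that equip $\dc{D}$ with its monoidal structure, and then to transfer the coherence constraints from $\dc{D}_1$ and $\Mnd(\dc{D})$ using the monadicity of \cref{prop:Mod(D)monadicComod(D)comonadic}. First I would record the evident isomorphism $\Mod(\dc{D}\times\dc{D})\cong\Mod(\dc{D})\times\Mod(\dc{D})$ (a module in a product double category is a pair of modules), and dually for comodules. Since $\otimes$ is a pseudo, hence in particular lax, double functor, \cref{prop:laxdoublefunmod} produces a functor $\Mod(\otimes)\colon\Mod(\dc{D})\times\Mod(\dc{D})\to\Mod(\dc{D})$; explicitly, for an $A$-module $M$ and a $B$-module $N$ it sends the pair to $M\otimes_1 N$ equipped with the $A\otimes_1 B$-action obtained by composing the interchange isomorphism $(A\otimes_1 B)\odot(M\otimes_1 N)\cong(A\odot M)\otimes_1(B\odot N)$ of \cref{eq:monoidaldoubleiso} with $\lambda\otimes_1\lambda$. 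The monoidal unit is the image under $\Mod(I)$, namely the horizontal unit $1_I$ regarded as a module over the monad $1_I$. By the final remark of \cref{prop:laxdoublefunmod} this tensor product sits above the one on $\Mnd(\dc{D})$ from \cref{prop:MonDComonDmonoidal}.

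Next I would exhibit the associativity and unit constraints. The claim is that the associator $a_{M,N,P}$ and unitors $\ell_M,r_M$ of $\dc{D}_1$ underlie module morphisms, whose accompanying monad morphisms are precisely the corresponding constraints of $\Mnd(\dc{D})$ supplied by \cref{prop:MonDComonDmonoidal}. Verifying this amounts to checking the compatibility axiom \cref{eq:modulemapaxiom} for each, which follows from the coherence relating the interchange isomorphisms \cref{eq:monoidaldoubleiso} to the monoidal associativity and unit isomorphisms of $\dc{D}_1$ -- this is the analogue, now in the presence of actions, of the verification already carried out for monads in \cref{prop:MonDComonDmonoidal}. These diagram chases are the main obstacle, though they are routine given the monoidal double category coherence.

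For the coherence axioms themselves (pentagon and triangle) I would avoid repeating the $\dc{D}_1$ computations. The forgetful functor $\Mod(\dc{D})\to\dc{D}_1\times_{\dc{D}_0}\Mnd(\dc{D})$ is monadic, hence faithful, by \cref{prop:Mod(D)monadicComod(D)comonadic}, and the pullback target is monoidal with componentwise structure (its tensor being $(M,A)\otimes(N,B)=(M\otimes_1 N,A\otimes_1 B)$, well-defined because $\Gr{t}$ is strict monoidal), with respect to which this forgetful functor is strictly monoidal and carries the constraints just constructed to the pairs of monoidal constraints of $\dc{D}_1$ and $\Mnd(\dc{D})$. Since pentagon and triangle hold in each coordinate, they hold in $\Mod(\dc{D})$ by faithfulness, which reflects equations between parallel morphisms. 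Finally, the braided or symmetric case is handled identically: when $\dc{D}$ is braided or symmetric the braiding $s_{M,N}$ of $\dc{D}_1$ underlies a module morphism over the braiding of $\Mnd(\dc{D})$ available from \cref{prop:MonDComonDmonoidal}, and the hexagon (respectively symmetry) axiom transfers along the same faithful functor. The entire argument dualizes to $\Comod(\dc{D})$ by using that $\otimes$ is also an oplax double functor together with the comonad halves of \cref{prop:laxdoublefunmod,prop:MonDComonDmonoidal,prop:Mod(D)monadicComod(D)comonadic}.
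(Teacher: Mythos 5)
Your proposal is correct and follows essentially the same route as the paper, which obtains the tensor product by applying \cref{prop:laxdoublefunmod} to the pseudo double functor $\otimes\colon\dc{D}\times\dc{D}\to\dc{D}$ and records exactly the action $(A\ot B)\odot(M\ot N)\cong(A\odot M)\ot(B\odot N)\xrightarrow{\lambda\ot\lambda}M\ot N$ with unit $1_I$. Your additional device of transferring the pentagon, triangle and hexagon axioms along the faithful (monadic) forgetful functor to $\dc{D}_1\times_{\dc{D}_0}\Mnd(\dc{D})$ is a clean way to organize the coherence checks that the paper leaves implicit.
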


Explicitly, the tensor product of an $A$-module $M\colon U\bular X$ and a $B$-module $N\colon T\bular Y$ is the $A\ot B$-module $M\otimes N\colon U\ot T\bular X\ot Y$ with action 
$$(A\ot B)\odot(M\ot N)\stackrel{(\ref{eq:monoidaldoubleiso})}{\cong}
(A\odot M)\ot(B\odot N)\xrightarrow{\lambda\ot\lambda}(M\ot N)$$
and the monoidal unit is $1_I$. Notice how this monoidal structure does not 
restrict to any of the fixed domain or fixed monad module categories in a 
straightforward way, since in return it provides functors
\begin{equation}\label{eq:restrictedtensor}
^U\Mod(\dc{D})\times {}^U\Mod(\dc{D})\to {}^{U\ot U}\Mod(\dc{D})\quad\textrm{ and }\quad
_A\Mod(\dc{D})\times {}_A\Mod(\dc{D})\to {}_{A\ot A}\Mod(\dc{D}).
\end{equation}

By \cref{prop:Mndmonfib}, in a fibrant monoidal double 
category $\dc{D}$ the categories of monads and comonads form a monoidal 
fibration and opfibration respectively, above the 
category of objects. For modules and comodules, we have the following analogous 
result for the (op)fibration of (co)modules over (co)monads established in 
\cref{prop:ModfibredoverMnd}.

\begin{prop}\label{prop:ModMndmonoidal}
 If $\dc{D}$ is a fibrant (braided/symmetric) monoidal double category, the fibration 
$\Mod(\dc{D})\to\Mnd(\dc{D})$ and opfibration $\Comod(\dc{D})\to\Cmd(\dc{D})$ are (braided/symmetric) monoidal.
\end{prop}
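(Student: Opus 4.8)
The plan is to establish the monoidal (op)fibration structure of \cref{prop:ModMndmonoidal} by verifying the two conditions of \cref{monoidalfibration}: namely that the functor $\Mod(\dc{D})\to\Mnd(\dc{D})$ is a strict monoidal functor, and that its tensor product preserves cartesian liftings. The first condition should follow almost immediately from the construction of the monoidal structures already in place. Indeed, by \cref{prop:ModComodmonoidal} the category $\Mod(\dc{D})$ inherits its monoidal structure from $\dc{D}_1$, where $M\ot N$ is an $A\ot B$-module, and by \cref{prop:MonDComonDmonoidal} the category $\Mnd(\dc{D})$ inherits its monoidal structure from $\dc{D}_1$ as well. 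Since the forgetful functor $\Mod(\dc{D})\to\Mnd(\dc{D})$ of \cref{prop:ModfibredoverMnd} sends the $A\ot B$-module $M\ot N$ to the monad $A\ot B$, which is precisely the tensor of the images $A$ and $B$, this functor strictly preserves tensor products and the unit (both coming from $\dc{D}_1$). Thus it is a strict monoidal functor, and is braided/symmetric whenever $\dc{D}$ is, by the corresponding clauses of \cref{prop:ModComodmonoidal,prop:MonDComonDmonoidal}.

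\textbf{The main work} lies in the second condition: that $\ot\colon\Mod(\dc{D})\times\Mod(\dc{D})\to\Mod(\dc{D})$ preserves cartesian liftings of the fibration over $\Mnd(\dc{D})$. I would proceed by recalling the explicit form of the cartesian liftings from \cref{eq:Modcartlifts}: for a monad map $^f\alpha^f\colon A\Rightarrow B$ and a $B$-module $N\colon T\bular Y$, the cartesian lift is $\Cart(\alpha,N)\colon \wc{f}\odot N\to N$, built by pasting the conjoint structure 2-cell $q_1$ onto $N$. Given two such cartesian liftings $\Cart(\alpha,N)$ and $\Cart(\beta,N')$ over monad maps $\alpha\colon A\Rightarrow B$, $\beta\colon A'\Rightarrow B'$ (with underlying vertical maps $f,f'$), I must show that their tensor is (globularly isomorphic to) the cartesian lift over the tensored monad map $\alpha\ot\beta\colon A\ot A'\Rightarrow B\ot B'$, whose underlying vertical map is $f\ot f'$. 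The key fact needed is that conjoints are compatible with the monoidal structure: one has a globular isomorphism $\wc{f}\ot\wc{f'}\cong\wc{f\ot f'}$, which follows because $\ot\colon\dc{D}\times\dc{D}\to\dc{D}$ is a pseudo double functor and hence preserves conjoints by \cref{lem:fibrantproperties}(v), applied to the vertical 1-cell $(f,f')$ in $\dc{D}\times\dc{D}$ whose conjoint is $(\wc{f},\wc{f'})$.

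\textbf{Putting this together}, I would compute
\begin{displaymath}
(\wc{f}\odot N)\ot(\wc{f'}\odot N')\stackrel{\cref{eq:monoidaldoubleiso}}{\cong}(\wc{f}\ot\wc{f'})\odot(N\ot N')\cong\wc{f\ot f'}\odot(N\ot N'),
\end{displaymath}
using first the interchange isomorphism of the monoidal double category \cref{eq:monoidaldoubleiso} and then the conjoint-preservation isomorphism. Under this identification, the tensor $\Cart(\alpha,N)\ot\Cart(\beta,N')$ matches the structure 2-cell defining $\Cart(\alpha\ot\beta,N\ot N')$, because the pasted $q_1$ cells for $f$ and $f'$ tensor together -- again via \cref{eq:monoidaldoubleiso} and the fact that $\ot$ preserves the companion/conjoint structure 2-morphisms -- to give the $q_1$ cell for $f\ot f'$. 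Since cartesian liftings are unique up to vertical isomorphism and any 2-cell globularly isomorphic to a cartesian one is itself cartesian, this shows $\ot$ preserves cartesian liftings. The dual argument, using cocartesian liftings \cref{eq:Comodcartlifts} built from companion cells $p_2$ and the fact that $\ot$ preserves companions, establishes that $\Comod(\dc{D})\to\Cmd(\dc{D})$ is a monoidal opfibration.

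\textbf{The hard part} is the careful bookkeeping verifying that the tensor of the two pasting diagrams defining the cartesian lifts genuinely coincides, under the interchange isomorphism, with the pasting diagram defining the lift over the tensored monad map -- in particular checking that the module actions transported along these isomorphisms agree. This is the standard type of coherence verification for monoidal double categories, reducing essentially to the interchange law \cref{eq:monoidaldoubleiso} and the functoriality of $\ot$ on the companion/conjoint structure cells, but it requires attention to the precise placement of the $q_1$ (respectively $p_2$) cells. I expect no genuine obstacle here, only routine diagrammatic computation, which is why I would present it as a sketch rather than grinding through every pasting equality.
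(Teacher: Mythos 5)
Your proposal is correct and follows essentially the same route as the paper: strict monoidality is immediate from the inherited tensor products, and preservation of cartesian liftings is obtained by composing the interchange isomorphism \cref{eq:monoidaldoubleiso} with the canonical isomorphism $\wc{f}\otimes\wc{g}\cong\wc{f\otimes g}$ (which the paper also invokes, for any fibrant double category) to identify $(\wc{f}\odot M)\otimes(\wc{g}\odot N)$ with $\wc{f\otimes g}\odot(M\otimes N)$ compatibly with the lifting 2-cells. The paper likewise treats the final coherence check as routine and dualizes for comodules, so no further commentary is needed.
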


\begin{proof}
To verify \cref{monoidalfibration}, first notice that all involved categories inherit their monoidal product from $\dc{D}$. By 
construction, the underlying monad of $M_A\ot N_B$ is $A\ot B$, the tensor of 
the underlying monads, hence both functors are strict monoidal. Moreover, the 
tensor preserves cartesian liftings: indeed, consider monad morphisms 
$^{f}\alpha^{f}\colon A'\to A$ and $^{g}\beta^{g}\colon B'\to B$ and modules 
$M_A$ and $N_B$. Then the pair of cartesian liftings $\Cart(\alpha,M)$ and 
$\Cart(\beta,N)$ given in \cref{eq:Modcartlifts} gets tensored as the top arrow below 
\begin{displaymath}
	\xymatrix @C=.6in @R=.3in
	{(\wc{f}\odot M)\otimes(\wc{g}\odot 
N)\ar[rr]^{\qquad\quad\Cart(\alpha,M)\otimes\Cart(\beta,N)}
		\ar @{-->}[d]_-{\cong} && M\otimes N\ar @{.>}[dd] &\\
		\wc{(f\otimes g)}\odot(M\otimes 
N)\ar[urr]_-{\quad\Cart(\alpha\otimes\beta,M\otimes N)}\ar @{.>}[d]
		&&& \textrm{in }\Mod(\dc{D}) \\
		A'\otimes B'\ar[rr]_-{\alpha\otimes\beta} && A\otimes B & \textrm{in 
}\Mnd(\dc{D})}
\end{displaymath}
The vertical isomorphism on the left-hand side is the 
interchange isomorphism composed with the canonical 
isomorphism $\wc{f}\otimes\wc{g}\cong\wc{f\otimes g}$ in any fibrant double category. It then follows that the triangle in the above diagram commute,
and thus $\Mod(\dc{D})\to\Mnd(\dc{D})$ is a monoidal fibration. Braiding and symmetry are easily verified to agree with the structure.

Dual arguments establish that $\Comod(\dc{D})\to\Cmd(\dc{D})$ is a monoidal opfibration.
\end{proof}

Using a similar methodology, we can also verify that the source bifibrations of 
(co)modules over the category of objects are 
monoidal.

\begin{prop}\label{prop:ModD0monfib}
If $\dc{D}$ is a fibrant monoidal double category, the bifibrations 
$\Gr{s}\colon\Mod(\dc{D})\to\dc{D}_0$ and $\Gr{s}\colon\Comod(\dc{D})\to\dc{D}_0$ are 
monoidal.
\end{prop}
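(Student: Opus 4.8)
The plan is to mirror the structure of the proof of \cref{prop:ModMndmonoidal}, but now for the source bifibration $\Gr{s}\colon\Mod(\dc{D})\to\dc{D}_0$ of \cref{prop:sourcetargetbifib} rather than the underlying-monad fibration. To verify the clauses of \cref{monoidalfibration}, I must establish three things: that $\Gr{s}$ is a strict monoidal functor, that the monoidal product $\otimes_1$ preserves cartesian liftings, and (since it is a bifibration) that it preserves cocartesian liftings too. The strict monoidality is immediate from \cref{prop:ModComodmonoidal}: the tensor of an $A$-module $M\colon U\bular X$ and a $B$-module $N\colon T\bular Y$ has source $U\ot T$, so $\Gr{s}(M\ot N)=U\ot T=\Gr{s}(M)\ot\Gr{s}(N)$, and the unit $1_I$ has source $I$, matching the monoidal unit of $\dc{D}_0$.

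For the preservation of cartesian liftings, recall from the proof of \cref{prop:sourcetargetbifib} that the cartesian lift of a vertical map $f\colon U'\to U$ at an $A$-module $N\colon U\bular X$ is the $A$-module $N\odot\wh{f}\colon U'\bular X$, with cartesian morphism built from the companion structure cell. First I would take two such cartesian liftings, for $f\colon U'\to U$ at $M_A$ and $g\colon T'\to T$ at $N_B$, and tensor their cartesian morphisms $N\odot\wh{f}$-$\to N$ and $M\odot\wh{g}$-$\to M$ together. Then, exactly paralleling the triangle in the proof of \cref{prop:ModMndmonoidal}, I would exhibit the canonical comparison isomorphism
\begin{displaymath}
(M\odot\wh{f})\ot(N\odot\wh{g})\cong(M\ot N)\odot\widehat{(f\ot g)}
\end{displaymath}
which arises from the interchange isomorphism \cref{eq:monoidaldoubleiso} together with the canonical isomorphism $\wh{f}\ot\wh{g}\cong\widehat{f\ot g}$ valid in any fibrant monoidal double category (a consequence of \cref{lem:fibrantproperties} and strict monoidality of companions). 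This identifies the tensored cartesian morphism with the cartesian lift of $f\ot g$ at $M\ot N$, proving preservation.

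The cocartesian case is entirely dual, using that the source bifibration's cocartesian lifts are given by $\mi\odot\wc{f}$ (the reindexing functors $\mi\odot\wc{f}$ from \cref{prop:sourcetargetbifib}) and the companion-conjoint compatibility $\wc{f}\ot\wc{g}\cong\widehat{\phantom{x}}\widecheck{f\ot g}$. For comodules the argument transfers verbatim, since $\Gr{s}\colon\Comod(\dc{D})\to\dc{D}_0$ is again a bifibration with the same shape of (co)cartesian lifts. Finally, braiding and symmetry, when present in $\dc{D}$, are inherited by $\Mod(\dc{D})$ and $\Comod(\dc{D})$ by \cref{prop:ModComodmonoidal} and are strictly preserved by $\Gr{s}$ for the same reason as strict monoidality. \textbf{The main obstacle} I anticipate is purely bookkeeping: verifying that the comparison isomorphism on the source objects is genuinely compatible with the module \emph{actions} on both sides (so that it is an isomorphism in $\Mod(\dc{D})$ and not merely in $\dc{D}_1$), which requires unwinding the action on $(M\ot N)\odot\widehat{f\ot g}$ through the coherence of \cref{eq:monoidaldoubleiso} and checking it matches the tensor of the two restricted actions — a coherence calculation rather than a conceptual difficulty.
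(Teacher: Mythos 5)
Your proposal is correct and follows essentially the same route as the paper's proof: strict monoidality of $\Gr{s}$ is immediate from the construction of the tensor product on $\Mod(\dc{D})$, and preservation of (co)cartesian liftings is obtained from the canonical comparison isomorphism $(M\odot\wh{f})\otimes(N\odot\wh{g})\cong(M\otimes N)\odot\wh{(f\otimes g)}$ built from the interchange isomorphism \cref{eq:monoidaldoubleiso} and the compatibility of companions (resp.\ conjoints) with $\otimes$, with the opfibration and comodule cases handled analogously. The paper's proof is equally brief and, relying on the uniqueness clause of the universal property of the lifting, does not spell out the action-compatibility check you flag as the main bookkeeping obstacle.
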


\begin{proof}
Consider the source fibration $\Mod(\dc{D})\to\dc{D}_0$ of \cref{prop:sourcetargetbifib}. Indeed, the tensor 
product of two modules $M_A\ot N_B\colon U\ot T\bular X\ot Y$ is mapped to 
$U\ot T$ hence the fibration is a strict monoidal functor. Moreover, for 
$M_A\colon U\bular X$, $N_B\colon T\bular Y$ and $f\colon U'\to U, g\colon 
T'\to T$, there is a unique globular isomorphism  
\begin{displaymath}
\xymatrix @C=.6in @R=.3in{(M\odot\wh{f})\otimes 
(N\odot\wh{g})\ar[rr]^{\qquad\quad\Cart(f,M)\otimes\Cart(g,N)}
\ar @{-->}[d]_-{\cong} && M\otimes N\ar @{.>}[dd] &\\
(M\otimes N)\odot\wh{(f\otimes 
g)}\ar[urr]_-{\quad\Cart(f\otimes g,M\otimes N)}\ar @{.>}[d]
&&& \textrm{in }\Mod(\dc{D}) \\
U'\otimes T'\ar[rr]_-{f\otimes g} && U\otimes T & \textrm{in 
}\dc{D}_0}
\end{displaymath}
coming from the monoidal double category structure, which makes the triangle 
commute thus establishing that the tensor product preserves liftings. The other three structures -- monoidality of the source opfibration of modules, and of the source fibration and opfibration of comodules -- are established analogously.
\end{proof}

It was mentioned above that fixed-domain module categories are not monoidal in general, since 
the tensor product functor induces functors between modules with different 
domains as in \cref{eq:restrictedtensor}. For our purposes, we investigate 
certain assumptions under which fixed-domain (co)module categories of monoidal double categories do inherit the 
tensor product.

\begin{lem}\label{lem:lemmaabove}
If $\dc{D}$ is a fibrant double category and $T\cong U$ as objects in $\dc{D}_0$, then there is an (adjoint) equivalence between the respective categories of 
fixed-domain modules $^U\Mod(\dc{D})\simeq{}^T\Mod(\dc{D})$. 
\end{lem}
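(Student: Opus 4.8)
The plan is to exploit the fibration structure of the source functor $\Gr{s}\colon\Mod(\dc{D})\to\dc{D}_0$ established in \cref{prop:sourcetargetbifib}. Since $^U\Mod(\dc{D})$ and $^T\Mod(\dc{D})$ are precisely the fibers of this bifibration above $U$ and $T$ respectively, an isomorphism $\theta\colon T\cong U$ in $\dc{D}_0$ should induce a reindexing functor between these fibers that is an equivalence. Concretely, I would first take the given isomorphism $\theta\colon T\simrightarrow U$ in $\dc{D}_0$ and consider its companion $\wh{\theta}\colon T\bular U$ and conjoint $\wc{\theta}\colon U\bular T$, which by \cref{fibr-ii} have invertible structure 2-morphisms since $\theta$ is an isomorphism.

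The key construction is the pair of reindexing functors. Recalling from the proof of \cref{prop:sourcetargetbifib} that the source bifibration has reindexing functors of the form $\mi\odot\wh{f}$ (cartesian) and $\mi\odot\wc{f}$ (cocartesian), I would define the candidate equivalence as $\mi\odot\wh{\theta}\colon {}^U\Mod(\dc{D})\to{}^T\Mod(\dc{D})$, which sends a $B$-module $N\colon U\bular Y$ to $N\odot\wh{\theta}\colon T\bular Y$ with the action described in \cref{prop:sourcetargetbifib}; note this stays an $B$-module since horizontal composition on the right by $\wh{\theta}$ does not disturb the left $B$-action. Its pseudo-inverse would be $\mi\odot\wc{\theta}$ going the other way. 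The adjunction $(\mi\odot\wc{\theta})\dashv(\mi\odot\wh{\theta})$ from \cref{globalvslocal} restricts to these fixed-monad/fixed-domain module subcategories, and I would then upgrade this adjunction to an adjoint equivalence.

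The crux is to verify that the unit and counit of this adjunction are isomorphisms. This reduces to showing that $\wh{\theta}\odot\wc{\theta}\cong 1_U$ and $\wc{\theta}\odot\wh{\theta}\cong 1_T$, which is exactly the content of \cref{lem:fibrantproperties}(iii): for an isomorphism $\theta$, the adjunction $\wh{\theta}\dashv\wc{\theta}$ in the horizontal bicategory $\ca{H}(\dc{D})$ becomes an adjoint equivalence. I would use these globular isomorphisms to produce the required natural isomorphisms $(\mi\odot\wh{\theta})\circ(\mi\odot\wc{\theta})\cong\mi\odot(\wh{\theta}\odot\wc{\theta})\cong\mi\odot 1_U\cong\mathrm{id}$, invoking the associativity and unit coherence isomorphisms of the double category, and similarly on the other side. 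I expect the main technical obstacle to be checking that these globular isomorphisms at the level of underlying horizontal 1-cells are compatible with the module actions, i.e. that they assemble into \emph{module} morphisms rather than mere 2-morphisms; this amounts to a routine but careful diagram pasting using the action axioms \cref{eq:moduleaxioms} together with the invertible structure cells $p_1,p_2,q_1,q_2$ of the companion and conjoint. Since everything takes place in a single fiber over a fixed monad and the module actions are preserved under the reindexing by construction, this compatibility should follow formally, giving the desired adjoint equivalence $^U\Mod(\dc{D})\simeq{}^T\Mod(\dc{D})$.
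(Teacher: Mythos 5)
Your proposal is correct and follows essentially the same route as the paper: both take the reindexing functors $\mi\odot\wh{\theta}$ and $\mi\odot\wc{\theta}$ of the source bifibration from \cref{prop:sourcetargetbifib} and deduce the adjoint equivalence from the fact that $\wh{\theta}\dashv\wc{\theta}$ is an adjoint equivalence in $\ca{H}(\dc{D})$ when $\theta$ is invertible (\cref{lem:fibrantproperties}). The compatibility of the globular isomorphisms with the module actions, which you flag as the remaining technical point, is indeed the only check the paper leaves implicit.
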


\begin{proof}
If $f\colon T\xrightarrow{\sim} U$ is the vertical isomorphism, then the induced functor 
$\mi\odot\wh{f}\colon{}^U\Mod(\dc{D})\to{}^T\Mod(\dc{D})$ is an (adjoint) equivalence with adjoint $\mi\odot\wc{f}$, essentially because $\wh{f}\dashv\wc{f}$ is an adjoint 
equivalence in the horizontal bicategory by \cref{lem:fibrantproperties}, namely $\wh{f}\odot\wc{f}\cong1_U$ and $\wc{f}\odot\wh{f}\cong1_T$ via globular 2-cells in $\dc{D}_1$. 
\end{proof}

Using this equivalence, we can restrict the tensor product functor accordingly 
and obtain the following result for modules with domain the monoidal 
unit $I$ -- and not an arbitrary object $U\in\dc{D}_0$. This is due to the fact that $1_I\colon I\bular I$ is a monoidal unit in $\dc{D}_1$.

\begin{prop}\label{prop:IModmonoidal}
For a fibrant monoidal double category $\dc{D}$, $^I\Mod(\dc{D})$ 
and $^I\Comod(\dc{D})$ obtain a monoidal structure. It is braided or symmetric when $\dc{D}$ is braided or symmetric monoidal.
\end{prop}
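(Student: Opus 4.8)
The plan is to use the established monoidal structure on the global category $\Mod(\dc{D})$ from \cref{prop:ModComodmonoidal} together with the equivalence $^I\Mod(\dc{D})\simeq{}^{I\ot I}\Mod(\dc{D})$ supplied by \cref{lem:lemmaabove}. The obstacle to making $^U\Mod(\dc{D})$ monoidal in general, as noted in \cref{eq:restrictedtensor}, is that the tensor of two modules with domain $U$ lands in $^{U\ot U}\Mod(\dc{D})$ rather than back in $^U\Mod(\dc{D})$; the key observation that unblocks the case $U=I$ is that the monoidal unit of $\dc{D}_0$ satisfies $I\ot I\cong I$, so that $^{I\ot I}\Mod(\dc{D})$ is equivalent to $^I\Mod(\dc{D})$.

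Concretely, first I would recall from \cref{prop:ModComodmonoidal} that tensoring restricts to a functor
\begin{displaymath}
\ot\colon{}^I\Mod(\dc{D})\times{}^I\Mod(\dc{D})\to{}^{I\ot I}\Mod(\dc{D}),
\end{displaymath}
sending an $A$-module $M\colon I\bular X$ and a $B$-module $N\colon I\bular Y$ to the $A\ot B$-module $M\ot N\colon I\ot I\bular X\ot Y$. Next I would invoke \cref{lem:lemmaabove} with the canonical unit isomorphism $\lambda_I=\rho_I\colon I\ot I\xrightarrow{\sim}I$ in $\dc{D}_0$ (which exists because $\dc{D}_0$ is monoidal), obtaining an adjoint equivalence $E\colon{}^{I\ot I}\Mod(\dc{D})\simeq{}^I\Mod(\dc{D})$ given by $\mi\odot\wh{f}$ for $f$ the said isomorphism. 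The tensor product on $^I\Mod(\dc{D})$ is then defined as the composite $E\circ\ot$, and the monoidal unit is $1_I\colon I\bular I$ regarded as a module over the trivial monad, which already is the unit of $\dc{D}_1$ and has domain $I$ so lives in $^I\Mod(\dc{D})$.

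The associator, unitors, and coherence would be inherited from those of $\Mod(\dc{D})$ transported along the equivalence $E$: since $E$ is an equivalence and the underlying tensor on $\Mod(\dc{D})$ is already coherent by \cref{prop:ModComodmonoidal}, the transported structure automatically satisfies the pentagon and triangle axioms. The only point requiring care is that the structure isomorphisms $(M\ot N)\odot(M'\ot N')\cong(M\odot M')\ot(N\odot N')$ from \cref{eq:monoidaldoubleiso}, which govern the action on the tensored module, remain compatible after restricting all domains to $I$ and applying $E$; this is where the globular isomorphisms $\wh{f}\odot\wc{f}\cong1_I$ from the adjoint equivalence in the horizontal bicategory enter, ensuring the coherence cells of $\dc{D}$ descend. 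The braided and symmetric cases follow immediately, since when $\dc{D}$ is braided or symmetric the tensor on $\Mod(\dc{D})$ is already braided or symmetric by \cref{prop:ModComodmonoidal}, and transporting along $E$ preserves this.

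The hard part will be verifying that the transported coherence data is well-defined independently of the choices involved, namely that the equivalence $E$ interacts correctly with the interchange isomorphisms of the monoidal double category so that $E\circ\ot$ is genuinely a coherent monoidal product and not merely a functor with a tensor-like output. I expect this to reduce, after unwinding, to the coherence already guaranteed by \cref{lem:fibrantproperties} for companions and conjoints of the isomorphism $I\ot I\cong I$, together with the monoidal coherence of $\dc{D}$ itself, so no genuinely new calculation is needed beyond transporting established structure along an equivalence. The comodule case is entirely dual.
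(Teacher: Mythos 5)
Your proposal is correct and takes essentially the same route as the paper: the tensor on $^I\Mod(\dc{D})$ is defined as the restricted tensor of \cref{eq:restrictedtensor} followed by the equivalence of \cref{lem:lemmaabove} induced by $\rho_I=\lambda_I\colon I\ot I\cong I$, with unit $1_I$, and the unitors and braiding are transported using the invertible structure 2-cells guaranteed by \cref{lem:fibrantproperties} together with the coherence of the monoidal double category. The only slip is directional: with $f=\rho_I\colon I\ot I\to I$ the equivalence ${}^{I\ot I}\Mod(\dc{D})\to{}^I\Mod(\dc{D})$ is $\mi\odot\wc{f}$ (the conjoint), not $\mi\odot\wh{f}$, but since $f$ is invertible this is immaterial.
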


\begin{proof}
Take the canonical vertical isomorphism $\rho_I=\lambda_I\colon I\ot I\cong I$ in the monoidal 
category $\dc{D}_0$. For the desired monoidal structure, we define the unit to be $1_I\colon I\bular I$ and the 
tensor product to be 
\begin{displaymath}
\diamond\colon{}^I\Mod(\dc{D})\times{}^I\Mod(\dc{D})\xrightarrow{\cref{eq:restrictedtensor}}{}^{I\ot 
I}\Mod(\dc{D})\simeq{}^I\Mod(\dc{D})
\end{displaymath}
where the equivalence is $\mi\odot\wc{\rho_I}$. It can be verified this is a monoidal structure, where for example the left 
unitor $I\diamond M\cong M$ is built as
\begin{displaymath}
 \begin{tikzcd}[column sep=.6in,row sep=.5in]
I\ar[r,bul,"\wc{\rho_I}"]\ar[d,equal]\ar[dr,phantom,"\stackrel{q_2}{\cong}"] & I\ot 
I\ar[dr,phantom,"\stackrel{\rho_M}{\cong}"]\ar[r,bul,"M\ot 1_I"]\ar[d,"\rho"description] & X\ot 
I\ar[d,"\rho_X"] \\
I\ar[r,bul,"1_I"'] & I\ar[r,bul,"M"'] & X
 \end{tikzcd}
\end{displaymath}
where the structure 2-morphism $q_2$ of the conjoint of $\rho_I$ is invertible by \cref{lem:fibrantproperties} and the right hand side isomorphism is the right unitor of the 
monoidal double category $\dc{D}$.

For the braiding or symmetry, 
we can build a braiding as in
\begin{displaymath}
 \begin{tikzcd}[column sep=.6in] 
I\ar[r,bul,"\wc{\rho_I}"]\ar[d,equal]\ar[dr,phantom,"\cong"] & I\ot 
I\ar[dr,phantom,"\tau"]\ar[r,bul,"M\ot N"]\ar[d,"\tau"description] & X\ot Y\ar[d,"\Two\tau"] \\
I\ar[r,bul,"\wc{\rho_I}"'] & I\ot I\ar[r,bul,"N\ot M"'] & Y\ot X
 \end{tikzcd}
\end{displaymath}
where $\tau$ is the braiding of $\Mod(\dc{D})$ and the left-hand side isomorphism comes from the axioms the monoidal category $\dc{D}_0$.
Using analogous arguments, ${}^I\Comod(\dc{D})$ can be shown to be monoidal.
\end{proof}

Regarding an arbitrary object $U$ as the fixed domain, one could consider
precomposition along companions or conjoints of some appropriate vertical map $U\ot U\rightleftarrows U$ to obtain composite functors 
$^U\Mod(\dc{D})\times{}^U\Mod(\dc{D})\xrightarrow{\otimes}{}^{U\ot U}\Mod(\dc{D})\to {}^U\Mod(\dc{D})$ in an effort to construct a tensor product in some coherent way.
It turns out that in a certain class of monoidal double categories, fixed-domain modules and comodules always 
inherit a monoidal structure like that.

\begin{prop}\label{prop:cartesianD0}
Suppose that $\dc{D}$ is a fibrant monoidal double category, whose category of objects 
is cartesian monoidal $(\dc{D}_0,\times,1)$. Then all fixed-domain module 
and comodule categories ${}^U\Mod(\dc{D})$ and ${}^V\Comod(\dc{D})$ for arbitrary objects $U,V$ are monoidal. Moreover, they are braided/symmetric if $\dc{D}$ is.
\end{prop}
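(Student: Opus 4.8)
The plan is to reduce the cartesian case to the already-established monoidal unit case of \cref{prop:IModmonoidal} by exploiting the structure of a cartesian monoidal $\dc{D}_0$. The key observation is that when $(\dc{D}_0,\times,1)$ is cartesian, every object $U$ carries a canonical comonoid structure with diagonal $\Delta_U\colon U\to U\times U$ and unique map $!_U\colon U\to 1$ to the terminal object, and these are coherent in the strong sense that every vertical map is automatically a comonoid morphism. First I would use these diagonals to build a tensor product on $^U\Mod(\dc{D})$ as the composite
\begin{displaymath}
\diamond\colon{}^U\Mod(\dc{D})\times{}^U\Mod(\dc{D})\xrightarrow{\cref{eq:restrictedtensor}}{}^{U\times U}\Mod(\dc{D})\xrightarrow{\mi\odot\wh{\Delta_U}}{}^U\Mod(\dc{D})
\end{displaymath}
where the second functor is precomposition with the companion $\wh{\Delta_U}$ of the diagonal, using the reindexing functor of the source bifibration from \cref{prop:sourcetargetbifib} (concretely $\mi\odot\wh{\Delta_U}$ is the cartesian reindexing along $\Delta_U$). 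For the unit, I would take $1_U\odot\wh{!_U}$, i.e. the reindexing of the monoidal unit $1_{1}\colon 1\bular 1$ along $!_U\colon U\to 1$, so that the unit object is the appropriate ``terminal'' horizontal cell with domain $U$.

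Next I would verify the coherence isomorphisms (associator, unitors, and braiding/symmetry). The hard part will be establishing these coherences, since they must be assembled from three separate sources: the coassociativity and counit laws of the cartesian comonoid structure on $U$ (giving identities like $(\Delta_U\times\id)\Delta_U=(\id\times\Delta_U)\Delta_U$ and $(!_U\times\id)\Delta_U=\id$ up to the canonical isomorphisms), the functoriality of companions under composition from \cref{lem:fibrantproperties}(iv) (giving globular isomorphisms $\wh{g}\odot\wh{f}\cong\wh{gf}$), and the existing monoidal coherence of $\Mod(\dc{D})$ from \cref{prop:ModComodmonoidal}. The strategy is to show these three layers interlock: each instance of cartesian comonoid coherence translates, via the companion functoriality isomorphisms, into a globular isomorphism between the relevant reindexed tensor products, and the underlying $\dc{D}_1$-level coherence is inherited from $\Mod(\dc{D})$ being monoidal. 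I would present the left unitor explicitly, as in the proof of \cref{prop:IModmonoidal}, by pasting the invertible structure 2-cell $q_2$ for the conjoint/companion of $!_U$ together with the counit identity, but I would state rather than grind through the associator pentagon.

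For the braiding and symmetry, I would build the braiding isomorphism $M\diamond N\cong N\diamond M$ by combining the braiding $\tau$ of $\Mod(\dc{D})$ with the key fact that in a cartesian $\dc{D}_0$ the diagonal is \emph{symmetric}, namely $\tau_{U,U}\circ\Delta_U=\Delta_U$ where $\tau_{U,U}$ is the symmetry of $\dc{D}_0$; this identity is precisely what forces the reindexing along $\wh{\Delta_U}$ to commute with the swap, yielding a well-defined braiding on $^U\Mod(\dc{D})$. The symmetry axiom then follows from the symmetry of $\dc{D}$ together with the idempotence of the swap. Finally, I would note that the comodule case $^V\Comod(\dc{D})$ is dual: one uses the source bifibration of comodules (also established in \cref{prop:sourcetargetbifib}) and the same cartesian comonoid structure on $V$, with all the companion-functoriality arguments carried over verbatim, so the result for comodules follows by the evident dualization.
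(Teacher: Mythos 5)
Your construction is exactly the one the paper uses -- the tensor product $(\mi\odot\wh{\Delta_U})\circ\otimes$ and the unit obtained by reindexing $1_1$ along $\wh{!_U}$ -- but the two proofs justify coherence differently. The paper does not verify the associator, unitors or braiding by hand at all: it observes that the source fibration $\Mod(\dc{D})\to\dc{D}_0$ is monoidal (\cref{prop:ModD0monfib}) and then invokes the monoidal Grothendieck construction, specifically the theorem that a monoidal fibration over a \emph{cartesian} base is equivalent to a pseudofunctor into monoidal categories, so that the fibres automatically inherit (braided) monoidal structures with precisely your formulas. Your route instead re-proves the relevant special case of that theorem directly, interlocking the comonoid coherences of $\Delta_U$ and $!_U$, the companion isomorphisms $\wh{g}\odot\wh{f}\cong\wh{gf}$ and $\wh{f}\otimes\wh{g}\cong\wh{f\otimes g}$, and the interchange/coherence of the monoidal $\Mod(\dc{D})$; your identification of $\tau_{U,U}\circ\Delta_U=\Delta_U$ as the source of the braiding is likewise the correct mechanism hidden inside the cited theorem. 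What your approach buys is self-containedness and an explicit view of where each coherence cell comes from (in the spirit of \cref{prop:IModmonoidal}); what the paper's approach buys is brevity and the conceptual placement of the result inside the general theory of monoidal fibrations, with the pentagon and hexagon checks delegated to the literature. The only slips are cosmetic: your unit should read $1_1\odot\wh{!_U}$ rather than $1_U\odot\wh{!_U}$ (your own gloss makes clear this is what you meant), and a full write-up would need to make explicit the use of the interchange isomorphism \cref{eq:monoidaldoubleiso} when reassociating $((M\otimes N)\odot\wh{\Delta_U})\otimes P$.
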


\begin{proof}
According to the so-called monoidal Grothendieck construction, since the source 
fibration $\Mod(\dc{D})\to\dc{D}_0$ is monoidal by \cref{prop:ModD0monfib}, \cite[Thm.~3.13]{MonGroth} ensures that it  
corresponds to a lax monoidal structure on the corresponding pseudofunctor 
$\dc{D}_0^\op\to\Cat$ which sends an object $U$ to the category of fixed-domain modules $^U\Mod(\dc{D})$ and
a vertical map $f$ to the functor $\mi\odot\wh{f}$ (see \cref{prop:sourcetargetbifib}). The lax monoidal structure maps are precisely the functors $\phi_{U,T}\colon {}^{U}\Mod(\dc{D})\times {}^T\Mod(\dc{D})\to {}^{U\ot 
T}\Mod(\dc{D})$ discussed above, along with $\phi_0\colon \mathbf{1}\to 
{}^I\Mod(\dc{D})$ choosing $1_I$.

If the base monoidal category $\dc{D}_0$ is cartesian monoidal, 
by \cite[Thm.~12.7]{Framedbicats} or \cite[Thm.~4.2]{MonGroth} the monoidal structure of the fibration further 
corresponds to a monoidal structure on the fibres, via
\begin{displaymath}
^U\Mod(\dc{D})\times ^U\Mod(\dc{D})\to {}^{U\times 
U}\Mod(\dc{D})\xrightarrow{\mi\odot\wh{\Delta}} {}^U\Mod(\dc{D}),\qquad 
\mathbf{1}\to {}^1\Mod(\dc{D})\xrightarrow{\mi\odot\wh{!}}{}^U\Mod(\dc{D})
\end{displaymath}
where $\Delta\colon U\to U\times U$ is the diagonal and $!\colon U\to 1$ the 
unique map to the terminal object. Therefore in that case, all fixed-domain 
module categories are indeed monoidal. 

Analogous arguments hold for $^V\Comod(\dc{D})$, again using the monoidal source fibration $\Comod(\dc{D})\to\dc{D}_0$.
Finally, when $\dc{D}$ is braided monoidal then so are the monoidal (op)fibrations in question, 
hence again by \cite[Thm.~12.7]{Framedbicats} the fibers ${}^U\Mod(\dc{D})$ and $^V\Comod(\dc{D})$ become braided monoidal.
\end{proof}

The condition of the above proposition is satisfied, for example, in any cartesian double category in the sense of \cite{Aleiferi}. Moreover, it holds in the case of $\VMMat$ whose category of objects is just sets.

\begin{cor}\label{cor:VModmonoidal}
Suppose that $\ca{V}$ is a braided monoidal category with small coproducts preserved by $\ot$ in each variable. Then the categories of two-indexed modules and comodules $\Mod(\VMMat)$ and $\Comod(\VMMat)$, as well as the categories
of enriched modules $\VMod$ and comodules $\VComod$ are monoidal. 
If $\ca{V}$ is symmetric monoidal, then so are all four categories of modules and comodules.
\end{cor}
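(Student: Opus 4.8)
The plan is to recognize that $\VMMat$ satisfies all the hypotheses of the abstract propositions established earlier in this section, so that the four monoidality claims follow by direct application. First I would recall that under the stated assumptions on $\ca{V}$, the double category $\VMMat$ is fibrant by \cref{ex:VMMatfibrant} and monoidal by \cref{ex:VMMatmonoidal}; moreover, by that same example it becomes a \emph{symmetric} monoidal double category precisely when the braiding of $\ca{V}$ is a symmetry, the extra double-transformation axiom failing for a mere braiding.

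For the two global categories $\Mod(\VMMat)$ and $\Comod(\VMMat)$, I would then apply \cref{prop:ModComodmonoidal} verbatim: since $\VMMat$ is monoidal, both inherit a monoidal structure from $\VMMat_1$, and since $\VMMat$ is symmetric monoidal when $\ca{V}$ is symmetric, the same proposition upgrades these to symmetric monoidal categories in that case.

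For the enriched categories $\VMod={}^1\Mod(\VMMat)$ and $\VComod={}^1\Comod(\VMMat)$, the key observation is that the fixed domain, the singleton $1=\{*\}$, is exactly the monoidal unit $I$ of the cartesian monoidal base $\VMMat_0=\Set$. Thus these are the categories ${}^I\Mod(\VMMat)$ and ${}^I\Comod(\VMMat)$, and \cref{prop:IModmonoidal} applies directly to endow them with a monoidal structure, which becomes symmetric when $\VMMat$ (equivalently $\ca{V}$) is symmetric. Alternatively, since $\VMMat_0=\Set$ is cartesian monoidal, one may invoke the stronger \cref{prop:cartesianD0} to obtain monoidal structures on \emph{all} fixed-domain (co)module categories ${}^U\Mod(\VMMat)$ and ${}^V\Comod(\VMMat)$ at once, of which the singleton case is a special instance.

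The only point that warrants care, rather than a genuine obstacle, is the asymmetry between the braided and symmetric hypotheses: a merely braided $\ca{V}$ yields a plain monoidal (not braided) structure on all four categories, mirroring the behaviour of $\Mon(\ca{V})$ and $\Comon(\ca{V})$ recalled in \cref{sec:monoidsmodules}. This is dictated by \cref{ex:VMMatmonoidal}, where $\VMMat$ is shown to be monoidal but not braided as a double category for a non-symmetric braiding, so that \cref{prop:ModComodmonoidal,prop:IModmonoidal,prop:cartesianD0} transport only a bare monoidal structure; symmetry is the first level of commutativity that survives the passage to matrices and hence to their (co)modules.
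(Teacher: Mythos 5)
Your proposal is correct and follows essentially the same route as the paper: the two-indexed cases via \cref{prop:ModComodmonoidal}, the enriched cases via \cref{prop:IModmonoidal} after identifying the singleton with the monoidal unit of $(\Set,\times,\{*\})$, and \cref{prop:cartesianD0} as the alternative covering all fixed-domain categories at once. Your closing remark on the braided-versus-symmetric asymmetry correctly reflects the paper's \cref{ex:VMMatmonoidal}, where $\VMMat$ fails to be a braided monoidal double category unless the braiding of $\ca{V}$ is a symmetry.
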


\begin{proof}
Under the initial assumptions on $\ca{V}$, the double category $\VMMat$ is fibrant and monoidal (\cref{ex:VMMatfibrant,ex:VMMatmonoidal}).
Hence the first part for the two-indexed (co)modules of \cref{ex:twoVMod} follows from \cref{prop:ModComodmonoidal}, and the second part follows from \cref{prop:IModmonoidal}, for $\VMod{=}{}^1\Mod(\VMMat)$ and $\VComod{=}{}^1\Comod(\VMMat)$. More explicitly, for a $\ca{V}$-module $M\colon1\tickar X$ of a $\ca{V}$-category $A\colon X\tickar X$ and $N\colon1\tickar Y$ of a $\ca{V}$-category $B\colon Y\tickar Y$, their tensor product is $M\ot N\colon 1\tickar X\ot Y$ given by $(M\ot N)(x,y)=M(x)\ot N(y)$ with an action from the $\ca{V}$-category $A\ot B$ -- which is of course the natural structure for enriched modules. The monoidal unit is given by $1(*,*)=I$ with a trivial action from the terminal $\ca{V}$-category $\mathbf{1}$. If $\ca{V}$ is in fact symmetric, then $\VMMat$ is a symmetric monoidal doube category and so categories $\VMod$ and $\VComod$ become symmetric monoidal.

In fact, since $\VMMat$ has $(\Set,\times,\{*\})$ as its vertical category, \cref{prop:cartesianD0} also applies and gives any $^U\Mod(\dc{D})$ and $^V\Comod(\dc{D})$ a monoidal structure. In more detail, for two modules $M\colon U\tickar X$ and $M'\colon U\tickar Y$ in ${}^U\Mod(\dc{D})$, their tensor product is $M\otimes N\colon U\tickar X\times Y$ given by 
$(M\ot M')(u,(x,y))=M(u,x)\ot M'(u,y)$, whereas the monoidal unit is $J\colon U\tickar 1$ given by $J(u)=I$ for all $u\in U$.
\end{proof}

\subsection{Enrichment of modules in comodules}\label{sec:enrichedfibdouble}

We now turn towards the main goal of this work, namely establishing an enrichment of modules over monads in comodules over comonads, in the setting
of a fibrant monoidal double category which is moreover monoidal closed and locally presentable. Using results obtained in the previous section, we gradually
build up the theory required to apply \cref{thm:cotensorenrich,thm:enrichedfib} in order to show that $\Mod(\dc{D})$ is tensored and cotensored enriched in 
$\Comod(\dc{D})$, and that the fibration $\Mod(\dc{D})\to\Mnd(\dc{D})$ is enriched in the opfribration $\Comod(\dc{D})\to\Cmd(\dc{D})$ (\cref{thm:big2}). The techniques are
very analogous to those employed for the respective result including monads and comonads, \cref{thm:big1}.

Suppose that $\dc{D}$ is a monoidal closed double category (\cref{def:locclosed}). Recall that there is an induced 
functor $H\colon \Cmd(\dc{D})^\op\times\Mnd(\dc{D})\to\Mnd(\dc{D})$ between the categories of (co)monads defined as in \cref{eq:MonHdouble}, 
denoted $H$. Now due to \cref{prop:laxdoublefunmod}, we also obtain a functor
\begin{equation}\label{eq:ModHdouble}
 H\colon \Comod(\dc{D})^\op\times\Mod(\dc{D})\to\Mod(\dc{D})
\end{equation}
which maps a $C$-comodule $K\colon V\bular Z$ and an $A$-module $M\colon U\bular X$ to the $H(C,A)$-module $H(K,M)\colon $ 
$H(V,U)\bular H(Z,X)$ with action
\begin{equation}\label{eq:indact}
 \begin{tikzcd}[column sep=.5in]
H(V,U)\ar[d,equal]\ar[r,bul,"{H(K,M)}"]\ar[drr,phantom,"{\Two H_{\odot}}"] & H(Z,X)\ar[r,bul,"{H(C,A)}"] & H(Z,X)\ar[d,equal] \\
H(V,U)\ar[d,equal]\ar[rr,bul,"{H(C\odot K,A\odot M)}"]\ar[drr,phantom,"{\Two H(\gamma,\lambda)}"] && H(Z,X)\ar[d,equal] \\
H(V,U)\ar[rr,bul,"{H(K,M)}"'] && H(Z,X)
 \end{tikzcd}
\end{equation}
where $H_{\odot}$ is the structure map of the lax double functor $H$, $\gamma\colon\K\Rightarrow C\odot \K$ is the $C$-coaction and $\lambda\colon 
A\odot M\Rightarrow M$ is the $A$-action. This can be better understood by taking the adjunct composite arrow under $\mi\ot K\dashv H(K,\mi)$ in $\dc{D}_1$, namely the double
categorical analogue of \cref{eq:KMaction}:
\begin{equation}\label{eq:modunder}
\begin{tikzcd}[column sep=.7in]
H(V,U)\ot V\ar[rr,bul,"{(H(C,A)\odot H(K,M))\ot K}"]\ar[d,equal]\ar[drr,phantom,"\Two1\ot\gamma"] && H(Z,X)\ot Z\ar[d,equal] \\
H(V,U)\ot V\ar[d,equal]\ar[rr,bul,"{(H(C,A)\odot H(K,M))\ot (C\odot K)}"']\ar[drr,phantom,"\Two\tau"] && H(Z,X)\ot Z\ar[d,equal] \\
H(V,U)\ot V\ar[d,"\ev"']\ar[r,bul,"{H(K,M)\ot K}"']\ar[dr,phantom,"\Two\ev"] & H(Z,X)\ot Z\ar[dr,phantom,"\Two\ev"]\ar[r,bul,"{H(C,A)\ot C}"']\ar[d,"\ev"] & H(Z,X)\ot Z\ar[d,"\ev"] \\
U\ar[r,bul,"M"']\ar[drr,phantom,"\Two\lambda"]\ar[d,equal] & X\ar[r,bul,"A"'] & X\ar[d,equal] \\
U\ar[rr,bul,"M"']&&X 
\end{tikzcd}
\end{equation}
where $\tau$ is the interchange isomorphism of the monoidal double category.

The following result establishes that the above functor $H$ is an action of the category of comodules on the category of modules.

\begin{prop}\label{prop:ModHaction}
Suppose $\dc{D}$ is a monoidal closed double category. Then $H\colon\Comod(\dc{D})^\op\times\Mod(\dc{D})\to\Mod(\dc{D})$ is an action.
\end{prop}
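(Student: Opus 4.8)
The plan is to show that the functor $H\colon\Comod(\dc{D})^\op\times\Mod(\dc{D})\to\Mod(\dc{D})$ of \cref{eq:ModHdouble} inherits the action structure of the underlying action on the category of arrows. Recall from \cref{rmk:fixedaction} that $H_1^\bullet$ -- the internal hom on $\dc{D}_1^\bullet$ -- is an action of $\dc{D}_1^\bullet$ on itself, with structure 2-morphisms $\chi_1,\nu_1$ as in \cref{eq:actionstructuredouble}; more generally, the lax double functor $H_1\colon\dc{D}_1^\op\times\dc{D}_1\to\dc{D}_1$ induced by the monoidal closed structure is an action of the monoidal category $(\dc{D}_1,\otimes_1,1_I)$ on $\dc{D}_1$. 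The key observation is that the action of comodules on modules is built by transporting these very structure cells along the constructions of \cref{prop:laxdoublefunmod}.

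First I would exhibit the required natural isomorphisms. For comodules $K\colon V\bular Z$ over $C$ and $L\colon S\bular W$ over $D$, and a module $M\colon U\bular X$ over $A$, I need globular isomorphisms
\begin{displaymath}
\chi_{K,L,M}\colon H(K,H(L,M))\cong H(K\otimes L,M)\quad\text{and}\quad\nu_M\colon M\cong H(1_I,M)
\end{displaymath}
in $\Mod(\dc{D})$. The underlying horizontal 1-cell isomorphisms are precisely the components of the action structure of $H_1$ on $\dc{D}_1$, since $H(K,H(L,M))$ and $H(K\otimes L,M)$ have the same carrier $H(V\otimes S,U)$ by the corresponding identities $\chi_0$ for the action $H_0$ on $\dc{D}_0$ recorded in \cref{eq:actionstructuredouble}. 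So the content is entirely in checking that these underlying isomorphisms are \emph{module morphisms}, i.e.\ that they commute with the induced actions of \cref{eq:indact}. Concretely, I would verify that the $H(C,H(D,A))$-action on $H(K,H(L,M))$ is carried by $\chi$ to the $H(C\otimes D,A)$-action on $H(K\otimes L,M)$, and similarly that $\nu_M$ intertwines the trivial $H(1_I,A)$-action with the original $A$-action on $M$.

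The cleanest route, and the one I would follow, is to pass to the adjuncts under $\mi\otimes(K\otimes L)\dashv H(K\otimes L,\mi)$ as in \cref{eq:modunder}, where each induced action unwinds into a pasting of evaluation 2-cells $\mathrm{ev}$, the coactions $\gamma$, and the original action $\lambda$. Under this transpose, the compatibility of $\chi$ with the actions reduces to the already-established compatibility of $\chi_1$ (the action constraint on $\dc{D}_1$) with the evaluation cells -- precisely the content of the second diagram in \cref{eq:chiunder} together with the coassociativity/naturality built into $H_1$ being a lax double functor. The point is that the comodule coactions $\gamma$ feed into the same slot as the comonad comultiplications did in \cref{rmk:fixedaction}, so the verification that $\chi_1,\nu_1$ are monad maps there has a direct analogue showing they are module maps here. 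Finally, the pentagon and triangle coherence axioms for $\chi,\nu$ hold because they hold for the action $H_1$ on $\dc{D}_1$ (of which these are restrictions/liftings), and module morphisms are detected on underlying 2-cells; also $H$ being an action of $\Comod(\dc{D})^\op$ requires the constraints for the opposite, but as noted generally, if $*$ is an action then so is $*^\op$.

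The main obstacle will be bookkeeping the transpose computation: matching the pasting diagram of the $\chi$-transported action against the $H(C\otimes D,A)$-action after applying the interchange isomorphism $\tau$ and the structure map $H_\odot$ of the lax double functor. This is where one must carefully use that the counit (evaluation) 2-cells sit above the counits in $\dc{D}_0$ as guaranteed by \cref{eq:coev}, ensuring the source/target vertical maps of the transported action agree with $\chi_0,\nu_0$ and hence that the candidate isomorphisms are genuinely globular and module-compatible. Since all the heavy lifting -- that $H_1$ is an action and that its constraints behave well under evaluation -- is already available, the remaining work is diagrammatic and parallels the monad case of \cref{rmk:fixedaction} verbatim, so I would present it as a reduction to that case rather than grinding through every paste.
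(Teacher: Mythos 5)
Your proposal is correct and follows essentially the same route as the paper: the action constraints are the restrictions of the $\dc{D}_1$-action constraints $\psi_1,\mu_1$ of \cref{eq:chimod}, the module-morphism compatibility is checked by transposing under $\mi\ot A\dashv H(A,\mi)$ using \cref{eq:chiunder,eq:modunder}, and the coherence axioms are inherited because they already hold for arbitrary objects of $\dc{D}_1$. One small correction: these constraints are not globular -- they are invertible 2-morphisms sitting over the isomorphisms $\chi_0,\nu_0$ in $\dc{D}_0$ (so the carriers of $H(K,H(L,M))$ and $H(K\otimes L,M)$ are isomorphic rather than equal), which is consistent with what you yourself note at the end about the vertical boundaries agreeing with $\chi_0,\nu_0$.
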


\begin{proof}
Similarly to \cref{rmk:fixedaction}, since $\dc{D}_1$ is monoidal closed we have structure isomorphisms as in \cref{eq:actions} which are here written
\begin{equation}\label{eq:chimod}
\begin{tikzcd}[column sep=.7in]
H(V,H(S,T))\ar[r,bul,"{H(\K,H(\LL,N))}"]\ar[d,"\chi_0"']\ar[dr,phantom,"\Two\psi_1"] & H(Z,H(W,Y))\ar[d,"\chi_0"] \\
H(V\ot S,T)\ar[r,bul,"{H(\K\ot\LL,N)}"'] & H(Z\ot W,Y)
\end{tikzcd}\quad
\begin{tikzcd}[column sep=.6in]
T\ar[r,bul,"N"]\ar[d,"\nu_0"']\ar[dr,phantom,"\Two\mu_1"] & Y\ar[d,"\nu_0"] \\
H(I,T)\ar[r,bul,"{H(1_I,N)}"'] & H(I,Y)
\end{tikzcd}
\end{equation}
for any horizontal 1-cells $\K\colon V\bular Z$, $\LL\colon S\bular W$ and $N\colon T\bular Y$, that correspond under the adjunction to 
$\mathrm{ev}(\mathrm{ev}\ot1)$ and $\rho$ analogously to \cref{eq:chiunder}. 

It can then be verified that when $\K$ is a $C_Z$-comodule, $\LL$ is 
a $D_W$-comodule and $N$ is a $B_Y$-module for comonads $C,D$ and monad $B$, these isomorphisms are in fact module morphisms as in \cref{def:leftmodulesdouble}. 
More specifically, $\psi_1$ and $\mu_1$ commute with the 
induced module structures on the $H(C,H(D,B))$-module $H(\K,H(\LL,N))$ and the $H(C\ot D,B)$-module $H(\K\ot\LL,N)$, the $H(1_I,B)$-module $H(1,N)$ and the $B$-module $N$ respectively, with underlying monad morphisms $\chi_1$ and $\nu_1$ from \cref{eq:actionstructuredouble} between the involved monads. 
For example, the compatibility \cref{eq:modulemapaxiom} for $\psi_1$ in this case is written as 
\begin{displaymath}
\scalebox{0.7}{\begin{tikzcd}[column sep=.6in,ampersand replacement=\&]
H(V,H(S,T))\ar[d,equal]\ar[r,bul,"{H(K,H(L,N))}"]\ar[drr,phantom,"\Two"] \& H(Z,H(W,Y))\ar[r,bul,"{H(C,H(D,B))}"] \& H(Z,H(W,Y))\ar[d,equal] \\
H(V,H(S,T))\ar[rr,bul,"{H(K,H(L,N))}"]\ar[d,"\chi_0"']\ar[drr,phantom,"\Two\psi_1"] \&\& H(Z,H(W,Y))\ar[d,"\chi_0"] \\
H(V\ot S,T)\ar[rr,bul,"{H(K\ot L,N)}"'] \&\& H(Z\ot W,Y)
\end{tikzcd}=
\begin{tikzcd}[column sep=.6in,ampersand replacement=\&]
H(V,H(S,T))\ar[d,"\chi_0"']\ar[r,bul,"{H(K,H(L,N))}"]\ar[dr,phantom,"\Two\psi_1"] \& H(Z,H(W,Y))\ar[d,"\chi_0"]\ar[dr,phantom,"\Two\chi_1"]\ar[r,bul,"{H(C,H(D,B))}"] \& H(Z,H(W,Y))\ar[d,"\chi_0"] \\
H(V\ot S,T)\ar[d,equal]\ar[r,bul,"{H(K\ot L,N)}"]\ar[drr,phantom,"\Two"] \& H(Z\ot W,Y)\ar[r,bul,"{H(C\ot D,B)}"'] \& H(Z\ot W,Y)\ar[d,equal] \\
H(V\ot S,T)\ar[rr,bul,"{H(K\ot L,N)}"'] \&\& H(Z\ot W,Y)
\end{tikzcd}}
\end{displaymath}
where the unlabelled 2-cells are the actions formed according to \cref{eq:indact}. To check this axiom, one works under the adjunction $\mi\ot A\dashv H(A,\mi)$ for the monoidal closed $\dc{D}_1$
using the adjunct 2-cells as in \cref{eq:chiunder,eq:modunder} and applying various axioms like functoriality of $\ot_1$, naturality of the interchange law and the `pentagon' axiom
for interchange that holds in any monoidal double category. 

The maps $\psi_1$ and $\mu_1$ satisfy the action axioms because they do so between arbitrary objects in $\dc{D}_1$, therefore $H\colon\Comod(\dc{D})^\op\times\Mod(\dc{D})\to\Mod(\dc{D})$ is an action -- and so is its opposite $H^\op$.
\end{proof}

The following result, similarly to \cref{prop:MonHcartesian}, establishes that this functor $H$ preserves cartesian liftings of the respective 
fibrations over monads and comonads\footnote{For $\VMMat$, such a calculation is sketched in \cite[Lem.~7.7.3]{PhDChristina}. Again, without the current assumptions on a general double category,
such a result could not then be obtained in a broad context.}.

\begin{prop}\label{prop:ModHcartesian}
Suppose $\dc{D}$ is a braided monoidal closed and fibrant double category. The total functor $H$ on the top of
\begin{equation}\label{eq:HHmodfibred}
\begin{tikzcd}
\Comod(\dc{D})^\op\times\Mod(\dc{D})\ar[r,"H"]\ar[d] & \Mod(\dc{D})\ar[d] \\
\Cmd(\dc{D})^\op\times\Mnd(\dc{D})\ar[r,"H"] & \Mnd(\dc{D})
\end{tikzcd} 
\end{equation}
preserves cartesian liftings, therefore makes $(H,H)$ into a fibred 1-cell.
\end{prop}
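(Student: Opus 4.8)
The plan is to follow closely the strategy of \cref{prop:MonHcartesian}, replacing the role of the endo-cell category $\dc{D}_1^\bullet$ and its source/target fibrations over $\dc{D}_0$ by the full horizontal category $\dc{D}_1$ together with the target bifibration $\Gr{t}\colon\dc{D}_1\to\dc{D}_0$ of \cref{globalvslocal}. Since the domain of $H$ as in \cref{eq:ModHdouble} is a product of the two fibrations $\Comod(\dc{D})^\op\to\Cmd(\dc{D})^\op$ and $\Mod(\dc{D})\to\Mnd(\dc{D})$ of \cref{prop:ModfibredoverMnd}, and cartesian liftings in a product of fibrations are exactly the pairs of cartesian liftings in each factor (\cite[Prop.~8.1.14]{Handbook2}), I would show that $H$ preserves cartesian liftings in each variable separately; the general case then follows because a composite of cartesian morphisms is cartesian and the commutativity of \cref{eq:HHmodfibred} is automatic from \cref{prop:laxdoublefunmod}. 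The two partial functors to treat are $H(K,\mi)\colon\Mod(\dc{D})\to\Mod(\dc{D})$ for a fixed comodule $K_C$, and $H(\mi,M)\colon\Comod(\dc{D})^\op\to\Mod(\dc{D})$ for a fixed module $M_A$, whose underlying actions on $\dc{D}_1$ are $H_1(K,\mi)$ and $H_1(\mi,M)$.

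For the module variable, recall from \cref{eq:Modcartlifts} that the cartesian lifting of a monad morphism $^f\alpha^f\colon A\Rightarrow B$ at a $B$-module $N\colon T\bular Y$ is $\Cart(\alpha,N)\colon\wc{f}\odot N\to N$, whose underlying $2$-morphism in $\dc{D}_1$ is precisely the $\Gr{t}$-cartesian lifting of $f$ at $N$. Now condition \ref{item-ii} of \cref{def:locclosed}, applied to the fixed horizontal $1$-cell $K$, asserts that $(\Gr{t},\Gr{t})$ is a map of adjunctions for $\mi\ot K\dashv H(K,\mi)$ over $\mi\ot Z\dashv H(Z,\mi)$; viewing the legs as the target fibration, the right adjoint $H(K,\mi)\colon\dc{D}_1\to\dc{D}_1$ therefore preserves $\Gr{t}$-cartesian liftings, since right adjoints in $\mathsf{Cat}^\mathbf{2}$ do so (\cite[Ex.~9.4.4]{Jacobs}). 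Hence $H(K,\Cart(\alpha,N))$ is $\Gr{t}$-cartesian over $H(Z,f)$, which is exactly the underlying vertical map of the induced monad morphism $H(C,\alpha)$. I would then compare $H(K,\wc{f}\odot N)$ with the genuine module cartesian lifting $\wc{H(Z,f)}\odot H(K,N)$ of $H(C,\alpha)$ at $H(K,N)$: both are $\Gr{t}$-cartesian over $H(Z,f)$ landing at $H(K,N)$, so there is a unique globular isomorphism between them in $\dc{D}_1$, and exactly as in \cref{prop:MonHcartesian} the universal property of the module lifting forces the a priori merely module-theoretic comparison map to coincide with this isomorphism, proving that $H(K,\Cart(\alpha,N))$ is cartesian in $\Mod(\dc{D})$.

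For the comodule variable the argument is formally dual, but the self-adjunction of the double-dualization functor enters, which is where the braiding of $\dc{D}$ is genuinely used. As in the first-variable part of \cref{prop:MonHcartesian}, in the braided monoidal closed category $\dc{D}_1$ the functor $H(\mi,M)\colon\dc{D}_1\to\dc{D}_1^\op$ is right adjoint to $H(\mi,M)^\op$, and $(\Gr{t},\Gr{t}^\op)$ is a map of adjunctions over $H(\mi,X)^\op\dashv H(\mi,X)$ in $\dc{D}_0$ because the counit (double-dual embedding) is defined through its tensor-hom adjunct, whose source and target are the base counits by \cref{eq:coev}. Since $\Gr{t}$ is a bifibration, $\Gr{t}^\op\colon\dc{D}_1^\op\to\dc{D}_0^\op$ is again a fibration, so the right adjoint $H(\mi,M)$ preserves $\Gr{t}$-cartesian liftings. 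Recalling from \cref{eq:Comodcartlifts} that the cocartesian lifting $\Cocart(\alpha,K)\colon K\to\wh{f}\odot K$ has underlying $2$-morphism the $\Gr{t}$-cocartesian lift of $f$, the image under $H(\mi,M)$ of the corresponding cartesian morphism in $\Comod(\dc{D})^\op$ is $\Gr{t}$-cartesian over $H(f,X)$ at $H(K,M)$, and is then identified with the module cartesian lifting $\wc{H(f,X)}\odot H(K,M)$ of $H(\alpha,A)$ by the same uniqueness-and-universality argument. The hard part is precisely this final restriction step in each variable: one must verify that the partial functors send structured liftings to structured liftings and that the comparison isomorphism produced in $\dc{D}_1$ is compatible with the $A$-action, which I would secure by invoking the universal property of the module cartesian lifting, i.e. the closedness of $\Mod(\dc{D})$ and $\Comod(\dc{D})$ under the ambient $\Gr{t}$-(co)cartesian liftings, rather than through a direct computation of the induced actions via \cref{eq:indact}.
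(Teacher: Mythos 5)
Your proposal is correct and follows essentially the same route as the paper's proof: splitting into the two variables via the product-of-fibrations description of cartesian liftings, using the maps of adjunctions from \cref{def:locclosed} (and the double-dualization adjunction in the braided closed $\dc{D}_1$ for the comodule variable) to get $\Gr{t}$-cartesianness of the image, and then forcing the module-level comparison map to coincide with the unique $\dc{D}_1$-isomorphism because the liftings of $\Mod(\dc{D})\to\Mnd(\dc{D})$ have the same underlying $2$-morphisms as the $\Gr{t}$-liftings. No substantive differences from the paper's argument.
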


\begin{proof}
The square commutes by definition of the functor $H$. 
We will show that the top $H$ preserves cartesian liftings in each variable separately, for the fibrations of \cref{prop:ModfibredoverMnd}. That's enough since
cartesian liftings of a product of fibrations are pairs of liftings in each one, see \cite[Prop.~8.1.14]{Handbook2}.
Recall that part of the definition of a monoidal closed double category (\cref{def:locclosed}) is a map of adjunctions
\begin{displaymath}
\begin{tikzcd}[column sep=.6in, row sep=.4in]
\dc{D}_1\ar[r,shift left=2,"{\mi\ot \K}"]\ar[d,"\mathfrak t"']\ar[r,phantom,"\bot"] & \dc{D}_1\ar[d,"\mathfrak t"]\ar[l,shift 
left=2,"{H(\K,\mi)}"] \\
\dc{D}_0\ar[r,shift left=2,"{\mi\ot Z}"]\ar[r,phantom,"\bot"] & \dc{D}_0\ar[l,shift left=2,"{H(Z,\mi)}"]
\end{tikzcd}
\end{displaymath}
for any horizontal arrow $\K$ with target $Z$. In the fibrant context, $\mathfrak t$ is a fibration (\cref{globalvslocal})
and since right adjoints in $\Cat^\mathbf{2}$ preserve cartesian liftings (see e.g. \cite[Ex.~9.4.4]{Jacobs}), 
we have that $H(\K,\mi)$ does so: for any $N$ with target $Y$ and any vertical map $f\colon Y'\to Y$, cartesian maps are mapped to cartesian 
maps as follows
\begin{equation}\label{eq:here}
\begin{tikzcd}[column sep=.3in]
&&&& H(\K,\wc{f}\odot N)\ar[drr,bend left=15,"{H(1,\Cart(f,N))}"]\ar[d,dashed,"\exists!u"',"\cong"] &&& \\
\wc{f}\odot N\ar[rr,"{\Cart(f,N)}"]\ar[d,-,dotted] && N\ar[d,-,dotted] & \textrm{in }\dc{D}_1 & 
\wc{H(1_Z,f)}\odot H(\K,N)\ar[rr,"{\Cart}"]\ar[d,-,dotted] && H(\K,N)\ar[d,-,dotted] & \textrm{in }\dc{D}_1 \\
Y'\ar[rr,"{f}"] && Y & \textrm{in }\dc{D}_0\ar[ur,phantom,near start,"\mapsto"] & H(Z,Y')\ar[rr,"{H(1_Z,f)}"] && H(Z,Y) & \textrm{in }\dc{D}_0
\end{tikzcd} 
\end{equation}
Due to the nature of cartesian liftings for the fibration $\Mod(\dc{D})\to\Mnd(\dc{D})$ given by 
\cref{eq:_Mod}, we are able to use the above universal triangle to show that in fact the top of
\begin{displaymath}
\begin{tikzcd}[column sep=.5in]
\Mod(\dc{D})\ar[r,"{H(\K,\mi)}"]\ar[d] & \Mod(\dc{D})\ar[d] \\
\Mnd(\dc{D})\ar[r,"{H(\K,\mi)}"] & \Mnd(\dc{D})
\end{tikzcd}
\end{displaymath}
for any $C_Z$-comodule $\K$ is a cartesian functor. Indeed, take the image of a lifting of some $B_Y$-module $N$
\begin{displaymath}
\begin{tikzcd}[column sep=.25in]
&&&& H(\K,\wc{f}\odot N)\ar[drr,bend left=15,"{H(1,\Cart(\alpha,N))}"]\ar[d,dashed,"\exists!k"'] &&& \\
\wc{f}\odot N\ar[rr,"{\Cart(\alpha,N)}"]\ar[d,-,dotted] && N\ar[d,-,dotted] & \textrm{in }\Mod(\dc{D}) & 
\wc{H(1_Z,f)}\odot H(\K,N)\ar[rr,"{\Cart}"]\ar[d,-,dotted] && H(\K,N)\ar[d,-,dotted] & \textrm{in }\Mod(\dc{D}) \\
B'_{Y'}\ar[rr,"{\alpha_f}"] && B_Y & \textrm{in }\Mnd(\dc{D})\ar[ur,phantom,near start,"\mapsto"] & H(C,B')\ar[rr,"{H(1_C,\alpha)_{H(1_Z,f)}}"] && 
H(C,B) & \textrm{in }\Mnd(\dc{D})
\end{tikzcd} 
\end{displaymath}
that looks very similar to \cref{eq:here} but the base category is of horizontal 1-cells (and specifically monads) rather than 0-cells.
By the universal property of cartesian liftings in $\Mod(\dc{D})$, there exists a unique module map $k\colon H(\K,\wc{f}\odot N)\to \wc{H(1,f)}\odot H(\K,N)$ that makes the triangle
commute.
First of all, we notice that $k$ has the same domain and codomain 
as the $\dc{D}_1$-isomorphism $u$ from \cref{eq:here}. Then, the cartesian liftings of $\Mod(\dc{D})\to\Mnd(\dc{D})$ given as in 
\cref{eq:Modcartlifts} in fact coincide with those of $\dc{D}_1\xrightarrow{\mathfrak t}\dc{D}_0$ given as the right side of \cref{eq:D1targetlifts}, 
essentially because the former only employs the underlying vertical map of the monad maps involved. As a result, since $\Cart\circ k=H(1,\Cart)$ in $\Mod(\dc{D})$ in particular means that
the equality holds in $\dc{D}_1$, and $u$ is the unique map in $\dc{D}_1$ that makes the same diagram commute there, the module map $k$ is forced to be equal, as a horizontal 1-cell, to $u$ and 
thus $k$ is an isomorphism.

Using a very analogous argument, we can show that in the commutative
\begin{equation}\label{eq:cartesianfirstvar}
\begin{tikzcd}[column sep=.5in]
\Comod(\dc{D})^\op\ar[r,"{H(\mi,N)}"]\ar[d] & \Mod(\dc{D})\ar[d] \\
\Cmd(\dc{D})^\op\ar[r,"{H(\mi,B)}"] & \Mnd(\dc{D})
\end{tikzcd}
\end{equation}
the top functor $H(\mi,N)$ for any $B_Y$-module is cocartesian, essentially using the fact that 
\begin{displaymath}
\begin{tikzcd}[column sep=.6in, row sep=.4in]
\dc{D}_1\ar[r,shift left=2,"{H(\mi,N)^\op}"]\ar[d,"\mathfrak t"']\ar[r,phantom,"\bot"] & \dc{D}_1^\op\ar[d,"\mathfrak t^\op"]\ar[l,shift 
left=2,"{H(\mi,N)}"] \\
\dc{D}_0\ar[r,shift left=2,"{H(\mi,Y)^\op}"]\ar[r,phantom,"\bot"] & \dc{D}_0^\op\ar[l,shift left=2,"{H(\mi,Y)}"]
\end{tikzcd}
\end{displaymath}
is a map of adjunctions for any braided monoidal closed double category -- the braiding needed for the existence of the bottom adjunction. Then $H(\mi,N)$, again as a right adjoint in $\Cat^2$, preserves cartesian liftings 
from $\mathfrak t^\op$ to the fibration $\mathfrak t$ in the fibrant context, meaning that it maps cocartesian liftings to 
cartesian liftings as follows:
\begin{displaymath}
\begin{tikzcd}[column sep=.3in]
&&&& H(\wh{g}\odot\K,N)\ar[drr,bend left=15,"{H(\Cocart(g,\K),1)}"]\ar[d,dashed,"\exists!v"',"\cong"] &&& \\
\K\ar[d,-,dotted]\ar[rr,"{\Cocart(g,\K)}"] && \wh{g}\odot\K\ar[d,-,dotted] & \textrm{in }\dc{D}_1 & 
\wc{H(g,1)}\odot H(\K,N)\ar[rr,"{\Cart}"]\ar[d,-,dotted] && H(\K,N)\ar[d,-,dotted] & \textrm{in }\dc{D}_1 \\
Z\ar[rr,"g"] && Z' & \textrm{in }\dc{D}_0\ar[ur,phantom,near start,"\mapsto"] & H(Z',Y)\ar[rr,"{H(g,1_Y)}"] && 
H(Z',Y) & \textrm{in }\dc{D}_0
\end{tikzcd} 
\end{displaymath}
The top triangle on the right appears in the exact same form inside $\Mod(\dc{D})\subseteq\dc{D}_1$ when $\K$ is a $C_Z$-comodule, $N$ is a 
$B_Y$-module and the cocartesian lifting is above a comonad map $\beta_g\colon C_Z\to C'_{Z'}$ for the opfibration $\Comod(\dc{D})\to\Cmd(\dc{D})$ 
as in \cref{eq:Comodcartlifts}, therefore the uniquely induced monad map is forced once again to be an isomorphism and 
$H(\mi,N)\colon\Comod(\dc{D})^\op\to\Mod(\dc{D})$ is cartesian for the fibrations above $\Cmd(\dc{D})^\op$ and $\Mnd(\dc{D})$ respectively.
\end{proof}

We now establish that the global category of comodules in a double category is monoidal closed under certain assumptions, 
similarly to the category of comonads in \cref{prop:Cmdclosed}. This generalizes \cref{prop:ComodVclosed} from monoidal to double categories.

\begin{prop}\label{prop:Comodclosed}
Let $\dc{D}$ be a monoidal closed double category, which is fibrant and locally presentable. Then the category of 
comodules is monoidal closed.
\end{prop}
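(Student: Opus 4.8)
The plan is to follow the exact strategy used in the proof of \cref{prop:Cmdclosed} for comonads, now lifted to the level of comodules, using the opfibrational machinery and \cref{thm:totaladjointthm} to produce the internal hom. First I would recall from \cref{prop:ModComodmonoidal} that $\Comod(\dc{D})$ inherits its monoidal structure from $\dc{D}_1$, so that for any fixed comodule $\K_C\colon V\bular Z$ the tensor functor $\mi\ot\K_C\colon\Comod(\dc{D})\to\Comod(\dc{D})$ fits into a commutative square
\begin{displaymath}
\begin{tikzcd}[column sep=.5in]
\Comod(\dc{D})\ar[r,"{\mi\ot\K_C}"]\ar[d] & \Comod(\dc{D})\ar[d] \\
\Cmd(\dc{D})\ar[r,"{\mi\ot C}"] & \Cmd(\dc{D})
\end{tikzcd}
\end{displaymath}
over the opfibration $\Comod(\dc{D})\to\Cmd(\dc{D})$ of \cref{prop:ModfibredoverMnd}. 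The bottom functor $\mi\ot C$ is part of the monoidal closed structure on $\Cmd(\dc{D})$ established in \cref{prop:Cmdclosed}, giving an adjunction $\mi\ot C\dashv\Hom_\Cmd(C,\mi)$ between the bases.

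The next step is to verify that $(\mi\ot\K_C,\mi\ot C)$ is an opfibred $1$-cell, i.e.\ that the top functor preserves cocartesian liftings. Just as in \cref{prop:Cmdclosed}, this follows because the map of adjunctions \cref{eq:mapofadjunctions1} holds in any monoidal closed double category, so the total left adjoint $\mi\ot\K\colon\dc{D}_1^\bullet\to\dc{D}_1^\bullet$ preserves all cocartesian morphisms (being a left adjoint in $\Cat^\mathbf{2}$); since $\Comod(\dc{D})$ is closed under cocartesian liftings inside $\dc{D}_1$ by \cref{prop:ModfibredoverMnd}, the restriction to comodules remains cocartesian. I would then invoke \cref{thm:totaladjointthm} to build the right adjoint of $\mi\ot\K_C$ on the total categories: the relevant composite between fibres
\begin{displaymath}
{}_C\Comod(\dc{D})\xrightarrow{(\mi\ot\K)_C} {}_{C\ot C}\Comod(\dc{D})\xrightarrow{(\varepsilon)_!} {}_{\Hom_\Cmd(C,W)}\Comod(\dc{D})
\end{displaymath}
must have a right adjoint. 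For this I would check the three standard ingredients exactly as in \cref{thm:big1}: the domain fibre is locally presentable by \cref{_AMod_CComodLocallyPresentable}; the reindexing functor $(\varepsilon)_!$ is cocontinuous by \cref{prop:Comod(D)cocomplete}; and the fibrewise functor $(\mi\ot\K)_C$ is cocontinuous by \cref{prop:continuous fibred 1-cells}, since the total functor $\mi\ot\K$ is cocontinuous (both legs of the square over $\dc{D}_1^\bullet$ create colimits by \cref{prop:(co)limits in (co)monads}, and the bottom $\mi\ot\K$ has a right adjoint in the monoidal closed $\dc{D}_1$).

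Assembling these, each $\mi\ot\K_C$ acquires a right adjoint $\overline{\HOM}(\K_C,\mi)$, and these gather into a parameterized adjoint $\overline{\HOM}\colon\Comod(\dc{D})^\op\times\Comod(\dc{D})\to\Comod(\dc{D})$ exhibiting $\Comod(\dc{D})$ as monoidal closed, with the underlying comonad of $\overline{\HOM}(\K_C,\LL_D)$ being $\Hom_\Cmd(C,D)$. The main obstacle I anticipate is purely bookkeeping rather than conceptual: one must confirm that the fibre over which \cref{thm:totaladjointthm} is applied is indeed the fixed-comonad category $_{\Hom_\Cmd(C,W)}\Comod(\dc{D})$ whose local presentability \cref{_AMod_CComodLocallyPresentable} supplies, and that the cocontinuity of $(\mi\ot\K)_C$ transfers correctly through \cref{prop:continuous fibred 1-cells} given that the relevant opfibrations have opfibred colimits by \cref{prop:Comod(D)cocomplete}. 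Since every one of these inputs is already available under the hypotheses (fibrant, monoidal closed, locally presentable), the argument runs in close parallel to \cref{prop:Cmdclosed}, with comonads replaced by comodules and the base adjunction taken to be the internal hom of $\Cmd(\dc{D})$ rather than that of $\dc{D}_0$.
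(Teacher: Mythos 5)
Your overall strategy is exactly the paper's: exhibit $\mi\ot\K$ as the top of an opfibred 1-cell over the adjunction $\mi\ot C\dashv\Hom_\Cmd(C,\mi)$ on $\Cmd(\dc{D})$ supplied by \cref{prop:Cmdclosed}, then produce a right adjoint via the dual of \cref{thm:totaladjointthm} using local presentability of the fibres, cocontinuity of reindexing, and cocontinuity of the fibrewise tensor. However, several steps are transcribed from the comonad case without being adapted, and as written they do not go through. A comodule $\K\colon V\bular Z$ is in general not an endo-1-cell, so $\mi\ot\K$ is not an endofunctor of $\dc{D}_1^\bullet$ and the map of adjunctions \cref{eq:mapofadjunctions1} is not the relevant one. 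What is needed is clause (ii) of \cref{def:locclosed} directly: $(\mathfrak t,\mathfrak t)$ is a map of adjunctions for $\mi\ot\K\dashv H(\K,\mi)$ on all of $\dc{D}_1$ over $\dc{D}_0$, so that $\mi\ot\K\colon\dc{D}_1\to\dc{D}_1$ is cocartesian for the target opfibration $\mathfrak t$ of \cref{globalvslocal}; one then compares the $\mathfrak t$-cocartesian lifting $\Cocart(g,\LL)\ot 1_\K$ with the cocartesian lifting of $\Comod(\dc{D})\to\Cmd(\dc{D})$ from \cref{eq:Comodcartlifts}, whose underlying 2-cell is the same, and concludes that the induced comparison comodule map is forced to be an isomorphism. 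Likewise, in the last step the legs that create colimits are the comonadic forgetful functors ${}_D\Comod(\dc{D})\to\dc{D}_1^W$ of \cref{_AModmonadic_CComodcomonadic}, not $\Cmd(\dc{D})\to\dc{D}_1^\bullet$ as in \cref{prop:(co)limits in (co)monads}.

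More seriously, the composite between fibres to which \cref{thm:totaladjointthm} is applied is mis-stated. For each comonad $D\colon W\bular W$ the theorem asks for a right adjoint to ${}_{\Hom_\Cmd(C,D)}\Comod(\dc{D})\xrightarrow{(\mi\ot\K)}{}_{\Hom_\Cmd(C,D)\ot C}\Comod(\dc{D})\xrightarrow{(\varepsilon_D)_!}{}_D\Comod(\dc{D})$, where $\varepsilon_D\colon\Hom_\Cmd(C,D)\ot C\to D$ is the counit of the base adjunction. Your composite ${}_C\Comod(\dc{D})\to{}_{C\ot C}\Comod(\dc{D})\to{}_{\Hom_\Cmd(C,W)}\Comod(\dc{D})$ starts in the wrong fibre, passes through the wrong fibre, and invokes a pushforward along a map $C\ot C\to\Hom_\Cmd(C,W)$ that is not the counit of anything; \cref{thm:totaladjointthm} cannot be applied to it. This is more than the bookkeeping you flag, since the theorem's hypotheses are stated for exactly that composite. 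Once it is corrected, your three ingredients --- \cref{_AMod_CComodLocallyPresentable} for the domain fibre, \cref{prop:Comod(D)cocomplete} for $(\varepsilon_D)_!$, and \cref{prop:continuous fibred 1-cells} for the fibrewise tensor --- are precisely those the paper uses, so the repair is local and the argument then matches the paper's proof.
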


\begin{proof}
Recall by \cref{prop:ModMndmonoidal} that $\Comod(\dc{D})$ inherits the monoidal structure of $\dc{D}_1$, as does the category of comonads 
$\Cmd(\dc{D})$ by \cref{prop:MonDComonDmonoidal}, therefore the following diagram commutes 
 \begin{displaymath}
  \begin{tikzcd}
 \Comod(\dc{D})\ar[r,"{\mi\ot\K}"]\ar[d] & \Comod(\dc{D})\ar[d] \\
 \Cmd(\dc{D})\ar[r,"{\mi\ot C}"] & \Cmd(\dc{D})
  \end{tikzcd}
 \end{displaymath}
for any fixed left $C_Z$-comodule $\K\colon V\bular Z$. There is an adjunction $\mi\otimes C\dashv\Hom_\Cmd(C,\mi)$ between the bases with counit 
$\varepsilon$, since $\Cmd(\dc{D})$ is monoidal closed under these assumptions by \cref{prop:Cmdclosed}. Moreover, the legs are opfibrations 
by \cref{prop:ModfibredoverMnd}, and the top functor preserves cocartesian liftings. This can be verified either by a direct computation or via an 
analogous argument to the proof of \cref{prop:ModHcartesian} as follows. In any monoidal closed double category (\cref{def:locclosed}), the 
rightmost square is a map of adjunctions for any horizontal 1-cell $\K\colon V\bular Z$, hence the total left adjoint $\mi\otimes\K\colon\dc{D}_1\to\dc{D}_1$ is 
cocartesian between the opfibrations $\mathfrak t$ of \cref{globalvslocal}. This means that a cocartesian lifting on the left is mapped to a 
cocartesian lifting on the right
\begin{displaymath}
\begin{tikzcd}[column sep=.3in]
&&&&&& (\wh{g}\odot\LL)\ot\K & \\
\LL\ar[d,-,dotted]\ar[rr,"{\Cocart(g,\LL)}"] && \wh{g}\odot\LL\ar[d,-,dotted] & \textrm{in }\dc{D}_1 & 
\LL\otimes\K\ar[urr,bend left=15,"{\Cocart(g,\LL)\ot 1_\K}"]\ar[rr,"{\Cocart}"]\ar[d,-,dotted] && 
\wh{(g\ot1)}\odot(\LL\otimes\K)\ar[u,dashed,"\exists!k"',"\cong"]\ar[d,-,dotted] & 
\textrm{in }\dc{D}_1 \\
W\ar[rr,"g"] && W' & \textrm{in }\dc{D}_0\ar[ur,phantom,near start,"\mapsto"] & W\ot Z\ar[rr,"{g\ot1_Z}"] && 
W'\ot Z & \textrm{in }\dc{D}_0
\end{tikzcd}  
\end{displaymath}
for any horizontal 1-cell with target $W$, therefore there exists a unique isomorphism $k$ in $\dc{D}_1$ as above, making the diagram commute. In the 
case that $\LL$ is in fact a $D_W$-comodule, $\K$ is a $C_Z$-comodule and $g$ is the underlying vertical map of a comonad morphism $\beta_g\colon 
D_W\to D'_{W'}$, by the description of the opfibration $\Comod(\dc{D})\to\Cmd(\dc{D})$ as in \cref{eq:Comodcartlifts} we obtain an identical triangle 
in $\Comod(\dc{D})$, thus the uniquely induced comodule map 
$u\colon\wh{(g\ot1)}\odot(\LL\ot\K)\to(\wh{g}\odot\LL)\ot K$ ends up being an isomorphism. 

We can now apply the dual of \cref{thm:totaladjointthm} to obtain the adjoint of $\mi\otimes\K$. The composite functor between the fibers
\begin{displaymath}
{}_{H(C,B)}\Comod(\dc{D})\xrightarrow{(\mi\otimes\K)_{H(C,B)}}{}_{H(C,B)\otimes C}\Comod(\dc{D})\xrightarrow{(\varepsilon_B)_!}{}_B\Comod(\dc{D})
\end{displaymath}
has a right adjoint. First of all, the domain is locally presentable by \cref{_AMod_CComodLocallyPresentable}. Then, the reindexing functor 
$(\varepsilon_B)_!$ is cocontinuous by \cref{prop:Comod(D)cocomplete}. Finally, the functor $(\mi\otimes\K)_{H(C,B)}$ is cocontinuous as follows.
Both legs of the commutative diagram below create colimits by \cref{_AModmonadic_CComodcomonadic}
\begin{displaymath}
\begin{tikzcd}[column sep=.8in]
{}_{H(C,B)}\Comod(\dc{D})\ar[d]\ar[r,"{(\mi\ot K)_{H(C,B)}}"] & {}_{H(C,B)\ot C}\Comod(\dc{D})\ar[d] \\
\dc{D}_1^{H(Z,Y)}\ar[r,"{(\mi\ot K)_{H(Z,Y)}}"] & \dc{D}_1^{H(Z,Y)\ot Z}
\end{tikzcd}
\end{displaymath}
and the bottom functor preserves colimits by \cref{prop:continuous fibred 1-cells}: its corresponding total functor at the top of
\begin{displaymath}
\begin{tikzcd}[column sep=.6in]
\dc{D}_1\ar[d,"\mathfrak t"']\ar[r,"\mi\ot K"] & \dc{D}_1\ar[d,"\mathfrak t"]\\
\dc{D}_0\ar[r,"\mi\ot Z"'] & \dc{D}_0
\end{tikzcd}
\end{displaymath}
is a cocartesian functor (as a left adjoint in $\Cat^2$) and both horizontal functors preserve colimits (since they have a right adjoint for the monoidal closed $\dc{D}_1$ and $\dc{D}_0$) between
opfibrations that preserves them, since $\dc{D}$ is locally presentable (\cref{prop:lpparcomp}). 
Notice that we could obtain the same by considering the cocomplete opfibrations $\mathfrak s$ instead of $\mathfrak t$. 

Therefore we get an adjunction $\mi\otimes K\dashv\Hom_\Comod(K,\mi)$ for any comodule $K$, giving rise to
\begin{displaymath}
 \Hom_\Comod\colon\Comod(\dc{D})^\op\times\Comod(\dc{D})\to\Comod(\dc{D})
\end{displaymath}
where the comodule $\Hom_\Comod(K,L)$ for a $C_Z$-comodule $K\colon V\bular  Z$ and $D_W$-comodule $L\colon S\bular W$ is a $\Hom_{\Cmd}(C,D)$-comodule with target $H(Z,W)$. Notice that from the above theorem, we cannot deduce straight away the source of the comodule since it is not involved in the resulting commutative square.
\end{proof}

We have now set up all pieces in order to deduce that the fibration of modules over monads (\cref{prop:ModfibredoverMnd}) is 
enriched  in the monoidal opfibration of comodules over comonads (\cref{prop:ModMndmonoidal}).
This is the (co)module analogue of the relevant result for (co)monads over the category of 
objects, see \cref{thm:big1}. This result is the many-object generalization of \cref{thm:ModenrichedComod,thm:ModenrichedComodfib}, moving from monoidal categories to double categories.

\begin{thm}\label{thm:big2} (Sweedler theory for (co)modules)
Let $\dc{D}$ be a braided monoidal closed double category, which is fibrant and locally presentable.
\begin{enumerate}
 \item The category of modules $\Mod(\dc{D})$ is tensored and cotensored enriched in the category of comodules $\Comod(\dc{D})$.
 \item The fibration $\Mod(\dc{D})\to\Mnd(\dc{D})$ is enriched in the monoidal opfibration $\Comod(\dc{D})\to\Cmd(\dc{D})$.
\end{enumerate}
\end{thm}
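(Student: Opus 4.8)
The plan is to mimic the structure of the proof of \cref{thm:big1} very closely, using the module-level analogues of every ingredient that was used there for monads and comonads. The two parts are established by appealing to \cref{thm:cotensorenrich} and \cref{thm:enrichedfib} respectively, with the action-induced enrichment machinery doing the heavy lifting. Most of the required infrastructure has already been built up in the preceding propositions, so the proof is largely a matter of assembling them in the right order.

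For part (1), I would apply \cref{thm:cotensorenrich} to the functor $H\colon\Comod(\dc{D})^\op\times\Mod(\dc{D})\to\Mod(\dc{D})$ of \cref{eq:ModHdouble}. First recall that $\Comod(\dc{D})$ is braided monoidal by \cref{prop:ModComodmonoidal}, and that $H$ is an action by \cref{prop:ModHaction}. The crucial clause is that $H^\op(\mi,N)$ has a right adjoint for every module $N$; I would obtain this via \cref{thm:totaladjointthm}, exactly as in \cref{thm:big1}. The opfibred 1-cell here is
\begin{displaymath}
\begin{tikzcd}[column sep=.6in]
\Comod(\dc{D})\ar[r,"{H^\op(\mi,N)}"]\ar[d] & \Mod(\dc{D})^\op\ar[d] \\
\Cmd(\dc{D})\ar[r,"{H^\op(\mi,B)}"'] & \Mnd(\dc{D})^\op
\end{tikzcd}
\end{displaymath}
which is a genuine opfibred 1-cell precisely because $H^\op(\mi,N)$ preserves cocartesian liftings, as shown in \cref{prop:ModHcartesian} (specifically the second variable, \cref{eq:cartesianfirstvar}). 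The base adjunction $H^\op(\mi,B)\dashv P(\mi,B)$ comes from part (1) of \cref{thm:big1}. The composite functor between the fibres then has a right adjoint because its domain ${}_{H(C,B)}\Comod(\dc{D})$ is locally presentable by \cref{_AMod_CComodLocallyPresentable}, the reindexing functor is cocontinuous by \cref{prop:Comod(D)cocomplete}, and the fibre functor $H^\op(\mi,N)_{H(C,B)}$ is cocontinuous by \cref{prop:continuous fibred 1-cells}, using that the total functor $H^\op(\mi,N)$ is cocontinuous (via a commutative square over $\dc{D}_1^\bullet$ where both legs create colimits). This produces the enriching hom-functor, the parameterized adjoint $Q\colon\Mod(\dc{D})^\op\times\Mod(\dc{D})\to\Comod(\dc{D})$ of $H$, whose values are the universal measuring comodules. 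Cotensors then follow because $\Comod(\dc{D})$ is monoidal closed by \cref{prop:Comodclosed}; tensors $K\triangleright N$ follow from the dual of \cref{thm:totaladjointthm} applied to the fibred 1-cell $H(K,\mi)$, which is cartesian by \cref{prop:ModHcartesian}, with the fibre adjoints supplied the same way (local presentability plus \cref{Mod(D)completefibration}).

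For part (2), I would invoke \cref{thm:enrichedfib}. The monoidal opfibration is $\Comod(\dc{D})\to\Cmd(\dc{D})$, which is monoidal by \cref{prop:ModMndmonoidal}. It acts on the opposite of the fibration $\Mod(\dc{D})\to\Mnd(\dc{D})$ via the opfibred 1-cell obtained by taking opposites in \cref{eq:HHmodfibred}, namely $H^\op$ on both levels; this is an opfibred 1-cell by \cref{prop:ModHcartesian}, and both functors labelled $H^\op$ are actions as opposites of the actions of \cref{prop:ModHaction}, with compatible structure isomorphisms. The required right parameterized adjoint $(Q,P)$ in $\mathsf{Cat}^\mathbf{2}$ is exactly the one constructed in part (1) together with the base-level $P$ from \cref{thm:big1}, and \cref{thm:totaladjointthm} guarantees that the (co)units sit above one another, which is precisely the parameterized adjunction condition. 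All clauses of \cref{thm:enrichedfib} are thus met.

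**The main obstacle** is verifying the cocontinuity of the fibre functor $H^\op(\mi,N)_{H(C,B)}$ in part (1), which unwinds to checking that the total functor $H^\op(\mi,N)\colon\Comod(\dc{D})\to\Mod(\dc{D})^\op$ is cocontinuous. This reduces to the corresponding statement for the underlying functor on $\dc{D}_1^\bullet$, where $H_1$ is the internal hom of the monoidal closed $\dc{D}_1^\bullet$; I would argue this through a commutative square whose vertical legs create colimits by \cref{_AModmonadic_CComodcomonadic} and whose base functor is cocontinuous as a hom-functor of a braided monoidal closed category. The genuinely delicate point is keeping track of \emph{which} variance and \emph{which} fibration (source versus target, $\mathfrak s$ versus $\mathfrak t$) governs each reindexing, since here both modules and comodules carry left actions and the relevant liftings live over $\Mnd(\dc{D})$ and $\Cmd(\dc{D})$ rather than over $\dc{D}_0$; the bookkeeping, rather than any single hard estimate, is where care is needed. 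Everything else is a faithful transcription of the proof of \cref{thm:big1} one categorical level up.
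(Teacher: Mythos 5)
Your proposal follows essentially the same route as the paper's proof: part (1) via \cref{thm:cotensorenrich} applied to the action $H$ of \cref{eq:ModHdouble}, with the adjoint of $H^\op(\mi,N)$ produced by \cref{thm:totaladjointthm} over the base adjunction $H^\op(\mi,B)\dashv P(\mi,B)$ from \cref{thm:big1}, cotensors from \cref{prop:Comodclosed} and tensors from the dual lifting of $H(K,\mi)$; part (2) via \cref{thm:enrichedfib} with the parameterized adjoint $(Q,P)$.

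One step, however, would fail as written. In your ``main obstacle'' paragraph you propose to reduce the cocontinuity of the fibre functor to a square over $\dc{D}_1^\bullet$, ``where $H_1$ is the internal hom of the monoidal closed $\dc{D}_1^\bullet$.'' That reduction is not available here: the carriers of modules and comodules are horizontal 1-cells $M\colon U\bular X$ that are \emph{not} endo-1-cells, so neither $\Comod(\dc{D})$ nor $\Mod(\dc{D})$ maps to $\dc{D}_1^\bullet$, and the lemma you cite (\cref{_AModmonadic_CComodcomonadic}) exhibits ${}_A\Mod(\dc{D})$ and ${}_C\Comod(\dc{D})$ as (co)monadic over the fixed-codomain categories $\dc{D}_1^{X}$ and $\dc{D}_1^{Z}$, not over $\dc{D}_1^\bullet$. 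The correct square has the fibres sitting over $\dc{D}_1^{H(X,Y)}$ and $\bigl(\dc{D}_1^{H(H(X,Y),Y)}\bigr)^\op$, and the cocontinuity of the bottom functor between these fixed-codomain categories is not immediate ``as a hom-functor of a closed category'': it requires one further application of \cref{prop:continuous fibred 1-cells} to the total functor $H^\op(\mi,N)\colon\dc{D}_1\to\dc{D}_1^\op$ regarded as a cocartesian 1-cell over $(\mathfrak t,\mathfrak t^\op)$, using that it is a left adjoint in $\Cat^\mathbf{2}$ paired with $H(\mi,Y)$ on $\dc{D}_0$. Two smaller bookkeeping slips in the same part: the domain of the composite fibre functor is ${}_{P(A,B)}\Comod(\dc{D})$ (a fibre of $\Comod(\dc{D})\to\Cmd(\dc{D})$ over the comonad $P(A,B)$), not ``${}_{H(C,B)}\Comod(\dc{D})$'' ($H(C,B)$ is a monad, so this indexes no fibre of that opfibration); and the reindexing $(\varepsilon_B)_!$ lives in $\Mod(\dc{D})^\op$, so its cocontinuity comes from \cref{Mod(D)completefibration} (continuity of $\wc{\varepsilon_B}\odot\mi$ for the fibration of modules), not from \cref{prop:Comod(D)cocomplete}.
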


\begin{proof}
$(1)$ We apply \cref{thm:cotensorenrich} to the functor $H\colon\Comod(\dc{D})^\op\times\Mod(\dc{D})\to\Mod(\dc{D})$ as in 
\cref{eq:ModHdouble}, induced by the monoidal closed structure of $\dc{D}$.

First of all, $\Comod(\dc{D})$ is braided monoidal when $\dc{D}$ is by \cref{prop:ModComodmonoidal}. Moreover, $H$ is an action by 
\cref{prop:ModHaction}. For the remaining condition, it is indeed the case that the functor $H^\op(\mi,N)$ for any $B_Y$-module $N\colon T\bular Y$ 
has a right adjoint, using \cref{thm:totaladjointthm} as follows. Consider the square of categories and functors
\begin{equation}\label{eq:diag}
\begin{tikzcd}[column sep=.6in]
\Comod(\dc{D})\ar[r,"{H^\op(\mi,N)}"]\ar[d] & \Mod(\dc{D})^\op\ar[d] \\
\Cmd(\dc{D})\ar[r,"{H^\op(\mi,B)}"'] & \Mnd(\dc{D})^\op 
\end{tikzcd}
\end{equation}
which commutes by definition of the functors involved. This is an opfibred 1-cell between the respective opfibrations (\cref{prop:ModfibredoverMnd}) 
since $H^\op(\mi,N)$ preserves cocartesian liftings by \cref{prop:ModHcartesian}. Moreover, under the running assumptions there is an adjunction
\begin{displaymath}
\begin{tikzcd}[sep=.5in]
\Cmd(\dc{D})\ar[r,shift left=2,"{H^\op(\mi,B)}"]\ar[r,phantom,"\bot"] & \Mnd(\dc{D})^\op\ar[l,shift left=2,"{P(\mi,B)}"] 
\end{tikzcd}
\end{displaymath}
between the base categories with counit $\varepsilon$, where $P$ \cref{eq:P} is established in the proof of \cref{thm:big1}.
Finally, the composite functor between the fibres
\begin{equation}\label{eq:compfibers}
{}_{P(A,B)}\Comod(\dc{D})\xrightarrow{H^\op(\mi,N)_{P(A,B)}}{}_{H(P(A,B),B)}\Mod(\dc{D})^\op\xrightarrow{(\varepsilon_B)_!}{}_{A}\Mod(\dc{D})^\op
\end{equation}
has a right adjoint as follows. Its domain is locally presentable \cref{_AMod_CComodLocallyPresentable}, and the reindexing 
functor $\wc{\varepsilon_B}\odot\mi$ of \cref{eq:_Mod} is continuous by \cref{Mod(D)completefibration}, hence its opposite (which is the reindexing for the opposite of the fibration) is cocontinuous.
Now the functor between the fibers is also cocontinuous, because it makes the following also commute
\begin{displaymath}
\begin{tikzcd}[column sep=.8in]
{}_{P(A,B)}\Comod(\dc{D})\ar[r,"{{H^\op(\mi,N)}_{P(A,B)}}"]\ar[d] & {}_{H(P(A,B),B)}\Mod(\dc{D})^\op\ar[d] \\
\dc{D}_1^{H(X,Y)}\ar[r,"{H^\op(\mi,N)_{H(X,Y)}}"'] & \left(\dc{D}_1^{H(H(X,Y),Y)}\right)^\op
\end{tikzcd}
\end{displaymath}
where both legs are comonadic by \cref{_AModmonadic_CComodcomonadic} and the bottom functor is cocontinuous by \cref{prop:continuous fibred 1-cells}.
In detail, its corresponding total functor at the top of
\begin{displaymath}
\begin{tikzcd}[column sep=.6in]
\dc{D}_1\ar[r,"{H^\op(\mi,N)}"]\ar[d,"{\mathfrak t}"'] & \dc{D}_1^\op\ar[d,"{\mathfrak t}^\op"] \\
\dc{D}_0\ar[r,"{H^\op(\mi,Y)}"'] & \dc{D}_0^\op
\end{tikzcd}
\end{displaymath}
makes the diagram commute by construction, and is cocartesian since the two horizontal functors have a right adjoint $(H(\mi,B),H(\mi,Y))$ in $\Cat^2$ 
($\dc{D}_0$, $\dc{D}_1$ are braided monoidal closed categories). Hence it forms an opfibred 1-cell between the cocomplete opfibration $\mathfrak t$ and the opposite of the complete fibration ${\mathfrak t}^\op$
(see \cref{prop:lpparcomp}). As adjoints, both horizontal functors are cocontinuous thus it follows that the corresponding fiberwise of the top one is also cocontinuous, and we obtain an adjoint for
\cref{eq:diag} as required.

Therefore $\Mod(\dc{D})$ is enriched in $\Comod(\dc{D})$, with enriched hom functor an adjoint 
\begin{equation}\label{eq:Q}
Q\colon\Mod(\dc{D})^\op\times\Mod(\dc{D})\to\Comod(\dc{D})
\end{equation}
of the restricted $H$ on (co)modules, where $Q(M_A,N_B)$ for an $A$-module $M\colon U\bular X$ and a $B$-module $N\colon T\bular Y$ is a 
$P(A,B)$-comodule, thus with target $H(X,Y)$. Again, this theorem does not give any information about the source of the comodule. 

Next, under these assumptions $\Comod(\dc{D})$ is a monoidal closed category by \cref{prop:Comodclosed}, therefore the enrichment admits cotensors by \cref{thm:cotensorenrich}, namely the $H(C,B)$-modules 
$H(K,N)$ for any $C$-comodule $K$ and $B$-module $N$. Finally, the enrichment also admits as tensors certain $(C\triangleright B)$-modules $K\oslash N$, since $H(K,\mi)$ 
has a left adjoint $K\oslash\mi$ as follows. Using the dual of \cref{thm:totaladjointthm}, 
observe that 
the commutative diagram
\begin{displaymath}
\begin{tikzcd}[column sep=.6in]
\Mod(\dc{D})\ar[r,"{H(K,\mi)}"]\ar[d] & \Mod(\dc{D})\ar[d] \\
\Mnd(\dc{D})\ar[r,"{H(C,\mi)}"] & \Mnd(\dc{D})
 \end{tikzcd}
\end{displaymath}
is a fibred 1-cell since $H(K,\mi)$ preserves cartesian liftings by \cref{prop:ModHcartesian}, and moreover there is an adjunction 
$C\triangleright\mi\dashv 
H(C,\mi)$ between the bases with unit $\eta\colon A\to H(C,C\triangleright A)$ where $\triangleright$ is established in the proof of \cref{thm:big1}. Furthermore, the 
composite functor 
between fibers
\begin{displaymath}
_{C\triangleright A}\Mod(\dc{D})\xrightarrow{H(K,\mi)_{C\triangleright 
A}}{}_{H(C,C\triangleright A)}\Mod(\dc{D})\xrightarrow{(\eta_A)^*}{}_{A}\Mod(\dc{D})
\end{displaymath}
has a left adjoint, because its domain is locally presentable by \cref{_AMod_CComodLocallyPresentable}, the reindexing functor is continuous like above,
and the functor between the fibers is continuous with an analogous to previous arguments as follows. By definition of the functors involved, it makes the diagram
\begin{displaymath}
\begin{tikzcd}[column sep=.8in]
{}_{C\triangleright A}\Mod(\dc{D})\ar[r,"{H(K,\mi)_{C\triangleright A}}"]\ar[d] & {}_{H(C,C\triangleright A)}\Mod(\dc{D})\ar[d] \\
\dc{D}_1^{Z\ot X}\ar[r,"{H(K,\mi)_{Z\ot X}}"'] & \dc{D}_1^{H(Z,Z\ot X)}
\end{tikzcd}
\end{displaymath}
commute, where both legs are monadic by \cref{_AModmonadic_CComodcomonadic} and the bottom functor is continuous by \cref{prop:continuous fibred 1-cells}.
Indeed, its total functor at the top of
\begin{displaymath}
\begin{tikzcd}[column sep=.6in]
\dc{D}_1\ar[r,"{H(K,\mi)}"]\ar[d,"{\mathfrak t}"'] & \dc{D}_1\ar[d,"{\mathfrak t}"] \\
\dc{D}_0\ar[r,"{H(Z,\mi)}"'] & \dc{D}_0
\end{tikzcd}
\end{displaymath}
is continuous between fibred complete fibrations as part of a right adjoint in $\Cat^2$ ($\dc{D}_1$ and $\dc{D}$ are monoidal closed), 
and \cref{prop:continuous fibred 1-cells} gives the result for the functor between the fibers.
Consequently, each $H(K,\mi)$ has a left 
adjoint $K\oslash\mi$ between the total categories, giving rise to functor of two variables $\oslash\colon\Comod(\dc{D})\times\Mod(\dc{D})\to\Mod(\dc{D})$
that gives tensors for the enrichment.

$(2)$ This follows by \cref{thm:enrichedfib}. First of all, the opfibration $\Comod(\dc{D})\to\Cmd(\dc{D})$ that will serve as the base of the 
enrichment is indeed monoidal by \cref{prop:ModMndmonoidal}. It is also the case that $\Comod(\dc{D})\to\Cmd(\dc{D})$ acts on the opfibration 
$\Mod(\dc{D})^\op\to\Mnd(\dc{D})^\op$, according to \cref{Trepresentation}, as follows. For the opposite of the fibred 1-cell 
\cref{eq:HHmodfibred} 
\begin{displaymath}
 \begin{tikzcd}
\Comod(\dc{D})\times\Mod(\dc{D})^\op\ar[r,"H^\op"]\ar[d] & \Mod(\dc{D})^\op\ar[d] \\
\Cmd(\dc{D})\times\Mnd(\dc{D})^\op\ar[r,"H^\op"] & \Mnd(\dc{D})^\op
\end{tikzcd} 
\end{displaymath}
we have that both functors labelled $H^\op$ are actions by \cref{rmk:fixedaction,prop:ModHaction} respectively. Moreover, these actions are compatible: indeed, by construction of the constraints, we have that the module maps $\psi_1$ and $\mu_1$ of \cref{eq:chimod} have as underlying monad maps the ones appearing in 
\cref{eq:actionstructuredouble}.

Finally, $(H^\op,H^\op)$ has an ordinary right parameterized adjoint as defined in \cref{eq:parameterizedCat2}, namely $(Q,P)$ in
\begin{displaymath}
 \begin{tikzcd}
\Mod(\dc{D})^\op\times\Mod(\dc{D})\ar[r,"Q"]\ar[d] & \Comod(\dc{D})\ar[d] \\
\Mnd(\dc{D})^\op\times\Mnd(\dc{D})\ar[r,"P"] & \Cmd(\dc{D})
\end{tikzcd} 
\end{displaymath}
since $H^\op(\mi,B)\dashv P(\mi,B)$ with $P$ as in \cref{eq:P}, $H^\op(\mi,N)\dashv Q(\mi,N)$ with $Q$ as in \cref{eq:Q}, and the (co)units
are above each other (since the existence of Q was established using \cref{thm:totaladjointthm} which ensures that).
Thus the fibration $\Mod(\dc{D})\to\Mnd(\dc{D})$ is enriched in the monoidal opfibration $\Comod(\dc{D})\to\Cmd(\dc{D})$.
\end{proof}

Analogously to \cref{cor:VCatenrVCocat}, the above theorem directly applies to the setting of enriched matrices, giving the desired enrichment of enriched modules in enriched comodules
in their most general form.

\begin{cor}\label{cor:bigthm2}
Suppose that $\ca{V}$ is a symmetric monoidal closed and locally presentable category. Then the category $\Mod(\VMMat)$ of two-indexed $\ca{V}$-modules is tensored
and cotensored enriched in the category $\Comod(\VMMat)$ of two-indexed $\ca{V}$-comodules. 
\end{cor}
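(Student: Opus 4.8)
The plan is to obtain this corollary as a direct specialization of \cref{thm:big2}(1) to the double category $\dc{D}=\VMMat$, exactly as \cref{cor:VCatenrVCocat} specializes \cref{thm:big1}. Since $\Mod(\VMMat)$ and $\Comod(\VMMat)$ are precisely the categories of two-indexed $\ca{V}$-modules and comodules (\cref{ex:twoVMod,ex:twoVComod}), the only work is to verify that, under the hypothesis that $\ca{V}$ is symmetric monoidal closed and locally presentable, $\VMMat$ satisfies all the standing assumptions of \cref{thm:big2}: fibrancy, braided monoidal closedness, and local presentability.

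First I would record that each of these is already available from \cref{sec:doublecats,sec:monoidalclosed}, provided the side-conditions on $\ca{V}$ appearing there are met. Fibrancy holds by \cref{ex:VMMatfibrant}; the braided (indeed symmetric) monoidal structure by \cref{ex:VMMatmonoidal}; monoidal closedness by \cref{ex:VMMatclosed}; and local presentability by \cref{prop:VMatlp}. The key observation making all the side-conditions automatic is that monoidal closedness of $\ca{V}$ forces $\mi\ot\mi$ to be a left adjoint in each variable, hence cocontinuous; in particular it preserves all small coproducts and all filtered colimits. Together with local presentability of $\ca{V}$ -- which in particular supplies completeness (hence products, needed for the internal hom of \cref{ex:VMMatclosed}) and cocompleteness -- this covers every hypothesis: \cref{ex:VMMatfibrant,ex:VMMatmonoidal,ex:VMMatclosed} require coproducts and products preserved appropriately by $\ot$, and \cref{prop:VMatlp} requires $\ca{V}$ locally presentable with $\ot$ preserving colimits in each variable. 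Finally, symmetry of $\ca{V}$ promotes the monoidal double structure to a symmetric (hence braided) one.

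Having assembled these, part (1) of \cref{thm:big2} applies verbatim and yields the tensored and cotensored enrichment of $\Mod(\VMMat)$ in $\Comod(\VMMat)$, with enriched hom-objects the universal measuring comodules $Q(M,N)$ of \cref{eq:Q}, cotensors the modules $H(\K,N)$, and tensors the modules $\K\oslash N$. I do not expect a genuine obstacle here: the content of the corollary lies entirely in \cref{thm:big2}, and the proof is a bookkeeping check that the abstract hypotheses hold for $\VMMat$. The only point requiring a moment's care is confirming that the various preservation and existence conditions on $\ca{V}$ demanded separately by \cref{ex:VMMatfibrant,ex:VMMatmonoidal,ex:VMMatclosed,prop:VMatlp} are all consequences of the single package ``symmetric monoidal closed and locally presentable'', which is precisely what cocontinuity of $\ot$ and bicompleteness of $\ca{V}$ guarantee.
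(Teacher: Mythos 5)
Your proof is correct and follows exactly the same route as the paper: cite \cref{ex:VMMatfibrant,ex:VMMatmonoidal,ex:VMMatclosed,prop:VMatlp} to verify the hypotheses of \cref{thm:big2} and then apply it. The extra observation that closedness of $\ca{V}$ makes $\ot$ cocontinuous in each variable (so the side-conditions of those examples are automatic) is a welcome, if minor, elaboration of what the paper leaves implicit.
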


\begin{proof}
Recall the description of the involved categories from \cref{ex:twoVMod,ex:twoVComod}. All clauses of \cref{thm:big2} are satisfied: under these assumptions, $\VMMat$ is a braided (in fact symmetric) monoidal double category
by \cref{ex:VMMatmonoidal}, fibrant by \cref{ex:VMMatfibrant}, monoidal closed as a double category by \cref{ex:VMMatclosed} and also locally presentable by \cref{prop:VMatlp}.
\end{proof}

\begin{rmk}\label{rmk:thatrmk}
Analogously to \cref{rmk:thisrmk} and using \cref{rmk:modoneobject}, the one-object case of the above theorem reduces to the enrichments that arise in the context of a symmetric monoidal closed (ordinary) category, i.e. 
\cref{thm:ModenrichedComod,thm:ModenrichedComodfib}. In more detail, \cref{thm:big2} states in particular that for a monoidal category -- 
namely a (one-object, one-vertical arrow) double category -- which is symmetric monoidal closed and locally presentable, the global category of modules is tensored and cotensored enriched in the global category
of comodules, as well as the fibration of modules over monoids is enriched in the opfibration of comodules over comonoids.
\end{rmk}

\begin{rmk}\label{rmk:VMod}
We can also view \cref{cor:bigthm2} as a many-object generalization of the enrichment of modules in comodules in a specific $\ca{V}$, using precisely $\dc{D}=\VMMat$. In fact, if one wanted
to actually establish the enrichment of the category of $\ca{V}$-modules $\VMod={}^1\Mod(\VMMat)$ in the category of $\ca{V}$-comodules $\VComod={}^1\Comod(\VMMat)$ as those
were described in \cref{ex:twoVMod,ex:twoVComod}, that would require a sub-case of \cref{thm:big2} that involves modules and comodules with fixed domains\footnote{The specific enrichment of enriched modules in enriched comodules is obtained in \cite[Thm.~7.7.7]{PhDChristina} in a more technical and less general way.}. 

In a bit more detail, we would once again apply \cref{thm:cotensorenrich}, this time to the composite functor
\begin{displaymath}
\tilde{H}\colon{}^I\Comod(\dc{D})^\op\times{}^I\Mod(\dc{D})\to{}^{H(I,I)}\Mod(\dc{D})\xrightarrow{\simeq}{}^I\Mod(\dc{D})
\end{displaymath}
where the first part is the functor $H$ between the respective fibers and the second one is the (adjoint) equivalence induced, according to \cref{lem:lemmaabove}, by the standard isomorphism $I\cong H(I,I)$ in $\dc{D}_0$. We know that ${}^I\Comod(\dc{D})$ is braided monoidal by \cref{prop:IModmonoidal}, and we can adjust \cref{prop:ModHaction} to establish that $\tilde{H}$ is an action: the structure isomorphisms $\psi_1,\mu_1$ would have a pasted 2-isomorphism on their left which would `normalize' the domains of all horizontal 1-cells to $I$.
Finally, knowing that the adjoint equivalence of course has an adjoint, it would be left to show that the restricted $H^\op(\mi,N)$ for any $B_Y$-module $N\colon I\bular Y$ has a right adjoint: mimicking the proof of \cref{thm:big2}, one considers a square
\begin{displaymath}
\begin{tikzcd}[column sep=.6in]
{}^I\Comod(\dc{D})\ar[r,"{H^\op(\mi,N)}"]\ar[d] & {}^{H(I,I)}\Mod(\dc{D})^\op\ar[d] \\
\Cmd(\dc{D})\ar[r,"{H^\op(\mi,B)}"'] & \Mnd(\dc{D})^\op 
\end{tikzcd}
\end{displaymath}
where both legs are opfibrations by \cref{prop:ZModfibredoverMnd}, the top functor preserves cocartesian liftings using an argument
as for \cref{eq:cartesianfirstvar} which holds for arbitrary domain (co)modules, the bottom functor has an adjoint $P(\mi,B)$ and the composite functor between
the fibers is precisely \cref{eq:compfibers} but now emanating from $^{\phantom{ABC}I}_{P(A,B)}\Comod(\dc{D})$ and landing on $^{H(I,I)}_{\phantom{AB}A}\Mod(\dc{D})$. The domain is locally
presentable by \cref{_AMod_CComodLocallyPresentable} and the restricted functors are still cocontinuous (since the fixed domains are of the `right shape'). \cref{thm:totaladjointthm} then applies and produces the desired adjoint. 

Hence the result follows and each $^I\Mod(\dc{D})$ is enriched in $^I\Comod(\dc{D})$, giving the enrichment of $\VMod$ in $\VComod$ as a particular case when $\dc{D}=\VMMat$.
For the existence of tensors and cotensors, we can adjust \cref{prop:Comodclosed} in order to show that ${}^I\Comod(\dc{D})$ is monoidal closed, and also again work as in the proof of \cref{thm:big2} for the adjoint of $\tilde{H}(K,\mi)$. Similarly for the enrichment of the fibration ${}^I\Mod(\dc{D})\to\Mnd(\dc{D})$ in the monoidal opfibration ${}^I\Comod(\dc{D})\to\Cmd(\dc{D})$. We don't go into these details so as to not further prolong this already extended work.

Notice that the above arguments indicate that we can obtain enrichments of any $^X\Mod(\dc{D})$ in the monoidal $^I\Comod(\dc{D})$ using an appropriate functor
\begin{displaymath}
{}^I\Comod(\dc{D})^\op\times{}^X\Mod(\dc{D})\to{}^{H(I,X)}\Mod(\dc{D})\xrightarrow{\simeq}{}^X\Mod(\dc{D})
\end{displaymath} 
since $^X\Mod(\dc{D})$ being monoidal is not needed in the above arguments --  but of course $^I\Comod(\dc{D})$ being monoidal as the enriching base
is needed (\cref{prop:IModmonoidal}). 
\end{rmk}

\bibliographystyle{alpha}
\bibliography{References}

\end{document}